\documentclass[10pt,reqno]{amsart}

\usepackage{amsmath,amssymb,amsfonts,latexsym,amsthm}
\usepackage{mathtools}
\usepackage{mathrsfs}
\usepackage{hyperref}
\numberwithin{equation}{section}

\usepackage{graphicx}

\usepackage{ifthen} 

\calclayout
\provideboolean{shownotes} 
\setboolean{shownotes}{false} 
\newcommand{\margnote}[1]{
	\ifthenelse{\boolean{shownotes}}%
	{\marginpar{\raggedright\tiny\texttt{#1}}}%
	{}%
}

\newcommand{\hole}[1]{
	\ifthenelse{\boolean{shownotes}}%
	{\begin{center} \fbox{ \rule {.25cm}{0cm}
				\rule[-.1cm]{0cm}{.4cm} \parbox{.85\textwidth}{\begin{center}
						\texttt{#1}\end{center}} \rule {.25cm}{0cm}}\end{center}}
	{}
}

\theoremstyle{plain}
\newtheorem{defi}{Definition}[section]
\newtheorem{theo}{Theorem}[section]
\newtheorem{lema}{Lemma}

\newtheorem{prop}{Proposition}

\newtheorem{cond}{Condition}
\newtheorem*{condition}{Condition}

\newtheorem{rem}{Remark}

\theoremstyle{remark}

\theoremstyle{remark}

\begin{document}

	\title[]{Local well-posedness for hyperbolic systems of equations with fractional dissipation}
	
	\author[F. Angeles]{Felipe Angeles}
	
	\address{{\rm (F. Angeles)} Instituto de 
		Investigaciones en Matem\'aticas Aplicadas y en Sistemas\\Universidad Nacional Aut\'onoma de 
		M\'exico\\ Circuito Escolar s/n, Ciudad Universitaria, C.P. 04510\\Cd. de M\'{e}xico (Mexico)}
	
	\email{felipe.angeles@iimas.unam.mx}

\begin{abstract}
We establish a local well-posedness theory for a class of hyperbolic quasilinear evolution systems with fractional dissipation and commutators of the fractional Laplacian $(-\Delta)^{\alpha}$ with $\alpha\in(0,1)$. The analysis is motivated by the compressible isentropic Navier-Stokes equations with fractional diffusion and the compressible Euler alignment system with singular communication weights. Our approach does not rely on the cancellation condition previously used to recover the coercivity of the highest-order non-local terms. Instead, we exploit the self-adjointness of the fractional Laplacian together with an elementary algebraic identity for commutators of Fourier multipliers. Then, we use the non-homogeneous Littlewood-Paley decomposition to develop some new commutator estimates. These ingredients yield the coercive structure of the non-local terms, thus showing that the contribution of the commutators to the energy is absorbed by the dissipation. Our method remains applicable for systems with variable coefficients. We further show that the associated solution operator fails to satisfy Banach's contraction principle. To overcome this difficulty, we establish a H\"older continuity estimate of order $1/2$, combine it with the Aubin-Lions compactness theorem and a tail-control argument of the $L^{2}$ norm, that yields the relative compactness of the solution operator, allowing the application of Schauder's fixed point theorem. The resulting theory provides local existence for arbitrary orders $\alpha\in(0,1)$ and identifies the obstacles for the local uniqueness of solutions in the presence of fractional commutators.
\end{abstract}
	\maketitle
	
	\setcounter{tocdepth}{1}

	\section{Introduction}
The present work deals with the local well-posedness of the following Cauchy problem for a nonlinear system of evolution equations
\begin{equation}
\label{eq:quasilinearsystem1}
\begin{aligned}
A^{0}(U)\partial_{t}U+A^{j}(U)\partial_{j}U+D(U)U&+B(U)\Lambda^{2\alpha}U+H(U)[\Lambda^{2\alpha},G(U)]\mathcal{L}U=0,\\
U\rvert_{t=0}&=U_{0},
	\end{aligned}
\end{equation}
where the notation $\Lambda^{\sigma}=(-\Delta)^{\sigma/2}$ and the summation convention have been used. Here, $U$ stands as the variable to be determined and is such that $U=U(x,t)\in\mathbb{R}^{N}$ for all $(x,t)\in\mathbb{R}^{d}\times[0,T]$, for some $T>0$. The coefficients $A^{0}(U)$, $A^{j}(U)$, $D(U)$, $B(U)$, $H(U)$ and $G(U)$ are real matrices of order $N\times N$ and $\mathcal{L}$ is a constant real matrix of the same order.  The fractional Laplace operator $\Lambda^{2\alpha}:=(-\Delta)^{\alpha}$ is the Fourier multiplier with symbol $|\xi|^{2\alpha}$, namely
\begin{align*}
	\mathcal{F}\left(\Lambda^{2\alpha}f\right)(\xi)=|\xi|^{2\alpha}\widehat{f}(\xi).
\end{align*}
Equivalently, for smooth enough functions, 
\begin{align}
	\Lambda^{2\alpha}f(x)=c_{\alpha}\operatorname{P.V.}\int_{\mathbb{R}^{d}}\frac{f(x)-f(y)}{|x-y|^{d+2\alpha}}dy,\quad 0<\alpha<1,\quad c_{\alpha}:=\frac{2^{2\alpha}\Gamma(\alpha+\tfrac{d}{2})}{\pi^{\tfrac{d}{2}}|\Gamma(-\alpha)|}.\label{eq:fractionalPV}
\end{align}
 We use the notation
\begin{align*}
	[\Lambda^{2\alpha},f]g:=\Lambda^{2\alpha}(fg)-f\Lambda^{2\alpha}g,
\end{align*}
for the commutator of the operator $\Lambda^{2\alpha}$. 

The system \eqref{eq:quasilinearsystem1} is motivated by two classes of compressible fluid models involving non-local dissipation mechanisms. Although arising from different physical contexts, both systems contain fractional diffusion operators and lead naturally to quasilinear hyperbolic systems coupled with non-local terms. 

The first corresponds to the compressible isentropic fluid dynamics equations in space ($x\in\mathbb{R}^{d}$, $d=1,2,3$) with a non-local diffusion mechanism modeled by the fractional Laplace operator, namely
\begin{align}
\partial_{t}\rho+\nabla\cdot(\rho v)&=0,\label{eq:fractionalNS1}\\
\partial_{t}(\rho v)+\nabla\cdot(\rho v\otimes v)&=-\nabla p-\mu_{0}\Lambda^{2\alpha}v,\label{eq:fractionalNS2}
\end{align}
where $\rho=\rho(x,t)$ is the mass density,  $v=(v_{1},..,v_{d})(x,t)\in\mathbb{R}^{d}$ is the velocity field, $p$ is the pressure. The constant $\mu_{0}>0$ denotes the viscosity coefficient.

The system \eqref{eq:fractionalNS1}-\eqref{eq:fractionalNS2} was introduced by S. Wang and S. Zhang in \cite{wanginitial}. The authors consider the Cauchy problem for \eqref{eq:fractionalNS1}-\eqref{eq:fractionalNS2} when the pressure satisfies a power law and $\frac{1}{2}<\alpha\leq 1$ in three space dimensions. For initial data in $H^{4}$ sufficiently close to a constant equilibrium state, the authors established global existence of smooth solutions together with optimal time-decay estimates. In a subsequent work \cite{wangglobal}, the regularity assumptions on the initial data were significantly relaxed. More precisely, local well-posedness was established for initial data in $H^{3\alpha+1}$ when $\frac{1}{2}<\alpha\leq 1$. Furthermore, under this assumptions, a global well-posedness theory is established for small initial data, where solutions converge to a constant steady state exponentially in time. 

It is important to highlight that, although the analysis in \cite{wanginitial} is carried out for $\frac{1}{2}<\alpha\leq1$, the local existence argument extends to the hypo-dissipative regime $0<\alpha\leq\frac{1}{2}$, and the proof follows along the same lines with only straightforward modifications.

A second source of motivation for this paper is the compressible Euler system with singular velocity alignment, known as the \emph{Euler alignment system}, describing the flocking behaviors of large animal groups, namely
\begin{equation}
\label{eq:Eulervelal}
\begin{aligned}
\partial_{t}\rho+\nabla\cdot(\rho v)&=0,\\
\partial_{t}(\rho v)+\nabla\cdot(\rho v\otimes v)+\nabla p(\rho)&=-\beta\rho v-\rho\int_{\mathbb{R}^{d}}\phi(x-y)(u(x)-u(y))\rho(y)dy,
\end{aligned}
\end{equation}
where 
\begin{align}
\label{eq:fractionalkernel}
\phi(x):=\frac{c_{\alpha}}{|x|^{d+2\alpha}}.
\end{align}
The pressure is given by the power law $p(\rho)=\rho^{\gamma}$ with $\gamma\geq 1$, and the damping coefficient $\beta\geq 0$. 

The integral term in \eqref{eq:Eulervelal} represents the non-local velocity alignment, where $\phi$ is called the communication weight, measuring the strength of the alignment interactions. Such systems arise as hydrodynamic descriptions of agent-based models for self-organized dynamics (see, \cite{cucker}, \cite{motsch}). For Euler pressureless systems, it has been shown that the alignment non-linearity enhances the dissipation in the sense that globally regular solutions exists (see, \cite{dokiselev} and \cite{shvy}). 

In \cite{chenalign}, L. Chen, C.Tan and L. Tong consider the initial value problem for \eqref{eq:Eulervelal} near the constant equilibrium state $(\rho_{c},v_{c})=(1,0)$. Following \cite{sideris}, the authors introduce the change of variables, 
\begin{align*}
\sigma=\sigma(\rho):=\left\lbrace\begin{array}{cc}
\ln\rho&\gamma=1,\\
\frac{2\sqrt{\gamma}}{\gamma-1}(\rho^{\frac{\gamma-1}{2}}-1)&\gamma>1,
\end{array}\right.
\end{align*}
and reformulate \eqref{eq:Eulervelal} into the following equivalent system
\begin{equation}
\label{eq:Eulertantong}
\begin{aligned}
&\partial_{t}\sigma+u\cdot\nabla\sigma+\left(\frac{\gamma-1}{2}\sigma+\sqrt{\gamma}\right)\nabla\cdot u=0,\\
&\partial_{t}u+u\cdot\nabla u+\left(\frac{\gamma-1}{2}\sigma+\sqrt{\gamma}\right)\nabla\sigma=-\beta u-\Lambda^{2\alpha}u-[\Lambda^{2\alpha},u](\rho(\sigma)-1),
\end{aligned}
\end{equation}
subject to the initial data 
\begin{align*}
\sigma\rvert_{t=0}=\sigma(\rho_{0}),\quad u\rvert_{t=0}&=u_{0}.
\end{align*}
The authors establish local and global energy estimates in Sobolev spaces (see,\cite[Lemma 3.1, Lemma 3.2 and Theorem 2.2]{chenalign}). In particular, their local well-posedness theory requires initial data in $H^{s}$ with $s>\frac{d}{2}+\max\{1,2\alpha\}$.

The analysis in \cite{chenalign} relies crucially on a \emph{cancellation property}, showing that the commutator contribution $[\Lambda^{2\alpha},u](\rho(\sigma)-1)$ is controlled by the dissipation generated by $\Lambda^{2\alpha}u$. More precisely, after applying the operator $\Lambda^{s}$ to system \eqref{eq:Eulertantong} and performing the standard energy estimates, one encounters the contribution
\begin{align}
J:=-\left\langle\Lambda^{s}\left\lbrace[\Lambda^{2\alpha},u](\rho(\sigma)-1)\right\rbrace,\Lambda^{s}u\right\rangle_{L^{2}}.\label{eq:Jota}
\end{align}
After expanding the commutator and integrating by parts, one obtains
\begin{align*}
J&:=-\int_{\mathbb{R}^{d}}(\rho(\sigma)-1)\Lambda^{s+\alpha}u\cdot\Lambda^{s+\alpha} udx+\mathcal{R},
\end{align*}
where the remainder satisfies
\begin{align*}
|\mathcal{R}|\leq\theta\|(\rho-1,u)\|_{H^{s}}\|\Lambda^{s+\alpha}u\|_{L^{2}},
\end{align*}
for some function $\theta\in L^{2}([0,T])$. Therefore, there is a cancellation of the term 
\begin{align*}
\int_{\mathbb{R}^{d}}u\Lambda^{s+\alpha}(\rho(\sigma)-1)\cdot\Lambda^{s+\alpha}udx,
\end{align*}
which itself cannot be controlled. On the other hand, it is easily seen that the contribution from the term $\mathcal{R}$ is absorbed by the dissipation. The key observation is that the high-order contribution
\begin{align*}
-\int_{\mathbb{R}^{d}}(\rho(\sigma)-1)\Lambda^{s+\alpha}u\cdot\Lambda^{s+\alpha} udx
\end{align*}
combines with the dissipative term 
\begin{align*}
-\int_{\mathbb{R}^{d}}|\Lambda^{s+\alpha}u|^{2}dx,
\end{align*}
yielding
\begin{align*}
-\int_{\mathbb{R}^{d}}\rho(\sigma)|\Lambda^{s+\alpha}u|^{2}dx.
\end{align*}
Since $\rho(\sigma)$ remains positive, one obtains the coercive estimate
\begin{align*}
\int_{\mathbb{R}^{d}}\rho(\sigma)|\Lambda^{s+\alpha}u|^{2}dx\geq \frac{\rho_{\min}}{2}\|\Lambda^{s+\alpha}u\|_{L^{2}}^{2},\quad\mbox{with}\quad\rho_{\min}(t):=\inf_{x\in\mathbb{R}^{d}}\rho(\sigma(x,t))>0.
\end{align*}
Therefore, the coercive contribution is obtained from a precise combination between the commutator and the dissipative terms. As a consequence, this cancellation mechanism is highly sensitive to changes in the coefficient structure of the equation. For instance, if the sign of the commutator contribution were reversed, the resulting coercive term would become 
\begin{align*}
\int_{\mathbb{R}^{d}}(2-\rho(\sigma))\Lambda^{s+\alpha}u\cdot\Lambda^{s+\alpha}u dx,
\end{align*}
which is positive under the additional assumption $\|\rho\|_{L^{\infty}}<2$. Such condition can be expected in a global theory for initial data sufficiently close to a constant equilibrium, but it is not natural in a general local well-posedness framework with arbitrary initial data. Likewise, if $2J$ replaces $J$, the coercivity requires the stronger lower bound $\rho_{\min}>\frac{1}{2}$. Therefore, this cancellation argument relies heavily on the constants involved in the structure of the non-local terms. 

A natural question is whether the commutator contribution $J$, can be controlled directly, without the cancellation mechanism described above. In principle, such an approach would not depend on the particular coefficient structure of the equation. Given that
\begin{align*}
\Lambda^{s}[\Lambda^{2\alpha},u](\rho(\sigma)-1)=[\Lambda^{s+2\alpha},u](\rho(\sigma)-1)-[\Lambda^{s},u]\Lambda^{2\alpha}(\rho(\sigma)-1),
\end{align*}
the Cauchy-Schwarz inequality together with standard commutator estimates for the fractional Laplacian (see, \eqref{eq:fractionalcommutator2}), imply
\begin{align}
|J|\leq\theta\|(\rho-1,u)\|_{H^{s}}\|\Lambda^{s+2\alpha-1}(\rho(\sigma)-1)\|_{L^{2}},\label{eq:useless}
\end{align}
for some $\theta\in L^{2}([0,T])$. However, when $\frac{1}{2}<\alpha<1$, this estimate requires additional regularity for the density. As a consequence, it cannot be used to close the energy estimates. 
\section{Objectives and results}
We investigate the local well-posedness of the Cauchy problem associated with \eqref{eq:quasilinearsystem1}. We assume a block structure of the matrix coefficients that is reminiscent of the structure of a quasilinear hyperbolic-parabolic composite system of evolution equations (see, \cite{kawa}). In particular, there is a partition $U=(u,v)^{\top}\in\mathbb{R}^{k}\times\mathbb{R}^{m}$, where $v$ is the variable that experiences the fractional diffusion and $u$ is the \emph{hyperbolic} variable. We assume that the constant matrix $\mathcal{L}$ is either the identity matrix $\mathbb{I}_{N}$ or 
\begin{align}
\label{eq:diffusionmatrix}
\mathcal{L}_{N}:=\left(\begin{array}{cc}
\mathbb{O}_{k\times k}&\mathbb{O}_{k\times m}\\
\mathcal{L}_{0}&\mathbb{O}_{m\times m}
\end{array}\right),
\end{align}
where $\mathcal{L}_{0}$ is a constant $m\times k$ matrix. The choice $\mathcal{L}=\mathcal{L}_{N}$ is designed to capture the structure of the Euler alignment system, where the nonlocal conmmutator acts only through the hyperbolic variables. In section \ref{alignment section}, the reader will find the matrix coefficients that allow system \eqref{eq:Eulervelal} to be written in the form \eqref{eq:quasilinearsystem1}. By contrast, the choice $\mathcal{L}=\mathbb{I}_{N}$ corresponds to a situation in which both the fractional dissipation and the commutator act directly on the diffusive variable $v$. It is motivated by the following situation of a system of equations
\begin{align*}
\widetilde{A}^{0}(U)U_{t}+\widetilde{A}^{j}(U)\partial_{j}U+\widetilde{D}(U)U+\Lambda^{2\alpha}(\widetilde{B}(v)U)=0.
\end{align*}
Since
\begin{align*}
\Lambda^{2\alpha}(\widetilde{B}(v)U)=\widetilde{B}(v)\Lambda^{2\alpha}U+[\Lambda^{2\alpha},\widetilde{B}(v)]U,
\end{align*}
it is possible to write this system in the form \eqref{eq:quasilinearsystem1}.

Motivated by the previous discussion, the present work has three main objectives:

\begin{itemize}
\item [(O1)] To establish a local well-posedness theory for a class of quasilinear systems with fractional dissipation of arbitrary order $\alpha\in(0,1)$, assuming only initial data in $H^{s}$ with $s>\frac{d}{2}+1$ and $d=2,3$. 
\item [(O2)] To develop a method that yields the coercive structure of the non-local contributions, which does not rely on the cancellation condition and remains applicable in case of variable coefficients. 
\item [(O3)] To develop a local well-posedness theory for hyperbolic quasilinear systems coupled with dissipation terms involving fractional commutators, in situations where the classical contraction mapping principle is unavailable. 
\end{itemize}

Regarding (O2), the analysis in \cite{chenalign} avoids estimates of the form \eqref{eq:useless} by exploiting the self-adjointness of $\Lambda^{\alpha}$ in $L^{2}$. This allows the authors to use the cancellation property and recover the required coercive estimates. In contrast, the present work follows a different route. While the self-adjointness of $\Lambda^{2\alpha}$ is used to derived the $L^{2}$ energy estimate (see theorem \ref{mollifiedapprox}), the high order analysis does not rely on the cancellation argument. Instead, we develop a collection of new commutator estimates that make it possible to control the highest order non-local contributions and recover the necessary coercive structure (see theorem \ref{fractionalellipticity}).

The commutator estimates developed in theorems \ref{mainFcommutator}, \ref{fractionalcommutatorfeli} and \ref{secondFcommest} are based on the following simple, yet fundamental, commutator identity. Consider two Fourier multiplier operators $T_{1}$ and $T_{2}$ with corresponding symbols $p_{1}$ and $p_{2}$, respectively. Assume that $T_{1}f=\mathcal{F}^{-1}(p_{1}f)$ and $T_{2}f=\mathcal{F}^{-1}(p_{2}f)$ are well-defined on a class of sufficiently smooth functions $f$. Notice that 
\begin{align}
T_{1}[T_{2},f]g=T_{2}[T_{1},f]g+[T_{2},f]T_{1}g-[T_{1},f]T_{2}g.\label{eq:maincommutatoridea} 
\end{align}
In this decomposition, the last two commutator terms may be estimated by standard techniques, reducing the analysis to the term $T_{2}[T_{1},f]g$. Consequently, if $T_{2}$ is bounded on $L^{p}$, the remaining contribution can be estimated through the corresponding commutator estimates for $T_{1}$. 

This identity is a central observation of this work and we apply it to several operators, including the non-homogeneous dyadic block $\Delta_{j}$, the Friedrichs' mollifier $\mathbb{J}_{\epsilon}$, differential operators $D^{\gamma}$, and the fractional Laplacian $\Lambda^{\sigma}$ (see section \ref{commutatorsfeli}). 

Applying the identity \eqref{eq:maincommutatoridea} together with the non-homogeneous Littlewood-Paley decomposition, we first establish the commutator estimates of theorem \ref{mainFcommutator}. This result provides one of the main ingredients for theorem \ref{fractionalcommutatorfeli}, where we derive estimates required to control terms of the form \eqref{eq:Jota}. The same strategy also leads to the commutator estimate in theorem \ref{secondFcommest}. This estimate is the key ingredient in proving the energy estimates for solutions of \eqref{eq:quasilinearsystem1}.

A fundamental component of this work is the use of different refined Kato-Ponce type inequalities for the fractional Laplacian derived by D. Li in \cite{katoponceli}. It is worth mentioning that D. Li establishes a sharp version of the Kato-Ponce estimate \cite{kato88} as well as a new fractional Leibniz rule for the operator $\Lambda^{\sigma}$, which generalizes the Kenig-Ponce-Vega estimate \cite{kenig}. We refer to this estimates as the \emph{Kato-Ponce-Li inequalities}. We state this results in lemmas \ref{fractionalprodest} and \ref{dongli}.

We next discuss objective (O3). The coercivity and energy estimates described above provide the a priori bounds for the well-posedness of the linearized problem. The main remaining issue is to construct solutions to the nonlinear system. Nonetheless, our analysis reveals an additional difficulty that does not arise at the level of the energy estimates. Although the local existence argument in \cite{chenalign} is described as standard and its proof is omitted, the presence of the fractional commutator prevents the solution map from satisfying contraction type estimates. More precisely, the non-local contribution
\begin{align*}
[\Lambda^{2\alpha},G(\widetilde{U}^{1})]\mathcal{L}U^{1}-[\Lambda^{2\alpha},G(\widetilde{U}^{2})]\mathcal{L}U^{2},
\end{align*}
cannot be controlled in the topology of the \emph{low norm} to obtain a contraction type estimate (see the discussion prior to theorem \ref{holdercontinuity}).

An alternative to the classical contraction mapping principle was developed in \cite{felipecontraction}. There, it is shown that, for quasilinear hyperbolic-parabolic systems of evolution equations with coupling in the linearization, the energy estimates cannot directly yield a contraction inequality for the solution operator. Nonetheless, by going to a second iteration to the solution operator, the author establishes a contraction-type inequality (referred to as a Fibonacci contraction), which leads to the local existence of solutions. 

The present setting, however, falls outside the scope of that theory. The Fibonacci contraction method requires the nonlinear terms to have one differential order less than the dominant term in the equation (see \cite[Theorem 8.4, Eqs. 8.8, 8.9 and 8.10]{felipecontraction}). In contrast, the fractional commutator $[\Lambda^{2\alpha},G(U)]\mathcal{L}U$ has the same differential order as the fractional diffusion $\Lambda^{2\alpha}U$. Consequently, the argument of \cite{felipecontraction} cannot be applied.

To overcome this difficulty, we establish a H\"older continuity estimate of order $1/2$ for the solution operator (see  theorem \ref{holdercontinuity}). This regularity is sufficient to apply the Aubin-Lions lemma on bounded domains. We then combine this local compactness with a tail control argument (i.e. a tightness of the $L^{2}$-norm) to recover compactness on the whole space $\mathbb{R}^{d}$, thereby establishing the relative compactness of the solution operator (see lemma \ref{compactness}). Consequently, Schauder's fixed point theorem applies, which in turn implies the local existence of solutions (see section \ref{fixedlocal}). 

The tightness of the $L^{2}$ norm proved during the proof of lemma \ref{compactness}, is motivated by Leray's compactness argument for weak solutions of the incompressible Navier-Stokes equations, as presented for instance in \cite[Chapter 12]{lemarie}. The argument developed here, however, is more general. It is formulated for abstract quasilinear systems and depends only on three fundamental ingredients: the $L^{2}$ energy estimates, the Aubin-Lions compactness theorem, and the Kato-Ponce-Li commutator estimates (see, lemma \ref{dongli}). Consequently, the argument for the tightness of the $L^{2}$ norm is driven by the structure of the energy estimates rather than by the specific form of the underlying equations. 

It is worth mentioning that, when $\mathcal{L}=\mathcal{L}_{N}$, our regularity assumptions on the initial data required for the local well-posedness theory coincide with those assumed in \cite{chenalign}, namely $s>\frac{d}{2}+\max\{1,2\alpha\}$. By contrast, when $\mathcal{L}=\mathbb{I}_{N}$, the regularity assumptions can be relaxed to $s>\frac{d}{2}+1$, independently of the order of the fractional diffusion. This is a consequence of the fact that, in the second case the commutator acts only on the variable that experiences the regularizing effect of the fractional diffusion. This compensates the loss of regularity produced by the commutator contribution. Furthermore, this result is consistent with the local well-posedness theory established in \cite{wangglobal} for systems with fractional dissipation and without fractional commutator terms. It should be emphasized, however, that our regularity assumption is independent of $\alpha\in(0,1)$ when $\mathcal{L}=\mathbb{I}_{N}$. Actually, our analysis shows that $\partial_{t}U\in\mathcal{C}([0,T];H^{s-1})$ for any $\alpha\in(0,\tfrac{1}{2}]$ (see, Theorems \ref{superapprox} and \ref{finallinearwellposed}). In particular, we successfully achieve objective (O1).

Finally, the uniqueness theory also reflects the lack of a contraction property. In the absence of fractional commutator terms, the energy estimates lead to the standard contraction inequality, yielding local uniqueness. On the other hand, in the presence of commutators, the H\"older continuity of the solution map (see Theorem \ref{holdercontinuity}) is not enough to imply uniqueness, since the corresponding modulus of continuity fails to satisfy Osgood's condition (see, \cite[Section 3.1]{bahouri}). As a consequence, the local uniqueness of solutions can only be established under additional assumptions, such as the smallness of the initial data in $H^{s}$ (see theorem \ref{localwellposedness}).

In section \ref{alignment section}, we apply our results to the particular cases of the Navier-Stokes equations \eqref{eq:fractionalNS1}-\eqref{eq:fractionalNS2} and the Euler system with velocity alignment \eqref{eq:Eulervelal}. However, in contrast with \cite{chenalign}, \cite{matsuda}, \cite{liunew} and \cite{wanginitial}, we do not rely on a change of variables to symmetrize the system. Instead, our theory is developed for systems that have a Friedrichs' symmetrizer (see, \cite{benzo}). Consequently, the matrix $A^{0}$ in \eqref{eq:quasilinearsystem1} is not assumed to be the identity matrix. This formulation allows to apply our results in a broader class of quasilinear hyperbolic systems with fractional regularizations, beyond those that can be symmetrized through a change of variables.
 
\section{Basic Lemmas}
Throughout the paper, we use the standard notation $\dot{H}^{\sigma}:=\dot{H}^{\sigma}(\mathbb{R}^{d})$ and $H^{\sigma}:= H^{\sigma}(\mathbb{R}^{d})$ for the homogeneous and non-homogeneous Sobolev space, respectively. Let $m\in\mathbb{N}_{0}$. We denote $\widehat{H}^{m}:=\widehat{H}^{m}(\mathbb{R}^{d})$ as the Banach space,
\begin{align*}
\widehat{H}^{m}(\mathbb{R}^{d}):=\{f\in L^{\infty}(\mathbb{R}^{d})~|~\nabla u\in H^{m-1}(\mathbb{R}^{d})\},
\end{align*}
with norm $\|f\|_{\widehat{H}^{m}}=\|f\|_{L^{\infty}}+\|\nabla f\|_{H^{m-1}}$. When $m=0$ we define $\widehat{H}^{0}:=L^{\infty}$ and $\|f\|_{\widehat{H}^{0}}=\|f\|_{L^{\infty}}$.

The following result can be found in \cite{milani}.
\begin{lema}
\label{sobolevprodinteger}
Let $s$ and $r$ be integers. If $s>\frac{d}{2}$ the space $\widehat{H}^{s}$ is an algebra under point by point multiplications. That is, if $f,g\in\widehat{H}^{s}$ their product $fg$ belongs to $\widehat{H}^{s}$ and 
\begin{align*}
\|fg\|_{\widehat{H}^{s}}\leq C\|f\|_{\widehat{H}^{s}}\|g\|_{\widehat{H}^{s}},
\end{align*}
where $C$ is a constant independent of $f$ and $g$. More generally, if $f\in\widehat{H}^{s}$ and $g\in H^{r}$, $0\leq r\leq s$, then $fg\in H^{r}$, and 
\begin{align*}
\|fg\|_{H^{r}}\leq C\|f\|_{\widehat{H}^{s}}\|g\|_{H^{r}},
\end{align*}
where $C$ is a constant independent of $f$ and $g$. 
\end{lema}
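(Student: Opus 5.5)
We argue directly from the definition of the $\widehat{H}^{s}$ norm, using the Leibniz rule, H\"older's inequality and the Gagliardo--Nirenberg inequalities, in the classical Moser style. Throughout, $s>\frac{d}{2}$ is an integer and $f,g$ are first taken smooth; the general case follows by density. Here density means approximation by mollification, under which $\|\cdot\|_{\widehat{H}^{s}}$ is controlled and derivatives converge in $L^{2}_{loc}$. The $L^{\infty}$ part of the first estimate is immediate:
\begin{align*}
\|fg\|_{L^{\infty}}\le\|f\|_{L^{\infty}}\|g\|_{L^{\infty}}.
\end{align*}
It therefore remains to bound $\|\partial^{\gamma}(fg)\|_{L^{2}}$ for $1\le|\gamma|\le s$. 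By the Leibniz rule this is a sum of terms $\partial^{\beta}f\,\partial^{\gamma-\beta}g$. The extreme terms with $\beta=0$ or $\beta=\gamma$ are bounded by $\|f\|_{L^{\infty}}\|\nabla g\|_{H^{s-1}}$ and its symmetric counterpart.

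For the mixed terms with $0<|\beta|<|\gamma|$, set $k=|\gamma|$, $j=|\beta|$, and apply H\"older's inequality with exponents $p_{1}=2k/j$ and $p_{2}=2k/(k-j)$. Then use the Gagliardo--Nirenberg inequality
\begin{align*}
\|D^{j}f\|_{L^{2k/j}}\le C\|f\|_{L^{\infty}}^{1-j/k}\|D^{k}f\|_{L^{2}}^{j/k},
\end{align*}
and the analogous bound for $g$. Young's inequality then gives the bound
\begin{align*}
C\bigl(\|f\|_{L^{\infty}}\|D^{k}g\|_{L^{2}}+\|g\|_{L^{\infty}}\|D^{k}f\|_{L^{2}}\bigr)\le C\|f\|_{\widehat{H}^{s}}\|g\|_{\widehat{H}^{s}}.
\end{align*}
This interpolation step is the key point. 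Crucially, it only uses $L^{\infty}$ and top-order $L^{2}$ bounds, which is exactly what $\widehat{H}^{s}$ provides, since $f$ itself need not be in $L^{2}$. As stated, the estimate only involves $\nabla f\in H^{s-1}$ and $f\in L^{\infty}$, so the condition $s>d/2$ is not even needed for this part.

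For the second statement, with $f\in\widehat{H}^{s}$, $g\in H^{r}$ and $0\le r\le s$, bound $\|\partial^{\gamma}(fg)\|_{L^{2}}$ for $|\gamma|\le r$ in the same way. The term $\beta=0$ gives $\|f\|_{L^{\infty}}\|g\|_{H^{r}}$. For $\beta\neq0$, write $\partial^{\beta}f=\partial^{\beta-e_i}(\partial_i f)$ with $\partial_i f\in H^{s-1}$, and set $j=|\beta|$. Place $\partial^{\beta}f$ in $L^{p}$ and $\partial^{\gamma-\beta}g$ in $L^{q}$ with $1/p+1/q=1/2$. The exponents are chosen by Sobolev embedding:
\begin{align*}
H^{s-j}\hookrightarrow L^{p}\ \text{for }\partial^{\beta}f, \qquad H^{r-|\gamma|+j}\hookrightarrow L^{q}\ \text{for }\partial^{\gamma-\beta}g.
\end{align*}
Both embeddings are compatible because
\begin{align*}
(s-j)+(r-|\gamma|+j)\ge s>\tfrac d2.
\end{align*}
When $s-j>d/2$ we may take $p=\infty$ and $q=2$. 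Otherwise $j\ge1$, and we choose $p$ and $q$ satisfying the Sobolev relations $\frac1p\ge\frac12-\frac{s-j}{d}$ and $\frac1q\ge\frac12-\frac{r-|\gamma|+j}{d}$. This is possible since $\frac12-\frac{s-j}{d}+\frac12-\frac{r-|\gamma|+j}{d}<\frac12$. Integrality of the indices avoids fractional embeddings, and since $j\ge1$ only $\nabla f\in H^{s-1}$ is used. This gives $\|fg\|_{H^{r}}\le C\|f\|_{\widehat{H}^{s}}\|g\|_{H^{r}}$.

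The main obstacle is the bookkeeping in this exponent choice, in particular the endpoint cases. If $s-j=d/2$, which can happen for even $d$, the embedding into $L^{\infty}$ fails, and we instead use $L^{p}$ with $p$ large but finite, which is compensated by the strict inequality $s>d/2$. Likewise, when an embedding index for $g$ exceeds $d/2$, we take $q=\infty$ on the $g$ side instead.
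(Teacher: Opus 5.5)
Your proof is correct. The paper gives no proof of this lemma; it simply quotes it from \cite{milani}, so there is no internal argument to compare against, and your write-up works as a self-contained justification along classical Moser lines.

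What your route buys is that it shows the two halves use the hypothesis $s>\frac{d}{2}$ quite differently.

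In the algebra estimate, the $L^{\infty}$ control is built into the $\widehat{H}^{s}$ norm. The Leibniz rule together with $\|D^{j}f\|_{L^{2k/j}}\le C\|f\|_{L^{\infty}}^{1-j/k}\|D^{k}f\|_{L^{2}}^{j/k}$ and Young's inequality therefore close the estimate for every integer $s\ge 1$, as you remark. This interpolation is legitimate for bounded, non-decaying $f$: Nirenberg's exceptional case with $q=\infty$ only concerns $j=0$. That is exactly the situation in $\widehat{H}^{s}$, where $f$ need not lie in $L^{2}$.

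In the second half, $g$ has no $L^{\infty}$ bound. The hypothesis then enters through the count $(s-j)+(r-|\gamma|+j)\ge s>\frac{d}{2}$, which allows the H\"older--Sobolev pairing of $\partial^{\beta}f\in H^{s-j}$ with $\partial^{\gamma-\beta}g\in H^{r-|\gamma|+j}$ into $L^{2}$. Your endpoint bookkeeping is right. In particular, the extreme case $j=s$ forces $\gamma=\beta$ and $|\gamma|=r=s$, so the $g$-side index equals $s>\frac{d}{2}$ and $q=\infty$ is available. The remaining steps, the distributional Leibniz rule and the passage to the limit for non-smooth $f,g$, are routine and are covered by your mollification remark plus weak lower semicontinuity of the $L^{2}$ norms.
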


Next, we state various cases of the fractional Leibniz rule and commutator estimates for the fractional Laplacian that will be used throughout this work.
\begin{lema}[\cite{katoponceli}]
\label{fractionalprodest}
Let $\sigma>0$. 
\begin{itemize}
\item [(i)] There exists a universal constant $C>0$ for any $u,v\in \dot{H}^{\sigma}\cap L^{\infty}$, 
\begin{align}
\label{eq:fractionalproduct2}
\|uv\|_{\dot{H}^{\sigma}}\leq C\left(\|u\|_{L^{\infty}}\|v\|_{\dot{H}^{\sigma}}+\|u\|_{\dot{H}^{\sigma}}\|v\|_{L^{\infty}}\right).
\end{align}
\item [(ii)] Assume that $1\leq p_{1}, p_{4}\leq\infty$ and $1<p,p_{2}, p_{3}<\infty$ satisfy
\begin{align*}
\frac{1}{p}=\frac{1}{p_{1}}+\frac{1}{p_{2}}=\frac{1}{p_{3}}+\frac{1}{p_{4}}.
\end{align*}
Suppose that $u\in L^{p_{1}}$, $\Lambda^{\sigma}u\in L^{p_{3}}$, $v\in L^{p_{4}}$ and $\Lambda^{\sigma}v\in L^{p_{2}}$. Then $\Lambda^{\sigma}(uv)\in L^{p}$ and there is a universal constant $C>0$ such that
\begin{align}
\|\Lambda^{\sigma}(uv)\|_{L^{p}}\leq C\left(\|u\|_{L^{p_{1}}}\|\Lambda^{\sigma}v\|_{L^{p_{2}}}+\|\Lambda^{\sigma}u\|_{L^{p_{3}}}\|v\|_{L^{p_{4}}}\right)\label{eq:fractionalleibnizrule}
\end{align}
\item [(iii)] Assume that $u,v\in\dot{H}^{\sigma}$. If $\sigma\in(0,1]$ and $p\in(1,\infty)$, there is a positive constant $C$ (depending on $\sigma$), such that 
\begin{align}
\label{eq:fractionalcommutator}
\|[\Lambda^{\sigma},u]v\|_{L^{p}}\leq C\|\Lambda^{s}u\|_{L^{p}}\|v\|_{L^{\infty}}.
\end{align}
If, on the other hand, $\sigma>1$, there exists a positive constant $C$ (depending on $\sigma$) such that 
\begin{align}
\label{eq:fractionalcommutator2}
\|[\Lambda^{\sigma},u]v\|_{L^{p}}\leq C\left(\|D u\|_{L^{p_{1}}}\|\Lambda^{\sigma-1}v\|_{L^{p_{2}}}+\|\Lambda^{\sigma}u\|_{L^{p_{3}}}\|v\|_{L^{p_{4}}}\right).
\end{align}
\end{itemize}
\end{lema}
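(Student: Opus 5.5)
This lemma is quoted from D. Li's work \cite{katoponceli}, so the plan is to reconstruct the standard Littlewood--Paley proof. I do not intend a new argument. Throughout I use the paraproduct decomposition
\[
uv=T_{u}v+T_{v}u+R(u,v),\qquad T_{u}v=\sum_{j}S_{j-1}u\,\Delta_{j}v,\qquad R(u,v)=\sum_{|j-k|\le1}\Delta_{j}u\,\Delta_{k}v .
\]
Each piece will be estimated separately, with the operator $\Lambda^{\sigma}$ applied to it. The tools are Bernstein's inequalities, boundedness of the Littlewood--Paley square function on $L^{p}$ for $1<p<\infty$, and the Fefferman--Stein vector-valued maximal inequality.

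For (i) and (ii), the paraproduct $T_{u}v$ has Fourier support of its $j$-th term in an annulus of size $2^{j}$. Hence
\[
\|\Lambda^{\sigma}T_{u}v\|_{L^{p}}\lesssim\Bigl\|\Bigl(\sum_{j}|S_{j-1}u|^{2}\,2^{2j\sigma}|\Delta_{j}v|^{2}\Bigr)^{1/2}\Bigr\|_{L^{p}}\le\|Mu\|_{L^{p_{1}}}\,\|\Lambda^{\sigma}v\|_{L^{p_{2}}} .
\]
This uses Hölder and the square-function characterization of $\dot{W}^{\sigma,p_{2}}$, with $p_{2}<\infty$. When $p_{1}=\infty$, the maximal function is replaced by $\|u\|_{L^{\infty}}$. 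The term $T_{v}u$ is handled symmetrically with the pair $(p_{3},p_{4})$. The resonant term $R(u,v)$ has blocks supported in balls of radius $2^{j}$. Since $\sigma>0$, the factor $2^{j\sigma}$ is summable from the high-frequency side, and placing $2^{j\sigma}$ on $\Delta_{j}v$ gives the same bound. Part (i) is the special case $p=2$, $p_{1}=p_{4}=\infty$, $p_{2}=p_{3}=2$. No maximal function is needed there, because the $L^{\infty}$ bound on $S_{j-1}u$ is uniform in $j$.

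For (iii), decompose the commutator as
\[
[\Lambda^{\sigma},u]v=\bigl([\Lambda^{\sigma},T_{u}]v\bigr)+\bigl(\Lambda^{\sigma}T_{v}u-T_{\Lambda^{\sigma}v}u\bigr)+\bigl(\Lambda^{\sigma}R(u,v)-R(u,\Lambda^{\sigma}v)\bigr).
\]

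The second and third groups contain no genuine commutator. Each is bounded by $\|\Lambda^{\sigma}u\|_{L^{p_{3}}}\|v\|_{L^{p_{4}}}$ using the estimates from (ii). For the piece $T_{\Lambda^{\sigma}v}u$, the low-frequency cutoff combined with Bernstein gives $\|S_{j-1}\Lambda^{\sigma}v\|\lesssim 2^{j\sigma}\|v\|$, and this factor transfers onto $\Delta_{j}u$.

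The first group is the real commutator. Write $[\Lambda^{\sigma},S_{j-1}u]\Delta_{j}v$ as a bilinear integral via the mean value theorem on the symbol, $|\xi+\eta|^{\sigma}-|\xi|^{\sigma}$ with $|\eta|\ll|\xi|\sim2^{j}$. When $\sigma>1$, the first-order Taylor term produces $\nabla S_{j-1}u\cdot 2^{j(\sigma-1)}\Delta_{j}v$, which yields the term $\|Du\|_{L^{p_{1}}}\|\Lambda^{\sigma-1}v\|_{L^{p_{2}}}$. The second-order remainder carries the factor $|\eta|^{2}|\xi|^{\sigma-2}$. Because the sum over low frequencies is controlled by $2^{j(\sigma-2)}\|\nabla^{2}S_{j-1}u\|$, it can be redistributed into the $\Lambda^{\sigma}u$ term.

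When $\sigma\le1$, no Taylor expansion is used. Instead, the concavity bound $\bigl||\xi+\eta|^{\sigma}-|\xi|^{\sigma}\bigr|\le|\eta|^{\sigma}$ places all $\sigma$ derivatives on $u$, giving $\|\Lambda^{\sigma}u\|_{L^{p}}\|v\|_{L^{\infty}}$. I read the exponent $s$ in \eqref{eq:fractionalcommutator} as a typo for $\sigma$.

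The main obstacle is converting these symbol bounds into actual operator bounds. The multipliers $m(\xi,\eta)$ are Coifman--Meyer type bilinear symbols, and I would invoke the Coifman--Meyer theorem after verifying the symbol derivative bounds on the region $|\eta|\ll|\xi|$. The endpoint cases $p_{1}=\infty$ and $p_{4}=\infty$ are delicate. This is exactly where Li's refinement is needed, because the Coifman--Meyer theorem fails at $L^{\infty}$ targets. To handle them, I would expand the symbol in a Fourier series on the annulus, which gives a sum of tensor products with rapidly decaying coefficients, and apply the pointwise maximal-function bounds term by term.
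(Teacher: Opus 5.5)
Your treatment of (i)--(ii) is the standard paraproduct proof and is fine; the paper gives no argument here and simply imports the lemma from Li. The gap is in (iii), at the endpoint $p_4=\infty$. That endpoint is the whole content of \eqref{eq:fractionalcommutator}, and it is also how the paper uses \eqref{eq:fractionalcommutator2}, e.g.\ with $p_1=p_4=\infty$ in the proof of \eqref{eq:fraccommnon}.

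The piece $R(u,\Lambda^{\sigma}v)=\sum_j\Delta_ju\,\tilde\Delta_j\Lambda^{\sigma}v$ is \emph{not} covered by the estimates of (ii). In (ii) the resonant sum carried $\Lambda^{\sigma}$ on the output, and that is what lets ball-supported blocks be summed by a square function. Here there is no positive power on the output. Cauchy--Schwarz in $j$ leaves $\|(\sum_j|\tilde\Delta_jv|^2)^{1/2}\|$, which $\|v\|_{L^\infty}$ does not control. A pointwise bound only yields the $\dot{F}^{\sigma}_{p,1}$ norm of $u$.

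The same problem appears at $\sigma=1$ in the low--high piece. There is no decay factor $2^{-(j-k)(1-\sigma)}$ to sum over the dyadic pieces of $S_{j-1}u$. You are left with a paraproduct $\sum_jS_{j-1}(\nabla u)\,\Delta_j(\mathcal{R}v)$ with the bounded function in the high-frequency slot.

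Both terms require cancellation in $v$. Concretely, you need a Carleson-measure/BMO paraproduct bound such as $\|\sum_j\Delta_jf\,\tilde\Delta_jg\|_{L^p}\lesssim\|f\|_{L^p}\|g\|_{BMO}$, or equivalently the Coifman--Meyer theorem in its $L^p\times L^\infty\to L^p$ form.

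Your diagnosis of the endpoint is the wrong way round. Coifman--Meyer does hold for $L^{p}\times L^{\infty}\to L^{p}$ with $1<p<\infty$; it fails only for $L^\infty$ targets, which never occur in this lemma. The substitute you propose (Fourier-series expansion plus term-by-term pointwise maximal bounds) is exactly what cannot handle a bounded factor at high frequency without an output derivative. As written, the plan does not close.

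Two further points need fixing.
\begin{itemize}
\item The concavity bound is only a size bound. The quotient $(|\xi+\eta|^\sigma-|\xi|^\sigma)/|\eta|^\sigma$ is not a Coifman--Meyer symbol, since its $\eta$-derivatives are of size $|\eta|^{-|\beta|}(|\eta|/|\xi|)^{1-\sigma}$. You must decompose $S_{j-1}u$ dyadically and sum the factor $2^{-(j-k)(1-\sigma)}$, which works only for $\sigma<1$. At $\sigma=1$, keep the vector $\eta$, i.e.\ $\nabla u$, and use the BMO bound above.
\item For $\sigma\ge 2$, the second-order remainder $|\eta|^2|\xi|^{\sigma-2}$ cannot be absorbed into $\|\Lambda^\sigma u\|_{L^{p_3}}$, because the ratio $2^{(j-k)(\sigma-2)}$ does not decay. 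Put it into the $\|Du\|_{L^{p_1}}\|\Lambda^{\sigma-1}v\|_{L^{p_2}}$ term instead, where you gain $2^{k-j}$. Alternatively, write $|\xi+\eta|^\sigma-|\xi|^\sigma=\eta\cdot\int_0^1\nabla|\cdot|^{\sigma}(\xi+t\eta)\,dt$, which is $|\xi|^{\sigma-1}$ times a Coifman--Meyer symbol on $|\eta|\ll|\xi|$ for every $\sigma>1$. This matters because the paper applies \eqref{eq:fractionalcommutator2} with $\sigma$ as large as $s-1+2\alpha$.
\end{itemize}
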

Notice that \eqref{eq:fractionalproduct2} corresponds to the particular case of setting $p_{1}=p_{4}=\infty$ and $p_{2}=p_{3}=2$ in \eqref{eq:fractionalleibnizrule}. Nonetheless, we explicitly use this case throughout the text. The commutator estimate \eqref{eq:fractionalcommutator} follows from \cite[Theorem 1.2]{katoponceli}. In particular, see \cite[Eq. (1.8)]{katoponceli}.

Let $\sigma>0$. For any pair of given functions $f$ and $g$ we define its Kato-Ponce-Li commutator as
\begin{align}
\mathbb{K}_{\sigma}(f,g):=\Lambda^{\sigma}(fg)-\sum_{|\beta|\leq\sigma}\frac{1}{\beta!}D^{\beta}f\Lambda^{\sigma,\beta}g,\label{eq:KatoPonceLicommutator}
\end{align}
where, for any multi-index $\beta$, the operator $\Lambda^{\sigma,\beta}$ is defined via Fourier transform as 
\begin{align*}
	\widehat{\Lambda^{\sigma,\beta}g}(\xi)=\widehat{\Lambda^{\sigma,\beta}}(\xi)\widehat{g}(\xi)\quad\mbox{with}\quad\widehat{\Lambda^{\sigma,\beta}}(\xi):=i^{-|\beta|}D^{\beta}_{\xi}(|\xi|^{\sigma}).
\end{align*}
Observe that if $\sigma\in(0,1)$ we have that $\mathbb{K}_{\sigma}(f,g)=[\Lambda^{\sigma},f]g$. The following result can be found in \cite[Theorem 1.2, eq. (1.6)]{katoponceli}.
\begin{lema}
\label{dongli}
Let $\sigma>0$ and $1<p<\infty$. Then, there is a positive constant $C=C(d,\sigma)$ such that, for any $f,g\in\mathcal{S}$, 
\begin{align}
\|\mathbb{K}_{\sigma}(f,g)\|_{L^{2}}\leq C\|\Lambda^{\sigma}f\|_{BMO}\|g\|_{L^{2}},\label{eq:dongliC1}
\end{align}
In particular, if $\sigma\in(0,1]$, it holds that
\begin{align}
\|[\Lambda^{\sigma},f]g\|_{L^{2}}\leq C\|\Lambda^{\sigma}f\|_{L^{\infty}}\|g\|_{L^{2}}. \label{eq:dongliC2}
\end{align}
\end{lema}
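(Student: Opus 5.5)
The first estimate \eqref{eq:dongliC1} is exactly \cite[Theorem 1.2, eq. (1.6)]{katoponceli}, so I would not reprove it. I would only check that our definition \eqref{eq:KatoPonceLicommutator} of $\mathbb{K}_{\sigma}$ and of the operators $\Lambda^{\sigma,\beta}$ agrees with Li's normalization, including the factor $i^{-|\beta|}$ and the $1/\beta!$ weights. The real content to verify is the specialization \eqref{eq:dongliC2}.

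For $\sigma\in(0,1)$ the plan has two steps.

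First, observe that the only multi-index with $|\beta|\le\sigma<1$ is $\beta=0$. Since $\widehat{\Lambda^{\sigma,0}}(\xi)=|\xi|^{\sigma}$, we have $\Lambda^{\sigma,0}=\Lambda^{\sigma}$. Hence
\begin{align*}
\mathbb{K}_{\sigma}(f,g)=\Lambda^{\sigma}(fg)-f\Lambda^{\sigma}g=[\Lambda^{\sigma},f]g,
\end{align*}
as remarked before the lemma.

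Second, use the elementary embedding $L^{\infty}\hookrightarrow BMO$, namely $\|h\|_{BMO}\le 2\|h\|_{L^{\infty}}$. This holds because every mean oscillation over a cube is bounded by $2\|h\|_{L^\infty}$. Inserting it into \eqref{eq:dongliC1} with $h=\Lambda^{\sigma}f$ gives \eqref{eq:dongliC2}, at first for $f,g\in\mathcal{S}$.

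Next I would extend beyond Schwartz functions by density. Take $g\in L^{2}$ and approximate it by $g_n\in\mathcal{S}$ in $L^{2}$. The right-hand side is linear in $g$, so $[\Lambda^{\sigma},f]g_n$ is Cauchy in $L^{2}$, and the estimate passes to the limit. For $f$, I would restrict to $f\in\mathcal{S}$, which is all that is needed when the estimate is applied to mollified quantities later in the paper.

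The endpoint $\sigma=1$ is the main obstacle. Here the multi-indices $|\beta|=1$ also enter. We get $\widehat{\Lambda^{1,e_j}}(\xi)=-i\,\xi_j/|\xi|$, so $\Lambda^{1,e_j}=-\mathcal{R}_j$ up to sign conventions, where $\mathcal{R}_j$ is the Riesz transform. Thus
\begin{align*}
[\Lambda,f]g=\mathbb{K}_{1}(f,g)+\sum_{j}\partial_{j}f\,\Lambda^{1,e_j}g .
\end{align*}
The extra sum is bounded by $C\|\nabla f\|_{L^{\infty}}\|g\|_{L^{2}}$, because Riesz transforms are bounded on $L^{2}$. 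However, $\|\nabla f\|_{L^\infty}$ is not controlled by $\|\Lambda f\|_{L^\infty}$, since $\nabla f=-\mathcal{R}\Lambda f$ up to sign conventions, and Riesz transforms are unbounded on $L^\infty$.

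For this case I would therefore not go through \eqref{eq:dongliC1}. Instead I would invoke directly Li's commutator estimate for $\sigma\in(0,1]$, \cite[Eq. (1.8)]{katoponceli}, which is the form already quoted in \eqref{eq:fractionalcommutator}. I would then carry out the same density argument as above. If that reference only yields $\|\nabla f\|_{L^\infty}$ at $\sigma=1$, I would state \eqref{eq:dongliC2} at $\sigma=1$ with $\|\nabla f\|_{L^\infty}$ on the right, via the classical Calder\'on commutator bound. In the applications $\sigma=\alpha<1$, so this does not affect the rest of the paper.
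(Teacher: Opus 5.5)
For $\sigma\in(0,1)$ your argument is the paper's: the lemma is quoted from Li's Theorem 1.2, eq. (1.6), and \eqref{eq:dongliC2} follows from the remark before the lemma that $\mathbb{K}_{\sigma}(f,g)=[\Lambda^{\sigma},f]g$ for $\sigma<1$, together with $\|h\|_{BMO}\le 2\|h\|_{L^{\infty}}$.

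At $\sigma=1$ you correctly see that this derivation breaks. The problem is worse than a missing argument: \eqref{eq:dongliC2} is false there. So the paper's range $\sigma\in(0,1]$ is an overstatement, and your fallback is forced rather than optional. To see this, test on wave packets $g=\delta^{-d/2}e^{iN\omega\cdot x}\psi((x-x_{0})/\delta)$ with $\|\psi\|_{L^{2}}=1$, $\delta\to 0$ and $N\delta\to\infty$. This gives $[\Lambda,f]g=-i\,(\omega\cdot\nabla f(x_{0}))\,g+o(1)$ in $L^{2}$, so the $L^{2}$ operator norm of $g\mapsto[\Lambda,f]g$ is at least $\|\nabla f\|_{L^{\infty}}$. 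Since $\nabla f$ is a Riesz transform of $\Lambda f$, there are $f_{n}\in\mathcal{S}$ with $\|\Lambda f_{n}\|_{L^{\infty}}\le 1$ and $\|\nabla f_{n}\|_{L^{\infty}}\to\infty$. Your first suggested route also cannot work. Li's (1.8), as quoted in \eqref{eq:fractionalcommutator}, puts $\Lambda^{\sigma}f$ in $L^{p}$ and $g$ in $L^{\infty}$, the opposite distribution of norms, so it never gives a bound by $\|\Lambda^{\sigma}f\|_{L^{\infty}}\|g\|_{L^{2}}$. The correct endpoint statement is Calder\'on's bound $\|[\Lambda,f]g\|_{L^{2}}\le C\|\nabla f\|_{L^{\infty}}\|g\|_{L^{2}}$. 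Your decomposition through \eqref{eq:dongliC1} also yields it, because $\|\Lambda f\|_{BMO}\le C\|\nabla f\|_{L^{\infty}}$.

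Your remark that the applications only involve $\sigma=\alpha<1$ is inaccurate. The paper applies \eqref{eq:dongliC2} with $\sigma=2\alpha$ for every $\alpha\in(0,\tfrac{1}{2}]$, hence also at $\sigma=1$. This happens in \eqref{eq:dongli1}, in the $L^{2}$ estimate of Theorem \ref{mollifiedapprox}, in \eqref{eq:commutatortail}, and in the case $m=1$ of \eqref{eq:fraccommnon}. These uses are still harmless. The function playing the role of $f$ there ($Q$, $G$, $G(\widetilde{U}_{n(k)})$) satisfies $\nabla f\in H^{s-1}\hookrightarrow L^{\infty}$, so the corrected endpoint bound closes each estimate just as well. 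Those functions are also neither Schwartz nor mollified. Restricting to $f\in\mathcal{S}$ therefore proves the lemma as stated, but not in the generality in which the paper uses it.
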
	
The following results provide chain rule estimates in Sobolev spaces. Lemma \ref{chainruleestimates} can be found in \cite{kawa} and \cite{milani}. Meanwhile, the fractional chain rule estimates in lemma \ref{fractionalchainrulesest} can be found stated in \cite{chenalign} and \cite{wanginitial}. See also, \cite{christ} and \cite{leetan}.
\begin{lema}
\label{chainruleestimates}
Let $s\geq 1$ be an integer and assume that $v=(v_{1},...,v_{N})\in\widehat{H}^{s}$. Let $F=F(v)$ be a $\mathcal{C}^{\infty}$-function of $v\in\mathbb{R}^{N}$. Then for $1\leq j\leq s$, we have $D_{x}F(v)\in H^{j-1}$ and 
\begin{align}
\label{eq:crest}
\|D_{x}F(v)\|_{H^{j-1}}\leq CM(1+\|v\|_{L^{\infty}})^{j-1}\|D_{x}v\|_{H^{j-1}},
\end{align}
where $C$ is a positive constant and $M:=\sum_{1\leq k\leq j}\sup\{|D_{v}^{k}F(v)|~|~|v|\leq\lambda=\|v\|_{L^{\infty}}\}$. 
\end{lema}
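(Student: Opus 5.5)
We argue by induction on $j$, working with the components of the gradient so that only the classical Leibniz rule, the Gagliardo--Nirenberg inequality and H\"older's inequality are used. As in Lemma \ref{sobolevprodinteger}, $\|D_xF(v)\|_{H^{j-1}}$ is equivalent to $\sum_{1\le|\gamma|\le j}\|D^{\gamma}F(v)\|_{L^{2}}$. So it suffices to bound $\|D^{\gamma}F(v)\|_{L^{2}}$ for every multi-index $\gamma$ with $1\le|\gamma|=k\le j$.

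\textbf{Step 1 (Fa\`a di Bruno expansion).} Expanding $D^{\gamma}F(v)$ by the chain and product rules gives a finite sum of terms
\begin{align*}
(D_{v}^{\ell}F)(v)\,D^{\gamma_{1}}v\cdots D^{\gamma_{\ell}}v,\qquad 1\le\ell\le k,\quad \gamma_{1}+\dots+\gamma_{\ell}=\gamma,\quad|\gamma_{i}|\ge1,
\end{align*}
with combinatorial coefficients that depend only on $k$ and $d$. Each coefficient factor satisfies $|(D_{v}^{\ell}F)(v(x))|\le M$ pointwise, because $|v(x)|\le\lambda$ and $M$ is defined as a supremum over $\{|v|\le\lambda\}$. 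We therefore only need to bound the $L^{2}$ norm of the product $D^{\gamma_{1}}v\cdots D^{\gamma_{\ell}}v$.

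\textbf{Step 2 (Gagliardo--Nirenberg and H\"older).} Write $D^{\gamma_{i}}v=D^{\gamma_{i}-e}(D v)$ for some unit multi-index $e\le\gamma_{i}$. Set $w:=Dv$, so that $w\in H^{j-1}$ and $\|w\|_{L^{\infty}}$ is not available but $\|v\|_{L^{\infty}}$ is. The standard Gagliardo--Nirenberg inequality, interpolating between $L^{\infty}$ and $\dot H^{k}$, gives
\begin{align*}
\|D^{\gamma_{i}}v\|_{L^{2k/|\gamma_{i}|}}\le C\|v\|_{L^{\infty}}^{1-|\gamma_{i}|/k}\|D^{k}v\|_{L^{2}}^{|\gamma_{i}|/k}.
\end{align*}
This inequality holds on $\mathbb{R}^{d}$ for $v\in L^{\infty}$ with $D^{k}v\in L^{2}$. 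The exponents $2k/|\gamma_{i}|$ sum reciprocally to $1/2$, so H\"older's inequality yields
\begin{align*}
\|D^{\gamma_{1}}v\cdots D^{\gamma_{\ell}}v\|_{L^{2}}\le C\|v\|_{L^{\infty}}^{\ell-1}\|D^{k}v\|_{L^{2}}\le C(1+\|v\|_{L^{\infty}})^{j-1}\|D_{x}v\|_{H^{j-1}},
\end{align*}
using $\ell-1\le k-1\le j-1$ and $k\le j$.

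\textbf{Step 3 (summation).} Summing over the finitely many terms and over $1\le|\gamma|\le j$ gives \eqref{eq:crest} with a constant $C$ depending only on $j,d,N$. The case $\ell=1$ is trivial, since then $\|(D_vF)(v)D^{\gamma}v\|_{L^{2}}\le M\|D^{\gamma}v\|_{L^{2}}$. For a general $v\in\widehat{H}^{s}$ one first proves the estimate for smooth $v$ and then passes to the limit by mollification. This limit is justified because $F$ is smooth, so $F(v_\epsilon)\to F(v)$ locally uniformly, and the bound is stable under weak limits.

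\textbf{Main obstacle.} The hard step is the Gagliardo--Nirenberg interpolation in Step 2, when only $\|v\|_{L^{\infty}}$ controls the lowest order, since $v$ itself need not be in $L^{2}$. I would first prove the inequality $\|D^{i}v\|_{L^{2k/i}}\le C\|v\|_{L^\infty}^{1-i/k}\|D^{k}v\|_{L^{2}}^{i/k}$ by induction on $i$. The inductive step integrates by parts in $\int|D^{i}v|^{2k/i}$, which is allowed after truncating with cutoffs, since $D^{i}v\in L^{2}$ for $i\ge1$. One then applies H\"older's inequality and absorbs the resulting terms. This is the classical Nirenberg argument, and it is where the $L^\infty$ (rather than $L^2$) lower-order norm enters.
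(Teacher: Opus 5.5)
Your proof is correct. It is the standard Moser-type argument: the Fa\`a di Bruno expansion, the Gagliardo--Nirenberg bound $\|D^{i}v\|_{L^{2k/i}}\le C\|v\|_{L^{\infty}}^{1-i/k}\|D^{k}v\|_{L^{2}}^{i/k}$, and H\"older's inequality with $\sum_{i}|\gamma_{i}|/(2k)=1/2$. The paper gives no proof of its own and simply quotes the lemma from \cite{kawa} and \cite{milani}.

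The only inaccuracy is in the approximation step. The convergence $F(v_{\epsilon})\to F(v)$ need not be locally uniform, since for $s\le d/2$ an element of $\widehat{H}^{s}$ need not be continuous. However, $|v_{\epsilon}|\le\lambda$ pointwise, so the mean value inequality on the ball $\{|w|\le\lambda\}$ gives $F(v_{\epsilon})\to F(v)$ in $L^{1}_{\mathrm{loc}}$. That suffices to identify the weak $H^{j-1}$-limit of $D_{x}F(v_{\epsilon})$ with $D_{x}F(v)$.
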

	
\begin{lema}
\label{fractionalchainrulesest}
Let $s>0$. Suppose that $g\in H^{s}\cap L^{\infty}$ and $f\in\mathcal{C}^{[s]}(\operatorname{Range}(g))$. Then $f(g)\in H^{s}\cap L^{\infty}$ and there is a constant $C>0$ depending on $s$ and $\|g\|_{L^{\infty}}$, such that 
\begin{align}
\label{eq:fractionalchain1}
\|D_{x}^{s}f(g)\|_{L^{2}}\leq C\|f\|_{\mathcal{C}^{[s]}}\|D^{s}g\|_{L^{2}}.
\end{align}
In particular, for $s\in(0,1]$,
\begin{align*}
\|\Lambda^{s}f(g)\|_{L^{2}}\leq C\|f\|_{\mathcal{C}^{[s]}}\|\Lambda^{s}g\|_{L^{2}},
\end{align*}
where the constant $C$ depends only on $s$ and $\|g\|_{L^{\infty}}$. 
\end{lema}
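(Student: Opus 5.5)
The statement to be proved is Lemma \ref{fractionalchainrulesest}. I would treat the two ranges of $s$ separately and then combine them through the standard Fa\`a di Bruno / Moser machinery. Throughout, $g\in L^{\infty}$ confines all compositions to the compact set $K:=\overline{\operatorname{Range}(g)}\subset\{|y|\leq\|g\|_{L^{\infty}}\}$, and this is where the dependence of $C$ on $\|g\|_{L^{\infty}}$ enters. Subtracting a constant, I may assume $f(0)=0$ when needed. Then $|f(g)|\leq\|f\|_{\mathcal{C}^{1}}|g|$, so $f(g)\in L^{2}\cap L^{\infty}$, which handles the $L^{2}$ part of the $H^{s}$ norm.

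\emph{Case $s\in(0,1)$.} I would use the Gagliardo (finite-difference) characterization of $\dot H^{s}$:
\begin{align*}
\|\Lambda^{s}h\|_{L^{2}}^{2}\simeq\int_{\mathbb{R}^{d}}\int_{\mathbb{R}^{d}}\frac{|h(x)-h(y)|^{2}}{|x-y|^{d+2s}}\,dx\,dy ,
\end{align*}
which is the same kernel representation as \eqref{eq:fractionalPV}. Since $f$ is Lipschitz on $K$ (taking $\mathcal{C}^{[s]}$ with at least a $\mathcal{C}^{0,1}$ bound, as is implicit in the cited references), $|f(g(x))-f(g(y))|\leq\|f'\|_{L^{\infty}(K)}|g(x)-g(y)|$. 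Inserting this pointwise into the double integral gives $\|\Lambda^{s}f(g)\|_{L^{2}}\leq C\|f\|_{\mathcal{C}^{1}(K)}\|\Lambda^{s}g\|_{L^{2}}$ immediately. For $s=1$ the same bound is just the classical chain rule $\nabla f(g)=f'(g)\nabla g$.

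\emph{Case $s>1$.} Write $s=k+\theta$ with $k=[s]$ and $\theta\in[0,1)$.
\begin{itemize}
\item For integer derivatives I would apply Lemma \ref{chainruleestimates}. For $|\gamma|=k$, Fa\`a di Bruno gives $D^{\gamma}f(g)$ as a sum of terms $f^{(j)}(g)\,D^{\gamma_{1}}g\cdots D^{\gamma_{j}}g$ with $\sum\gamma_{i}=\gamma$ and $|\gamma_i|\geq1$.
\item For the remaining fractional part $\Lambda^{\theta}$ of each such term, I would use the Kato--Ponce--Li Leibniz rule \eqref{eq:fractionalleibnizrule}. Each factor $D^{\gamma_{i}}g$ is placed in the Lebesgue space $L^{p_{i}}$ with $p_{i}=2(s)/|\gamma_{i}|$ (including the $\theta$ share). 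The factor $f^{(j)}(g)$ is placed in $L^{\infty}$, with its $\Lambda^{\theta}$ handled by the case $s\in(0,1)$ in an $L^{p}$ version.
\item Gagliardo--Nirenberg interpolation then gives $\|D^{\gamma_{i}}g\|_{L^{2s/|\gamma_{i}|}}\lesssim\|g\|_{L^{\infty}}^{1-|\gamma_{i}|/s}\|D^{s}g\|_{L^{2}}^{|\gamma_{i}|/s}$. Multiplying these factors yields $\|g\|_{L^{\infty}}^{j-1}\|D^{s}g\|_{L^{2}}$, and the power of $\|g\|_{L^\infty}$ is absorbed into $C$.
\end{itemize}

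\emph{Main obstacle.} The hardest step is the fractional part when $s>1$: one needs $L^{p}$ versions of the $\theta$-order chain rule, and Lemma \ref{fractionalprodest}(ii) excludes endpoint exponents. If that becomes messy, I would instead switch to a paraproduct / Littlewood--Paley proof. One writes $f(g)=\sum_{q}\big(f(S_{q+1}g)-f(S_{q}g)\big)=\sum_q m_{q}\,\Delta_{q}g$ with $m_{q}=\int_{0}^{1}f'(S_{q}g+\tau\Delta_{q}g)\,d\tau$. One then uses $\|D^{k}m_{q}\|_{L^{\infty}}\lesssim 2^{qk}$, where the constant depends on $\|f\|_{\mathcal{C}^{k+1}(K)}$ and $\|g\|_{L^{\infty}}$. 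Summing dyadically (Meyer's argument) gives the estimate for all $s>0$ at once.
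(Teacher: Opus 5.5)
The paper gives no proof of this lemma; it quotes it from \cite{chenalign}, \cite{wanginitial}, \cite{christ}, \cite{leetan}. Your outline is the standard argument behind those citations: a difference characterization with a pointwise Lipschitz bound for $s<1$, and Fa\`a di Bruno, the fractional Leibniz rule \eqref{eq:fractionalleibnizrule} and Gagliardo--Nirenberg for $s>1$. Your exponent bookkeeping is correct.

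You were right to strengthen the hypotheses, but do it explicitly rather than call it ``implicit''. With $\|f\|_{\mathcal{C}^{[s]}}$ the estimate fails for every non-integer $s$. Indeed, $f_{N}(y)=N^{-[s]}\sin(Ny)$ is bounded in $\mathcal{C}^{[s]}$, yet $\|\Lambda^{s}f_{N}(g)\|_{L^{2}}\gtrsim N^{s-[s]}$ for a fixed nonconstant $g\in\mathcal{C}^{\infty}_{c}$. For non-integer $s$ your argument needs $f\in\mathcal{C}^{[s]+1}$ on a convex set containing the range, since the mean value step runs along segments. For $s<1$ Lipschitz suffices; for $s>1$ the factor $\Lambda^{\theta}f^{(j)}(g)$ needs $f^{(j+1)}$ with $j$ up to $[s]$. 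Likewise $f(0)=0$ is a hypothesis, not a normalization: for $f\equiv 1$, $f(g)\notin L^{2}$. Subtracting $f(0)$ yields only the homogeneous bound. That is, however, the only form the paper uses, e.g.\ in \eqref{eq:goodestimate}.

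The real gap is the step you flagged, and ``the $L^{p}$ version of the case $s\in(0,1)$'' does not fill it. The Gagliardo double integral is special to $L^{2}$. Its $L^{p}$ analogue $\iint|h(x)-h(y)|^{p}|x-y|^{-d-\theta p}\,dx\,dy$ is the $\dot{B}^{\theta}_{p,p}$ seminorm. For $p>2$ this is dominated by, but not equivalent to, $\|\Lambda^{\theta}h\|_{L^{p}}$. So bounding it gives no control of $\|\Lambda^{\theta}f^{(j)}(g)\|_{L^{p_{3}}}$ with $p_{3}=2s/\theta>2$. What works is Stein's square function:
\[
\mathcal{D}_{\theta}h(x)=\Big(\int_{\mathbb{R}^{d}}\frac{|h(x+y)-h(x)|^{2}}{|y|^{d+2\theta}}\,dy\Big)^{1/2},\qquad \|\mathcal{D}_{\theta}h\|_{L^{p}}\simeq\|\Lambda^{\theta}h\|_{L^{p}}\quad\text{for }\tfrac{2d}{d+2\theta}<p<\infty .
\]
Your pointwise Lipschitz bound gives $\mathcal{D}_{\theta}(F\circ g)\leq\|\nabla F\|_{L^{\infty}}\,\mathcal{D}_{\theta}g$, and $p_{3}>2$ lies in the admissible range. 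This is the Christ--Weinstein/Kenig--Ponce--Vega chain rule, so you could equally cite \cite{christ}.

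With this in place your endpoint worry disappears. The factor $f^{(j)}(g)$ occupies the admissible slot $p_{1}=\infty$ in \eqref{eq:fractionalleibnizrule}. Every other exponent ($2s/|\gamma_{l}|$, $2s/(|\gamma_{l}|+\theta)$, $2s/\theta$, $2s/[s]$) lies in $[2,\infty)$.

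Your Littlewood--Paley fallback needs two corrections. First, $S_{q}g+\tau\Delta_{q}g$ does not take values in $K$, because the dyadic kernels are not positive; it only stays in a ball of radius $C\|g\|_{L^{\infty}}$. So $f$ must first be extended off the range, which is harmless here via a cutoff. Second, summing low frequencies $q$ into higher output blocks requires $\|D^{N}m_{q}\|_{L^{\infty}}\lesssim 2^{qN}$ with $N>s$. In the naive form this means $f\in\mathcal{C}^{[s]+2}$, one derivative more than the Fa\`a di Bruno route.
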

We recall the Sobolev embedding theorem.
\begin{lema}
\label{Sobolevembed}
The space $H^{s}(\mathbb{R}^{d})$ embdes continuously in the Lebesgue space $L^{p}(\mathbb{R}^{d})$, if $0\leq s<\frac{d}{2}$ and $2\leq p\leq \frac{2d}{d-2s}$.
\end{lema}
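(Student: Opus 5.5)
We prove Lemma~\ref{Sobolevembed} by combining Plancherel's theorem with the Hardy--Littlewood--Sobolev inequality, and then interpolating between the two endpoint exponents.

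\emph{Endpoints.} For $p=2$ there is nothing to prove, since $\|f\|_{L^{2}}\leq\|f\|_{H^{s}}$ by Plancherel. If $s=0$, then $2d/(d-2s)=2$, so this is the whole statement. Assume now $0<s<\frac{d}{2}$ and set $p^{*}:=\frac{2d}{d-2s}\in(2,\infty)$. It suffices to prove the homogeneous estimate
\begin{align*}
\|f\|_{L^{p^{*}}}\leq C\|\Lambda^{s}f\|_{L^{2}}\leq C\|f\|_{H^{s}},
\end{align*}
and by density we may first take $f\in\mathcal{S}$. Write $g:=\Lambda^{s}f\in L^{2}$. Then $f=\Lambda^{-s}g=c_{d,s}\,|\cdot|^{-(d-s)}\ast g$, the Riesz potential of order $s$; this representation is legitimate because $0<s<d$ makes $|\xi|^{-s}$ locally integrable. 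The Hardy--Littlewood--Sobolev inequality with exponents $1<2<p^{*}<\infty$ and $\frac{1}{p^{*}}=\frac{1}{2}-\frac{s}{d}$ gives $\|f\|_{L^{p^{*}}}\leq C\|g\|_{L^{2}}$. The second inequality holds because $|\xi|^{2s}\leq(1+|\xi|^{2})^{s}$.

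\emph{Justifying the passage to $H^{s}$.} This is the only delicate point: the homogeneous bound must be transferred from Schwartz functions to all of $H^{s}$. Take $f_{n}\in\mathcal{S}$ with $f_{n}\to f$ in $H^{s}$. The estimate makes $(f_{n})$ Cauchy in $L^{p^{*}}$, so it has a limit there. That limit coincides with $f$, because $f_{n}\to f$ in $L^{2}$, hence a subsequence converges almost everywhere. Consequently the estimate holds for every $f\in H^{s}$.

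\emph{Intermediate exponents.} For $2<p<p^{*}$, choose $\theta\in(0,1)$ with $\frac{1}{p}=\frac{1-\theta}{2}+\frac{\theta}{p^{*}}$. Hölder's inequality gives
\begin{align*}
\|f\|_{L^{p}}\leq\|f\|_{L^{2}}^{1-\theta}\|f\|_{L^{p^{*}}}^{\theta}\leq C\|f\|_{H^{s}}.
\end{align*}
This proves the continuous embedding of $H^{s}$ into $L^{p}$ for all $2\leq p\leq p^{*}$.

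An alternative that avoids Hardy--Littlewood--Sobolev is to split $\widehat{f}$ into low and high frequencies at a threshold $\lambda$. One then bounds the distribution function $|\{|f|>t\}|$ and optimizes in $\lambda$. This argument gives only a weak-type bound at $p^{*}$, which must then be upgraded by Marcinkiewicz interpolation, so the potential-theoretic route above is the cleaner choice.
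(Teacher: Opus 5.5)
The paper gives no proof of this lemma. It is quoted as the classical Sobolev embedding (``We recall the Sobolev embedding theorem''), so there is no internal argument to match, and your proof stands on its own as a correct one. Your three steps are all sound: Hardy--Littlewood--Sobolev for the Riesz potential from $L^{2}$ to $L^{p^{*}}$ with $\frac{1}{p^{*}}=\frac{1}{2}-\frac{s}{d}$; the transfer from $\mathcal{S}$ to $H^{s}$ by density, identifying the $L^{p^{*}}$ limit through an a.e.\ convergent subsequence; and H\"older interpolation with $L^{2}$ for $2<p<p^{*}$. Your argument in fact gives the homogeneous bound $\|f\|_{L^{p^{*}}}\leq C\|\Lambda^{s}f\|_{L^{2}}$ for $f\in H^{s}$. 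That is the form the paper actually uses later, for instance $\dot{H}^{\alpha}\hookrightarrow L^{\frac{2d}{d-2\alpha}}$ in the proof of (I3).

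One justification is too quick. Local integrability of $|\xi|^{-s}$ only makes it a tempered distribution. Identifying $\Lambda^{-s}g$ with the Riesz potential of the non-Schwartz function $g=\Lambda^{s}f$ needs one more line. For example, approximate $g$ in $L^{2}$ by Schwartz functions $\psi_{n}$, for which the Fourier identity is classical. Then pass to the limit in $L^{p^{*}}$ on the physical side using HLS, and in $\mathcal{S}'$ on the Fourier side using that $|\xi|^{-s}\varphi\in L^{2}$ for $\varphi\in\mathcal{S}$; this holds precisely because $s<\frac{d}{2}$.

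Your closing remark about the frequency-splitting alternative is inaccurate, and that route is in fact the more elementary one; it is the proof given in \cite[Chapter 1]{bahouri}. Cauchy--Schwarz gives $\|\mathcal{F}^{-1}(\mathbf{1}_{\{|\xi|\leq A\}}\widehat{f})\|_{L^{\infty}}\leq CA^{\frac{d}{2}-s}\|f\|_{\dot{H}^{s}}$, using $2s<d$. Choose the cutoff $A=A_{\lambda}$ \emph{depending on the level} $\lambda$ so that this bound is at most $\lambda/2$. Chebyshev applied to the high-frequency part then bounds $|\{|f|>\lambda\}|$ by a constant times $\lambda^{-2}\int_{|\xi|>A_{\lambda}}|\widehat{f}(\xi)|^{2}d\xi$. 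Integrating against $p^{*}\lambda^{p^{*}-1}d\lambda$ and applying Fubini gives exactly $\|f\|_{L^{p^{*}}}^{p^{*}}\leq C\|f\|_{\dot{H}^{s}}^{p^{*}}$. This is a strong-type bound obtained directly, with no Marcinkiewicz step.

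What HLS buys you is generality: it gives $L^{q}\to L^{q^{*}}$ for every $1<q<\infty$, and hence $L^{q}$-based embeddings. The cost is invoking a theorem whose standard proofs themselves go through weak-type estimates and interpolation, or the maximal function. For the purely $L^{2}$-based statement here, the Plancherel splitting argument is self-contained and avoids identifying the Riesz kernel altogether.
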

Finally, we use the non-homogeneous Littlewood-Paley decomposition as in \cite[Chapter 2]{bahouri}. More precisely, if $\Delta_{j}$ denotes the non-homogeneous dyadic block operator and $u\in\mathcal{S}^{\prime}$, we have that
\begin{align*}
u=\sum_{j\geq -1}\Delta_{j}u\quad\mbox{in}\quad\mathcal{S}^{\prime}. 
\end{align*}
In the following section we use the fact that $H^{\sigma}$ coincides with the Besov space $B_{2,2}^{\sigma}$ and use the equivalent norm
\begin{align}
\|u\|_{B_{2,2}^{\sigma}}:=\left(\sum_{j\geq -1}2^{2j\sigma}\|\Delta_{j} u\|_{L^{2}}^{2}\right)^{1/2}\quad\mbox{for all}\quad u\in\label{eq:littlewoodpaley} H^{\sigma}. 
\end{align}
\section{Commutator estimates}
\label{commutatorsfeli}
In this section we establish some commutator estimates involving the operators $D^{\gamma}$, $\Lambda^{\sigma}$ and $\mathbb{J}_{\epsilon}=\eta_{\epsilon}\ast\cdot$. We constantly make use of the following estimate: If $f\in\dot{H}^{\alpha}\cap\dot{H}^{s+\alpha}$ and $\alpha+|\beta|+r\in[\alpha,s+\alpha]$, we have
\begin{align*}
	\|\Lambda^{\alpha}D^{\beta}f\|_{H^{r}}\leq C\left[\|f\|_{\dot{H}^{\alpha}}+\|f\|_{\dot{H}^{s+\alpha}}\right].
\end{align*}

\begin{rem}
\label{d2}
Throughout the paper, we assume $d=2$ or $d=3$. For simplicity, the proofs are carried out in the case $d=3$. The corresponding arguments for $d=2$ are completely analogous, except that the embedding $H^{1/2}\hookrightarrow L^{4}$ is used instead of $H^{1}\hookrightarrow L^{4}$.
\end{rem}

\begin{theo}
\label{mainFcommutator}
Let $s>\frac{d}{2}+1$ be an integer and $\alpha\in(0,1)$. Assume that $f\in \dot{H}^{\alpha}\cap\widehat{H}^{s}\cap\dot{H}^{s+\alpha}$ and $g\in H^{m}$ for some integer $1\leq m\leq s$. Then, for any multi-index $\gamma$ with $|\gamma|=m$, there is a positive constant $C$, depending only on $\alpha$, $m$, $d$ and $s$ such that,
\begin{align}
\label{eq:Fcommutatorestimate1}
\|[D^{\gamma},f]g\|_{H^{\alpha}}\leq C\left(\|f\|_{\dot{H}^{\alpha}}+\|f\|_{\dot{H}^{s+\alpha}}\right)\|g\|_{H^{m}}.
\end{align}
\end{theo}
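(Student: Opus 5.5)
Since $H^{\alpha}$ has the equivalent norm $\|h\|_{L^{2}}+\|\Lambda^{\alpha}h\|_{L^{2}}$, I will bound these two pieces separately for $h=[D^{\gamma},f]g$, with $|\gamma|=m\geq1$ and $m\leq s$.

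\emph{The $L^{2}$ part.} By Leibniz,
\begin{align*}
[D^{\gamma},f]g=\sum_{0<\beta\leq\gamma}\binom{\gamma}{\beta}D^{\beta}f\,D^{\gamma-\beta}g .
\end{align*}
Every term contains at least one derivative on $f$. I will put $D^{\beta}f$ in $L^{\infty}$ or $L^{4}$ and $D^{\gamma-\beta}g$ in $L^{2}$ or $L^{4}$, using Lemma \ref{Sobolevembed} and $H^{1}\hookrightarrow L^{4}$ (see Remark \ref{d2}). Since $s>\frac d2+1$, every $D^{\beta}f$ with $1\leq|\beta|\leq s$ is controlled by $\|\nabla f\|_{H^{s-1}}$. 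This in turn is bounded by $\|f\|_{\dot H^{\alpha}}+\|f\|_{\dot H^{s+\alpha}}$, because the exponents $1,\dots,s$ lie in $[\alpha,s+\alpha]$; this is the interpolation estimate quoted at the start of the section. The result is
\begin{align*}
\|[D^{\gamma},f]g\|_{L^{2}}\leq C\left(\|f\|_{\dot H^{\alpha}}+\|f\|_{\dot H^{s+\alpha}}\right)\|g\|_{H^{m-1}}.
\end{align*}

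\emph{The $\Lambda^{\alpha}$ part.} Here I apply identity \eqref{eq:maincommutatoridea} with $T_{1}=\Lambda^{\alpha}$ and $T_{2}=D^{\gamma}$:
\begin{align*}
\Lambda^{\alpha}[D^{\gamma},f]g=D^{\gamma}[\Lambda^{\alpha},f]g+[D^{\gamma},f]\Lambda^{\alpha}g-[\Lambda^{\alpha},f]D^{\gamma}g .
\end{align*}
The three terms are handled as follows.
\begin{itemize}
\item \textbf{Third term.} Estimate \eqref{eq:dongliC2} gives $\|\Lambda^{\alpha}f\|_{L^{\infty}}\|D^{\gamma}g\|_{L^{2}}$. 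I then bound $\|\Lambda^{\alpha}f\|_{L^{\infty}}\lesssim\|\Lambda^{\alpha}f\|_{H^{s-1}}$, which is admissible since $s-1>\frac d2$ and $\alpha+s-1\in[\alpha,s+\alpha]$.
\item \textbf{Second term.} This is again a Leibniz sum $\sum_{\beta>0}D^{\beta}f\,D^{\gamma-\beta}\Lambda^{\alpha}g$. The $g$-factor has order at most $m-1+\alpha\leq m$, so each term is treated exactly as in the $L^{2}$ part, now with $g$ measured in $H^{m}$.
\item \textbf{First term.} This is the genuine one. I would rather not expand $D^{\gamma}$ of a commutator directly. Instead I write $D^{\gamma}[\Lambda^{\alpha},f]g=\Lambda^{\alpha}(D^{\gamma}(fg))-D^{\gamma}(f\Lambda^{\alpha}g)$ and expand both by Leibniz. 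The top-order pieces $f\Lambda^{\alpha}D^{\gamma}g$ cancel, and what remains is
\begin{align*}
\sum_{\beta\leq\gamma}\binom{\gamma}{\beta}[\Lambda^{\alpha},D^{\beta}f]D^{\gamma-\beta}g .
\end{align*}
\end{itemize}

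\emph{Estimating the remaining sum.} For $|\beta|\leq s-2$ I would use \eqref{eq:dongliC2}: $\|\Lambda^{\alpha}D^{\beta}f\|_{L^{\infty}}$ is controlled by $\|\Lambda^{\alpha}D^{\beta}f\|_{H^{2}}$, which is fine since $|\beta|+2+\alpha\leq s+\alpha$. For the high-$\beta$ terms, $|\beta|\in\{s-1,s\}$, this $L^{\infty}$ bound is unavailable. There I would use the Kato--Ponce--Li estimate \eqref{eq:fractionalleibnizrule} with Lebesgue exponents $(4,4)$ or $(2,\infty)$, putting $\Lambda^{\alpha}D^{\beta}f$ in $L^{2}$ or $L^{4}$ and $D^{\gamma-\beta}g$ in $L^{\infty}$ or $L^{4}$; the $L^{4}$ bound for $\Lambda^{\alpha}D^{\beta}f$ comes from $H^{1}\hookrightarrow L^{4}$. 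I also need the $f\Lambda^{\alpha}(\cdot)$ part of the commutator. For $|\beta|=m\geq1$ this is $D^{\gamma}f\,\Lambda^{\alpha}g$, which is fine since $\|D^{\gamma}f\|_{L^{\infty}}$ is finite for $m\leq s-1$; for $m=s$ I instead put $\Lambda^{\alpha}g\in L^{\infty}$, using $g\in H^{s}$.

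\emph{Main obstacle.} The delicate point is the endpoint $|\beta|=m=s$. There $\Lambda^{\alpha}D^{\beta}f$ is only in $L^{2}$ (from $\dot H^{s+\alpha}$) and $g$ receives no derivatives. One must check that $g\in L^{\infty}$, and likewise $\Lambda^{\alpha}g\in L^{\infty}$, follows from $\|g\|_{H^{s}}$ with $s>\frac d2+1$, so that the bound closes with constant depending only on $\alpha,m,d,s$. If the exponent bookkeeping fails there, I would fall back to the Littlewood--Paley form \eqref{eq:littlewoodpaley}, splitting paraproducts so that the high frequencies of $f$ carry the $\dot H^{s+\alpha}$ norm.
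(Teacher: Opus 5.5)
Your argument is correct in substance, and it takes a genuinely different route from the paper.

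\textbf{The paper's route.} It works block by block. It applies \eqref{eq:maincommutatoridea} with $T_{1}=\Delta_{j}$ and $T_{2}=D^{\gamma}$. It bounds $[D^{\gamma},f]\Delta_{j}g$ and $[\Delta_{j},f]D^{\gamma}g$ by standard commutator estimates that only use $\nabla f\in L^{\infty}$; the resulting factor $2^{j(\alpha-1)}$ is square-summable because $\alpha<1$. It is then left with the products $\Delta_{j}(D^{\beta}f\,D^{\gamma-\beta}g)$. These it controls through the fractional Leibniz rule in \eqref{eq:felipedecomp3}, \eqref{eq:felipedecomp5}, \eqref{eq:felipedecomp8}, invoking \eqref{eq:fractionalcommutator} at the endpoint $|\gamma|=s$.

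\textbf{Your route.} You dispense with Littlewood--Paley entirely. In fact your three-term identity collapses. The $\beta=0$ summand $[\Lambda^{\alpha},f]D^{\gamma}g$ of your first term cancels your third term exactly; it is this commutator that cancels, not $f\Lambda^{\alpha}D^{\gamma}g$. Your second term then recombines with the remaining summands, so that
\[
\Lambda^{\alpha}[D^{\gamma},f]g=\sum_{0<\beta\leq\gamma}\binom{\gamma}{\beta}\Lambda^{\alpha}\bigl(D^{\beta}f\,D^{\gamma-\beta}g\bigr),
\]
which is plain Leibniz. Thus both proofs rest on the same product bounds for $\|D^{\beta}f\,D^{\gamma-\beta}g\|_{H^{\alpha}}$, $0<\beta\leq\gamma$, and these alone already give \eqref{eq:Fcommutatorestimate1}.

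\textbf{What each buys.} Your version is shorter and more transparent. The paper's dyadic bookkeeping produces the block-wise bound \eqref{eq:Fdecomposition3} on $\mathcal{K}_{1}^{j}$. That bound is reused in Theorem \ref{secondFcommest}, where a block-wise duality argument for $[\mathbb{J}_{\epsilon},Q]$ is needed to avoid $\|w\|_{H^{m+\alpha}}$.

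\textbf{Fixes needed.} Three pieces of exponent bookkeeping need repair, none of them serious.
\begin{enumerate}
\item Your claim that $\|D^{\gamma}f\|_{L^{\infty}}<\infty$ for $m\leq s-1$ is false at $m=s-1$. There $D^{\gamma}f$ is only known to lie in $H^{1+\alpha}$, which does not embed in $L^{\infty}$ when $d=3$ and $\alpha\leq\frac{1}{2}$. Use H\"older in $L^{4}\times L^{4}$ instead, i.e.\ $\|D^{\beta}f\|_{H^{1}}\|\Lambda^{\alpha}D^{\gamma-\beta}g\|_{H^{1}}$. This is admissible for all $2\leq|\beta|\leq s-1$, because $|\beta|+1\leq s$ and $m-|\beta|+1+\alpha<m$.
\item In your $L^{2}$ part, the term $\beta=\gamma$ with $m=s$ cannot place $D^{\gamma}f$ in $L^{\infty}$ or $L^{4}$, since it is only in $H^{\alpha}$. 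Use $\|D^{\gamma}f\|_{L^{2}}\|g\|_{L^{\infty}}$ instead; this still costs only $\|g\|_{H^{m-1}}$.
\item The pair $(p_{1},p_{2})=(2,\infty)$ for $\|D^{\gamma}f\|_{L^{p_{1}}}\|\Lambda^{\alpha}g\|_{L^{p_{2}}}$ lies outside the range $1<p_{2}<\infty$ of Lemma \ref{fractionalprodest}(ii). Either take $p_{1}=\frac{2d}{d-2\alpha}$, $p_{2}=\frac{d}{\alpha}$ together with $\dot{H}^{\alpha}\hookrightarrow L^{\frac{2d}{d-2\alpha}}$, or apply \eqref{eq:fractionalcommutator} directly to $[\Lambda^{\alpha},D^{\gamma}f]g$, as the paper does.
\end{enumerate}
With these fixes, the endpoint you flagged as the main obstacle is harmless, since $\|g\|_{L^{\infty}}+\|\Lambda^{\alpha}g\|_{L^{\infty}}\lesssim\|g\|_{H^{s-1+\alpha}}\leq\|g\|_{H^{s}}$. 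No Littlewood--Paley fallback is needed.
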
	
\begin{proof}
First, we apply \eqref{eq:maincommutatoridea} with the non-homogeneous dyadic blocks $\Delta_{j}$ (see,\cite{bahouri}) and the differential operator $D^{\gamma}$ to obtain the decomposition
\begin{align*}
\Delta_{j}\left\lbrace[D^{\gamma},f]g\right\rbrace&=D^{\gamma}\left\lbrace[\Delta_{j},f]g\right\rbrace+[D^{\gamma},f]\Delta_{j} g-[\Delta_{j},f]D^{\gamma}g\\
&=:\mathcal{K}_{1}^{j}+\mathcal{K}_{2}^{j}+\mathcal{K}_{3}^{j}.
	\end{align*}
Thus,
\begin{align*}
2^{j\alpha}\|\Delta_{j}\left\lbrace[D^{\gamma},f]g\right\rbrace\|_{L^{2}}\leq 2^{j\alpha}\|\mathcal{K}_{1}^{j}\|_{L^{2}}+2^{j\alpha}\|\mathcal{K}_{2}^{j}\|_{L^{2}}+2^{j\alpha}\|\mathcal{K}_{3}^{j}\|_{L^{2}}\quad\mbox{for all}\quad j\in\mathbb{Z}.
\end{align*}
By standard commutator estimates (see, \cite[Lemma 2.97]{bahouri} and \cite[Proposition 1.5.10]{milani}), there is a positive constant $C$ such that
\begin{align*}
\|\mathcal{K}_{2}^{j}\|_{L_{2}}\leq C\|\nabla f\|_{L^{\infty}}2^{j(m-1)}\|\Delta_{j}g\|_{L^{2}}\quad\mbox{and}\quad\|\mathcal{K}_{3}^{j}\|_{L^{2}}\leq C2^{-j}\|\nabla f\|_{L^{\infty}}\|D^{\gamma}g\|_{L^{2}}.
\end{align*}
From the assumptions, $\nabla f\in H^{s-1}$ and $s-1>\frac{d}{2}$, we deduce that
	\begin{align}
		\label{eq:felipedecomp1}
		2^{j\alpha}\|\Delta_{j}\left\lbrace[D^{\gamma},f]g\right\rbrace\|_{L^{2}}\leq 2^{j\alpha}\|\mathcal{K}_{1}^{j}\|_{L^{2}}+C\|f\|_{\widehat{H}^{s}}\left(2^{j(m-1+\alpha)}\|\Delta_{j}g\|_{L^{2}}+2^{j(\alpha-1)}\|g\|_{H^{m}}\right).
	\end{align}
In order to estimate $\mathcal{K}_{1}^{j}$ we use the Leibniz rule to decompose it as
\begin{align*}
\mathcal{K}_{1}^{j}=D^{\gamma}\left\lbrace[\Delta_{j},f]g\right\rbrace=[\Delta_{j},f]D^{\gamma}g+\sum_{0<\beta\leq\gamma}C_{\beta,\gamma}\mathcal{M}(\beta,j),
\end{align*}
where $C_{\beta,\gamma}=\frac{\gamma!}{\beta!(\alpha-\beta)!}$ and for each $\beta\in\mathbb{N}_{0}$, with $0<\beta\leq\gamma$,
\begin{align*}
\mathcal{M}(\beta,j):=\Delta_{j}(D^{\beta}fD^{\gamma-\beta}g)-D^{\beta}fD^{\gamma-\beta}\Delta_{j}g.
\end{align*}
	Note that the first term in $\mathcal{K}_{1}^{j}$ and $-\mathcal{K}_{3}^{j}$ coincide, so that 
	\begin{align}
		2^{j\alpha}\|\mathcal{K}_{1}^{j}\|_{L^{2}}\leq C2^{(\alpha-1)j}\|\nabla f\|_{L^{\infty}}\|D^{\gamma}g\|_{L^{2}}+\sum_{0<\beta\leq \gamma}C_{\beta,\gamma}2^{j\alpha}\|\mathcal{M}(\beta,j)\|_{L^{2}}.\label{eq:K1decomp}
	\end{align}
Although the terms within $\mathcal{M}(\beta,j)$ are commutators of the dyadic block operator $\Delta_{j}$, we cannot apply to them the standard commutator estimate (\cite[Lemma 2.97]{bahouri}) since it requires the $L^{\infty}$ control on $D^{|\beta|+1}f$ for $|\beta|\geq 1$. Instead, we use a combination of the fractional Leibniz rule and the Sobolev embedding to estimate these terms. Notice that
\begin{equation}
\label{eq:Fdecomposition2}
\begin{aligned}
\sum_{0<\beta\leq \gamma}C_{\beta,\gamma}2^{j\alpha}\|\mathcal{M}(\beta,j)\|_{L^{2}}\leq\sum_{0<\beta\leq \gamma}C_{\beta,\gamma}\left[2^{j\alpha}\|\Delta_{j}(D^{\beta}fD^{\gamma-\beta}g)\|_{L^{2}}+2^{j\alpha}\|D^{\beta}fD^{\gamma-\beta}\Delta_{j}g\|_{L^{2}}\right]
\end{aligned}
\end{equation}
For the terms with $|\beta|=1$ apply the fractional Leibniz rule \eqref{eq:fractionalleibnizrule} with $p_{3}=p_{4}=4$, $p_{1}=\infty$ and $p_{2}=2$,
\begin{align*}
\|D^{\beta}fD^{\gamma-\beta}g\|_{\dot{H}^{\alpha}}\leq C\left(\|\Lambda^{\alpha}D^{\beta}f\|_{L^{4}}\|D^{\gamma-\beta}g\|_{L^{4}}+\|D^{\beta}f\|_{L^{\infty}}\|\Lambda^{\alpha}D^{\gamma-\beta}g\|_{L^{2}}\right).
\end{align*}
If $d=3$ the Sobolev embedding yields
\begin{align*}
\|D^{\beta}fD^{\gamma-\beta}g\|_{\dot{H}^{\alpha}}&\leq C\left(\|\Lambda^{\alpha}D^{\beta}f\|_{H^{1}}\|D^{\gamma-\beta}g\|_{H^{1}}+\|D^{\beta}f\|_{L^{\infty}}\|\Lambda^{\alpha}D^{\gamma-\beta}g\|_{L^{2}}\right)\\
&\leq C\left(\left[\|f\|_{\dot{H}^{\alpha}}+\|f\|_{\dot{H}^{2+\alpha}}\right]\|g\|_{H^{m}}+\|\nabla f\|_{L^{\infty}}\|g\|_{H^{m}}\right)
\end{align*}
and
\begin{align*}
\|D^{\beta}fD^{\gamma-\beta}g\|_{L^{2}}&\leq\|D^{\beta}f\|_{L^{4}}\|D^{\gamma-\beta}g\|_{L^{4}}\leq C\|D^{\beta}f\|_{H^{1}}\|D^{\gamma-\beta}g\|_{H^{1}}\\
&\leq C\left[\|f\|_{\dot{H}^{\alpha}}+\|f\|_{\dot{H}^{2}}\right]\|g\|_{H^{m}}.
\end{align*}
For the case $d=2$ the embedding $H^{1/2}\hookrightarrow L^{4}$ leads to the same bound (see Remark \ref{d2}). 

Given that $2+\alpha<3\leq s$, the characterization  of the $H^{\alpha}$ norm in terms of the dyadic blocks \eqref{eq:littlewoodpaley}, gives that
\begin{equation}
\label{eq:felipedecomp3}
\begin{aligned}
\mbox{for all}~\beta\in\mathbb{N}_{0}^{d}~\mbox{with}~|\beta|=1,&\\
\left(\sum_{j\geq -1}2^{2j\alpha}\|\Delta_{j}(D^{\beta}fD^{\gamma-\beta}g)\|_{L^{2}}^{2}\right)^{1/2}&\leq\|D^{\beta}fD^{\gamma-\beta}g\|_{L^{2}}+\|D^{\beta}fD^{\gamma-\beta}g\|_{\dot{H}^{\alpha}}\\
&\leq C\left(\|f\|_{\dot{H}^{\alpha}}+\|f\|_{\dot{H}^{s}}\right)\|g\|_{H^{m}}.
\end{aligned}
\end{equation}
Similarly, by the fractional Leibniz rule with $p_{i}=4$ for all $i=1,2,3,4$ and the embedding $H^{1}\hookrightarrow L^{4}$, it holds that
\begin{equation}
\label{eq:felipedecomp5}
\begin{aligned}
\mbox{for all}~\beta\in\mathbb{N}_{0}^{d}~\mbox{with}~2\leq|\beta|\leq m-1&,\\
\left(\sum_{j\geq -1}2^{2j\alpha}\|\Delta_{j}(D^{\beta}fD^{\gamma-\beta}g)\|_{L^{2}}^{2}\right)^{1/2}&\leq C\left(\|D^{\beta}f\|_{H^{1}}\|D^{\gamma-\beta}g\|_{H^{1}}+\|\Lambda^{\alpha}D^{\beta}f\|_{H^{1}}\|D^{\gamma-\beta}g\|_{H^{1}}\right.\\
&+ \left.\|D^{\beta}f\|_{H^{1}}\|\Lambda^{\alpha}D^{\gamma-\beta}g\|_{H^{1}}\right)\\
&\leq C\left(\|f\|_{\dot{H}^{\alpha}}+\|f\|_{\dot{H}^{m+\alpha}}\right)\|g\|_{H^{m}}.
\end{aligned}
\end{equation}
For the case in which $\beta=\gamma$ in \eqref{eq:Fdecomposition2} we have to distinguish three situations: $|\gamma|=1$, $2\leq |\gamma|\leq s-1$ and $|\gamma|=s$. The fractional Leibniz rule combined with the embedding $H^{1}\hookrightarrow L^{4}$, yields
\begin{align*}
\|\Lambda^{\alpha}(D^{\gamma}fg)\|_{L^{2}}&\leq C\left\lbrace\begin{array}{cc}
\|D^{\gamma}f\|_{L^{\infty}}\|\Lambda^{\alpha}g\|_{L^{2}}+\|\Lambda^{\alpha}D^{\gamma}f\|_{H^{1}}\|g\|_{H^{1}}&\mbox{if}\quad|\gamma|=m=1,\\
\|D^{\gamma}f\|_{H^{1}}\|\Lambda^{\alpha}g\|_{H^{1}}+\|\Lambda^{\alpha}D^{\gamma}f\|_{H^{1}}\|g\|_{H^{1}}&\mbox{if}\quad 2\leq|\gamma|=m\leq s-1.
\end{array}\right.
\end{align*}
If $|\gamma|=m=s$, we use the fractional commutator estimate \eqref{eq:fractionalcommutator} together with the embeddings $H^{s}\hookrightarrow L^{\infty}$ and $H^{s-\alpha}\hookrightarrow L^{\infty}$ to obtain
\begin{align*}
\|\Lambda^{\alpha}(D^{\gamma}fg)\|_{L^{2}}&\leq\|[\Lambda^{\alpha},D^{\gamma}f]g\|_{L^{2}}+\|D^{\gamma}f\Lambda^{\alpha}g\|_{L^{2}}\\
&\leq C[\|f\|_{\dot{H}^{\alpha}}+\|f\|_{\dot{H}^{s+\alpha}}]\|g\|_{H^{s}}.
	\end{align*}
Therefore, for any $1\leq |\gamma|\leq s$, the following estimate holds
\begin{equation}
\label{eq:felipedecomp8}
\begin{aligned}
\left(\sum_{j\geq -1}2^{2j\alpha}\|\Delta_{j}(D^{\gamma}fg)\|_{L^{2}}\right)\leq C[\|f\|_{\dot{H}^{\alpha}}+\|f\|_{\dot{H}^{s+\alpha}}]\|g\|_{H^{m}}.
\end{aligned}
\end{equation}
By applying H\"older's inequality in the second term of \eqref{eq:Fdecomposition2}, we obtain that
\begin{align}
\label{eq:felipedecomp4}
\mbox{for all}~\beta\in\mathbb{N}_{0}^{d}~\mbox{with}~|\beta|=1,\quad\|D^{\beta}fD^{\gamma-\beta}\Delta_{j}g\|_{L^{2}}\leq C\|\nabla f\|_{L^{\infty}}2^{j(m-1)}\|\Delta_{j}g\|_{L^{2}}
\end{align}
and
\begin{equation}
\label{eq:felidecomp6}
\begin{aligned}
\mbox{for all}~\beta\in\mathbb{N}_{0}^{d}~\mbox{with}~&2\leq|\beta|\leq m-1,\\
\|D^{\beta}fD^{\gamma-\beta}\Delta_{j}g\|_{L^{2}}&\leq\|D^{\beta}f\|_{L^{4}}\|D^{\gamma-\beta}\Delta_{j}g\|_{L^{4}}\leq C\|D^{\beta}f\|_{H^{1}}\|D^{\gamma-\beta}\Delta_{j}g\|_{H^{1}}\\
&\leq C\left[\|f\|_{\dot{H}^{\alpha}}+\|f\|_{\dot{H}^{s+\alpha}}\right]2^{j(m-1)}\|\Delta_{j}g\|_{L^{2}}.
\end{aligned}
\end{equation}
Similarly, if $\beta=\gamma$ we have
\begin{align*}
\|D^{\gamma}f\Delta_{j}g\|_{L^{2}}\leq C\left\lbrace\begin{array}{cc}
\|\nabla f\|_{L^{\infty}}\|\Delta_{j}g\|_{L^{2}}&\mbox{if}\quad|\gamma|=m=1\\
\|D^{\gamma}f\|_{H^{1}}\|\Delta_{j}g\|_{H^{1}}&\mbox{if}\quad2\leq|\gamma|=m\leq s-1,\\
\|D^{\gamma}f\|_{L^{2}}\|\Delta_{j}g\|_{L^{\infty}}&\quad m=|\gamma|=s.
\end{array}\right.
\end{align*}
Since $s>\frac{d}{2}+1$, we deduce that, for any $1\leq|\gamma|\leq s$, 
\begin{align}
\label{eq:felipedecomp7}
\|D^{\gamma}f\Delta_{j}g\|_{L^{2}}\leq C\|f\|_{\dot{H}^{m}}\|\Delta_{j}g\|_{H^{m-1}}\leq C\|f\|_{\dot{H}^{m}}2^{j(m-1)}\|\Delta_{j}g\|_{L^{2}}.
\end{align}
Combining \eqref{eq:felipedecomp4}, \eqref{eq:felidecomp6} and \eqref{eq:felipedecomp7} shows that
\begin{align*}
\sum_{0<\beta\leq \gamma}C_{\beta,\gamma}2^{j\alpha}\|D^{\beta}fD^{\gamma-\beta}\Delta_{j}g\|_{L^{2}}\leq C\left(\|f\|_{\dot{H}^{\alpha}}+\|f\|_{\dot{H}^{s+\alpha}}\right)2^{j(m-1+\alpha)}\|\Delta_{j}g\|_{L^{2}}.
\end{align*}
Applying this estimate in \eqref{eq:Fdecomposition2} and using the resulting inequality in \eqref{eq:K1decomp}, we infer that 
\begin{equation}
\label{eq:Fdecomposition3}
\begin{aligned}
2^{j\alpha}\|\mathcal{K}_{1}^{j}\|_{L^{2}}&\leq C 2^{(\alpha-1)j}\|\nabla f\|_{L^{\infty}}\|D^{\gamma}g\|_{L^{2}}+\sum_{0<\beta\leq \gamma}C_{\beta,\gamma}2^{j\alpha}\|\Delta_{j}(D^{\beta}fD^{\gamma-\beta}g)\|_{L^{2}}\\
&+C\left(\|f\|_{\dot{H}^{\alpha}}+\|f\|_{\dot{H}^{s+\alpha}}\right)2^{j(m-1+\alpha)}\|\Delta_{j}g\|_{L^{2}}.
\end{aligned}
\end{equation}
By virtue of \eqref{eq:Fdecomposition3}, estimate \eqref{eq:felipedecomp1} yields 
\begin{align*}
2^{j\alpha}\|\Delta_{j}\left\lbrace[D^{\gamma},f]g\right\rbrace\|_{L^{2}}&\leq C\left(\|f\|_{\dot{H}^{\alpha}}+\|f\|_{\dot{H}^{s+\alpha}}\right)\left[2^{j(\alpha-1)}\|g\|_{H^{m}}+2^{j(m-1+\alpha)}\|\Delta_{j}g\|_{L^{2}}\right]\\
&+\sum_{0<\beta\leq \gamma}C_{\beta,\gamma}2^{j\alpha}\|\Delta_{j}(D^{\beta}fD^{\gamma-\beta}g)\|_{L^{2}}.
\end{align*}
Finally, since $\alpha\in(0,1)$, estimates \eqref{eq:felipedecomp3}, \eqref{eq:felipedecomp5} and \eqref{eq:felipedecomp8} imply that
\begin{align*}
&\left(\sum_{j\geq -1}2^{2j\alpha}\|\Delta_{j}\left\lbrace[D^{\gamma},f]g\right\rbrace\|_{L^{2}}^{2}\right)^{1/2}\\
&\leq C\left(\|f\|_{\dot{H}^{\alpha}}+\|f\|_{\dot{H}^{s+\alpha}}\right)\left[\left(\sum_{j\geq -1}2^{2j(\alpha-1)}\right)^{1/2}\|g\|_{H^{m}}+\left(\sum_{j\geq -1}2^{2j(m-1+\alpha)}\|\Delta_{j}g\|_{L^{2}}^{2}\right)^{1/2}\|g\|_{H^{m}}\right]\\
&\leq C \left(\|f\|_{\dot{H}^{\alpha}}+\|f\|_{\dot{H}^{s+\alpha}}\right)\|g\|_{H^{m}},
\end{align*}
	where the constant $C$ depends on $\alpha$, $d$, $m$ and $s$. We conclude the result. 
\end{proof}	
\begin{theo}
\label{fractionalcommutatorfeli}
Let $s>\frac{d}{2}+1$ be an integer, $\gamma\in\mathbb{N}_{0}^{d}$ with $|\gamma|=m\in[1,s]$ and $\alpha\in(0,1)$ be given.  Assume that $Q\in\dot{H}^{\alpha}\cap\dot{H}^{s+\alpha}\cap\widehat{H}^{s}$ is a $p\times p$ matrix-valued function and that $w\in H^{m}$, $h\in H^{\alpha}$ are $\mathbb{R}^{p}$-valued vector fields. Then, there is a positive constant $C$, depending only on $\alpha,m,d$ and $s$ such that the following statements hold:
\begin{itemize}
\item [(I1)] If $w\in H^{m+\alpha}$, then
\begin{align}
\label{eq:secondFeliCommEst}
|\left\langle D^{\gamma}\left\lbrace[\Lambda^{2\alpha},Q]w\right\rbrace,h\right\rangle_{L^{2}}|\leq C\left(\|Q\|_{\dot{H}^{\alpha}}+\|Q\|_{\dot{H}^{s+\alpha}}\right)\left(\|w\|_{H^{m}}\|h\|_{H^{\alpha}}+\|\Lambda ^{\alpha}w\|_{H^{m}}\|h\|_{L^{2}}\right).
\end{align}
\item [(I2)] If $\alpha\in(0,\tfrac{1}{2}]$ and $w\in H^{m}$, then
\begin{equation}
\label{eq:nocancellation1}
|\left\langle D^{\gamma}\left\lbrace[\Lambda^{2\alpha},Q]w\right\rbrace,h\right\rangle_{L^{2}}|\leq C(\|Q\|_{\dot{H}^{\alpha}}+\|Q\|_{\dot{H}^{s+\alpha}})\|w\|_{H^{m}}\|h\|_{H^{\alpha}}.
\end{equation}
\item [(I3)] If $\alpha\in(\tfrac{1}{2},1)$, $s>\frac{d}{2}+2\alpha$ and $w\in H^{m}$, then estimate \eqref{eq:nocancellation1} remains valid. 
	\end{itemize}
\end{theo}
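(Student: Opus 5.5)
We apply the commutator identity \eqref{eq:maincommutatoridea} with $T_{1}=D^{\gamma}$ and $T_{2}=\Lambda^{2\alpha}$, which gives
\begin{align*}
D^{\gamma}\left\lbrace[\Lambda^{2\alpha},Q]w\right\rbrace=\Lambda^{2\alpha}\left\lbrace[D^{\gamma},Q]w\right\rbrace+[\Lambda^{2\alpha},Q]D^{\gamma}w-[D^{\gamma},Q]\Lambda^{2\alpha}w .
\end{align*}
We pair each of the three terms with $h$ and estimate them separately.

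\textbf{First term.} By self-adjointness of $\Lambda^{\alpha}$ we have
\[
\langle\Lambda^{2\alpha}[D^{\gamma},Q]w,h\rangle=\langle\Lambda^{\alpha}[D^{\gamma},Q]w,\Lambda^{\alpha}h\rangle ,
\]
so it is bounded by $\|[D^{\gamma},Q]w\|_{H^{\alpha}}\|h\|_{H^{\alpha}}$. Theorem \ref{mainFcommutator}, applied with $f=Q$ entrywise, then gives $C(\|Q\|_{\dot H^{\alpha}}+\|Q\|_{\dot H^{s+\alpha}})\|w\|_{H^{m}}\|h\|_{H^{\alpha}}$. This is valid in all three cases (I1)--(I3).

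\textbf{Second term.} Write $\Lambda^{2\alpha}=\Lambda^{\alpha}\Lambda^{\alpha}$, so that
\[
[\Lambda^{2\alpha},Q]=\Lambda^{\alpha}[\Lambda^{\alpha},Q]+[\Lambda^{\alpha},Q]\Lambda^{\alpha}.
\]
Moving the outer $\Lambda^{\alpha}$ onto $h$, the contribution is bounded by
\[
\|[\Lambda^{\alpha},Q]D^{\gamma}w\|_{L^{2}}\|h\|_{H^{\alpha}}+\|[\Lambda^{\alpha},Q]\Lambda^{\alpha}D^{\gamma}w\|_{L^{2}}\|h\|_{L^{2}}.
\]
For the first piece, \eqref{eq:dongliC2} gives $\|\Lambda^{\alpha}Q\|_{L^{\infty}}\|w\|_{H^{m}}$. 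For the second piece we must avoid $\Lambda^{\alpha}w\in H^{m}$ in cases (I2) and (I3). The plan there is to move the remaining $\Lambda^{\alpha}$ by self-adjointness as well, $\langle[\Lambda^{\alpha},Q]\Lambda^{\alpha}D^{\gamma}w,h\rangle=\langle \Lambda^{\alpha}D^{\gamma}w,[Q^{\top},\Lambda^{\alpha}]h\rangle$. We then need $\|[\Lambda^{\alpha},Q^{\top}]h\|_{H^{\alpha}}\lesssim\|h\|_{H^{\alpha}}$, after which $D^{\gamma}$ can be traded through one more application of $\Lambda^{\alpha}$. Throughout, $\|\Lambda^{\alpha}Q\|_{L^{\infty}}\lesssim\|Q\|_{\dot H^{\alpha}}+\|Q\|_{\dot H^{s+\alpha}}$ holds by Sobolev embedding, since $s>d/2$.

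In case (I1) no such trick is needed: the second piece is directly bounded by $\|\Lambda^{\alpha}Q\|_{L^{\infty}}\|\Lambda^{\alpha}w\|_{H^{m}}\|h\|_{L^{2}}$, which is exactly the extra term allowed in \eqref{eq:secondFeliCommEst}.

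\textbf{Third term.} We write
\[
\langle[D^{\gamma},Q]\Lambda^{2\alpha}w,h\rangle=\sum_{0<\beta\le\gamma}C_{\beta,\gamma}\langle D^{\beta}Q\,D^{\gamma-\beta}\Lambda^{2\alpha}w,h\rangle .
\]
In each summand we split $\Lambda^{2\alpha}=\Lambda^{\alpha}\Lambda^{\alpha}$ and move one $\Lambda^{\alpha}$ onto the product $(D^{\beta}Q)^{\top}h$. This yields
\[
\|\Lambda^{\alpha}D^{\gamma-\beta}w\|_{L^{2}}\,\|(D^{\beta}Q)^{\top}h\|_{H^{\alpha}} .
\]
Since $|\gamma-\beta|\le m-1$ and $\alpha<1$, the first factor is at most $\|w\|_{H^{m}}$. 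The second factor is handled by \eqref{eq:fractionalleibnizrule} with $L^{\infty}$, $L^{4}$ pairs and $H^{1}\hookrightarrow L^{4}$, as in the proof of Theorem \ref{mainFcommutator}. For $|\beta|\le s-1$ this gives $\lesssim(\|Q\|_{\dot H^{\alpha}}+\|Q\|_{\dot H^{s+\alpha}})\|h\|_{H^{\alpha}}$. When $|\beta|=m=s$ we instead use $\|D^{\gamma}Q\|_{L^{2}}$ against $\Lambda^{2\alpha}w$ in $L^{\infty}$, and this is where the cases split.

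\textbf{Main obstacle.} The hardest point is the top-order contributions (the $\Lambda^{\alpha}D^{\gamma}w$ piece and $\beta=\gamma$), where roughly $m+2\alpha$ derivatives fall on $w$ and $Q$ combined.
\begin{itemize}
\item If $\alpha\le\tfrac12$, then $2\alpha\le1$. Estimating $D^{\beta}Q\,\Lambda^{2\alpha}D^{\gamma-\beta}w$ directly in $L^{2}$ costs at most $m-|\beta|+1\le m$ derivatives of $w$, so no $\Lambda^{\alpha}$ needs to be moved and only $\|h\|_{L^{2}}\le\|h\|_{H^{\alpha}}$ is used. This gives (I2).
\item If $\alpha>\tfrac12$, we put the excess $2\alpha-1$ derivatives on $Q$ through \eqref{eq:fractionalcommutator2}. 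The resulting factor is $\|\Lambda^{2\alpha}Q\|_{L^{\infty}}$ or $\|D^{\gamma}Q\|$ in $L^{\infty}$, and these are controlled by $\|Q\|_{\dot H^{s+\alpha}}$ precisely because $s>\frac d2+2\alpha$. This gives (I3).
\end{itemize}
I expect the careful bookkeeping of the $|\beta|=m=s$ endpoint, and the justification of $\|[\Lambda^{\alpha},Q]h\|_{H^{\alpha}}$, to be the delicate steps.
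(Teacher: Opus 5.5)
You have the same three-term decomposition as the paper, and your first term is handled identically. Your third-term plan is essentially the paper's as well: a direct $D^{\gamma}$-commutator bound when $2\alpha\le1$, and for $2\alpha>1$ one $\Lambda^{\alpha}$ moved at $|\beta|=1$, with $\|D^{\gamma}Q\|_{L^{2}}\|\Lambda^{2\alpha}w\|_{L^{\infty}}$ at $\beta=\gamma$, $m=s$. The gap is the second term $\langle[\Lambda^{2\alpha},Q]D^{\gamma}w,h\rangle$ in (I2)--(I3). After splitting $\Lambda^{2\alpha}=\Lambda^{\alpha}\Lambda^{\alpha}$ you still need $\|[\Lambda^{\alpha},Q^{\top}]h\|_{\dot H^{\alpha}}\lesssim\|h\|_{H^{\alpha}}$, and you never prove it. Your ``main obstacle'' discussion does not supply it, since it only treats $D^{\beta}Q\,\Lambda^{2\alpha}D^{\gamma-\beta}w$, which belongs to the third term. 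Moreover, the split buys nothing, because
\[
\Lambda^{\alpha}[\Lambda^{\alpha},Q^{\top}]h=[\Lambda^{2\alpha},Q^{\top}]h-[\Lambda^{\alpha},Q^{\top}]\Lambda^{\alpha}h .
\]
The last term is harmless by \eqref{eq:dongliC2}. So your missing estimate is equivalent to $\|[\Lambda^{2\alpha},Q^{\top}]h\|_{L^{2}}\lesssim\|h\|_{H^{\alpha}}$, which is the one genuinely nontrivial input of the whole proof.

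The paper closes this without splitting. Self-adjointness of $\Lambda^{2\alpha}$ gives $\langle[\Lambda^{2\alpha},Q]D^{\gamma}w,h\rangle=-\langle D^{\gamma}w,[\Lambda^{2\alpha},Q^{\top}]h\rangle$. For $\alpha\le\frac12$, \eqref{eq:dongliC2} with $\sigma=2\alpha\le1$ gives the bound $\|\Lambda^{2\alpha}Q\|_{L^{\infty}}\|h\|_{L^{2}}$; you could even apply it to $D^{\gamma}w$ without dualizing. For $\alpha>\frac12$ that estimate is unavailable. You then need the Kato--Ponce--Li expansion \eqref{eq:KatoPonceLicommutator} together with \eqref{eq:dongliC1}:
\[
\|[\Lambda^{2\alpha},Q^{\top}]h\|_{L^{2}}\lesssim\|\Lambda^{2\alpha}Q\|_{BMO}\|h\|_{L^{2}}+\|\nabla Q\|_{L^{\infty}}\|\Lambda^{2\alpha-1}h\|_{L^{2}},
\]
with $\|\Lambda^{2\alpha}Q\|_{BMO}\lesssim\|Q\|_{\dot H^{2\alpha+d/2}}$. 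The excess $2\alpha-1<\alpha$ derivatives land on $h$, not on $Q$ as you suggest. Estimate \eqref{eq:fractionalcommutator2} puts $\sigma-1$ derivatives on the second argument, which is why the whole commutator must first be dualized onto $h$.

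Two smaller corrections:
\begin{itemize}
\item For $|\gamma|=s$, $\|D^{\gamma}Q\|_{L^{\infty}}$ is not controlled by the hypotheses. What $s>\frac d2+2\alpha$ actually buys is $\Lambda^{2\alpha}w\in H^{s-2\alpha}\hookrightarrow L^{\infty}$, as your third-term paragraph correctly states.
\item ``$L^{4}$ pairs'' would need $h\in L^{4}$, which $H^{\alpha}$ does not give when $d=3$ and $\alpha<\frac34$. Use $L^{d/\alpha}\times L^{2d/(d-2\alpha)}$ with $\dot H^{\alpha}\hookrightarrow L^{2d/(d-2\alpha)}$ instead, as the paper does.
\end{itemize}
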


\begin{proof}
We begin by applying \eqref{eq:maincommutatoridea} to obtain the identity
\begin{align*}
D^{\gamma}\left\lbrace[\Lambda^{2\alpha},Q]w\right\rbrace=\Lambda^{2\alpha}\left\lbrace[D^{\gamma},Q]w\right\rbrace+[\Lambda^{2\alpha},Q]D^{\gamma}w-[D^{\gamma},Q]\Lambda^{2\alpha}w.
\end{align*}
Taking the inner product in $L^{2}$ and using the self-adjointness of $\Lambda^{\alpha}$, yields the following decomposition 
\begin{align*}
\left\langle D^{\gamma}\left\lbrace[\Lambda^{2\alpha},Q]w\right\rbrace,h\right\rangle_{L^{2}}&=\left\langle\Lambda^{\alpha}\left\lbrace[D^{\gamma},Q]w\right\rbrace,\Lambda^{\alpha}h\right\rangle_{L^{2}}+\left\langle[\Lambda^{2\alpha},Q]D^{\gamma}w,h\right\rangle_{L^{2}}-\left\langle[D^{\gamma},Q]\Lambda^{2\alpha}w,h\right\rangle_{L^{2}}\\
&=:I_{1}+I_{2}+I_{3}.
\end{align*}
In all cases, estimate \eqref{eq:Fcommutatorestimate1} implies that, 
\begin{align*}
|I_{1}|\leq&C\left(\|Q\|_{\dot{H}^{\alpha}}+\|Q\|_{\dot{H}^{s+\alpha}}\right)\|w\|_{H^{m}}\|h\|_{H^{\alpha}}.
\end{align*}
For $I_{2}$ we use the self-adjointness of $\Lambda^{2\alpha}$ in order to have
\begin{align*}
I_{2}=-\left\langle D^{\gamma}w,[\Lambda^{2\alpha},Q^{\top}]h\right\rangle_{L^{2}}
\end{align*}
and thus, 
\begin{align*}
|I_{2}|\leq\|D^{\gamma}w\|_{L^{2}}\|[\Lambda^{2\alpha},Q^{\top}]h\|_{L^{2}}.
\end{align*}
Assume that $\alpha\in\left(0,\frac{1}{2}\right]$. By setting $\sigma=2\alpha\leq 1$ in lemma \ref{dongli}, we obtain the existence of a constant $C=C(\alpha)>0$ such that,
\begin{align}
\|[\Lambda^{2\alpha},Q^{\top}]h\|_{L^{2}}\leq C\|\Lambda^{2\alpha}Q\|_{L^{\infty}}\|h\|_{L^{2}}.\label{eq:dongli1}
\end{align}
On the other hand, for $\alpha\in[\tfrac{1}{2},1)$, we use \eqref{eq:KatoPonceLicommutator} to write 
\begin{align*}
[\Lambda^{2\alpha},Q^{\top}]h=\mathbb{K}_{2\alpha}(Q^{\top},h)+\sum_{0<|\beta|\leq 2\alpha}\frac{1}{\beta!}D^{\beta}Q^{\top}\Lambda^{2\alpha,\beta}h
\end{align*}
so that, by applying the Kato-Ponce-Li inequality in Lemma \ref{dongli},
\begin{equation}
\label{eq:dongli2}
\begin{aligned}
\|[\Lambda^{2\alpha},Q^{\top}]h\|_{L^{2}}&\leq C\left[\|\Lambda^{2\alpha}Q\|_{BMO}\|h\|_{L^{2}}+\|\nabla Q\|_{L^{\infty}}\|\Lambda^{2\alpha-1}h\|_{L^{2}}\right].
\end{aligned}
\end{equation}
By the embedding $L^{1}_{loc}(\mathbb{R}^{d})\cap\dot{H}^{d/2}\hookrightarrow BMO$ and the assumption $s>\frac{d}{2}+1$, there is a constant $C>0$ such that
\begin{align*}
\|\Lambda^{2\alpha}Q\|_{BMO}\leq C\|\Lambda^{2\alpha}Q\|_{\dot{H}^{d/2}}\leq C\|Q\|_{\dot{H}^{2\alpha+\frac{d}{2}}}\leq C\left(\|Q\|_{\dot{H}^{\alpha}}+\|Q\|_{\dot{H}^{s+\alpha}}\right).
	\end{align*}
As a result, \eqref{eq:dongli1} and \eqref{eq:dongli2}, imply that
\begin{align*}
\|[\Lambda^{2\alpha},Q^{\top}]h\|_{L^{2}}\leq C\left(\|Q\|_{\dot{H}^{\alpha}}+\|Q\|_{\dot{H}^{s+\alpha}}\right)\|\Lambda^{\alpha}h\|_{L^{2}}.
\end{align*}
Hence,
\begin{align*}
|I_{2}|\leq C(\|Q\|_{\dot{H}^{\alpha}}+\|Q\|_{\dot{H}^{s+\alpha}})\|w\|_{H^{m}}\|h\|_{H^{\alpha}}.
\end{align*}
It remains to estimate $I_{3}$. Apply the standard commutator estimates for $D^{\gamma}$ (\cite[Proposition 1.5.10]{milani}) in $I_{3}$ to obtain
\begin{align*}
|I_{3}|\leq C\|\nabla Q\|_{L^{\infty}}\|\Lambda^{2\alpha}w\|_{H^{m-1}}\|h\|_{L^{2}}.
\end{align*}
Then, the embedding $H^{s-1}\hookrightarrow L^{\infty}$ yields,
\begin{align*}
|I_{3}|\leq C\|\nabla Q\|_{H^{s-1}}\left\lbrace\begin{array}{cc}
\|w\|_{H^{m}}\|h\|_{L^{2}}&0<\alpha\leq\frac{1}{2},\\
\|\Lambda^{\alpha}w\|_{H^{m}}\|h\|_{L^{2}}&\frac{1}{2}<\alpha<1.
\end{array}\right.
\end{align*}
After combining the estimates for $I_{1}$, $I_{2}$ and $I_{3}$, the statements in (I1) and (I2) follow.
	
Now assume that $\alpha\in(\tfrac{1}{2},1)$ and $s>\frac{d}{2}+2\alpha$. By the Leibniz rule, 
\begin{align*}
I_{3}=-\sum_{0<\beta\leq\gamma}C_{\beta,\gamma}\left\langle D^{\beta}QD^{\gamma-\beta}\Lambda^{2\alpha}w,h\right\rangle_{L^{2}}.
\end{align*}
We split this sum according to the size of $|\beta|$:
\begin{align*}
I_{3}=I_{3,1}+I_{3,2}+I_{3,3}
\end{align*}
where
\begin{align*}
I_{3,1}:=-\sum_{\substack{\beta\leq\gamma\\|\beta|=1}}C_{\beta,\gamma}\left\langle D^{\beta}QD^{\gamma-\beta}\Lambda^{2\alpha}w,h\right\rangle_{L^{2}}&,\quad I_{3,2}:=-\sum_{\substack{\beta\leq\gamma\\2\leq|\beta|\leq|\gamma|-1}}C_{\beta,\gamma}\left\langle D^{\beta}QD^{\gamma-\beta}\Lambda^{2\alpha}w,h\right\rangle_{L^{2}}\\
\mbox{and}\quad I_{3,3}&:=-\left\langle D^{\gamma}Q\Lambda^{2\alpha}w,h\right\rangle_{L^{2}}.
\end{align*}
Here and below, empty sums are understood to be zero. By the self-adjointness of $\Lambda^{\alpha}$,
\begin{align*}
I_{3,1}=-\sum_{\substack{\beta\leq\gamma\\|\beta|=1}}C_{\beta,\gamma}\left\langle D^{\gamma-\beta}\Lambda^{\alpha}w,\Lambda^{\alpha}(D^{\beta}Q^{\top}h)\right\rangle_{L^{2}}.
\end{align*}
Thus, by the Cauchy-Schwarz inequality and the commutator estimate in lemma \eqref{eq:dongliC2}, it follows that
\begin{align*}
|I_{3,1}|&\leq\sum_{\substack{\beta\leq\gamma\\|\beta|=1}}C_{\gamma,\beta}\|D^{\gamma-\beta}\Lambda^{\alpha}w\|_{L^{2}}\left(\|[\Lambda^{\alpha},D^{\beta}Q^{\top}]h\|_{L^{2}}+\|D^{\beta}Q^{\top}\Lambda^{\alpha}h\|_{L^{2}}\right)\\
&\leq C\|w\|_{H^{m}}\left(\|\Lambda^{\alpha}\nabla Q\|_{L^{\infty}}\|h\|_{L^{2}}+\|\nabla Q\|_{L^{\infty}}\|h\|_{H^{\alpha}}\right).
\end{align*}
Since, by hypothesis, $Q\in\dot{H}^{s+\alpha}$ and $\nabla Q\in H^{s-1}$ we infer that $\nabla Q\in H^{s+\alpha-1}$, which in turn implies that $\Lambda^{\alpha}\nabla Q\in H^{s-1}$. Therefore, by the embedding $H^{s-1}\hookrightarrow L^{\infty}$, we obtain
\begin{align*}
\|\Lambda^{\alpha}\nabla Q\|_{L^{\infty}}\leq C\|\Lambda^{\alpha}\nabla Q\|_{H^{s-1}}\leq C(\|Q\|_{\dot{H}^{\alpha}}+\|Q\|_{\dot{H}^{s+\alpha}}).
\end{align*}
Hence, we arrive at the estimate
\begin{align*}
|I_{3,1}|\leq C(\|Q\|_{\dot{H}^{\alpha}}+\|Q\|_{\dot{H}^{s+\alpha}})\|w\|_{H^{m}}\|h\|_{H^{\alpha}}.
\end{align*}
For $I_{3,2}$, we write
\begin{align*}
I_{3,2}:=-\sum_{\substack{\beta\leq\gamma\\2\leq|\beta|\leq|\gamma|-1}}C_{\beta,\gamma}\left\langle D^{\gamma-\beta}\Lambda^{2\alpha}w,D^{\beta}Q^{\top}h\right\rangle_{L^{2}}
\end{align*}
and since, $|\beta|\geq 2$, the Cauchy-Schwarz inequality implies the estimate
\begin{align*}
|I_{3,2}|\leq C\|w\|_{H^{m}}\sum_{\substack{\beta\leq\gamma\\2\leq|\beta|\leq|\gamma|-1}}\|D^{\beta}Q^{\top}h\|_{L^{2}}.
\end{align*}
By applying H\"older's inequality with $p=\tfrac{d}{d-2\alpha}$ and $q=\tfrac{d}{2\alpha}$ we obtain
\begin{align*}
\|D^{\beta}Q^{\top}h\|_{L^{2}}\leq\left(\int_{\mathbb{R}^{d}}|D^{\beta}Q|^{\frac{d}{\alpha}}dx\right)^{\frac{\alpha}{d}}\left(\int_{\mathbb{R}^{d}}|h|^{\frac{2d}{d-2\alpha}}dx\right)^{\frac{d-2\alpha}{2d}}.
\end{align*}
Given that $\alpha\in(\tfrac{1}{2},1)$ and $d=2,3$, it holds that, $2<\frac{d}{\alpha}<6$ and we can apply the continuous embeddings $\dot{H}^{\alpha}\hookrightarrow L^{\frac{2d}{d-2\alpha}}$ and $H^{1}\hookrightarrow L^{\frac{d}{\alpha}}$ to the right-hand side of the last inequality and thus obtain,
\begin{align*}
\|D^{\beta}Q^{\top}h\|_{L^{2}}\leq C\|D^{\beta}Q\|_{H^{1}}\|h\|_{\dot{H}^{\alpha}}.
\end{align*}
Since, in $I_{3,2}$, $|\beta|\leq|\gamma|-1$, we conclude that
\begin{align*}
|I_{3,2}|\leq C(\|Q\|_{\dot{H}^{\alpha}}+\|Q\|_{\dot{H}^{s+\alpha}})\|w\|_{H^{m}}\|h\|_{H^{\alpha}}. 
\end{align*}
For $I_{3,3}$ we have to distinguish three cases, namely, $|\gamma|=1$, $2\leq|\gamma|\leq s-1$ and $|\gamma|=s$. We assume first that $|\gamma|=1$ and proceed as in the estimate for $I_{3,1}$, that is, 
\begin{align*}
|I_{3,3}|=|\left\langle\Lambda^{\alpha}w,\Lambda^{\alpha}(D^{\gamma}Q^{\top}h)\right\rangle_{L^{2}}|&\leq C\|w\|_{H^{1}}\left(\|\Lambda^{\alpha}\nabla Q\|_{L^{\infty}}\|h\|_{L^{2}}+\|\nabla Q\|_{L^{\infty}}\|h\|_{H^{\alpha}}\right)\\
&\leq(\|Q\|_{\dot{H}^{\alpha}}+\|Q\|_{\dot{H}^{s+\alpha}})\|w\|_{H^{1}}\|h\|_{H^{\alpha}}.
\end{align*}
If $2\leq|\gamma|\leq s-1$ we argue as in the estimate for $I_{3,2}$, 
\begin{align*}
|I_{3,3}|=|\left\langle\Lambda^{2\alpha}w,D^{\gamma}Q^{\top}h\right\rangle_{L^{2}}|&\leq C\|w\|_{H^{2}}\|D^{\gamma}Q\|_{H^{1}}\|h\|_{\dot{H}^{\alpha}}\\
&\leq(\|Q\|_{\dot{H}^{\alpha}}+\|Q\|_{\dot{H}^{s+\alpha}})\|w\|_{H^{m}}\|h\|_{H^{\alpha}}.
\end{align*}
Finally, if $|\gamma|=s$ we use the Cauchy-Schwarz inequality to obtain,
\begin{align*}
|I_{3,3}|\leq\|D^{\gamma}Q\Lambda^{2\alpha}w\|_{L^{2}}\|h\|_{L^{2}}\leq\|D^{\gamma}Q\|_{L^{2}}\|\Lambda^{2\alpha}w\|_{L^{\infty}}\|h\|_{L^{2}}.
\end{align*}
Therefore, by the assumption $s>\frac{d}{2}+2\alpha$, we apply the embedding $H^{s-2\alpha}\hookrightarrow L^{\infty}$ and thus, 
\begin{align*}
|I_{3,3}|\leq(\|Q\|_{\dot{H}^{\alpha}}+\|Q\|_{\dot{H}^{s+\alpha}})\|w\|_{H^{s}}\|h\|_{L^{2}}. 
\end{align*}
Hence, for any $1\leq |\gamma|=m\leq s$, 
\begin{align*}
|I_{3,3}|\leq(\|Q\|_{\dot{H}^{\alpha}}+\|Q\|_{\dot{H}^{s+\alpha}})\|w\|_{H^{m}}\|h\|_{H^{\alpha}}.
\end{align*}
Thus, statement (I3) is satisfied. This concludes the proof. 
\end{proof}
\begin{theo}
\label{secondFcommest}
Let $s>\frac{d}{2}+1$ be an integer, $m\in\mathbb{N}_{0}$ with $m\in[0,s]$ and $\alpha\in(0,1)$ be given. Assume that $Q\in\dot{H}^{\alpha}\cap\dot{H}^{s+\alpha}\cap\widehat{H}^{s}$ is a square matrix of order $p$ and $w\in H^{m}$ is a vector field with range in $\mathbb{R}^{p}$. There is a positive constant $C$ depending only on $d$, $s$ and $\alpha$ such that,
\begin{align*}
\|\Lambda^{\alpha}[\mathbb{J}_{\epsilon},Q]w\|_{H^{m}}\leq C(\|Q\|_{\dot{H}^{\alpha}}+\|Q\|_{\dot{H}^{s+\alpha}})\|w\|_{H^{m}}
\end{align*}
\end{theo}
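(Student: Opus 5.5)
The plan is to apply the identity \eqref{eq:maincommutatoridea} with $T_{1}=\Lambda^{\alpha}$ and $T_{2}=\mathbb{J}_{\epsilon}$. Both are Fourier multipliers, and $\mathbb{J}_{\epsilon}$ is bounded on every $H^{r}$ with norm at most one, uniformly in $\epsilon$. This gives
\begin{align*}
\Lambda^{\alpha}[\mathbb{J}_{\epsilon},Q]w=\mathbb{J}_{\epsilon}[\Lambda^{\alpha},Q]w+[\mathbb{J}_{\epsilon},Q]\Lambda^{\alpha}w-[\Lambda^{\alpha},Q]\mathbb{J}_{\epsilon}w .
\end{align*}
Using $\|\mathbb{J}_{\epsilon}f\|_{H^{m}}\leq\|f\|_{H^{m}}$, the first and third terms reduce to bounding $\|[\Lambda^{\alpha},Q]v\|_{H^{m}}\leq C(\|Q\|_{\dot{H}^{\alpha}}+\|Q\|_{\dot{H}^{s+\alpha}})\|v\|_{H^{m}}$ for $v\in\{w,\mathbb{J}_{\epsilon}w\}$. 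The second term should be handled by a Friedrichs-type estimate for $[\mathbb{J}_{\epsilon},Q]$ acting on $\Lambda^{\alpha}w$, which has $\alpha$ fewer derivatives than needed. The $\alpha$ loss must therefore be recovered from the smoothing gain of the mollifier commutator.

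\textbf{Step 1: the fractional commutator in $H^{m}$.} For $m=0$, estimate \eqref{eq:dongliC2} gives $\|[\Lambda^{\alpha},Q]v\|_{L^{2}}\leq C\|\Lambda^{\alpha}Q\|_{L^{\infty}}\|v\|_{L^{2}}$, and $\|\Lambda^{\alpha}Q\|_{L^{\infty}}\lesssim\|Q\|_{\dot{H}^{\alpha}}+\|Q\|_{\dot{H}^{s+\alpha}}$ by Sobolev embedding, since $s+\alpha>d/2$. For $|\gamma|\leq m$, apply the same identity with $D^{\gamma}$:
\begin{align*}
D^{\gamma}[\Lambda^{\alpha},Q]v=\Lambda^{\alpha}[D^{\gamma},Q]v+[\Lambda^{\alpha},Q]D^{\gamma}v-[D^{\gamma},Q]\Lambda^{\alpha}v .
\end{align*}
The first term is controlled in $L^{2}$ by Theorem \ref{mainFcommutator}, through its $H^{\alpha}$ bound. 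The second term is controlled by \eqref{eq:dongliC2}. The third is expanded by Leibniz as $\sum_{0<\beta\leq\gamma}D^{\beta}Q\,D^{\gamma-\beta}\Lambda^{\alpha}v$. In the terms with $|\beta|=1$, one would move $\Lambda^{\alpha}$ off $v$ by writing $D^{\beta}Q\,\Lambda^{\alpha}f=\Lambda^{\alpha}(D^{\beta}Qf)-[\Lambda^{\alpha},D^{\beta}Q]f$. Here the piece $\Lambda^{\alpha}(D^{\beta}Qf)$ is bounded with the fractional Leibniz rule \eqref{eq:fractionalleibnizrule} and $L^{4}$ embeddings, exactly as in \eqref{eq:felipedecomp3}. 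The commutator piece is bounded with \eqref{eq:dongliC2}, using $\Lambda^{\alpha}\nabla Q\in H^{s-1}\hookrightarrow L^{\infty}$. In the terms with $|\beta|\geq2$, the factor $D^{\gamma-\beta}\Lambda^{\alpha}v$ carries at most $m-2+\alpha<m-1$ derivatives, so Hölder with $L^{4}\times L^{4}$, or $L^{2}\times L^{\infty}$ when $|\beta|=s$, closes the estimate.

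\textbf{Step 2: the mollifier term, the main obstacle.} The naive Friedrichs bound $\|[\mathbb{J}_{\epsilon},Q]\Lambda^{\alpha}w\|_{H^{m}}\lesssim\|\nabla Q\|\,\|\Lambda^{\alpha}w\|_{H^{m-1}}$ loses $\alpha$ derivatives at the top order. To avoid this, I would instead use the structure $[\mathbb{J}_{\epsilon},Q]f(x)=\int\eta_{\epsilon}(x-y)(Q(y)-Q(x))f(y)\,dy$.

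First, for $m=0$, a Schur/Taylor argument gives $\|[\mathbb{J}_{\epsilon},Q]f\|_{L^{2}}\lesssim\epsilon\|\nabla Q\|_{L^{\infty}}\|f\|_{L^{2}}$. In the $\Lambda^{\alpha}w$ case, one splits $\Lambda^{\alpha}=\Lambda^{\alpha}S_{\epsilon}+\Lambda^{\alpha}(1-S_{\epsilon})$, where $S_{\epsilon}$ is a low-frequency cutoff at scale $\epsilon^{-1}$. The low-frequency part costs $\epsilon^{-\alpha}$, which the factor $\epsilon$ absorbs, while the high-frequency part is placed directly on the commutator (interpolating between $\epsilon\|\nabla Q\|$ and $\|Q\|_{L^{\infty}}$ bounds). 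Alternatively, and more in the spirit of the paper, one can avoid this altogether: rewrite via the same identity applied to the Littlewood–Paley blocks $\Delta_{j}$, and use $2^{j\alpha}\|[\Delta_{j},Q]\cdot\|$-type bounds as in the proof of Theorem \ref{mainFcommutator}, uniformly in $\epsilon$ since $\eta_{\epsilon}$'s symbol is bounded with bounded $\xi\partial_{\xi}$.

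Higher $m$ then follow by distributing $D^{\gamma}$ via Leibniz as in Step 1. The lower-order pieces contain $D^{\beta}Q$ with $|\beta|\geq1$ and are bounded using $\|\mathbb{J}_{\epsilon}\|\leq1$ together with the embeddings.
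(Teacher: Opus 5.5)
Your Step~1 is correct. Organising the proof around $T_1=\Lambda^{\alpha}$ is a genuinely different decomposition from the paper's, which takes $T_1=D^{\gamma}$ and then works with dyadic blocks $\Delta_j$. Incidentally, the $|\beta|=1$ terms there need no rearrangement, since $\|D^{\beta}Q\,D^{\gamma-\beta}\Lambda^{\alpha}v\|_{L^2}\le\|\nabla Q\|_{L^\infty}\|v\|_{H^{m-1+\alpha}}$.

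The gap is in Step~2, which is where the real content of the theorem sits. The loss you describe is an arithmetic slip. The bound you call naive, $\|\nabla Q\|_{L^\infty}\|\Lambda^{\alpha}w\|_{H^{m-1}}$, is already at most $\|\nabla Q\|_{L^\infty}\|w\|_{H^{m}}$, because $m-1+\alpha<m$. What is actually missing is a proof of that bound at top order. Write
\[
D^{\gamma}[\mathbb{J}_\epsilon,Q]\Lambda^{\alpha}w=[\mathbb{J}_\epsilon,Q]D^{\gamma}\Lambda^{\alpha}w+\mathbb{J}_\epsilon[D^{\gamma},Q]\Lambda^{\alpha}w-[D^{\gamma},Q]\mathbb{J}_\epsilon\Lambda^{\alpha}w .
\]
The last two terms are treated exactly like the third term of your Step~1. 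For $|\gamma|=m$, however, the first term requires $\|[\mathbb{J}_\epsilon,Q]h\|_{L^2}\lesssim\|h\|_{H^{-\alpha}}$ uniformly in $\epsilon$, because $h=D^{\gamma}\Lambda^{\alpha}w$ lies only in $H^{-\alpha}$. For $m=0$ this estimate is the whole statement.

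Neither substitute you offer supplies this estimate.
\begin{itemize}
\item After splitting at frequency $\epsilon^{-1}$, you handle the high-frequency piece by interpolating between $\|[\mathbb{J}_\epsilon,Q]\|_{L^2\to L^2}\lesssim\epsilon\|\nabla Q\|_{L^\infty}$ and $\|[\mathbb{J}_\epsilon,Q]\|_{L^2\to L^2}\lesssim\|Q\|_{L^\infty}$. Both are order-zero bounds, so no interpolate of them can absorb the unbounded operator $\Lambda^{\alpha}(1-S_\epsilon)$ uniformly in $\epsilon$. Taken separately, $\mathbb{J}_\epsilon(Q\,\cdot)$ and $Q\,\mathbb{J}_\epsilon$ composed with it are only $O(\epsilon^{-\alpha})$; what is needed is a derivative gain coming from the commutator structure.
\item The Littlewood--Paley variant, applied via \eqref{eq:maincommutatoridea} with $\Delta_j$, only relocates the problem. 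Next to the harmless $[\Delta_j,Q]$ pieces it produces $[\mathbb{J}_\epsilon,Q]\Delta_j(\cdot)$, which needs exactly the same gain. Your remark about the symbol of $\eta_\epsilon$ would have to become a Calder\'on-type commutator theorem, which you neither state nor prove.
\end{itemize}

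The missing idea, and the key step of the paper's proof, is that $[\mathbb{J}_\epsilon,Q]$ is uniformly of order $-1$. Friedrichs' lemma gives $\|[\mathbb{J}_\epsilon,Q^{\top}]\varphi\|_{H^{1}}\le C\|\nabla Q\|_{L^\infty}\|\varphi\|_{L^2}$. Combining this with the self-adjointness of $\mathbb{J}_\epsilon$, duality yields \eqref{eq:dualityargument}:
\[
\|[\mathbb{J}_\epsilon,Q]h\|_{L^2}\le C\|\nabla Q\|_{L^\infty}\|h\|_{H^{-1}} .
\]
Inserting this gives $\|[\mathbb{J}_\epsilon,Q]D^{\gamma}\Lambda^{\alpha}w\|_{L^2}\le C\|\nabla Q\|_{L^\infty}\|w\|_{H^{|\gamma|-1+\alpha}}$. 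With that single estimate, your decomposition closes.
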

\begin{proof}
First observe that 
\begin{equation}
\label{eq:usefulidentityHm}
\begin{aligned}
\|\Lambda^{\alpha}[\mathbb{J}_{\epsilon},Q]w\|_{H^{m}}^{2}\leq\sum_{|\gamma|\leq m}\|D^{\gamma}\{[\mathbb{J}_{\epsilon},Q]w\}\|_{H^{\alpha}}^{2}=\sum_{|\gamma|\leq m}\sum_{j\geq -1}2^{2\alpha j}\|\Delta_{j}D^{\gamma}\{[\mathbb{J}_{\epsilon},Q]w\}\|_{L^{2}}^{2}.
\end{aligned}
\end{equation}
Hence, we concentrate on estimating the $L^{2}$ norms of $\Delta_{j}D^{\gamma}\{[\mathbb{J}_{\epsilon},Q]w\}$ for $j\geq -1$ and $\gamma\in\mathbb{N}_{0}^{d}$ with $|\gamma|\leq m$. We use \eqref{eq:maincommutatoridea} with $T_{1}=D^{\gamma}$ and $T_{2}=\mathbb{J}_{\epsilon}$. Thus, for each $j\geq -1$, we have
\begin{align*}
\Delta_{j}D^{\gamma}\{[\mathbb{J}_{\epsilon},Q]w\}&=\Delta_{j}\{\mathbb{J}_{\epsilon}[D^{\gamma},Q]w\}+\Delta_{j}\{[\mathbb{J}_{\epsilon},Q]D^{\gamma}w\}-\Delta_{j}\{[D^{\gamma},Q]\mathbb{J}_{\epsilon}w\}\\
&=:\mathcal{L}_{1}^{j}+\mathcal{L}_{2}^{j}+\mathcal{L}_{3}^{j}.
\end{align*}
To control the terms $\mathcal{L}_{k}^{j}$, we apply \eqref{eq:maincommutatoridea} once again to obtain a second decomposition,
\begin{align*}
&\mathcal{L}_{1}^{j}=\mathbb{J}_{\epsilon}\left( D^{\gamma}\{[\Delta_{j},Q]w\}+[D^{\gamma},Q]\Delta_{j}w-[\Delta_{j},Q]D^{\gamma}w\right),\\
&\mathcal{L}_{2}^{j}=\mathbb{J}_{\epsilon}\{[\Delta_{j},Q]D^{\gamma}w\}+[\mathbb{J}_{\epsilon},Q]\Delta_{j}D^{\gamma}w-[\Delta_{j},Q]\mathbb{J}_{\epsilon}D^{\gamma}w,\\
&\mathcal{L}_{3}^{j}=D^{\gamma}\{[\Delta_{j},Q]\mathbb{J}_{\epsilon}w\}+[D^{\gamma},Q]\Delta_{j}\mathbb{J}_{\epsilon}w-[\Delta_{j},Q]\mathbb{J}_{\epsilon}D^{\gamma}w.
\end{align*}
The commutator estimates for the operators $D^{\gamma}$ and $\Delta_{j}$ (see, \cite[Lemma 2.97]{bahouri} and \cite[Proposition 1.5.10]{milani}) yield
\begin{align*}
2^{j\alpha}\|\mathcal{L}_{1}^{j}\|_{L^{2}}\leq 2^{j\alpha}\|D^{\gamma}\{[\Delta_{j},Q]w\}\|_{L^{2}}+C\|\nabla Q\|_{L^{\infty}}2^{j(|\gamma|-1+\alpha)}\|\Delta_{j}w\|_{L^{2}}+C2^{j(\alpha-1)}\|\nabla Q\|_{L^{\infty}}\|D^{\gamma}w\|_{L^{2}}.
\end{align*}
Notice that the first term in $\mathcal{L}_{1}^{j}$ coincides with $\mathcal{K}_{1}^{j}$ defined in the proof of Theorem \ref{mainFcommutator} and thus, by \eqref{eq:Fdecomposition3}, 
\begin{align*}
2^{j\alpha}\|\mathcal{L}_{1}^{j}\|_{L^{2}}&\leq C(\|Q\|_{\dot{H}^{\alpha}}+\|Q\|_{\dot{H}^{s+\alpha}})\left(2^{(\alpha-1)j}\|D^{\gamma}w\|_{L^{2}}+2^{j(|\gamma|-1+\alpha)}\|\Delta_{j}w\|_{L^{2}}\right)\\
&+\sum_{0<\beta\leq\gamma}C_{\beta,\gamma}2^{j\alpha}\|\Delta_{j}(D^{\beta}QD^{\gamma-\beta}w)\|_{L^{2}}.
\end{align*}
In view of \eqref{eq:felipedecomp3}, \eqref{eq:felipedecomp5} and \eqref{eq:felipedecomp8}, we obtain
\begin{align*}
\left(\sum_{j\geq -1}2^{2j\alpha}\|\mathcal{L}_{1}^{j}\|_{L^{2}}^{2}\right)^{1/2}\leq C(\|Q\|_{\dot{H}^{\alpha}}+\|Q\|_{\dot{H}^{s+\alpha}})\|w\|_{H^{m}}.
\end{align*}
An analogous argument yields the same bound for $\mathcal{L}_{3}^{j}$.
	
Meanwhile, the commutator estimates for the operator $\Delta_{j}$ (see, \cite[Lemma 2.97]{bahouri}), yield
\begin{align*}
2^{j\alpha}\|\mathcal{L}_{2}^{j}\|_{L^{2}}\leq C(\|Q\|_{\dot{H}^{\alpha}}+\|Q\|_{\dot{H}^{s+\alpha}})2^{(\alpha-1)j}\|D^{\gamma}w\|_{L^{2}}+2^{j\alpha}\|[J_{\epsilon},Q]\Delta_{j}D^{\gamma}w\|_{L^{2}}.
\end{align*}
Observe that we cannot apply the commutator estimates for the operator $\mathbb{J}_{\epsilon}$ (see, \cite[Section 1.5.4]{milani}) to the remaining term in the last inequality, because after adding them up with respect to $j$, it would yield upper bounds involving $\|w\|_{H^{m+\alpha}}$. Instead, we deal with this term by a duality argument. Indeed, set $h=\Delta_{j}D^{\gamma}w$ and use the self-adjointness of $\mathbb{J}_{\epsilon}$ on $L^{2}$ to show that
\begin{align*}
\langle[J_{\epsilon},Q]h,\varphi\rangle_{L^{2}}=-\langle h,[J_{\epsilon},Q^{\top}]\varphi\rangle_{L^{2}}\quad\mbox{for every}\quad \varphi\in L^{2}.
\end{align*}
Then, by the Cauchy-Schwarz inequality and by \cite[Lemma 1.5.2]{milani}, we have
\begin{align*}
|\langle[J_{\epsilon},Q]h,\varphi\rangle_{L^{2}}|\leq\|h\|_{H^{-1}}\|[J_{\epsilon},Q^{\top}]\varphi\|_{H^{1}}\leq C\|\nabla Q\|_{L^{\infty}}\|\varphi\|_{L^{2}}\|h\|_{H^{-1}},
\end{align*}
for some positive constant $C$ independent of $h$, $Q$ and $\varphi$. Hence,
\begin{equation}
\|[J_{\epsilon},Q]h\|_{L^{2}}=\sup_{\|\varphi\|_{L^{2}}=1}|\langle[J_{\epsilon},Q]h,\varphi\rangle_{L^{2}}|\leq C\|\nabla Q\|_{L^{\infty}}\|h\|_{H^{-1}}.\label{eq:dualityargument}
\end{equation}
We find
\begin{align*}
2^{j\alpha}\|\mathcal{L}_{2}^{j}\|_{L^{2}}&\leq C(\|Q\|_{\dot{H}^{\alpha}}+\|Q\|_{\dot{H}^{s+\alpha}})\left(2^{(\alpha-1)j}\|D^{\gamma}w\|_{L^{2}}+2^{j\alpha}\|\Delta_{j}D^{\gamma}w\|_{H^{-1}}\right)\\
&\leq C(\|Q\|_{\dot{H}^{\alpha}}+\|Q\|_{\dot{H}^{s+\alpha}})\left(2^{(\alpha-1)j}\|D^{\gamma}w\|_{L^{2}}+2^{j(|\gamma|-1+\alpha)}\|\Delta_{j}w\|_{L^{2}}\right)\\
&\leq C(\|Q\|_{\dot{H}^{\alpha}}+\|Q\|_{\dot{H}^{s+\alpha}})\left(2^{(\alpha-1)j}\|w\|_{H^{m}}+2^{jm}\|\Delta_{j}w\|_{L^{2}}\right).
\end{align*}
Therefore,
\begin{align*}
\left(\sum_{j\geq -1}2^{2j\alpha}\|\mathcal{L}_{2}^{j}\|_{L^{2}}^{2}\right)^{1/2}\leq C(\|Q\|_{\dot{H}^{\alpha}}+\|Q\|_{\dot{H}^{s+\alpha}})\|w\|_{H^{m}}.
\end{align*}
After collecting all the estimates for $\mathcal{L}_{1}^{j}$, $\mathcal{L}_{2}^{j}$, $\mathcal{L}_{3}^{j}$ and using \eqref{eq:usefulidentityHm} we conclude the result. 
\end{proof}

\section{Basic assumptions for linearized fractional systems}
In this section we establish the basic structural assumptions for the following linearized system of equations, 
\begin{equation}
\label{eq:linearizedsystem1}
A^{0}(\widetilde{U})\partial_{t}U+A^{j}(\widetilde{U})\partial_{j}U+D(\widetilde{U})U+B(\widetilde{U})\Lambda^{2\alpha}U+H(\widetilde{U})[\Lambda^{2\alpha},G(\widetilde{U})]\mathcal{L}U=F,
\end{equation}
The initial data is prescribed at $t=0$,
\begin{align}
\label{eq:linearizedinitialdata}
U\rvert_{t=0}=U_{0}.
\end{align}
Here, $U$ denotes the unknown variable and is such that $U=U(x,t)\in\mathbb{R}^{N}$ for all $(x,t)\in\mathbb{R}^{d}\times[0,T]$, for some $T>0$. For each $\widetilde{U}\in\mathcal{O}$,  an open convex set of $\mathbb{R}^{N}$, the coefficients $A^{0}(\widetilde{U})$, $A^{j}(\widetilde{U})$ $D(\widetilde{U})$, $B(\widetilde{U})$, $H(\widetilde{U})$ and $G(\widetilde{U})$ in \eqref{eq:linearizedsystem1} are real matrices of order $N\times N$ and $\mathcal{L}$ is a constant real matrix of the same order. We make the following assumptions for the linear system \eqref{eq:linearizedsystem1}.
\begin{cond}
\label{cond1}
\begin{itemize}
\item [(C1)] The functions $A^{0}(\widetilde{U})$, $A^{j}(\widetilde{U})$, $D(\widetilde{U})$, $B(\widetilde{U})$, $H(\widetilde{U})$ and $G(\widetilde{U})$ are smooth in $\widetilde{U}\in\mathcal{O}$.
\item [(C2)] $A^{0}(\widetilde{U})$ is real symmetric and positive definite for $\widetilde{U}\in\mathcal{O}$. 
\item [(C3)] $A^{j}(\widetilde{U})$ are real symmetric for each $\widetilde{U}\in\mathcal{O}$. 
\item [(C4)] There is a partition in the vector functions $\widetilde{U}\in\mathcal{O}$ and $U\in\mathbb{R}^{N}$ so that, 
\begin{align*}
\widetilde{U}(x,t)=(\widetilde{u},\widetilde{v})^{\top}(x,t),\quad\mbox{and}\quad U(x,t)=(u,v)^{\top}(x,t),
\end{align*}
with $\widetilde{u}(x,t),u(x,t)\in\mathbb{R}^{k}$, $\widetilde{v}(x,t),v(x,t)\in\mathbb{R}^{m}$ and $N=k+m$, for any $(x,t)\in\mathbb{R}^{d}\times[0,T]$.
\item [(C5)] For each $\widetilde{U}\in\mathcal{O}$, $B(\widetilde{U})\geq 0$. Moreover, $B(\widetilde{U})$ is of the form,
\begin{equation}
\label{eq:BS1}
B(\widetilde{U})=\left(\begin{array}{cc}
	\mathbb{O}_{k\times k}&\mathbb{O}_{k\times m}\\
	\mathbb{O}_{m\times k}& B_{0}(\widetilde{U})
\end{array}\right),
\end{equation}
where $B_{0}(\widetilde{U})$ is a real symmetric positive definite matrix of order $m\times m$ for all $\widetilde{U}\in\mathcal{O}$ and there is a positive constant $\mu_{0}>0$ such that,
\begin{align*}
\mbox{for all}\quad v\in H^{\alpha},\quad\langle B_{0}(\widetilde{U})\Lambda^{2\alpha}v,v\rangle_{L^{2}}\geq\mu_{0}\|\Lambda^{\alpha}v\|_{L^{2}}^{2}.
\end{align*} 
\item [(C6)] There are matrices $H_{0}$ and $G_{0}$ of order $m\times m$ that are smooth functions only on $\widetilde{v}\in\mathbb{R}^{m}$ such that for any $\widetilde{U}\in\mathcal{O}$, 
\begin{equation}
\label{eq:BS2}
H(\widetilde{U})=\left(\begin{array}{cc}
	\mathbb{O}_{k\times k}&\mathbb{O}_{k\times m}\\
	\mathbb{O}_{m\times k}&H_{0}(\widetilde{v})
\end{array}\right)\quad\mbox{and}\quad G(\widetilde{U})=\left(\begin{array}{cc}
	\mathbb{O}_{k\times k}&\mathbb{O}_{k\times m}\\
	\mathbb{O}_{m\times k}&G_{0}(\widetilde{v})
\end{array}\right)
\end{equation}
\item[(C7)] $\mathcal{L}$ is a constant square matrix of order $N$.
\end{itemize}
\end{cond}
\begin{cond}
\label{cond2}
Let $s>\frac{d}{2}+1$ be an integer and $\alpha\in(0,1)$ be arbitrary. For  $(\widetilde{u},\widetilde{v})=(\widetilde{u},\widetilde{v})(x,t)$, given functions on $Q_{T}$, we assume the following conditions.
\begin{itemize}
\item [(A1)] $(\widetilde{u},\widetilde{v})\in\mathcal{C}([0,T];H^{s})$, $\widetilde{v}\in L^{2}([0,T];H^{s+\alpha})$.
\item [(A2)] $\partial_{t}\widetilde{u}\in\mathcal{C}([0,T];H^{s-1})$. If $\alpha\in[\frac{1}{2},1]$, we assume that $\partial_{t}\widetilde{v}\in\mathcal{C}([0,T];H^{s-2\alpha})\cap L^{2}([0,T];H^{s-1})$ and, if on the other hand, $\alpha\in(0,\frac{1}{2})$, we assume that $\partial_{t}\widetilde{v}\in\mathcal{C}([0,T]; H^{s-1})\cap L^{2}([0,T]; H^{s-1})$.
\item [(A3)] $(\widetilde{u},\widetilde{v})(x,t)\in\mathcal{O}_{1}$ for any $(x,t)\in Q_{T}$, where $\mathcal{O}_{1}$ is an open bounded convex set in $\mathbb{R}^{N}$ satisfying $\overline{\mathcal{O}_{1}}\subset\mathcal{O}$. 
\end{itemize}
\end{cond}
\begin{rem}
\label{remark1}
Observe that, by conditions (A2) and (C3) there are positive constants $a_{0}=a_{0}(\mathcal{O}_{1})$ and $a_{1}=a_{1}(\mathcal{O}_{1})$ such that, 
\begin{align*}
	\mbox{for all}\quad t\in[0,T],\quad a_{0}|V|^{2}\leq(A^{0}(\widetilde{U}(t))V,V)_{\mathbb{R}^{N}}\leq a_{1}|V|^{2}
\end{align*}
\end{rem}

In the following sections we assume that the constant matrix $\mathcal{L}$ is either the identity matrix $\mathbb{I}_{N}$ or the matrix $\mathcal{L}$ defined in \eqref{eq:diffusionmatrix}. In the second case, for every  $U=(u,v)^{\top}\in\mathbb{R}^{N}$ with $u\in\mathbb{R}^{k}$ and $v\in\mathbb{R}^{m}$, we have
\begin{equation}
\label{eq:diffusionvector}
\mathcal{L}_{N}U=\left(\begin{array}{c}
\mathbb{O}_{k\times 1}\\
\mathcal{L}_{0}u
\end{array}\right)\in\mathbb{R}^{N}.
\end{equation} 
We show that the estimates for the commutator term in \eqref{eq:linearizedsystem1} can be absorbed by the dissipation if, in addition to condition \ref{cond1}, one of the following hypotheses holds:
\begin{itemize}
\item [(E1)] $\mathcal{L}=\mathbb{I}_{N}$.
\item [(E2)] $\mathcal{L}=\mathcal{L}_{N}$ and $s>\frac{d}{2}+s_{\alpha}$ with $s_{\alpha}=\max\{1,2\alpha\}$. Furthermore, if $\alpha\in(\tfrac{1}{2},1)$, we assume $A^{0}(\widetilde{U})$ has the block structure
\begin{align*}
A^{0}(\widetilde{U})=\left(\begin{array}{cc}
A^{0}_{1}(\widetilde{U})&\mathbb{O}_{k\times m}\\
\mathbb{O}_{m\times k}&A^{0}_{2}(\widetilde{v})
\end{array}\right),
\end{align*}
where $A^{0}_{1}(\widetilde{U})$ and $A^{0}_{2}(\widetilde{v})$ are real symmetric, positive definite matrices of order $k\times k$ and $m\times m$ for all $\widetilde{U}=(\widetilde{u},\widetilde{v})\in\mathcal{O}$. 
\end{itemize}

To establish the well-posedness of the linear Cauchy problem \ref{eq:linearizedsystem1}-\eqref{eq:linearizedinitialdata}, we introduce the following assumptions, which will be assumed throughout the next section.

\begin{itemize}
	\item [(L1)] $s>\frac{d}{2}+1$ is and integer and $\alpha\in(0,1)$ is fixed but arbitrary.
	\item [(L2)] $\widetilde{U}=(\widetilde{u},\widetilde{v})^{\top}$ satisfies condition \ref{cond2}. Moreover, we fix a constant $M>0$ such that
	\begin{align}
		\|\widetilde{U}(t)\|_{H^{s}}\leq M\quad\mbox{for all}\quad t\in[0,T].\label{eq:M0}
	\end{align}
	\item [(L3)] Condition \ref{cond1} is satisfied with one of the hypothesis (E1)-(E2).
\end{itemize}

\subsection{Preliminary estimates}
For convenience, we collect the following estimates, which will be used repeatedly throughout the remainder of the paper. Let $A=A(\widetilde{U})$ denote any of the matrix coefficients in \eqref{eq:linearizedsystem1} and assume that $\widetilde{U}$ satisfies condition \ref{cond2} and the uniform bound \eqref{eq:M0}. 

First note that, since $s-1>\frac{d}{2}$, the Sobolev embedding Theorem combined with \eqref{eq:M0}, yield
\begin{align}
\|\widetilde{U}(t)\|_{L^{\infty}}\leq\kappa_{d}M,\quad\mbox{for all}\quad t\in[0,T].\label{eq:basiccontrol}
\end{align}
By the smoothness of $A$, $A(\widetilde{U})\in L^{\infty}(\mathbb{R}^{d}\times[0,T])$. Moreover, since $\widetilde{U}\in\mathcal{O}_{1}$, there is a positive constant $c_{A}(\mathcal{O}_{1})$, such that
\begin{align}
\|A(\widetilde{U})\|_{L^{\infty}}\leq c_{A}(\mathcal{O}_{1}).\label{eq:firstuseful}
\end{align}
Let $m\in[1,s]$ be an integer. As a consequence of the chain rule estimates \eqref{eq:crest} in \ref{chainruleestimates} and \eqref{eq:basiccontrol}, there is a positive constant $C$, for which
\begin{align*}
\|\nabla^{x}A(\widetilde{U})\|_{H^{m-1}}\leq C\|A\|_{C^{m}(\overline{\mathcal{O}_{1}})}(1+M)^{m-1}\|\nabla\widetilde{U}\|_{H^{m-1}}.
\end{align*}
Hence, there is a positive constant $C_{A}=C_{A}(\mathcal{O}_{1},M)$, such that
\begin{align*}
\|A(\widetilde{U}(t))\|_{\widehat{H}^{m}}\leq c_{A}(\mathcal{O}_{1})+C_{A}(\mathcal{O}_{1},M)M\quad\mbox{for all}\quad t\in[0,T].
\end{align*}
Define 
\begin{align*}
C(\mathcal{O}_{1},M):=\sup_{A} c_{A}(\mathcal{O}_{1})+C_{A}(\mathcal{O}_{1},M)M,
\end{align*}
where the supremum is taken over the set of all matrix coefficients $A$ in \eqref{eq:linearizedsystem1}. Similarly, for every $\sigma\in[0,s]$ we use the fractional chain rule estimates in \eqref{eq:fractionalchain1} to ensure the existence of a constant $C=C(\mathcal{O}_{1})$ such that 
\begin{align}
\|A(\widetilde{U}(t))\|_{\dot{H}^{\sigma}}\leq C(\mathcal{O}_{1})\|\widetilde{U}(t)\|_{H^{s}}\quad\mbox{for all}\quad t\in[0,T].\label{eq:goodestimate}
\end{align}
Assume that $V\in H^{m}$. Then, by the Sobolev product estimates \ref{sobolevprodinteger}, 
\begin{align}
\|A(\widetilde{U})V\|_{H^{m}}\leq C(\mathcal{O}_{1},M)\|V\|_{H^{m}}.\label{eq:basicproduct}
\end{align}
Let $G=G(\widetilde{U})$ be as in condition \ref{cond1} and let $\widetilde{U}=(\widetilde{u},\widetilde{v})^{\top}$ satisfy condition \ref{cond2}. By the chain rule estimates \eqref{eq:fractionalchain1} together with condition (C6), we have
\begin{equation}
	\label{eq:Gchainestimates}
	\begin{aligned}
		\|G(\widetilde{U})\|_{\dot{H}^{\alpha}}+\|G(\widetilde{U})\|_{\dot{H}^{s+\alpha}}&\leq \|G_{0}(\widetilde{v})\|_{\dot{H}^{\alpha}}+\|G_{0}(\widetilde{v})\|_{\dot{H}^{s+\alpha}}\\
		&\leq C(\mathcal{O}_{1})\left(\|\widetilde{U}\|_{H^{s}}+\|\widetilde{v}\|_{H^{s+\alpha}}\right).
	\end{aligned}
\end{equation}
These constants may vary from line to line throughout the proofs. Whenever convenient, they will be denoted by $C_{i}(\mathcal{O}_{1},M)$. In particular, all the coefficient estimates appearing below can be expressed in terms of the generic constant $C(\mathcal{O}_{1},M)$. 
	\section{Approximate linearized system}
In this section we construct approximations to the solutions of  \eqref{eq:linearizedsystem1}-\eqref{eq:linearizedinitialdata} by means of the Friedrichs' mollifier, namely,
\begin{align*}
\mathbb{J}_{\epsilon}u=\mathcal{F}^{-1}\left(\widehat{\eta_{\epsilon}}\widehat{u}\right)=\eta_{\epsilon}\ast u.
\end{align*}
Let $\{\epsilon_{n}\}_{n\rightarrow\infty}$ be a sequence of positive real numbers such that, $\epsilon_{n}\rightarrow 0$ as $n\rightarrow\infty$. 
We consider the following approximation of system \eqref{eq:linearizedsystem1}, 
\begin{equation}
\label{eq:LSn}
\begin{aligned}
\partial_{t}U_{n}+(A^{0}(\widetilde{U}))^{-1}\left[\mathbb{J}_{\epsilon_{n}}\left(A^{j}(\widetilde{U})\partial_{j}\mathbb{J}_{\epsilon_{n}}U_{n}\right)\right.&+\mathbb{J}_{\epsilon_{n}}\left(D(\widetilde{U})\mathbb{J}_{\epsilon_{n}}U_{n}\right)+\mathbb{J}_{\epsilon_{n}}\left(B(\widetilde{U})\Lambda^{2\alpha}\mathbb{J}_{\epsilon_{n}}U_{n}\right)\\
&+\left.\mathbb{J}_{\epsilon_{n}}\left(H(\widetilde{U})\mathbb{J}_{\epsilon_{n}}[\Lambda^{2\alpha},G(\widetilde{U})]\mathcal{L}\mathbb{J}_{\epsilon_{n}}U_{n}\right)\right]=(A^{0}(\widetilde{U}))^{-1}\mathbb{J}_{\epsilon_{n}}\left(F\right),
\end{aligned}
\tag{LSn}
\end{equation}
for the unknown $U_{n}=(u_{n},v_{n})^{\top}$, with $u_{n}(x,t)\in\mathbb{R}^{k}$ and $v_{n}(x,t)\in\mathbb{R}^{m}$ for all $(x,t)\in\mathbb{R}^{d}\times[0,T]$ and initial data, 
\begin{align}
\label{eq:LSninitialdata}
U_{n}\rvert_{t=0}=\mathbb{J}_{\epsilon_{n}}U_{0}.
\end{align}
In order to prove the existence and uniqueness of solutions for \eqref{eq:LSn} we apply the following lemma. For a proof, see \cite[Proposition 2.17]{novotny}.
\begin{lema}
\label{odelemma}
Let $L=L(t)$, $t\in[0,T]$ be a family of linear bounded operators in a Banach space $X$ and let $L\in\mathcal{C}\left([0,T];\mathcal{B}(X)\right)$. Then, for any $v_{0}\in X$ and $f\in\mathcal{C}([0,T];X)$ the problem 
\begin{equation}
\label{eq:LSnode}
\begin{aligned}
v^{\prime}(t)&=L(t)v(t)+f(t),\quad t\in(0,T),\\
v(0)&=v_{0},
\end{aligned}
\end{equation}
has a unique solution $v\in\mathcal{C}((0,T);X)\cap C^{1}\left([0,T];X\right)$. This means in particular that the derivative $v^{\prime}(t)$ exists with respect to the norm of $X$ in $(0,T)$ and the equation in \eqref{eq:LSnode} is satisfied pointwise. 
\end{lema}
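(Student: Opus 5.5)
This is the classical Cauchy–Lipschitz (Picard) theorem for linear ODEs in a Banach space. I would rewrite the problem as a fixed point problem for the integral equation
\begin{align*}
v(t)=v_{0}+\int_{0}^{t}\left(L(\tau)v(\tau)+f(\tau)\right)d\tau,\quad t\in[0,T],
\end{align*}
posed in the Banach space $\mathcal{C}([0,T];X)$ with the sup norm. Since $L\in\mathcal{C}([0,T];\mathcal{B}(X))$ and $f\in\mathcal{C}([0,T];X)$, the map $\tau\mapsto L(\tau)v(\tau)+f(\tau)$ is continuous with values in $X$ whenever $v$ is continuous. Indeed,
\begin{align*}
\|L(\tau)v(\tau)-L(\tau_{0})v(\tau_{0})\|_{X}\leq\|L(\tau)-L(\tau_{0})\|_{\mathcal{B}(X)}\|v(\tau)\|_{X}+\|L(\tau_{0})\|_{\mathcal{B}(X)}\|v(\tau)-v(\tau_{0})\|_{X}.
\end{align*}
Hence the Riemann integral in $X$ is well defined, and the right-hand side defines an operator $\Phi:\mathcal{C}([0,T];X)\to\mathcal{C}([0,T];X)$.

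Set $K:=\sup_{t\in[0,T]}\|L(t)\|_{\mathcal{B}(X)}<\infty$. Rather than restricting to a short time interval, I would use the weighted norm $\|v\|_{K}:=\sup_{t}e^{-2Kt}\|v(t)\|_{X}$, which is equivalent to the sup norm. The estimate
\begin{align*}
\|\Phi v(t)-\Phi w(t)\|_{X}\leq K\int_{0}^{t}e^{2K\tau}\|v-w\|_{K}\,d\tau\leq\tfrac{1}{2}e^{2Kt}\|v-w\|_{K}
\end{align*}
shows that $\Phi$ is a contraction with constant $\tfrac12$. Banach's fixed point theorem then gives a unique continuous solution of the integral equation on all of $[0,T]$. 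Iterating $\Phi$ on a partition of $[0,T]$ into subintervals of length less than $1/(2K)$ would work equally well.

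For regularity, the integrand is continuous in $X$, so the fundamental theorem of calculus for $X$-valued Riemann integrals shows that $v$ is differentiable in the norm of $X$. Its derivative is $v'(t)=L(t)v(t)+f(t)$, which is continuous, and one-sided derivatives exist at the endpoints. Hence $v\in\mathcal{C}^{1}([0,T];X)$, the equation holds pointwise, and $v(0)=v_{0}$.

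For uniqueness, any $\mathcal{C}^{1}$ (or merely differentiable with continuous derivative) solution of the ODE integrates to a solution of the integral equation, so it coincides with the fixed point. Alternatively, Gronwall's inequality applied to $\|v(t)-w(t)\|_{X}\leq K\int_{0}^{t}\|v-w\|_{X}$ gives the same conclusion.

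No step is a genuine obstacle. The only point needing care is the justification of the vector-valued fundamental theorem of calculus, which follows from uniform continuity of the integrand on the compact interval $[0,T]$.
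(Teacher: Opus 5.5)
Your proof is correct and complete. The Bielecki-weighted norm makes $\Phi$ a contraction with constant $\tfrac12$ on all of $[0,T]$ at once. The fundamental theorem of calculus for continuous $X$-valued integrands then upgrades the fixed point to a $\mathcal{C}^{1}([0,T];X)$ solution. The paper gives no proof of this lemma and only cites \cite[Proposition 2.17]{novotny}, so your argument is a self-contained substitute for that citation rather than a competing route.

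One small point about the solution class. As written, the statement's class $\mathcal{C}((0,T);X)\cap\mathcal{C}^{1}([0,T];X)$ has the intervals swapped, since the first factor is redundant. The class actually used in Theorem~\ref{existenceUniqLSn} is $\mathcal{C}([0,T];X)\cap\mathcal{C}^{1}((0,T);X)$. Your existence result is stronger than that, so nothing changes there. If you also want uniqueness in this larger class, proceed as follows:
\begin{itemize}
\item integrate the equation over $[\delta,t]$ with $0<\delta<t$;
\item let $\delta\to0$, using the continuity of $v$ at $t=0$ and the boundedness of $L(\tau)v(\tau)+f(\tau)$ on $[0,T]$;
\item conclude that such a solution satisfies the integral equation, so it coincides with your fixed point.
\end{itemize}
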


\begin{theo}
\label{existenceUniqLSn}
Assume that conditions \ref{cond1} and \ref{cond2} are satisfied and that $F\in\mathcal{C}([0,T];H^{\sigma})$ for some $\sigma\in[0,s]$. Then, there is a unique solution $U_{n}$ of the Cauchy problem \eqref{eq:LSn} that belongs to the space $\mathcal{C}([0,T];H^{s})\cap\mathcal{C}^{1}((0,T);H^{s})$ and satisfies the equation pointwise. 
\end{theo}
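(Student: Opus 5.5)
We will rewrite \eqref{eq:LSn} in the abstract form $U_n'(t)=L_n(t)U_n(t)+f_n(t)$ on the Banach space $X=H^{s}$ and apply Lemma \ref{odelemma}. Here
\begin{align*}
L_n(t)V:=-(A^{0}(\widetilde{U}(t)))^{-1}\Big[\mathbb{J}_{\epsilon_n}\big(A^{j}(\widetilde{U})\partial_j\mathbb{J}_{\epsilon_n}V\big)+\mathbb{J}_{\epsilon_n}\big(D(\widetilde{U})\mathbb{J}_{\epsilon_n}V\big)+\mathbb{J}_{\epsilon_n}\big(B(\widetilde{U})\Lambda^{2\alpha}\mathbb{J}_{\epsilon_n}V\big)\\
+\mathbb{J}_{\epsilon_n}\big(H(\widetilde{U})\mathbb{J}_{\epsilon_n}[\Lambda^{2\alpha},G(\widetilde{U})]\mathcal{L}\mathbb{J}_{\epsilon_n}V\big)\Big],
\end{align*}
and $f_n(t):=(A^{0}(\widetilde{U}(t)))^{-1}\mathbb{J}_{\epsilon_n}F(t)$. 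The initial datum is $\mathbb{J}_{\epsilon_n}U_0\in H^{s}$. The only tool needed beyond the preliminary estimates is the smoothing property of the Friedrichs mollifier: for fixed $\epsilon>0$ and any $r\ge0$, $\|\mathbb{J}_\epsilon g\|_{H^{\sigma+r}}\le C_{\epsilon,r}\|g\|_{H^{\sigma}}$. Since $n$ is fixed, all constants may depend on $\epsilon_n$.

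\textbf{Step 1: $L_n(t)\in\mathcal{B}(H^{s})$ uniformly in $t$.} First, $(A^{0}(\widetilde{U}))^{-1}$ is a smooth function of $\widetilde{U}$ on $\overline{\mathcal{O}_1}$, by (C2) and Remark \ref{remark1}. Hence by \eqref{eq:basicproduct} (Lemma \ref{sobolevprodinteger}) it acts boundedly on $H^{s}$ with bound $C(\mathcal{O}_1,M)$. Each inner term is then handled by putting the loss of derivatives onto the mollifier. For example,
\[
\|A^{j}\partial_j\mathbb{J}_{\epsilon_n}V\|_{H^s}\le C\|\mathbb{J}_{\epsilon_n}V\|_{H^{s+1}}\le C_{\epsilon_n}\|V\|_{H^s},
\]
and similarly with $\Lambda^{2\alpha}$, which costs at most $2\alpha<2$ derivatives. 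For the commutator we write $[\Lambda^{2\alpha},G]W=\Lambda^{2\alpha}(GW)-G\Lambda^{2\alpha}W$ with $W=\mathcal{L}\mathbb{J}_{\epsilon_n}V$. The outer $\mathbb{J}_{\epsilon_n}$ then maps $H^{s-2\alpha}\to H^{s}$, and $\|\Lambda^{2\alpha}(GW)\|_{H^{s-2\alpha}}\le C\|GW\|_{H^{s}}\le C\|W\|_{H^s}$ by \eqref{eq:basicproduct}. Strictly, $H(\widetilde U)\mathbb{J}_{\epsilon_n}$ sits between; we handle this by first gaining with the inner $\mathbb{J}_{\epsilon_n}$ from $H^{s-2\alpha}$ to $H^{s}$, then multiplying by $H$ in $H^s$. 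This gives $\|L_n(t)\|_{\mathcal{B}(H^s)}\le C(\epsilon_n,\mathcal{O}_1,M)$.

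\textbf{Step 2: continuity of $t\mapsto L_n(t)$ in operator norm.} This is the main point requiring care. Each coefficient $A(\widetilde U(t))$ enters linearly, so $\|L_n(t)-L_n(t')\|_{\mathcal{B}(H^s)}$ is bounded by $C_{\epsilon_n}$ times a sum of terms $\|A(\widetilde U(t))-A(\widetilde U(t'))\|_{\widehat H^{s}}$. For the commutator term the same argument gives $\|G(\widetilde U(t))-G(\widetilde U(t'))\|_{\widehat H^{s}}$, with no need for $H^{s+\alpha}$ regularity of $\widetilde v$ because the mollifier absorbs the $2\alpha$ derivatives. We write $A(\widetilde U(t))-A(\widetilde U(t'))=\int_0^1 DA(\theta\widetilde U(t)+(1-\theta)\widetilde U(t'))\,d\theta\,(\widetilde U(t)-\widetilde U(t'))$. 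Convexity of $\mathcal{O}_1$ keeps the segment inside it, and Lemmas \ref{sobolevprodinteger} and \ref{chainruleestimates} give a bound $C(\mathcal{O}_1,M)\|\widetilde U(t)-\widetilde U(t')\|_{H^s}$. This tends to $0$ by (A1). The inverse $(A^0)^{-1}$ is treated identically. The expected obstacle is the product estimate $\widehat H^s\times H^{s}\to H^{s}$ for the difference of coefficients; Lemma \ref{sobolevprodinteger} provides it since $s>d/2$.

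\textbf{Step 3: conclusion.} The forcing $f_n\in\mathcal{C}([0,T];H^s)$ because $\mathbb{J}_{\epsilon_n}:H^\sigma\to H^s$ is bounded and $F\in\mathcal{C}([0,T];H^\sigma)$, together with the continuity of $(A^0)^{-1}$ from Step 2. Lemma \ref{odelemma} then yields a unique $U_n\in\mathcal{C}([0,T];H^s)\cap\mathcal{C}^1((0,T);H^s)$ satisfying \eqref{eq:LSn} pointwise in $H^s$, with $U_n(0)=\mathbb{J}_{\epsilon_n}U_0$. Uniqueness in this class is part of the lemma. Alternatively, it follows from Gronwall applied to the difference of two solutions, using the bound of Step 1.
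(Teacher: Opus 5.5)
Your proposal is correct and follows essentially the same route as the paper. Both write \eqref{eq:LSn} as $U_n'=L(t)U_n+f_n$ on $H^{s}$ and let the mollifiers absorb the first-order and $2\alpha$-order derivative losses, including inside the commutator, so only $\widehat{H}^{s}$ control of $G(\widetilde U)$ is needed. Both then obtain operator-norm continuity in $t$ from $\widehat{H}^{s}$-differences of the coefficients and invoke Lemma \ref{odelemma}. The only difference is cosmetic: you verify the $\widehat{H}^{s}$-continuity of $t\mapsto A(\widetilde U(t))$ directly, via the mean-value formula and Lemmas \ref{sobolevprodinteger} and \ref{chainruleestimates}, whereas the paper factors $L_0=A^{0}L$ and cites an external lemma for that continuity.
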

\begin{proof}
We begin by defining, for each fixed $t\in[0,T]$, the mapping $L(t): H^{s}\rightarrow H^{s}$ as
\begin{align*}
V~\xmapsto{L} ~-(A^{0}(\widetilde{U}))^{-1}\mathbb{J}_{\epsilon_{n}}\left(A^{j}(\widetilde{U})\partial_{j}\mathbb{J}_{\epsilon_{n}}V\right)&-(A^{0}(\widetilde{U}))^{-1}\mathbb{J}_{\epsilon_{n}}\left(D(\widetilde{U})\mathbb{J}_{\epsilon_{n}}V\right)-(A^{0}(\widetilde{U}))^{-1}\mathbb{J}_{\epsilon_{n}}\left(B(\widetilde{U})\Lambda^{2\alpha}\mathbb{J}_{\epsilon_{n}}V\right)\\
&-(A^{0}(\widetilde{U}))^{-1}\mathbb{J}_{\epsilon_{n}}\left(H(\widetilde{U})\mathbb{J}_{\epsilon_{n}}[\Lambda^{2\alpha},G(\widetilde{U})]\mathcal{L}\mathbb{J}_{\epsilon_{n}}V\right).
\end{align*}	
Let us show that $L(t)$ is continuous and $L\in\mathcal{C}([0,T];\mathcal{B}(H^{s}))$. Notice that the mollifier between $H(\widetilde{U})$ and $[\Lambda^{2\alpha},G(\widetilde{U})]\mathcal{L}\mathbb{J}_{\epsilon_{n}}V$ allows one to obtain the bound
\begin{equation}
	\label{eq:LSncontinuity3}
	\begin{aligned}
		&\left\lVert(A^{0}(\widetilde{U}))^{-1}\mathbb{J}_{\epsilon_{n}}\left(H(\widetilde{U})\mathbb{J}_{\epsilon_{n}}[\Lambda^{2\alpha},G(\widetilde{U})]\mathbb{J}_{\epsilon_{n}}\mathcal{L}V\right)\right\rVert_{H^{s}}\\
		&\leq C(\mathcal{O}_{1},M)\left(\|\mathbb{J}_{\epsilon_{n}}\Lambda^{2\alpha}(G(\widetilde{U})\mathbb{J}_{\epsilon_{n}}\mathcal{L}V)\|_{H^{s}}+\|\mathbb{J}_{\epsilon_{n}}(G(\widetilde{U})\Lambda^{2\alpha}\mathbb{J}_{\epsilon_{n}}\mathcal{L}V)\|_{H^{s}}\right)\\
		&\leq C(\mathcal{O}_{1},M)\frac{1}{\epsilon^{2\alpha}_{n}}\|V\|_{H^{s}}.
	\end{aligned}
\end{equation}
Similar bounds are obtained for the rest of the terms and thus, it holds
\begin{align}
	\label{eq:boundedregulop}
	\sup_{t\in[0,T]}\sup_{\substack{V\in H^{s}\\\|V\|_{H^{s}}=1}}\|L(t)V\|_{H^{s}}\leq C(\mathcal{O}_{1},M)\left(\frac{1}{\epsilon_{n}}+\frac{1}{\epsilon_{n}^{2\alpha}}\right).
\end{align}
This shows that $L\in L^{\infty}([0,T];\mathcal{B}(H^{s}))$. Let $t,t_{0}\in[0,T]$ be  such that $t\rightarrow t_{0}$. Define $L_{0}(t):=A^{0}(\widetilde{U}(t))L(t)$. For any $V\in H^{s}$ with $\|V\|_{H^{s}}=1$, the same estimates that led us to \eqref{eq:LSncontinuity3} yield 
\begin{align*}
	&\|L_{0}(t)V-L_{0}(t_{0})V\|_{H^{s}}\\
	&\leq\sum_{j=1}^{d}\|A^{j}(\widetilde{U}(t))-A^{j}(\widetilde{U}(t_{0}))\|_{\widehat{H}^{s}}\frac{1}{\epsilon_{n}}+\|D(\widetilde{U}(t))-D(\widetilde{U}(t_{0}))\|_{\widehat{H}^{s}}+\|B(\widetilde{U}(t))-B(\widetilde{U}(t_{0}))\|_{\widehat{H}^{s}}\frac{1}{\epsilon_{n}^{2\alpha}}\\
	&+\|H(\widetilde{U}(t))-H(\widetilde{U}(t_{0}))\|_{\widehat{H}^{s}}C(\mathcal{O}_{1},M)\frac{1}{\epsilon_{n}^{2\alpha}}+\|G(\widetilde{U}(t))-G(\widetilde{U}(t_{0}))\|_{\widehat{H}^{s}}C(\mathcal{O}_{1},M)\frac{1}{\epsilon_{n}^{2\alpha}}.
\end{align*}
By the smoothness of the coefficients together with condition \ref{cond2}, we infer that
\begin{align}
	\forall~V\in H^{s},~\mbox{with}~\|V\|_{H^{s}}=1,\quad\|L_{0}(t)V-L_{0}(t_{0})V\|_{H^{s}}\rightarrow 0\quad\mbox{as}\quad t\rightarrow t_{0},\label{eq:boundedregulop2}
\end{align}
(see,\cite[Lemma 7.2]{felipecontraction}). Moreover, by the Sobolev product estimates, 
\begin{align*}
	\|L(t)V-L(t_{0})V\|_{H^{s}}&\leq C(\mathcal{O}_{1},M)\left(\|(A^{0}(\widetilde{U}(t)))^{-1}-(A^{0}(\widetilde{U}(t_{0})))^{-1}\|_{\widehat{H}^{s}}\|L(t)V\|_{H^{s}}+\|L_{0}(t)V-L_{0}(t_{0})V\|_{H^{s}}\right).
\end{align*}
Therefore, \eqref{eq:boundedregulop} and \eqref{eq:boundedregulop2} yield that $L\in\mathcal{C}([0,T];\mathcal{B}(H^{s}))$. In the same way, we have
\begin{align*}
&\|(A^{0}(\widetilde{U}(t)))^{-1}\mathbb{J}_{\epsilon_{n}}F(t)-(A^{0}(\widetilde{U}(t_{0})))^{-1}\mathbb{J}_{\epsilon_{n}}F(t_{0})\|_{H^{s}}\\
&\leq\left\lVert(A^{0}(\widetilde{U}(t)))^{-1}-(A^{0}(\widetilde{U}(t_{0})))^{-1}\right\rVert_{\widehat{H}^{s}}\frac{1}{\epsilon_{n}^{s-\sigma}}\|F(t)\|_{H^{\sigma}}+\|(A^{0}(\widetilde{U}(t_{0})))^{-1}\|_{\widehat{H}^{s}}\frac{1}{\epsilon^{s-\sigma}}\|F(t)-F(t_{0})\|_{H^{\sigma}}.
\end{align*}
Thus, $(A^{0}(\widetilde{U}))^{-1}\mathbb{J}_{\epsilon_{n}}\left(F\right)\in\mathcal{C}([0,T];H^{s})$.	

As a consequence, we can view \eqref{eq:LSn} as the Cauchy problem \eqref{eq:LSnode} and thus, by lemma \ref{odelemma}, there is a unique $U_{n}\in\mathcal{C}([0,T];H^{s})\cap\mathcal{C}^{1}((0,T);H^{s})$ such that $U_{n}(0)=\mathbb{J}_{\epsilon_{n}}U_{0}$ and 
\begin{align*}
\frac{d}{dt}U_{n}=L(t)U_{n}+(A^{0}(\widetilde{U}))^{-1}\mathbb{J}_{\epsilon_{n}}F\in\mathcal{C}((0,T);H^{s}).
\end{align*}
\end{proof}

\section{Energy estimates for fractional systems}

In this section we prove the well-posedness of the linear Cauchy problem \eqref{eq:linearizedsystem1}-\eqref{eq:linearizedinitialdata}. It will be useful to introduce, for any $\sigma\in[0,s]$ and any $\alpha\in(0,1)$, the vector space
\begin{align*}
	X_{\sigma,\alpha,T}^{\infty}:=\left\lbrace U=(u,v)\in L^{\infty}([0,T];H^{\sigma})~\rvert~v\in L^{2}([0,	T];H^{\sigma+\alpha})\right\rbrace,
\end{align*}
and for any $U\in X_{\sigma,\alpha,T}^{\infty}$, we define
\begin{align*}
	E^{2}_{\sigma,\alpha}(U(t)):=\|U(t)\|_{H^{\sigma}}^{2}+\int_{0}^{t}\|\Lambda^{\alpha}v(\tau)\|_{H^{\sigma}}^{2}d\tau\quad\mbox{for}\quad t\in[0,T]. 
\end{align*}
Moreover, we define $X_{\sigma,\alpha,T}:=\mathcal{C}([0,T];H^{\sigma})\cap X_{\sigma,\alpha,T}^{\infty}$. Observe that, under the norm, $E_{\sigma,\alpha,T}(U):=\sup_{t\in[0,T]}E_{\sigma,\alpha}(U(t))$, $X_{\sigma,\alpha,T}^{\infty}$ and $X_{\sigma,\alpha,T}$ are Banach spaces.

\begin{defi}
Assume condition \ref{cond1} is satisfied under one of the hypotheses (E1)-(E2). Let $\widetilde{U}$ satisfy condition \ref{cond2}. For a fixed $\alpha\in(0,1)$, we introduce the family of operators $\{\mathcal{A}_{\alpha,\widetilde{U}}^{\gamma}\}_{\gamma\in\mathbb{N}_{0}^{d}}$, defined as follows. Let $\gamma\in\mathbb{N}_{0}^{d}$ and let $U=(u,v)^{\top}$ be such that the expressions below are well defined. For each $t\in[0,T]$, we set
\begin{align*}
(\mathcal{A}_{\alpha,\widetilde{U}}^{\gamma}U)(t)&:=B(\widetilde{U}(t))\Lambda^{2\alpha}D^{\gamma}U(t)+H(\widetilde{U}(t))D^{\gamma}\left\lbrace [\Lambda^{2\alpha},G(\widetilde{U}(t))]\mathcal{L}U(t)\right\rbrace\\
&+[D^{\gamma},H(\widetilde{U}(t))][\Lambda^{2\alpha},G(\widetilde{U}(t))]\mathcal{L}U(t).
\end{align*}
For the regularized problem \eqref{eq:LSn}, we similarly introduce the family of operators $\{\mathcal{A}_{\alpha,\widetilde{U},}^{\gamma,n}\}_{\gamma\in\mathbb{N}_{0}^{d}}$ defined by
\begin{align*}
(\mathcal{A}_{\alpha,\widetilde{U},}^{\gamma,n}U)(t)&:=B(\widetilde{U}(t))\Lambda^{2\alpha}D^{\gamma}\mathbb{J}_{\epsilon_{n}}U(t)+H(\widetilde{U}(t))D^{\gamma}\left\lbrace \mathbb{J}_{\epsilon_{n}}[\Lambda^{2\alpha},G(\widetilde{U}(t))]\mathbb{J}_{\epsilon_{n}}\mathcal{L}U(t)\right\rbrace\\
&+\mathbb{J}_{\epsilon_{n}}\left([D^{\gamma},H(\widetilde{U}(t))]\left\lbrace\mathbb{J}_{\epsilon_{n}}[\Lambda^{2\alpha},G(\widetilde{U}(t))]\mathbb{J}_{\epsilon_{n}}\mathcal{L}U(t)\right\rbrace\right).
\end{align*}
\end{defi}

\begin{theo}[Coercivity]
\label{fractionalellipticity}
Assume conditions (L1)-(L3) hold. Then, there exists  a nonnegative function $\omega\in L^{1}([0,T])$, depending only on $\widetilde{U}$, $M$ and $\mathcal{O}_{1}$, such that for any integer $m\in[1,s]$, the following statements hold.
\begin{itemize}
	\item [(i)] If $U\in X_{s+\alpha,\alpha,T}^{\infty}$, 
\begin{equation}
\label{eq:fractionalstrongelliptic}
\begin{aligned}
\sum_{|\gamma|\leq m}\left\langle(\mathcal{A}_{\alpha,\widetilde{U}}^{\gamma,}U)(t),D^{\gamma}U(t)\right\rangle_{L^{2}}\geq \frac{\mu_{0}}{2}\|\Lambda^{\alpha}v(t)\|_{H^{m}}^{2}-\omega(t)\|U(t)\|_{H^{m}}^{2}\quad\mbox{for all}\quad t\in[0,T].
\end{aligned}
\end{equation}
\item [(ii)] If $U\in X_{s,\alpha,T}^{\infty}$, 
\begin{equation}
\label{eq:fractionalstrongellipt}
\begin{aligned}
\sum_{|\gamma|\leq m}\left\langle(\mathcal{A}_{\alpha,\widetilde{U}}^{\gamma,n}\mathbb{J}_{\epsilon_{n}}U)(t),D^{\gamma}\mathbb{J}_{\epsilon_{n}}U(t)\right\rangle_{L^{2}}\geq \frac{\mu_{0}}{2}\|\Lambda^{\alpha}\mathbb{J}_{\epsilon_{n}}v(t)\|_{H^{m}}^{2}-\omega(t)\|\mathbb{J}_{\epsilon_{n}}U(t)\|_{H^{m}}^{2}\quad\mbox{for all}\quad t\in[0,T].
\end{aligned}
\end{equation}
\end{itemize}
\end{theo}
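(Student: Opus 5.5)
We prove (i) and then indicate the modifications for (ii). Fix $t$ and $\gamma$ with $|\gamma|\le m$, and write $Q:=G(\widetilde U(t))$. By (C6), $H$ and $G$ act only on the $v$-block, so
\[
\langle \mathcal A^{\gamma}_{\alpha,\widetilde U}U,D^{\gamma}U\rangle_{L^2}=T_1+T_2+T_3,
\]
where
\begin{align*}
T_1&=\langle B_0(\widetilde U)\Lambda^{2\alpha}D^\gamma v,D^\gamma v\rangle_{L^2},\\
T_2&=\langle D^\gamma\{[\Lambda^{2\alpha},G_0]\,(\mathcal L U)_v\},\,H_0^{\top}D^\gamma v\rangle_{L^2},\\
T_3&=\langle [D^\gamma,H_0][\Lambda^{2\alpha},G_0](\mathcal L U)_v,\,D^\gamma v\rangle_{L^2}.
\end{align*}
Here $(\mathcal L U)_v$ equals $v$ under (E1) and $\mathcal L_0 u$ under (E2).

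\emph{Dissipative term.} By (C5) applied to $D^\gamma v$, $T_1\ge \mu_0\|\Lambda^\alpha D^\gamma v\|_{L^2}^2$. Summing over $|\gamma|\le m$ gives $\mu_0$ times a norm equivalent to $\|\Lambda^\alpha v\|_{H^m}^2$. The term $|\gamma|=0$ contributes only $\|\Lambda^\alpha v\|_{L^2}^2$, which is controlled by the $|\gamma|\le m$ sum up to constants, so the leading coefficient can be adjusted.

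\emph{Commutator term $T_2$.} Apply Theorem \ref{fractionalcommutatorfeli} with $w=(\mathcal L U)_v$ and $h=H_0^\top D^\gamma v$. This requires bounding $\|h\|_{H^\alpha}$, which I do with the fractional Leibniz rule \eqref{eq:fractionalproduct2} and the embedding $H^{s-1}\hookrightarrow L^\infty$:
\[
\|h\|_{H^\alpha}\le C(\mathcal O_1,M)\bigl(\|\Lambda^\alpha v\|_{H^m}+\|v\|_{H^m}\bigr).
\]
The coefficient $\|Q\|_{\dot H^\alpha}+\|Q\|_{\dot H^{s+\alpha}}$ is bounded by \eqref{eq:Gchainestimates} by $C(M+\|\widetilde v(t)\|_{H^{s+\alpha}})=:\theta(t)$, and $\theta\in L^2([0,T])$ by (A1). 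The two hypotheses are handled as follows.
\begin{itemize}
\item Under (E1), $w=v$ and I use (I1):
\[
|T_2|\le C\theta\bigl(\|v\|_{H^m}+\|\Lambda^\alpha v\|_{H^m}\bigr)\|v\|_{H^m}.
\]
Only one factor of $\|\Lambda^\alpha v\|_{H^m}$ appears, because (I1) places $\Lambda^\alpha$ either on $w$ or on $h$ but never on both.
\item Under (E2), $w=\mathcal L_0 u$ has no dissipation, so I use (I2) when $\alpha\le\frac12$ and (I3) when $\alpha>\frac12$, the latter using $s>\frac d2+2\alpha$:
\[
|T_2|\le C\theta\,\|u\|_{H^m}\bigl(\|v\|_{H^m}+\|\Lambda^\alpha v\|_{H^m}\bigr).
\]
\end{itemize}
In both cases Young's inequality gives $|T_2|\le \frac{\mu_0}{8}\|\Lambda^\alpha v\|^2_{H^m}+C\theta^2\|U\|_{H^m}^2$.

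\emph{Lower-order term $T_3$.} This is the step I expect to be the main obstacle, because $[D^\gamma,H_0]$ moves up to $m-1$ derivatives onto the commutator $[\Lambda^{2\alpha},G_0]w$. I would expand by the Leibniz rule as $\sum_{0<\beta\le\gamma}D^\beta H_0\,D^{\gamma-\beta}[\Lambda^{2\alpha},G_0]w$ and, for each term, move a factor $\Lambda^{\alpha}$ onto $D^\gamma v$ by duality, as in the estimate of $I_{3,1}$. Then $\|D^{\gamma-\beta}[\Lambda^{2\alpha},G_0]w\|_{H^{-\alpha}}$ is bounded through the identity \eqref{eq:maincommutatoridea}, Theorem \ref{mainFcommutator} and Lemma \ref{dongli}, while $D^\beta H_0$ is controlled through \eqref{eq:goodestimate}, the fractional Leibniz rule and Sobolev embeddings. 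This gives the bound $C\theta(\|U\|_{H^m}+\|\Lambda^\alpha v\|_{H^m})\|U\|_{H^m}$, absorbed as before. Where full duality is awkward, I would fall back on the cruder bound $\|[\Lambda^{2\alpha},G_0]w\|_{H^{m-1}}\lesssim \theta\|w\|_{H^{m-1+2\alpha}}$. When $2\alpha\le1$ this is $\le\theta\|w\|_{H^m}$ and is admissible. Otherwise, under (E1) I interpolate between $H^m$ and $H^{m+\alpha}$; under (E2) the case $\alpha>\frac12$ is where the block structure of $A^0$ and $s>\frac d2+2\alpha$ are intended to enter.

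\emph{Conclusion for (i), and part (ii).} Summing over $\gamma$ gives (i) with $\omega=C(\mathcal O_1,M)(1+\theta^2)\in L^1([0,T])$. Part (ii) follows from the same computation applied to $\mathbb J_{\epsilon_n}U$. The extra mollifiers are moved across using the self-adjointness of $\mathbb J_{\epsilon_n}$, and the resulting commutators $[\mathbb J_{\epsilon_n},H_0]$ are bounded uniformly in $n$ by Theorem \ref{secondFcommest}, so the constants do not depend on $\epsilon_n$.
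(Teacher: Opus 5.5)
You handle the dissipative term $T_1$ and the term $T_2$ (via (I1) under (E1), and via (I2)/(I3) under (E2)) exactly as the paper does. The proposal nonetheless has a genuine gap at $T_3$. That step is left as a plan, and your fallback fails precisely in the case you leave open.

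The crude bound $\|[\Lambda^{2\alpha},G_0]w\|_{H^{m-1}}\lesssim\theta\|w\|_{H^{m-1+2\alpha}}$ treats the commutator as an operator of order $2\alpha$ and discards its structure. Under (E2) with $\alpha\in(\tfrac12,1)$ and $m=s$, it would require $u\in H^{s-1+2\alpha}$. That regularity is not available, because $u$ receives no dissipation. The block structure of $A^0$ cannot repair this, since $A^0$ does not appear in $\mathcal{A}^{\gamma}_{\alpha,\widetilde U}$ at all.

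Your primary duality route is not carried out either, and as described it does not close at top order. For $\beta=\gamma$ with $|\gamma|=s$, moving $\Lambda^\alpha$ onto $D^\gamma H_0^\top D^\gamma v$ requires a product of two factors, each controlled only in $H^\alpha$, to lie in $H^\alpha$.

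The missing ingredient is that $[\Lambda^{2\alpha},f]$ has order $2\alpha-1<1$, so it maps $H^m$ into $H^{m-1}$ for every $\alpha\in(0,1)$. This is the appendix estimate \eqref{eq:fraccommnon}:
\[
\|[\Lambda^{2\alpha},f]g\|_{H^{m-1}}\le C\bigl(\|f\|_{\dot H^{\alpha}}+\|f\|_{\dot H^{s+\alpha}}\bigr)\|g\|_{H^{m}}.
\]
It follows from the identity $\Lambda^{m-1}[\Lambda^{2\alpha},f]g=[\Lambda^{m-1+2\alpha},f]g-[\Lambda^{m-1},f]\Lambda^{2\alpha}g$ and the Kato--Ponce--Li bounds. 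The leading terms involve only $\Lambda^{m-2+2\alpha}g$ and $\Lambda^{m-1+2\alpha}f$, with $m-2+2\alpha<m$ and $m-1+2\alpha<s+\alpha$. The remaining terms are handled with suitable Lebesgue exponents and Sobolev embeddings.

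Combine this with the standard estimate $\|[D^\gamma,H_0]F\|_{L^2}\le C\|\nabla H_0\|_{H^{s-1}}\|F\|_{H^{|\gamma|-1}}$. It gives at once
\[
|T_3|\le C(\mathcal{O}_1,M)\,\theta(t)\,\|w\|_{H^m}\|D^\gamma v\|_{L^2}\le C(\mathcal{O}_1,M)\,\theta(t)\,\|U\|_{H^m}^2 .
\]
This needs no dissipation and no duality, and it holds uniformly in $\alpha$ and under both (E1) and (E2). It is exactly how the paper treats $\mathcal{D}_\gamma$.

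Two smaller corrections:
\begin{itemize}
\item In bounding $\|H_0^\top D^\gamma v\|_{H^\alpha}$, the rule \eqref{eq:fractionalproduct2} would produce $\|H_0\|_{\dot H^\alpha}\|D^\gamma v\|_{L^\infty}$, which is uncontrolled when $|\gamma|=m$. Use instead the commutator bound \eqref{eq:dongliC2}, with $\|\Lambda^\alpha H_0\|_{L^\infty}$ controlled by \eqref{eq:infinitycontrol}.
\item In (ii), no commutator $[\mathbb{J}_{\epsilon_n},H_0]$ is needed. Self-adjointness moves $\mathbb{J}_{\epsilon_n}$ onto $H_0^\top D^\gamma\mathbb{J}_{\epsilon_n}v$, and $\mathbb{J}_{\epsilon_n}$ is a contraction on $H^\alpha$.
\end{itemize}
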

\begin{proof}
We first prove (i). Fix an integer $m\in[1,s]$ and let $U\in X_{s+\alpha,\alpha,T}^{\infty}$. Condition (C5) yields the lower bound,
\begin{align*}
\sum_{|\gamma|\leq m}\langle B_{0}\Lambda^{2\alpha}D^{\gamma}v,D^{\gamma}v\rangle_{L^{2}}\geq\mu_{0}\|\Lambda^{\alpha}v\|_{H^{m}}^{2}.
\end{align*}
Therefore, by taking the inner product in $L^{2}$ between $\mathcal{A}_{\alpha,\widetilde{U}}^{\gamma}U$ and $D^{\gamma}U$, we obtain	
\begin{align*}
\sum_{|\gamma|\leq m}\left\langle(\mathcal{A}_{\alpha,\widetilde{U}}^{\gamma}U)(t),D^{\gamma}U(t)\right\rangle_{L^{2}}&\geq \mu_{0}\|\Lambda^{\alpha}v\|_{H^{m}}^{2}+\sum_{|\gamma|\leq m}\mathcal{C}_{\gamma}+\sum_{|\gamma|\leq m}\mathcal{D}_{\gamma},
\end{align*}	
where
\begin{align*}
\mathcal{C}_{\gamma}:=\left\langle HD^{\gamma}\left\lbrace[\Lambda^{2\alpha},G]\mathcal{L}U\right\rbrace,D^{\gamma}U\right\rangle_{L^{2}}
\end{align*}
and 
\begin{align*}
\mathcal{D}_{\gamma}:=\left\langle\left([D^{\gamma},H]\left\lbrace[\Lambda^{2\alpha},G]\mathcal{L}U\right\rbrace\right),D^{\gamma}U\right\rangle_{L^{2}}.
\end{align*}
We treat each term separately. The Cauchy-Schwarz inequality together with the commutator estimates for $D^{\gamma}$, give the estimate
\begin{align*}
|\mathcal{D}_{\gamma}|&\leq\|\nabla H\|_{H^{s-1}}\|[\Lambda^{2\alpha},G]U\|_{H^{m-1}}\|D^{\gamma}U\|_{L^{2}}.
\end{align*}
Combining \eqref{eq:fraccommnon} and \eqref{eq:Gchainestimates} with the assumption \eqref{eq:M0}, shows that
\begin{align}
|\mathcal{D}_{\gamma}|&\leq C(\mathcal{O}_{1},M)\left(M+\|\widetilde{v}\|_{H^{s+\alpha
}}\right)\|U\|_{H^{m}}^{2}.\label{eq:dngama}
\end{align}
Now, let $\mathcal{L}U=(y,w)\in\mathbb{R}^{k}\times \mathbb{R}^{m}$. As a consequence of condition (C6), we have
\begin{align*}
\mathcal{C}_{\gamma}=\left\langle D^{\gamma}\{[\Lambda^{2\alpha},G_{0}]w\},H_{0}^{\top}D^{\gamma}v\right\rangle_{L^{2}}.
\end{align*}

Assume condition (E1) is satisfied. Then, $w=v$. Consequently, by applying estimate \eqref{eq:secondFeliCommEst} in Theorem \ref{fractionalcommutatorfeli} with $Q=G_{0}$, $w=v$ and $h=H_{0}^{\top}D^{\gamma}v$, we obtain
\begin{align*}
|\mathcal{C}_{\gamma}|&\leq C\left(\|G_{0}\|_{\dot{H}^{\alpha}}+\|G_{0}\|_{\dot{H}^{s+\alpha}}\right)\left(\|v\|_{H^{m}}\|H_{0}^{\top}D^{\gamma}v_{n}\|_{H^{\alpha}}+\|\Lambda^{\alpha} v\|_{H^{m}}\|H_{0}^{\top}D^{\gamma}v\|_{L^{2}}\right).
\end{align*}
Using the Kato-Ponce-Li inequality \eqref{eq:dongliC2}, together with the estimate \eqref{eq:infinitycontrol} with $\sigma=\alpha$ and \eqref{eq:goodestimate}, leads to the following inequalities
\begin{equation}
\label{eq:h0halpha}
\begin{aligned}
\|H_{0}^{\top}D^{\gamma}v\|_{H^{\alpha}}&\leq\|H_{0}^{\top}D^{\gamma}v\|_{L^{2}}+\|\Lambda^{\alpha}\left(H_{0}^{\top}D^{\gamma}v\right)\|_{L^{2}}\\
&\leq\|H_{0}\|_{L^{\infty}}\|D^{\gamma}v\|_{L^{2}}+\|[\Lambda^{\alpha},H_{0}^{\top}]D^{\gamma}v\|_{L^{2}}+\|H_{0}^{\top}\Lambda^{\alpha}D^{\gamma}v\|_{L^{2}}\\
&\leq C(\mathcal{O}_{1},M)\left(\|D^{\gamma}v\|_{L^{2}}+\|\Lambda^{\alpha}D^{\gamma}v\|_{L^{2}}\right).
\end{aligned}
\end{equation}
Applying the previous estimate together with \eqref{eq:Gchainestimates}, yields
\begin{align}
|\mathcal{C}_{\gamma}|\leq C(\mathcal{O}_{1},M)\left(M+\|\widetilde{v}\|_{H^{s+\alpha}}\right)\left(\|U\|_{H^{m}}^{2}+\|U\|_{H^{m}}\|\Lambda^{\alpha}v\|_{H^{m}}\right).\label{eq:cngamma}
\end{align}
By Young's inequality, for any $\epsilon_{0}>0$ there is a constant $C(\mathcal{O}_{1},M,\epsilon_{0})$ such that
\begin{align*}
|\mathcal{C}_{\gamma}|+|D_{\gamma}|\leq C(\mathcal{O}_{1},M)(M+\|\widetilde{v}\|_{H^{s+\alpha}})\|U\|_{H^{m}}^{2}+C(\mathcal{O}_{1},M,\epsilon_{0})(M^{2}+\|\widetilde{v}\|_{H^{s+\alpha}}^{2})\|U\|_{H^{m}}^{2}+\frac{\epsilon_{0}}{2}\|\Lambda^{\alpha}v\|_{H^{m}}^{2}.
\end{align*}
Therefore, choosing $\epsilon_{0}>0$ sufficiently small, yields
\begin{align*}
\sum_{|\gamma|\leq m}\left\langle(\mathcal{A}_{\alpha,\widetilde{U}}^{\gamma}U)(t),D^{\gamma}U(t)\right\rangle_{L^{2}}\geq\frac{\mu_{0}}{2}\|\Lambda^{\alpha}v(t)\|_{H^{m}}^{2}-\omega(t)\|U(t)\|_{H^{m}}^{2}.
\end{align*}
Here, the function $\omega:[0,T]\rightarrow\mathbb{R}_{+}$ is defined by
\begin{equation}
\label{eq:firstomega}
\omega(t):=C_{1}(\mathcal{O}_{1},M)\left(M+M^{2}+\|\widetilde{v}(t)\|_{H^{s+\alpha}}+\|\widetilde{v}(t)\|_{H^{s+\alpha}}^{2}\right),
\end{equation}
for some constant $C_{1}(\mathcal{O}_{1},M)$. Moreover $\omega\in L^{1}([0,T])$ as a consequence of condition \ref{cond2}. This concludes the proof under the assumption (E1).

From \eqref{eq:diffusionvector}, under assumption (E2), we have $w=\mathcal{L}_{0}u$. Thus,
\begin{align*}
\mathcal{C}_{\gamma}:=\left\langle D^{\gamma}\left\lbrace[\Lambda^{2\alpha},G_{0}]\mathcal{L}_{0}u\right\rbrace,H_{0}^{\top}D^{\gamma}v\right\rangle_{L^{2}}
\end{align*}
where $\mathcal{L}_{0}u\in H^{m}$ for any $1\leq m\leq s$, since $\mathcal{L}_{0}$ is constant. By applying estimate \eqref{eq:nocancellation1} in Theorem \ref{fractionalcommutatorfeli}, we obtain
\begin{align*}
|\mathcal{C}_{\gamma}|\leq C\left(\|G_{0}\|_{\dot{H}^{\alpha}}+\|G_{0}\|_{\dot{H}^{s+\alpha}}\right)\|U\|_{H^{m}}\|H_{0}^{\top}D^{\gamma}v\|_{H^{\alpha}}.
\end{align*}
Proceeding as in the previous case and using \eqref{eq:h0halpha}, we recover estimate \eqref{eq:cngamma}. Therefore, \eqref{eq:fractionalstrongelliptic} holds with $\omega$ given by \eqref{eq:firstomega}. 

The proof of (ii) is analogous. The only difference is that the terms $\mathcal{C}_{\gamma}$ and $\mathcal{D}_{\gamma}$ are replaced by
\begin{align*}
\mathcal{C}_{n,\gamma}:=\left\langle HD^{\gamma}\mathbb{J}_{\epsilon_{n}}\left\lbrace[\Lambda^{2\alpha},G]\mathbb{J}_{\epsilon_{n}}\mathcal{L}U\right\rbrace,D^{\gamma}\mathbb{J}_{\epsilon_{n}}U\right\rangle_{L^{2}}
\end{align*}
and 
\begin{align*}
\mathcal{D}_{n,\gamma}:=\left\langle\mathbb{J}_{\epsilon_{n}}\left([D^{\gamma},H]\{\mathbb{J}_{\epsilon_{n}}[\Lambda^{2\alpha},G]\mathbb{J}_{\epsilon_{n}}\mathcal{L}U\},D^{\gamma}\mathbb{J}_{\epsilon_{n}}U\right)\right\rangle_{L^{2}}.
\end{align*}
The estimate for $\mathcal{D}_{n,\gamma}$ is obtained as in the treatment of $\mathcal{D}_{\gamma}$, yielding \eqref{eq:dngama}. Concerning $\mathcal{C}_{n,\gamma}$, condition (C6) and the self-adjointness of $\mathbb{J}_{\epsilon_{n}}$ on $L^{2}$ imply
\begin{align*}
\mathcal{C}_{n,\gamma}=\left\langle D^{\gamma}[\Lambda^{2\alpha},G_{0}]\mathbb{J}_{\epsilon_{n}}w,\mathbb{J}_{\epsilon_{n}}(H_{0}^{\top}D^{\gamma}\mathbb{J}_{\epsilon_{n}}v)\right\rangle_{L^{2}}.
\end{align*}
As a result, the same argument used in the estimate for $\mathcal{C}_{\gamma}$ yields \eqref{eq:cngamma}. This completes the proof of the Theorem.
\end{proof}

\begin{theo}
\label{energyestimatesF}
Assume conditions (L1)-(L3) are satisfied.
Define $\mu_{1}:[0,T]\rightarrow\mathbb{R}_{+}$ as
\begin{align*}
	\mu_{1}(t):=1+M+M^{2}+\|\widetilde{v}(t)\|_{H^{s+\alpha}}+\|\widetilde{v}(t)\|_{H^{s+\alpha}}^{2}+\|\partial_{t}\widetilde{U}(t)\|_{H^{s-1}}
\end{align*}
which, by Condition \ref{cond2}, belongs to $L^{1}([0,T])$. Let $F=(f_{1},f_{2})^{\top}:\mathbb{R}^{d}\times[0,T]\rightarrow\mathbb{R}^{k}\times\mathbb{R}^{m}$ satisfy $f_{1}\in\mathcal{C}([0,T];H^{m-1})\cap L^{2}([0,T];H^{m})$ and $f_{2}\in\mathcal{C}([0,T];H^{m-2\alpha})\cap L^{2}([0,T];H^{m-\alpha})$. Assume moreover that $U_{0}\in H^{m}$. Let $U_{n}=(u_{n},v_{n})\in\mathcal{C}([0,T];H^{s})\cap\mathcal{C}^{1}((0,T);H^{s})$ be the unique solution of the Cauchy problem \eqref{eq:LSn}-\eqref{eq:LSninitialdata}. Then, for every $t\in[0,T]$ the following estimate holds
\begin{equation}
\label{eq:parabolicenergyestimate}
\|U_{n}(t)\|_{H^{m}}^{2}+\int_{0}^{t}\|\Lambda^{\alpha}\mathbb{J}_{\epsilon_{n}}v_{n}(\tau)\|_{H^{m}}^{2}d\tau\leq K(t)\Phi_{\alpha}^{m}(U_{0},F,t),
\end{equation}
where
\begin{align*}
K(t):=C_{0}(\mathcal{O}_{1})e^{C_{2}(\mathcal{O}_{1},M)\int_{0}^{t}\mu_{1}(s)ds},
\end{align*}
for some positive constants $C_{0}=C_{0}(\mathcal{O}_{1})$, $C_{2}(\mathcal{O}_{1},M)$ and 
\begin{align*}
\Phi_{\alpha}^{m}(U_{0},F,t):=\|U_{0}\|_{H^{m}}^{2}+\int_{0}^{t}\left(\|f_{1}(\tau)\|_{H^{m}}^{2}+\|f_{2}(\tau)\|_{H^{m-\alpha}}^{2}\right)d\tau.
\end{align*}
In particular, for every integer $m\in[1,s]$, every $n\in\mathbb{N}$, and every $t\in[0,T]$, the estimate
\begin{align}
\label{eq:fractionalenergyT}
E^{2}_{m,\alpha,T}(\mathbb{J}_{\epsilon_{n}}U_{n}(t))\leq K(t)\Phi_{\alpha}^{m}(U_{0},F,t)
\end{align}
holds.
\end{theo}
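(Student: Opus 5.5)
We argue by a standard weighted $H^{m}$ energy method for the regularized system \eqref{eq:LSn}, using the symmetrizer $A^{0}(\widetilde{U})$ and the coercivity of Theorem \ref{fractionalellipticity}(ii). Fix an integer $m\in[1,s]$ and a multi-index $\gamma$ with $|\gamma|\leq m$. We multiply \eqref{eq:LSn} by $A^{0}(\widetilde{U})$, apply $D^{\gamma}$, and take the $L^{2}$ inner product with $D^{\gamma}U_{n}$. This is legitimate since $U_{n}\in\mathcal{C}^{1}((0,T);H^{s})$. We then sum over $|\gamma|\leq m$ and work with the energy
\[
\mathcal{E}_{m}(t):=\sum_{|\gamma|\leq m}\langle A^{0}(\widetilde{U})D^{\gamma}U_{n},D^{\gamma}U_{n}\rangle_{L^{2}},
\]
which by Remark \ref{remark1} is equivalent to $\|U_{n}\|_{H^{m}}^{2}$ with constants $a_{0},a_{1}$. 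The time derivative produces $2\langle A^{0}\partial_{t}D^{\gamma}U_{n},D^{\gamma}U_{n}\rangle$ plus the term $\langle(\partial_{t}A^{0})D^{\gamma}U_{n},D^{\gamma}U_{n}\rangle$. The latter is bounded by $C\|\partial_{t}\widetilde{U}\|_{H^{s-1}}\|U_{n}\|_{H^{m}}^{2}$, using $H^{s-1}\hookrightarrow L^{\infty}$. The commutator $[D^{\gamma},A^{0}]\partial_{t}U_{n}$ is handled by substituting the equation for $\partial_{t}U_{n}$.

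\textbf{Hyperbolic and zero-order terms.} For the term $\mathbb{J}_{\epsilon_{n}}(A^{j}\partial_{j}\mathbb{J}_{\epsilon_{n}}U_{n})$, we move one mollifier across using self-adjointness, write $D^{\gamma}$ acting on $A^{j}\partial_{j}\mathbb{J}_{\epsilon_{n}}U_{n}$ as $A^{j}\partial_{j}D^{\gamma}\mathbb{J}_{\epsilon_{n}}U_{n}$ plus $[D^{\gamma},A^{j}]\partial_{j}\mathbb{J}_{\epsilon_{n}}U_{n}$, and integrate by parts using the symmetry (C3). This yields a bound $C(\mathcal{O}_{1},M)\|U_{n}\|_{H^{m}}^{2}$. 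We also need to commute $\mathbb{J}_{\epsilon_{n}}$ with the coefficients; for that we use Friedrichs-type commutator bounds \cite{milani}, or the duality estimate \eqref{eq:dualityargument}. The terms involving $D$ and $F$ are treated by Cauchy--Schwarz. Because $f_{2}$ only lies in $L^{2}H^{m-\alpha}$, we pair its $v$-component as $\langle\Lambda^{-\alpha}D^{\gamma}f_{2},\Lambda^{\alpha}D^{\gamma}\mathbb{J}_{\epsilon_{n}}v_{n}\rangle$ and absorb it via Young's inequality into $\frac{\mu_{0}}{8}\|\Lambda^{\alpha}\mathbb{J}_{\epsilon_{n}}v_{n}\|_{H^{m}}^{2}$, leaving $C\|f_{2}\|_{H^{m-\alpha}}^{2}$. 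Low frequencies cause no trouble because the inhomogeneous norm of $U_{n}$ controls them.

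\textbf{Nonlocal terms.} After moving the outer $\mathbb{J}_{\epsilon_{n}}$ onto the test function and commuting $D^{\gamma}$ past $H(\widetilde{U})$, the sum of the diffusive and commutator contributions is exactly
\[
\sum_{|\gamma|\leq m}\left\langle(\mathcal{A}^{\gamma,n}_{\alpha,\widetilde{U}}U_{n}),D^{\gamma}\mathbb{J}_{\epsilon_{n}}U_{n}\right\rangle_{L^{2}},
\]
up to a commutator $[\mathbb{J}_{\epsilon_{n}},B(\widetilde{U})]\Lambda^{2\alpha}D^{\gamma}\mathbb{J}_{\epsilon_{n}}U_{n}$. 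By Theorem \ref{fractionalellipticity}(ii) this sum is bounded below by $\frac{\mu_{0}}{2}\|\Lambda^{\alpha}\mathbb{J}_{\epsilon_{n}}v_{n}\|_{H^{m}}^{2}-\omega\|U_{n}\|_{H^{m}}^{2}$, where $\omega\leq C\mu_{1}$. The mollifier commutator with $B$ is the \emph{main obstacle}, since it carries $2\alpha$ derivatives. We plan to write $\Lambda^{2\alpha}=\Lambda^{\alpha}\Lambda^{\alpha}$, move one $\Lambda^{\alpha}$ onto the other factor by self-adjointness, and apply Theorem \ref{secondFcommest} with $Q=B_{0}(\widetilde{U})$. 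This controls $\|\Lambda^{\alpha}[\mathbb{J}_{\epsilon_{n}},B_{0}]\cdot\|$ uniformly in $n$ by $(M+\|\widetilde{v}\|_{H^{s+\alpha}})\|\cdot\|$, and the result is again absorbed by Young's inequality into the dissipation. The same device handles the $A^{0}$-commutator terms arising from $(A^{0})^{-1}$ and the $[D^{\gamma},A^{0}]$ substitution.

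\textbf{Conclusion.} Collecting everything gives
\[
\frac{d}{dt}\mathcal{E}_{m}+\frac{\mu_{0}}{4}\|\Lambda^{\alpha}\mathbb{J}_{\epsilon_{n}}v_{n}\|_{H^{m}}^{2}\leq C_{2}(\mathcal{O}_{1},M)\mu_{1}(t)\mathcal{E}_{m}+C\big(\|f_{1}\|_{H^{m}}^{2}+\|f_{2}\|_{H^{m-\alpha}}^{2}\big).
\]
Gronwall's lemma, the bound $\|\mathbb{J}_{\epsilon_{n}}U_{0}\|_{H^{m}}\leq\|U_{0}\|_{H^{m}}$, and the equivalence of $\mathcal{E}_{m}$ with $\|U_{n}\|_{H^{m}}^{2}$ then yield \eqref{eq:parabolicenergyestimate}. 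Finally, \eqref{eq:fractionalenergyT} follows because $\|\mathbb{J}_{\epsilon_{n}}U_{n}\|_{H^{m}}\leq\|U_{n}\|_{H^{m}}$.
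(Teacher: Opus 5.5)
Your outline is essentially the paper's proof. It uses the $A^{0}(\widetilde U)$-weighted $H^{m}$ energy, self-adjointness of $\mathbb{J}_{\epsilon_{n}}$ and symmetry of $A^{j}$ for the transport part, Theorem~\ref{fractionalellipticity}(ii) for the nonlocal part, and then Young and Gronwall. You multiply by $A^{0}$ before applying $D^{\gamma}$, which leaves $[D^{\gamma},A^{0}]\partial_{t}U_{n}$ to bound by $C\|\partial_{t}U_{n}\|_{H^{m-1}}$ through the equation. The paper instead differentiates the $(A^{0})^{-1}$-normalized system and collects $R^{2}_{\gamma,n}$, $R^{3}_{\gamma,n}$ and $[D^{\gamma},(A^{0})^{-1}]\mathbb{J}_{\epsilon_{n}}F$. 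This is only a bookkeeping difference: both versions reduce to the Moser estimate $\|[D^{\gamma},f]g\|_{L^{2}}\leq C\|f\|_{\widehat{H}^{s}}\|g\|_{H^{m-1}}$ together with \eqref{eq:fraccommnon}.

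However, the step you single out as the main obstacle does not occur, and the tool you propose for it would not apply. Since $\mathbb{J}_{\epsilon_{n}}$ sits symmetrically in \eqref{eq:LSn} and commutes with $D^{\gamma}$ and $\Lambda^{2\alpha}$, you get exactly
\begin{align*}
\langle D^{\gamma}\mathbb{J}_{\epsilon_{n}}(B\Lambda^{2\alpha}\mathbb{J}_{\epsilon_{n}}U_{n}),D^{\gamma}U_{n}\rangle_{L^{2}}&=\langle B\Lambda^{2\alpha}D^{\gamma}\mathbb{J}_{\epsilon_{n}}U_{n},D^{\gamma}\mathbb{J}_{\epsilon_{n}}U_{n}\rangle_{L^{2}}\\
&\quad+\langle[D^{\gamma},B]\Lambda^{2\alpha}\mathbb{J}_{\epsilon_{n}}U_{n},D^{\gamma}\mathbb{J}_{\epsilon_{n}}U_{n}\rangle_{L^{2}}.
\end{align*}
In fact no commutator with $\mathbb{J}_{\epsilon_{n}}$ arises anywhere in this estimate; the same holds for the $A^{j}$, $D$ and $F$ terms.

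The remainder you actually need to estimate is the $[D^{\gamma},B]$ term (the paper's $\mathcal{I}_{n,\gamma}$), which your outline omits. It is bounded by $C\|\Lambda^{2\alpha}\mathbb{J}_{\epsilon_{n}}v_{n}\|_{H^{m-1}}\|U_{n}\|_{H^{m}}\leq C(\|U_{n}\|_{H^{m}}+\|\Lambda^{\alpha}\mathbb{J}_{\epsilon_{n}}v_{n}\|_{H^{m}})\|U_{n}\|_{H^{m}}$, because $|\xi|^{2\alpha}\langle\xi\rangle^{m-1}\leq\langle\xi\rangle^{m}(1+|\xi|^{\alpha})$. It is then absorbed by Young's inequality.

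Even if a mollifier commutator had appeared, Theorem~\ref{secondFcommest} would be the wrong tool. It requires $\|Q\|_{\dot{H}^{s+\alpha}}$, but (C5) allows $B_{0}=B_{0}(\widetilde U)$ (and likewise $A^{0}(\widetilde U)$) to depend on $\widetilde u$, which is only in $H^{s}$. So the bound by $M+\|\widetilde v\|_{H^{s+\alpha}}$ you claim is not available; only $G=G_{0}(\widetilde v)$ has that regularity, by (C6). What you would need is the elementary Friedrichs bound $\|[\mathbb{J}_{\epsilon},Q]w\|_{H^{1}}\leq C\|Q\|_{W^{1,\infty}}\|w\|_{L^{2}}$. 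The same remark applies to your ``same device'' for the $A^{0}$ terms: they need only the Moser estimate.

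With these corrections your argument closes exactly as in the paper.
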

\begin{proof}
Let $U_{n}$ be the unique solution to the Cauchy problem \eqref{eq:LSn}. For simplicity, we omit the dependence of the coefficients on $\widetilde{U}$. Let $m\in[1,s]$ be an integer and $\gamma\in\mathbb{N}_{0}^{d}$ satisfy $|\gamma|\leq m$. We apply the operator $D^{\gamma}$ to the linear system \eqref{eq:LSn} to obtain
\begin{equation}
\label{eq:DLSn}
\begin{aligned}
\partial_{t}D^{\gamma}U_{n}&+(A^{0})^{-1}\mathbb{J}_{\epsilon_{n}}(A^{j}\partial_{j}D^{\gamma}\mathbb{J}_{\epsilon_{n}}U_{n})+(A^{0})^{-1}\mathbb{J}_{\epsilon_{n}}(DD^{\gamma}\mathbb{J}_{\epsilon_{n}}U_{n})+(A^{0})^{-1}\mathbb{J}_{\epsilon_{n}}(\mathcal{A}_{\alpha,\widetilde{U}}^{\gamma}\mathbb{J}_{\epsilon_{n}}U_{n})\\
&+(A^{0})^{-1}R_{\gamma,n}^{1}+R_{\gamma,n}^{2}=(A^{0})^{-1}D^{\gamma}\mathbb{J}_{\epsilon_{n}}F+[D^{\gamma},(A^{0})^{-1}]\mathbb{J}_{\epsilon_{n}}F.
\end{aligned}
\end{equation}
where, 
\begin{align*}
(A^{0})^{-1}R_{\gamma,n}^{1}&:=(A^{0})^{-1}\mathbb{J}_{\epsilon_{n}}\left([D^{\gamma},A^{j}]\partial_{j}\mathbb{J}_{\epsilon_{n}}U_{n}\right)+(A^{0})^{-1}\mathbb{J}_{\epsilon_{n}}\left([D^{\gamma},B]\Lambda^{2\alpha}\mathbb{J}_{\epsilon_{n}}U_{n}\right)\\
&+(A^{0})^{-1}\mathbb{J}_{\epsilon_{n}}\left([D^{\gamma},D]\mathbb{J}_{\epsilon_{n}}U_{n}\right),
\end{align*}
\begin{align*}
R_{\gamma,n}^{2}&:=[D^{\gamma},(A^{0})^{-1}]\left\lbrace\mathbb{J}_{\epsilon_{n}}(A^{j}\partial_{j}\mathbb{J}_{\epsilon_{n}}U_{n})\right\rbrace+[D^{\gamma},(A^{0})^{-1}]\left\lbrace\mathbb{J}_{\epsilon_{n}}D\mathbb{J}_{\epsilon_{n}}U_{n}\right\rbrace\\
&+[D^{\gamma},(A^{0})^{-1}]\left\lbrace\mathbb{J}_{\epsilon_{n}}B\Lambda^{2\alpha}\mathbb{J}_{\epsilon_{n}}U_{n}\right\rbrace
\end{align*}
and 
\begin{align*}
R_{\gamma,n}^{3}:=[D^{\gamma},(A^{0})^{-1}]\left\lbrace\mathbb{J}_{\epsilon_{n}}\left(H\mathbb{J}_{\epsilon_{n}}[\Lambda^{2\alpha},G]\mathbb{J}_{\epsilon_{n}}\mathcal{L}U_{n}\right)\right\rbrace.
\end{align*}
We multiply equation \eqref{eq:DLSn} by $A^{0}$ and take $L^{2}$-inner product with $D^{\gamma}U_{n}$. Since $\mathbb{J}_{\epsilon_{n}}$ is self-adjoint on $L^{2}$ and each matrix $A^{j}$ is symmetric, we find
\begin{align*}
\langle\mathbb{J}_{\epsilon_{n}}(A^{j}\partial_{j}D^{\gamma}\mathbb{J}_{\epsilon_{n}}U_{n}),D^{\gamma}U_{n}\rangle_{L^{2}}&=\langle A^{j}\partial_{j}D^{\gamma}\mathbb{J}_{\epsilon_{n}}U_{n},D^{\gamma}\mathbb{J}_{\epsilon_{n}}U_{n}\rangle_{L^{2}}=-\frac{1}{2}\langle(\partial_{j}A^{j})D^{\gamma}\mathbb{J}_{\epsilon_{n}}U_{n},D^{\gamma}\mathbb{J}_{\epsilon_{n}}U_{n}\rangle_{L^{2}},\\
\langle\mathbb{J}_{\epsilon_{n}}(B\Lambda^{2\alpha}D^{\gamma}\mathbb{J}_{\epsilon_{n}}U_{n}),D^{\gamma}U_{n}\rangle_{L^{2}}&=\langle B\Lambda^{2\alpha}D^{\gamma}\mathbb{J}_{\epsilon_{n}}U_{n},D^{\gamma}\mathbb{J}_{\epsilon_{n}}U_{n}\rangle_{L^{2}},\\
\langle\mathbb{J}_{\epsilon_{n}}(D D^{\gamma}\mathbb{J}_{\epsilon_{n}}U_{n}),D^{\gamma}\mathbb{J}_{\epsilon_{n}}U_{n}\rangle_{L^{2}}&=\langle D D^{\gamma}\mathbb{J}_{\epsilon_{n}}U_{n},D^{\gamma}\mathbb{J}_{\epsilon_{n}}U_{n}\rangle_{L^{2}}.
\end{align*}
Together with the definition of  $(\mathcal{A}_{\alpha,\widetilde{U}}^{\gamma}\mathbb{J}_{\epsilon_{n}}U_{n})$ (see Theorem \ref{fractionalellipticity}), this leads to the following identity
\begin{align}
\label{eq:energyidentityfrac1}
\frac{1}{2}\frac{d}{dt}\left\langle A^{0}D^{\gamma}U_{n},D^{\gamma}U_{n}\right\rangle_{L^{2}}+\langle (\mathcal{A}_{\alpha,\widetilde{U}}^{\gamma}\mathbb{J}_{\epsilon_{n}}U_{n}),D^{\gamma}\mathbb{J}_{\epsilon_{n}}v_{n}\rangle_{L^{2}}=\mathcal{A}_{n,\gamma}+\mathcal{B}_{n,\gamma}-\mathcal{I}_{n,\gamma}-\mathcal{J}_{n,\gamma}-\mathcal{M}_{n,\gamma},
\end{align}
where
\begin{align*}
\mathcal{A}_{n,\gamma}&=\left\langle D^{\gamma}F,D^{\gamma}\mathbb{J}_{\epsilon_{n}}U_{n}\right\rangle_{L^{2}}+\left\langle A^{0}[D^{\gamma},(A^{0})^{-1}]\mathbb{J}_{\epsilon_{n}}F,D^{\gamma}U_{n}\right\rangle_{L^{2}},\\
\mathcal{B}_{n,\gamma}&=\frac{1}{2}\langle(\partial_{j}A^{j})D^{\gamma}\mathbb{J}_{\epsilon_{n}}U_{n},D^{\gamma}\mathbb{J}_{\epsilon_{n}}U_{n}\rangle_{L^{2}}-\langle D D^{\gamma}\mathbb{J}_{\epsilon_{n}}U_{n},D^{\gamma}\mathbb{J}_{\epsilon_{n}}U_{n}\rangle_{L^{2}}+\frac{1}{2}\left\langle(\partial_{t}A^{0})D^{\gamma}U_{n},D^{\gamma}U_{n}\right\rangle_{L^{2}},\\
\mathcal{I}_{n,\gamma}&=\left\langle R^{1}_{\gamma,n},D^{\gamma}U_{n}\right\rangle_{L^{2}},\\
\mathcal{J}_{n,\gamma}&=\left\langle A^{0}R^{2}_{\gamma,n},D^{\gamma}U_{n}\right\rangle_{L^{2}},\\
\mathcal{M}_{n,\gamma}&=\left\langle A^{0}R^{3}_{\gamma,n},D^{\gamma}U_{n}\right\rangle_{L^{2}}.
\end{align*}	
Let us estimate each term separately. By Cauchy-Schwarz inequality, the embedding $H^{s-1}\hookrightarrow L^{\infty}$ and \eqref{eq:goodestimate}, there is a positive constant $C(\mathcal{O}_{1})$ such that
\begin{align}
\label{eq:energyEstimate1}
|\mathcal{B}_{n,\gamma}|\leq C(\mathcal{O}_{1})\left(1+\|\partial_{t}\widetilde{U}\|_{H^{s-1}}+M\right)\|D^{\gamma}U_{n}\|_{L^{2}}^{2}.
\end{align}
To estimate $\mathcal{I}_{n,\gamma}$ and $\mathcal{J}_{n,\gamma}$ we apply the commutator estimates for $D^{\gamma}$ to obtain
\begin{align}
\label{eq:energyestimatec}
|\mathcal{I}_{n,\gamma}|,|\mathcal{J}_{n,\gamma}|\leq C(\mathcal{O}_{1},M)\left(\|U_{n}\|_{H^{m}}^{2}+\|U_{n}\|_{H^{m}}\|\Lambda^{\alpha}\mathbb{J}_{\epsilon_{n}}v_{n}\|_{H^{m}}\right).
\end{align}
Next, by the symmetry of $A^{0}$ in condition (C2), it follows that
\begin{align*}
\mathcal{M}_{n,\gamma}=\left\langle[D^{\gamma},(A^{0})^{-1}]\left\lbrace\mathbb{J}_{\epsilon_{n}}\left(H\mathbb{J}_{\epsilon_{n}}[\Lambda^{2\alpha},G]\mathbb{J}_{\epsilon_{n}}\mathcal{L}	U_{n}\right)\right\rbrace,(A^{0})^{-1}D^{\gamma}\mathbb{J}_{\epsilon_{n}}U_{n}\right\rangle_{L^{2}}.
\end{align*}
This term has the same structure as $\mathcal{D}_{n,\gamma}$ in the proof of Theorem \ref{fractionalellipticity}. Therefore, proceeding exactly as in the estimate for $\mathcal{D}_{n,\gamma}$ (see, \eqref{eq:dngama}), leads to the bound
\begin{align*}
|\mathcal{M}_{n,\gamma}|&\leq C(\mathcal{O}_{1},M)\left(M+\|\widetilde{v}\|_{H^{s+\alpha
}}\right)\|U_{n}\|_{H^{m}}^{2}.
\end{align*}
Applying the coercivity inequality \eqref{eq:fractionalstrongellipt}, together with the estimates for $\mathcal{B}_{n,\gamma}$, $\mathcal{I}_{n,\gamma}$, $\mathcal{J}_{n,\gamma}$ and $\mathcal{M}_{n,\gamma}$ in \eqref{eq:energyidentityfrac1} and summing over $|\gamma|\in[0,m]$, we obtain the following estimate
\begin{equation}
\label{eq:energyEstimate5}
\begin{aligned}
&\frac{1}{2}\frac{d}{dt}\sum_{|\gamma|=0}^{m}\left\langle A^{0}D^{\gamma}U_{n},D^{\gamma}U_{n}\right\rangle_{L^{2}}+\frac{\mu_{0}}{2}\|\Lambda^{\alpha}\mathbb{J}_{\epsilon_{n}}v_{n}\|_{H^{m}}^{2}-\omega(t)\|U_{n}\|_{H^{m}}^{2}\\
&\leq\sum_{|\gamma|=0}^{m}|\mathcal{A}_{n,\gamma}|+C(\mathcal{O}_{1},M)(1+M+\|\widetilde{v}\|_{H^{s+\alpha}}+\|\partial_{t}\widetilde{U}_{n}\|_{H^{s-1}})\left(\|U_{n}\|_{H^{m}}^{2}+\|U_{n}\|_{H^{m}}\|\Lambda^{\alpha}\mathbb{J}_{\epsilon_{n}}v_{n}\|_{H^{m}}\right)
\end{aligned}
\end{equation}
Since $F=(f_{1},f_{2})^{\top}$, 
\begin{equation}
\label{eq:sourcehyper1}
\begin{aligned}
\sum_{|\gamma|=0}^{m}|\mathcal{A}_{n,\gamma}|&\leq\sum_{|\gamma|=0}^{m}|\langle D^{\gamma}f_{1},D^{\gamma}\mathbb{J}_{\epsilon_{n}}u_{n}\rangle_{L^{2}}|+|\langle D^{\gamma}f_{2}, D^{\gamma}\mathbb{J}_{\epsilon_{n}}v_{n}\rangle_{L^{2}}|\\
&\leq\|f_{1}\|_{H^{m}}\|u_{n}\|_{H^{m}}+\|f_{2}\|_{L^{2}}\|v_{n}\|_{L^{2}}+\sum_{|\gamma|=1}^{m}|\langle D^{\gamma}f_{2},D^{\gamma}\mathbb{J}_{\epsilon_{n}}v_{n}\rangle_{L^{2}}|\\
&\leq \left(\|f_{1}\|_{H^{m}}\|U_{n}\|_{H^{m}}+\|f_{2}\|_{H^{m-\alpha}}[\|U_{n}\|_{H^{m}}+\|\Lambda^{\alpha}\mathbb{J}_{\epsilon_{n}}v_{n}\|_{H^{m}}]\right).
\end{aligned}
\end{equation}
Then, using the definition of $\omega$ given in \eqref{eq:firstomega} and applying Young's inequality in the right hand side of \eqref{eq:energyEstimate5} and \eqref{eq:sourcehyper1}, shows that
\begin{equation}
\label{eq:energyEstimate10}
\begin{aligned}
\frac{d}{dt}\sum_{|\gamma|=0}^{m}\left\langle A^{0}D^{\gamma}U_{n},D^{\gamma}U_{n}\right\rangle_{L^{2}}&+\frac{\mu_{0}}{2}\|\Lambda^{\alpha}\mathbb{J}_{\epsilon_{n}}v_{n}\|_{H^{m}}^{2}\\
&\leq C_{1}(\mathcal{O}_{1},M)\left[\|f_{1}\|_{H^{m}}^{2}+\|f_{2}\|_{H^{m-\alpha}}^{2}+\mu_{1}(t)\|U_{n}\|_{H^{m}}^{2}\right]
\end{aligned}
\end{equation}
for some positive constant $C_{1}=C_{1}(\mathcal{O}_{1},M)$. By integrating this inequality in the interval $[0,t]$ with $t\in(0,T]$ and use Remark \ref{remark1}, we can assure the existence of positive constants $a_{0}=a_{0}(\mathcal{O}_{1})$ and $a_{1}=a_{1}(\mathcal{O}_{1})$ such that, 
\begin{align*}
a_{0}\|U_{n}(t)\|_{H^{m}}^{2}&+\frac{\mu_{0}}{2}\int_{0}^{t}\|\Lambda^{\alpha}\mathbb{J}_{\epsilon_{n}}v_{n}(\tau)\|_{H^{m}}^{2}d\tau\\
&\leq a_{1}\|U_{0}\|_{H^{m}}^{2}+C_{1}(\mathcal{O}_{1},M)\int_{0}^{t}\left[\|f_{1}(\tau)\|_{H^{m}}^{2}+\|f_{2}(\tau)\|_{H^{m-\alpha}}^{2}+\mu_{1}(\tau)\|U_{n}(\tau)\|_{H^{m}}^{2}\right]d\tau
\end{align*}
Then, a standard application of Gronwall's inequality yields estimate \eqref{eq:parabolicenergyestimate} with 
\begin{equation}
\label{eq:importantconstant}
C_{0}(\mathcal{O}_{1}):=\frac{a_{1}(\mathcal{O}_{1})}{\min\left\lbrace a_{0}(\mathcal{O}_{1}),\frac{\mu_{0}}{2}\right\rbrace}.
\end{equation}
Finally, estimate \eqref{eq:fractionalenergyT} follows from the standard inequality,
\begin{align*}
\|\mathbb{J}_{\epsilon}U_{n}(t)\|_{H^{m}}\leq\|U_{n}(t)\|_{H^{m}},
\end{align*}
valid for every $t\in[0,T]$ and $\epsilon>0$.
\end{proof}
The next results establish the convergence of $\{U_{n}\}$ to a strong solution of \eqref{eq:linearizedsystem1} under one of the assumptions (E1)-(E3). We will repeatedly use the following observation: for any $\sigma\in\mathbb{R}$, $\delta\in(0,1]$ and $f\in H^{\sigma+\delta}$, we have that, 
\begin{equation}
\label{eq:Cauchyrate}
\begin{aligned}
\|(\mathbb{J}_{\epsilon_{n+p}}-\mathbb{J}_{\epsilon_{n}})f\|_{H^{\sigma}}^{2}&=\int_{\mathbb{R}^{d}}(1+|\xi|^{2})^{\sigma}|\widehat{\eta}(\epsilon_{n+p}\xi)-\widehat{\eta}(\epsilon_{n}\xi)|^{2-2\delta}|\widehat{\eta}(\epsilon_{n+p}\xi)-\widehat{\eta}(\epsilon_{n}\xi)|^{2\delta}|\widehat{f}(\xi)|^{2}d\xi\\
&\leq(2\|\widehat{\eta}\|_{L^{\infty}})^{2-2\delta}\int_{\mathbb{R}^{d}}(1+|\xi|^{2})^{\sigma}|\widehat{\eta}(\epsilon_{n+p}\xi)-\widehat{\eta}(\epsilon_{n}\xi)|^{2\delta}|\widehat{f}(\xi)|^{2}d\xi\\
&\leq(2\|\widehat{\eta}\|_{L^{\infty}})^{2-2\delta}\|\nabla\widehat{\eta}\|_{L^{\infty}}^{2\delta}|\epsilon_{n+p}-\epsilon_{n}|^{2\delta}\int_{\mathbb{R}^{d}}(1+|\xi|^{2})^{\sigma}|\xi|^{2\delta}|\widehat{f}(\xi)|^{2}d\xi\\
&\leq C_{0}(\delta)^{2}|\epsilon_{n+p}-\epsilon_{n}|^{2\delta}\|f\|_{H^{\sigma+\delta}}^{2},
\end{aligned}
\end{equation}
with  $C_{0}(\delta):=(2\|\widehat{\eta}\|_{L^{\infty}})^{1-\delta}\|\nabla \widehat{\eta}\|_{L^{\infty}}^{\delta}$. Similarly, since $\widehat{\eta}(0)=1$, the same argument yields
\begin{align}
\|(\mathbb{J}_{\epsilon_{n}}-\mathbb{I})f\|_{H^{\sigma}}^{2}\leq C_{0}(\delta)\epsilon_{n}^{2\delta}\|f\|_{H^{\sigma+\delta}}^{2}.\label{eq:Cauchyrate2} 
\end{align}
\begin{theo}
\label{Cauchysequenceaprox}
Under assumptions (L1)-(L3), let $U_{n}=(u_{n},v_{n})$ be the unique solution of \eqref{eq:LSn} associated with data
\begin{align*}
U_{0}\in H^{s},&\quad f_{1}\in\mathcal{C}([0,T];H^{s-1})\cap L^{2}([0,T];H^{s}),\quad f_{2}\in\mathcal{C}([0,T];H^{s-2\alpha})\cap L^{2}([0,T];H^{s-\alpha}).
\end{align*}
Then, there exists $U\in\cap_{\sigma\in[0,s)}X_{\sigma,\alpha,T}$ such that, 
\begin{equation}
	\label{eq:Cauchylimit}
	\mathbb{J}_{\epsilon_{n}}U_{n}\rightarrow U\quad\mbox{in}\quad X_{\sigma,\alpha,T}\quad\mbox{for all}\quad\sigma\in[0,s). 
\end{equation}
\end{theo}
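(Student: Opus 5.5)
The plan is to show that $\{\mathbb{J}_{\epsilon_{n}}U_{n}\}$ is Cauchy in $X_{0,\alpha,T}$, that is, in the low norm $E_{0,\alpha,T}$. Then we interpolate against the uniform high-norm bound \eqref{eq:fractionalenergyT} with $m=s$ to get convergence in every $X_{\sigma,\alpha,T}$ with $\sigma<s$. Concretely, set $W_{n,p}:=U_{n+p}-U_{n}$. Subtracting the two copies of \eqref{eq:LSn} gives
\begin{align*}
A^{0}\partial_{t}W_{n,p}+\mathbb{J}_{\epsilon_{n+p}}\bigl(A^{j}\partial_{j}\mathbb{J}_{\epsilon_{n+p}}W_{n,p}\bigr)+\mathbb{J}_{\epsilon_{n+p}}\bigl(B\Lambda^{2\alpha}\mathbb{J}_{\epsilon_{n+p}}W_{n,p}\bigr)+\dots=\mathcal{R}_{n,p}.
\end{align*}
The left side has the same structure as \eqref{eq:LSn} with index $n+p$, including the $D$ term and the mollified commutator term. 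The remainder $\mathcal{R}_{n,p}$ collects all terms in which one $\mathbb{J}_{\epsilon_{n+p}}$ is replaced by $\mathbb{J}_{\epsilon_{n+p}}-\mathbb{J}_{\epsilon_{n}}$ and acts on $U_{n}$ or on $F$. We then take the $L^{2}$ inner product with $W_{n,p}$, which is the case $\gamma=0$, and repeat the argument of Theorem \ref{energyestimatesF}. Self-adjointness of $\mathbb{J}_{\epsilon}$ and the symmetry of $A^{j}$ handle the transport part. For the nonlocal part we use the $L^{2}$ coercivity: condition (C5) for $B_{0}$, plus the self-adjointness of $\Lambda^{2\alpha}$ to move the commutator onto the test function, exactly as in $I_{2}$ in the proof of Theorem \ref{fractionalcommutatorfeli}. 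Together these give
\begin{align*}
\frac{d}{dt}\langle A^{0}W_{n,p},W_{n,p}\rangle+\frac{\mu_{0}}{2}\|\Lambda^{\alpha}\mathbb{J}_{\epsilon_{n+p}}w_{n,p}\|_{L^{2}}^{2}\leq C\mu_{1}(t)\|W_{n,p}\|_{L^{2}}^{2}+|\langle\mathcal{R}_{n,p},W_{n,p}\rangle|,
\end{align*}
where $w_{n,p}$ is the $v$-component of $W_{n,p}$.

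Next we bound the remainder with \eqref{eq:Cauchyrate}, taking $\delta=\min\{1,\cdot\}$ small. Each term of $\mathcal{R}_{n,p}$ contains a factor $(\mathbb{J}_{\epsilon_{n+p}}-\mathbb{J}_{\epsilon_{n}})$ acting on $U_{n}$, on $\partial_{j}\mathbb{J}U_{n}$, on $\Lambda^{2\alpha}\mathbb{J}U_{n}$, on the commutator $[\Lambda^{2\alpha},G]\mathcal{L}\mathbb{J}U_{n}$, or on $F$. Since $s>\frac d2+1\ge 2$, there is at least one full derivative of slack: $\|U_{n}\|_{H^{s}}$ is uniformly bounded and $\Lambda^{\alpha}\mathbb{J}v_{n}\in L^{2}H^{s}$ uniformly. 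For the $B$ term, $\|(\mathbb{J}_{\epsilon_{n+p}}-\mathbb{J}_{\epsilon_{n}})\Lambda^{2\alpha}\mathbb{J}v_{n}\|_{H^{-\alpha}}\le C|\epsilon_{n+p}-\epsilon_{n}|\,\|\Lambda^{\alpha}\mathbb{J}v_{n}\|_{H^{1}}$, and this is paired with $\|\Lambda^{\alpha}\mathbb{J}w\|$ and absorbed by Young's inequality. The commutator terms are bounded in $H^{s-1}$ (or in $H^{s-1-\alpha}$ in case (E2)) via Lemma \ref{fractionalprodest}(iii) and \eqref{eq:Gchainestimates}. The $F$ terms use $f_{1}\in L^{2}H^{s}$ and $f_{2}\in L^{2}H^{s-\alpha}$. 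The outcome is $|\langle\mathcal{R}_{n,p},W\rangle|\le C|\epsilon_{n+p}-\epsilon_{n}|\,\theta(t)\bigl(\|W\|_{L^{2}}+\|\Lambda^{\alpha}\mathbb{J}w\|_{L^{2}}\bigr)$ with $\theta\in L^{2}(0,T)$ depending only on the uniform bound from Theorem \ref{energyestimatesF}. The initial difference satisfies $\|(\mathbb{J}_{\epsilon_{n+p}}-\mathbb{J}_{\epsilon_{n}})U_{0}\|_{L^{2}}\le C|\epsilon_{n+p}-\epsilon_{n}|\,\|U_{0}\|_{H^{1}}$. Gronwall's inequality then gives $E_{0,\alpha,T}(W_{n,p})\le C|\epsilon_{n+p}-\epsilon_{n}|\to0$, and the same bound holds for $\mathbb{J}_{\epsilon_{n}}U_{n}$ after using \eqref{eq:Cauchyrate2}.

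To finish, note that $\mathbb{J}_{\epsilon_{n}}U_{n}\in\mathcal C([0,T];H^{s})$, so it is continuous in time. Interpolation $\|f\|_{H^{\sigma}}\le\|f\|_{L^{2}}^{1-\sigma/s}\|f\|_{H^{s}}^{\sigma/s}$, applied both to the $L^{\infty}_{t}$ part and, via Hölder in time, to the dissipative part $\int\|\Lambda^{\alpha}v\|_{H^{\sigma}}^{2}$, combined with the uniform bound \eqref{eq:fractionalenergyT}, shows the sequence is Cauchy in every $X_{\sigma,\alpha,T}$ with $\sigma<s$. Completeness of these spaces gives the limit $U$. Limits in different $\sigma$ agree because they all agree in $L^{2}$.

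The main obstacle is the remainder coming from the commutator term $\mathbb{J}(H\mathbb{J}[\Lambda^{2\alpha},G]\mathcal{L}\mathbb{J}U)$ in case (E2). There $[\Lambda^{2\alpha},G_{0}]\mathcal{L}_{0}u_{n}$ acts on the nondissipated variable, and for $\alpha>\frac12$ it costs $2\alpha-1$ derivatives of $u_{n}$. My first attempt is to bound it in $H^{s-2\alpha}$ using \eqref{eq:fractionalcommutator2} together with $s>\frac d2+2\alpha$. That still leaves $s-2\alpha>0$ derivatives to trade for a positive power $\delta$ of $|\epsilon_{n+p}-\epsilon_{n}|$ in \eqref{eq:Cauchyrate}. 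If this fails, the fallback is to estimate the difference in $H^{-\alpha}$ by duality and absorb it into the $L^{2}$ dissipation of $w$.
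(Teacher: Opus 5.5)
Your proof is correct, but it runs the Cauchy estimate at a different level from the paper.

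\textbf{The paper's route.} The paper feeds $V_{n,p}$ into the already proved estimate \eqref{eq:fractionalenergyT} with $m=s-2$. It must therefore bound the source in $H^{s-2}\times H^{s-2-\alpha}$. This forces a case split $\alpha\le\frac12$ versus $\alpha>\frac12$ and yields only fractional powers of $|\epsilon_{n+p}-\epsilon_n|$. It also leaves a term $\int_0^T\|(\mathbb{J}_{\epsilon_{n+p}}-\mathbb{J}_{\epsilon_n})U_n\|_{H^s}^2\,d\tau$, which is sent to zero by dominated convergence. That step really needs more than the uniform $H^s$ bound, because the function being mollified changes with $n$.

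\textbf{Your route.} You take $\gamma=0$ instead. Since $s\ge 3$, no remainder term ever requires more than $1+2\alpha<3$ derivatives of $U_n$ in $L^2$. So \eqref{eq:Cauchyrate} with $\delta=1$ gives the full factor $|\epsilon_{n+p}-\epsilon_n|$ for every term, and no dominated-convergence step is needed. In particular, the (E2) obstacle you anticipate does not arise: $[\Lambda^{2\alpha},G_0]\mathcal{L}_0u_n\in H^{s-2\alpha}\subset H^1$ because $s-2\alpha>1$. Your first attempt already works, and the fallback is unnecessary.

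The price is an $L^2$ energy inequality for the mollified difference system. This is not literally Theorem \ref{energyestimatesF}, which is stated for $m\ge1$. It is the mollified analogue of \eqref{eq:L2energy}, obtained as you describe from the self-adjointness of $\mathbb{J}_{\epsilon}$ and $\Lambda^{2\alpha}$ together with Lemma \ref{dongli}. The final interpolation against the uniform $X^{\infty}_{s,\alpha,T}$ bound is the same mechanism as in the paper, only starting from $\sigma=0$ rather than $\sigma=s-2$.

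\textbf{Two small repairs.}

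First, the remainder $(\mathbb{J}_{\epsilon_{n+p}}-\mathbb{J}_{\epsilon_n})(B\Lambda^{2\alpha}\mathbb{J}_{\epsilon_n}U_n)$ is tested against the unmollified $W_{n,p}$. It therefore cannot be paired with $\|\Lambda^{\alpha}\mathbb{J}_{\epsilon_{n+p}}w_{n,p}\|_{L^2}$. This does no harm: its $L^2$ norm is simply bounded by $C|\epsilon_{n+p}-\epsilon_n|\,\|v_n\|_{H^{1+2\alpha}}$.

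Second, Gronwall controls $\int_0^t\|\Lambda^{\alpha}\mathbb{J}_{\epsilon_{n+p}}w_{n,p}\|_{L^2}^2\,d\tau$, not the dissipative part of $E_{0,\alpha,T}(W_{n,p})$. Conclude instead through the splitting $\mathbb{J}_{\epsilon_{n+p}}U_{n+p}-\mathbb{J}_{\epsilon_n}U_n=\mathbb{J}_{\epsilon_{n+p}}W_{n,p}+(\mathbb{J}_{\epsilon_{n+p}}-\mathbb{J}_{\epsilon_n})U_n$, applying \eqref{eq:Cauchyrate} (not \eqref{eq:Cauchyrate2}) to the second piece.
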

\begin{proof}
Assume one of the hypotheses (E1)-(E2) holds. For any $p\in\mathbb{N}$, we define $V_{n,p}:=U_{n+p}-U_{n}$ and, in particular, $z_{n,p}:=v_{n+p}-v_{n}$. Subtracting the equations satisfied by $U_{n+p}$ and $U_{n}$, we get
\begin{equation}
\label{eq:Cauchydifference1}
\begin{aligned}
A^{0}\partial_{t}V_{n,p}+\mathbb{J}_{\epsilon_{n+p}}\left(A^{j}\partial_{j}\mathbb{J}_{\epsilon_{n+p}}V_{n,p}\right.&+\left.D\mathbb{J}_{\epsilon_{n+p}}V_{n,p}+B\Lambda^{2\alpha}\mathbb{J}_{\epsilon_{n+p}}V_{n,p}+H\mathbb{J}_{\epsilon_{n+p}}[\Lambda^{2\alpha},G]\mathbb{J}_{\epsilon_{n+p}}V_{n,p}\right)=F_{n,p}^{\alpha},\\
&V_{n,p}\rvert_{t=0}=(\mathbb{J}_{\epsilon_{n+p}}-\mathbb{J}_{\epsilon_{n}})U_{0},
\end{aligned}
\end{equation}
where the source term is given by
\begin{equation}
\label{eq:CauchydifferenceS1}
\begin{aligned}
F_{n,p}^{\alpha}&:=(\mathbb{J}_{\epsilon_{n+p}}-\mathbb{J}_{\epsilon_{n}})(F)-(\mathbb{J}_{\epsilon_{n+p}}-\mathbb{J}_{\epsilon_{n}})(A^{j}\partial_{j}\mathbb{J}_{\epsilon_{n}}U_{n})-(\mathbb{J}_{\epsilon_{n+p}}-\mathbb{J}_{\epsilon_{n}})(D\mathbb{J}_{\epsilon_{n}}U_{n})\\
&-(\mathbb{J}_{\epsilon_{n+p}}-\mathbb{J}_{\epsilon_{n}})(B\Lambda^{2\alpha}\mathbb{J}_{\epsilon_{n}}U_{n})-\mathbb{J}_{\epsilon_{n+p}}\left(H(\mathbb{J}_{\epsilon_{n+p}}-\mathbb{J}_{\epsilon_{n}})[\Lambda^{2\alpha},G]\mathcal{L}\mathbb{J}_{\epsilon_{n}}U_{n}\right)\\
&-(\mathbb{J}_{\epsilon_{n+p}}-\mathbb{J}_{\epsilon_{n}})\left(H\mathbb{J}_{\epsilon_{n}}[\Lambda^{2\alpha},G]\mathcal{L}\mathbb{J}_{\epsilon_{n}}U_{n}\right)-\mathbb{J}_{\epsilon_{n+p}}\left(A^{j}\partial_{j}(\mathbb{J}_{\epsilon_{n+p}}-\mathbb{J}_{\epsilon_{n}})U_{n}\right)\\
&-\mathbb{J}_{\epsilon_{n+p}}\left(D(\mathbb{J}_{\epsilon_{n+p}}-\mathbb{J}_{\epsilon_{n}})U_{n}\right)-\mathbb{J}_{\epsilon_{n+p}}\left(B\Lambda^{2\alpha}(\mathbb{J}_{\epsilon_{n+p}}-\mathbb{J}_{\epsilon_{n}})U_{n}\right)\\
&-\mathbb{J}_{\epsilon_{n+p}}\left(H\mathbb{J}_{\epsilon_{n+p}}[\Lambda^{2\alpha},G]\mathcal{L}(\mathbb{J}_{\epsilon_{n+p}}-\mathbb{J}_{\epsilon_{n}})U_{n}\right).
\end{aligned}
\end{equation}
Moreover, we decompose  $F_{n,p}^{\alpha}=(f_{n,p}^{1,\alpha},f_{n,p}^{2,\alpha})^{\top}$ where $f_{n,p}^{1,\alpha}$ and $f_{n,p}^{2,\alpha}$ take values in $\mathbb{R}^{k}$ and $\mathbb{R}^{m}$, respectively. 

According to Theorem \ref{energyestimatesF}, the sequence $\{U_{n}\}$ satisfies the estimate, 
\begin{align}
\|U_{n}(t)\|_{H^{s}}^{2}+\int_{0}^{t}\|\Lambda^{\alpha}\mathbb{J}_{\epsilon_{n}}v_{n}(\tau)\|_{H^{s}}^{2}d\tau\leq K(t)\Phi_{\alpha}^{s}(U_{0},F,t)\quad\mbox{for all}\quad t\in[0,T].\label{eq:boundHs}
\end{align}
Furthermore, since $V_{n,p}$ satisfies \eqref{eq:Cauchydifference1}, we may apply $m=s-2$ in \eqref{eq:fractionalenergyT} to obtain, 
\begin{align}
\mbox{for all}\quad t\in[0,T],\quad\|\mathbb{J}_{\epsilon_{n+p}}V_{n,p}(t)\|_{H^{s-2}}^{2}+\int_{0}^{t}\|\Lambda^{\alpha}\mathbb{J}_{\epsilon_{n+p}}z_{n,p}(\tau)\|_{H^{s-2}}^{2}d\tau\leq K(t)\Phi_{\alpha}^{s-2}(V_{n,p}(0),F_{n,p}^{\alpha},t). \label{eq:Cauchyestimate}
\end{align}
We first prove that, as a consequence of \eqref{eq:boundHs} and \eqref{eq:Cauchyestimate}, $\mathbb{J}_{\epsilon_{n+p}}V_{n,p}$ converges to zero as $n\rightarrow\infty$ with respect to the norm of the space $X_{s-2,\alpha,T}^{\infty}$. Employing \eqref{eq:Cauchyrate} with $\sigma=s-2$ and $\delta=1$, together with the Sobolev product estimates, yields
\begin{equation}
\label{eq:firstsource}
\begin{aligned}
\|f_{n,p}^{1,\alpha}\|_{H^{s-2}}&\leq C_{0} |\epsilon_{n+p}-\epsilon_{n}|\left(\|f_{1}\|_{H^{s-1}}+\|A^{j}\partial_{j}\mathbb{J}_{\epsilon_{n}}U_{n}\|_{H^{s-1}}+\|D\mathbb{J}_{\epsilon_{n}}U_{n}\|_{H^{s-1}}\right)\\
&+\|A^{j}(\mathbb{J}_{\epsilon_{n+p}}-\mathbb{J}_{\epsilon_{n}})\partial_{j}U_{n}\|_{H^{s-2}}+\|D(\mathbb{J}_{\epsilon_{n+p}}-\mathbb{J}_{\epsilon_{n}})U_{n}\|_{H^{s-2}}\\
&\leq C_{0}C(\mathcal{O}_{1},M)|\epsilon_{n+p}-\epsilon_{n}|\left(\|f_{1}\|_{H^{s-1}}+\|U_{n}\|_{H^{s}}\right).
\end{aligned}
\end{equation}
Let, $\mathcal{L}U_{n}=(y_{n},w_{n})^{\top}\in\mathbb{R}^{k}\times\mathbb{R}^{m}$. To estimate $f_{n,p}^{2,\alpha}$ we apply \eqref{eq:Cauchyrate} with $\sigma=s-2-\alpha$ and $\delta=\alpha$, combined with conditions (C5) and (C6), so that
\begin{align*}
\|f_{n,p}^{2,\alpha}\|_{H^{s-2-\alpha}}&\leq C_{0}(\alpha)C(\mathcal{O}_{1},M)|\epsilon_{n+p}-\epsilon_{n}|^{\alpha}\left(\|f_{2}\|_{H^{s-2}}+\|U_{n}\|_{H^{s}}\right)+C_{0}(\alpha)|\epsilon_{n+p}-\epsilon_{n}|^{\alpha}\|B_{0}\Lambda^{2\alpha}\mathbb{J}_{\epsilon_{n}}v_{n}\|_{H^{s-2}}\\
&+\|H_{0}(\mathbb{J}_{\epsilon_{n+p}}-\mathbb{J}_{\epsilon_{n}})[\Lambda^{2\alpha},G_{0}]\mathbb{J}_{\epsilon_{n}}w_{n}\|_{H^{s-2-\alpha}}+C_{0}(\alpha)|\epsilon_{n+p}-\epsilon_{n}|^{\alpha}\|H_{0}\mathbb{J}_{\epsilon_{n}}[\Lambda^{2\alpha},G_{0}]\mathbb{J}_{\epsilon_{n}}w_{n}\|_{H^{s-2}}\\
&+\|B_{0}\Lambda^{2\alpha}(\mathbb{J}_{\epsilon_{n+p}}-\mathbb{J}_{\epsilon_{n}})v_{n}\|_{H^{s-2-\alpha}}+\|H_{0}\mathbb{J}_{\epsilon_{n+p}}[\Lambda^{2\alpha},G_{0}](\mathbb{J}_{\epsilon_{n+p}}-\mathbb{J}_{\epsilon_{n}})w_{n}\|_{H^{s-2-\alpha}}.
\end{align*}
In view of the embedding $H^{s-2}\hookrightarrow H^{s-2-\alpha}$ and the Sobolev product estimates, we have
\begin{equation}
\label{eq:secondsourcealmost}
\begin{aligned}
\|f_{n,p}^{2,\alpha}\|_{H^{s-2-\alpha}}&\leq C_{0}(\alpha)C(\mathcal{O}_{1},M)|\epsilon_{n+p}-\epsilon_{n}|^{\alpha}\left(\|f_{2}\|_{H^{s-2}}+\|U_{n}\|_{H^{s}}\right)+C(\mathcal{O}_{1},M)R_{n,p}^{\alpha}
\end{aligned}
\end{equation}
with
\begin{align*}
R_{n,p}^{\alpha}&:=\|(\mathbb{J}_{\epsilon_{n+p}}-\mathbb{J}_{\epsilon_{n}})[\Lambda^{2\alpha},G_{0}]\mathbb{J}_{\epsilon_{n}}w_{n}\|_{H^{s-2}}+\|\Lambda^{2\alpha}(\mathbb{J}_{\epsilon_{n+p}}-\mathbb{J}_{\epsilon_{n}})w_{n}\|_{H^{s-2}}\\
&+\|\mathbb{J}_{\epsilon_{n+p}}[\Lambda^{2\alpha},G_{0}](\mathbb{J}_{\epsilon_{n+p}}-\mathbb{J}_{\epsilon_{n}})w_{n}\|_{H^{s-2}}.
\end{align*}
We first consider the case $\alpha\in(0,\tfrac{1}{2}]$. Applying \eqref{eq:Cauchyrate} with $\sigma=s-2$ and $\delta=1$, implies
\begin{equation}
\label{eq:Rnhalf}
\begin{aligned}
R_{n,p}^{\alpha}&\leq C_{0}|\epsilon_{n+p}-\epsilon_{n}|\left(\|[\Lambda^{2\alpha},G_{0}]\mathbb{J}_{\epsilon_{n}}w_{n}\|_{H^{s-1}}+\|\Lambda^{2\alpha}w_{n}\|_{H^{s-1}}\right)+\|\Lambda^{2\alpha}\left(G_{0}(\mathbb{J}_{\epsilon_{n+p}}-\mathbb{J}_{\epsilon_{n}})w_{n}\right)\|_{H^{s-2}}\\
&+\|G_{0}(\mathbb{J}_{\epsilon_{n+p}}-\mathbb{J}_{\epsilon_{n}})\Lambda^{2\alpha}w_{n}\|_{H^{s-2}}\\
&\leq C(\mathcal{O}_{1},M)\left(C_{0}|\epsilon_{n+p}-\epsilon_{n}|\|w_{n}\|_{H^{s}}+\|(\mathbb{J}_{\epsilon_{n+p}}-\mathbb{J}_{\epsilon_{n}})w_{n}\|_{H^{s-1}}\right)\\
&\leq C(\mathcal{O}_{1},M)C_{0}|\epsilon_{n+p}-\epsilon_{n}|\|w_{n}\|_{H^{s}}.
\end{aligned}
\end{equation}
In contrast, if $\alpha\in(\tfrac{1}{2},1)$, we set $\sigma=s-2$ and $\delta=2-2\alpha$ in \eqref{eq:Cauchyrate} to have
\begin{align*}
R_{n,p}^{\alpha}&\leq C_{0}(2-2\alpha)|\epsilon_{n+p}-\epsilon_{n}|^{2-2\alpha}\left(\|[\Lambda^{2\alpha},G_{0}]\mathbb{J}_{\epsilon_{n}}w_{n}\|_{H^{s-2\alpha}}+\|\Lambda^{2\alpha}w_{n}\|_{H^{s-2\alpha}}\right)\\
&+\|\Lambda^{2\alpha}\left(G_{0}(\mathbb{J}_{\epsilon_{n+p}}-\mathbb{J}_{\epsilon_{n}})w_{n}\right)\|_{H^{s-2}}+\|G_{0}(\mathbb{J}_{\epsilon_{n+p}}-\mathbb{J}_{\epsilon_{n}})\Lambda^{2\alpha}w_{n}\|_{H^{s-2}}\\
&\leq C_{0}(2-2\alpha)|\epsilon_{n+p}-\epsilon_{n}|^{2-2\alpha}\left(\|\Lambda^{2\alpha}(G_{0}\mathbb{J}_{\epsilon_{n}}w_{n})\|_{H^{s-2\alpha}}+\|G_{0}\mathbb{J}_{\epsilon_{n}}\Lambda^{2\alpha}w_{n}\|_{H^{s-2\alpha}}+C(\mathcal{O}_{1},M)\|\Lambda^{2\alpha}w_{n}\|_{H^{s-2\alpha}}\right)\\
&+\|G_{0}(\mathbb{J}_{\epsilon_{n+p}}-\mathbb{J}_{\epsilon_{n}})w_{n}\|_{H^{s}}\\
&\leq C_{0}(2-2\alpha)|\epsilon_{n+p}-\epsilon_{n}|^{2-2\alpha}\left(C(\mathcal{O}_{1},M)\|w_{n}\|_{H^{s}}+\|G_{0}\mathbb{J}_{\epsilon_{n}}\Lambda^{2\alpha}w_{n}\|_{H^{s-2\alpha}}\right)\\
&+C(\mathcal{O}_{1},M)\|(\mathbb{J}_{\epsilon_{n+p}}-\mathbb{J}_{\epsilon_{n}})w_{n}\|_{H^{s}}.
\end{align*}
Applying the fractional Leibniz rule \eqref{eq:fractionalleibnizrule} with $p_{1}=\infty$, $p_{2}=2$ and $p_{3}=p_{4}=4$, and using the Sobolev embedding $H^{1}\hookrightarrow L^{4}$, we obtain
\begin{align*}
\|G_{0}\mathbb{J}_{\epsilon_{n}}\Lambda^{2\alpha}w_{n}\|_{H^{s-2\alpha}}&\leq\|G_{0}\mathbb{J}_{\epsilon_{n}}\Lambda^{2\alpha}w_{n}\|_{L^{2}}+\|\Lambda^{s-2\alpha}\left(G_{0}\mathbb{J}_{\epsilon_{n}}\Lambda^{2\alpha}w_{n}\right)\|_{L^{2}}\\
&\leq C(\mathcal{O}_{1},M)\|\Lambda^{2\alpha}w_{n}\|_{L^{2}}+\|G_{0}\|_{L^{\infty}}\|w_{n}\|_{H^{s}}+\|\Lambda^{s-2\alpha}G_{0}\|_{L^{4}}\|\Lambda^{2\alpha}w_{n}\|_{L^{4}}\\
&\leq C(\mathcal{O}_{1},M)\|w_{n}\|_{H^{s}}+C\|\Lambda^{s-2\alpha}G_{0}\|_{H^{1}}\|\Lambda^{2\alpha}w_{n}\|_{H^{1}}.
\end{align*}
Since $2\alpha>1$, we have $\|\Lambda^{s-2\alpha}G_{0}\|_{H^{1}}\leq  C(\mathcal{O}_{1},M)$ and given that $s\geq 3$, we deduce the bound
\begin{align*}
\|G_{0}\mathbb{J}_{\epsilon_{n}}\Lambda^{2\alpha}w_{n}\|_{H^{s-2\alpha}}\leq C(\mathcal{O}_{1},M)\|w_{n}\|_{H^{s}}.
\end{align*}
Hence, for $\alpha\in(\tfrac{1}{2},1)$,
\begin{align}
R_{n,p}^{\alpha}\leq C_{3}(\mathcal{O}_{1},M)\left(|\epsilon_{n+p}-\epsilon_{n}|^{2-2\alpha}\|w_{n}\|_{H^{s}}+\|(\mathbb{J}_{\epsilon_{n+p}}-\mathbb{J}_{\epsilon_{n}})w_{n}\|_{H^{s}}\right).\label{eq:Rnmorehalf}
\end{align}
Substituting \eqref{eq:Rnhalf} and \eqref{eq:Rnmorehalf} into \eqref{eq:secondsourcealmost}, we arrive at
\begin{equation}
\label{eq:finalsourcesecond}
\begin{aligned}
\|f_{n,p}^{2,\alpha}\|_{H^{s-2-\alpha}}&\leq C_{3}(\mathcal{O}_{1},M)\left(|\epsilon_{n+p}-\epsilon_{n}|^{\alpha}+|\epsilon_{n+p}-\epsilon_{n}|^{2-2\alpha}+|\epsilon_{n+p}-\epsilon_{n}|\right)\left(\|f_{2}\|_{H^{s-2}}+\|U_{n}\|_{H^{s}}\right)\\
&+C_{3}(\mathcal{O}_{1},M)\|(\mathbb{J}_{\epsilon_{n+p}}-\mathbb{J}_{\epsilon_{n}})U_{n}\|_{H^{s}}.
\end{aligned}
\end{equation}
Since $\epsilon_{n}\rightarrow 0$, we may assume that $|\epsilon_{n+p}-\epsilon_{n}|<1$. Combining \eqref{eq:firstsource} and \eqref{eq:finalsourcesecond} with \eqref{eq:Cauchyestimate}, we find that, for any $t\in[0,T]$
\begin{align*}
&\|\mathbb{J}_{\epsilon_{n+p}}V_{n,p}(t)\|_{H^{s-2}}^{2}+\int_{0}^{t}\|\Lambda^{\alpha}\mathbb{J}_{\epsilon_{n+p}}z_{n,p}(\tau)\|_{H^{s-2}}^{2}d\tau\\
&\leq K(T)\left[C_{0}^{2}|\epsilon_{n+p}-\epsilon_{n}|^{2}\|U_{0}\|_{H^{s}}^{2}+\int_{0}^{T}(\|f_{n,p}^{1,\alpha}(\tau)\|_{H^{s-2}}^{2}+\|f_{n,p}^{2,\alpha}(\tau)\|_{H^{s-2-\alpha}}^{2})d\tau\right]\\
&\leq K(T)\left[C_{0}^{2}|\epsilon_{n+p}-\epsilon_{n}|^{2\alpha}\left(\|U_{0}\|_{H^{s}}^{2}+C_{3}(\mathcal{O}_{1},M)^{2}\int_{0}^{T}(\|f_{1}(\tau)\|_{H^{s}}^{2}+\|U_{n}(\tau)\|_{H^{s}}^{2})d\tau\right)\right.\\
&+C_{3}(\mathcal{O}_{1},M)^{2}\left(|\epsilon_{n+p}-\epsilon_{n}|^{2\alpha}+|\epsilon_{n+p}-\epsilon_{n}|^{2(2-2\alpha)}\right)\int_{0}^{T}\left(\|f_{2}(\tau)\|_{H^{s-\alpha}}^{2}+\|U_{n}(\tau)\|_{H^{s}}^{2}\right)d\tau\\
&+\left. C_{3}(\mathcal{O}_{1},M)^{2}\int_{0}^{T}\|(\mathbb{J}_{\epsilon_{n+p}}-\mathbb{J}_{\epsilon_{n}})U_{n}(\tau)\|_{H^{s}}^{2}d\tau\right].
\end{align*}
Therefore, there exists a positive constant $C_{4}(\mathcal{O}_{1},M)$, such that
\begin{align*}
&\|\mathbb{J}_{\epsilon_{n+p}}V_{n,p}(t)\|_{H^{s-2}}^{2}+\int_{0}^{t}\|\Lambda^{\alpha}\mathbb{J}_{\epsilon_{n+p}}z_{n,p}(\tau)\|_{H^{s-2}}^{2}d\tau\\
&\leq C_{4}(\mathcal{O}_{1},M)K(T)\left(|\epsilon_{n+p}-\epsilon_{n}|^{2\alpha}+|\epsilon_{n+p}-\epsilon_{n}|^{2(2-2\alpha)}\right)\left(\Phi_{\alpha}^{s}(U_{0},F,T)+T\sup_{t\in[0,T]}\|U_{n}(t)\|_{H^{s}}^{2}\right)\\
&+C_{4}(\mathcal{O}_{1},M)\int_{0}^{T}\|(\mathbb{J}_{\epsilon_{n+p}}-\mathbb{J}_{\epsilon_{n}})U_{n}(t)\|_{H^{s}}^{2}d\tau.
\end{align*}
Notice that we cannot apply \eqref{eq:Cauchyrate} to the last term of the previous inequality.. Nonetheless, for every $t\in[0,T]$, $\|(\mathbb{J}_{\epsilon_{n+p}}-\mathbb{J}_{\epsilon_{n}})U_{n}(t)\|_{H^{s}}\rightarrow 0$ as $n\rightarrow\infty$ and 
\begin{align*}
\|(\mathbb{J}_{\epsilon_{n+p}}-\mathbb{J}_{\epsilon_{n}})U_{n}(t)\|_{H^{s}}^{2}\leq 2\|U_{n}(t)\|_{H^{s}}^{2}\leq 2 K(T)\Phi_{\alpha}^{s}(U_{0},F,T).
\end{align*}
In consequence, by the dominated convergence theorem, 
\begin{align*}
\int_{0}^{T}\|(\mathbb{J}_{\epsilon_{n+p}}-\mathbb{J}_{\epsilon_{n}})U_{n}(\tau)\|_{H^{s}}^{2}d\tau\rightarrow 0\quad\mbox{as}\quad n\rightarrow\infty.
\end{align*}
Hence
\begin{equation}
\label{eq:firstcauchyzero}
\sup_{t\in[0,T]}\|\mathbb{J}_{\epsilon_{n+p}}V_{n,p}(t)\|_{H^{s-2}}^{2}+\int_{0}^{T}\|\Lambda^{\alpha}\mathbb{J}_{\epsilon_{n+p}}z_{n,p}(\tau)\|_{H^{s-2}}^{2}d\tau\rightarrow 0\quad\mbox{as}\quad n\rightarrow\infty\quad\mbox{for any}\quad p\in\mathbb{N}_{0}.
\end{equation}
Using the triangle inequality,
\begin{align*}
E_{s-2,\alpha,T}(\mathbb{J}_{\epsilon_{n+p}}U_{n+p}-\mathbb{J}_{\epsilon_{n}}U_{n})\leq E_{s-2,\alpha,T}(\mathbb{J}_{\epsilon_{n+p}}V_{n,p})+E_{s-2,\alpha,T}((\mathbb{J}_{\epsilon_{n+p}}-\mathbb{J}_{\epsilon_{n}})U_{n}).
\end{align*}
By \eqref{eq:firstcauchyzero},  $E_{s-2,\alpha,T}(\mathbb{J}_{\epsilon_{n+p}}V_{n,p})\rightarrow 0$ as $n\rightarrow\infty$, uniformly in $p\in\mathbb{N}_{0}$. Furthermore, by \eqref{eq:Cauchyrate} with $\sigma=s-2$ and $\delta=1$ and the bound in \eqref{eq:boundHs}, 
\begin{align*}
E_{s-2,\alpha,T}^{2}((\mathbb{J}_{\epsilon_{n+p}}-\mathbb{J}_{\epsilon_{n}})U_{n})&\leq|\epsilon_{n+p}-\epsilon_{n}|^{2}\left(\sup_{t\in[0,T]}\|U_{n}(t)\|_{H^{s-1}}^{2}+\int_{0}^{T}\|\Lambda^{\alpha}v_{n}(\tau)\|_{H^{s-1}}^{2}d\tau\right)\\
&\leq|\epsilon_{n+p}-\epsilon_{n}|^{2}K(T)\Phi_{\alpha}^{s}(U_{0},F,T)\rightarrow 0\quad\mbox{as}\quad n\rightarrow\infty.
\end{align*}
Hence, for any $p\in\mathbb{N}_{0}$, $E_{s-2,\alpha,T}(\mathbb{J}_{\epsilon_{n+p}}U_{n+p}-\mathbb{J}_{\epsilon_{n}}U_{n})\rightarrow 0$ as $n\rightarrow\infty$. Since, by Theorem \ref{existenceUniqLSn}, $\{U_{n}\}\subset\mathcal{C}([0,T];H^{s})$, we conclude that $\{\mathbb{J}_{\epsilon_{n}}U_{n}\}$ is a Cauchy sequence in $X_{s-2,\alpha,T}$. Consequently, $\{\mathbb{J}_{\epsilon_{n}}U_{n}\}$ is a Cauchy sequence in $X_{\sigma,\alpha,T}$ for every $\sigma\in[0,s-2]$. 

It remains to consider the case $\sigma\in(s-2,s)$. Let $\theta\in(0,1)$ be such that $\sigma=(1-\theta)(s-2)+\theta s$. By the interpolation between non-homogeneous Sobolev spaces and H\"older's inequality with $p=(1-\theta)^{-1}$ and $q=\theta^{-1}$, we have, for any $t\in[0,T]$,
\begin{align*}
&E_{\sigma,\alpha}^{2}(\mathbb{J}_{\epsilon_{n}}U_{n}(t)-\mathbb{J}_{\epsilon_{m}}U_{m}(t))\leq\|\mathbb{J}_{\epsilon_{n}}U_{n}(t)-\mathbb{J}_{\epsilon_{m}}U_{m}(t)\|_{H^{s-2}}^{2(1-\theta)}\|\mathbb{J}_{\epsilon_{n}}U_{n}(t)-\mathbb{J}_{\epsilon_{m}}U_{m}(t)\|_{H^{s}}^{2\theta}\\
&+\int_{0}^{t}\|\Lambda^{\alpha}\left(\mathbb{J}_{\epsilon_{n}}v_{n}(\tau)-\mathbb{J}_{\epsilon_{m}}v_{m}(\tau)\right)\|_{H^{s-2}}^{2(1-\theta)}\|\Lambda^{\alpha}\left(\mathbb{J}_{\epsilon_{n}}v_{n}(\tau)-\mathbb{J}_{\epsilon_{m}}v_{m}(\tau)\right)\|_{H^{s}}^{2\theta}d\tau\\
&\leq\|\mathbb{J}_{\epsilon_{n}}U_{n}(t)-\mathbb{J}_{\epsilon_{m}}U_{m}(t)\|_{H^{s-2}}^{2(1-\theta)}\|\mathbb{J}_{\epsilon_{n}}U_{n}(t)-\mathbb{J}_{\epsilon_{m}}U_{m}(t)\|_{H^{s}}^{2\theta}\\
&+\left(\int_{0}^{t}\|\Lambda^{\alpha}\left(\mathbb{J}_{\epsilon_{n}}v_{n}(\tau)-\mathbb{J}_{\epsilon_{m}}v_{m}(\tau)\right)\|_{H^{s-2}}^{2}d\tau\right)^{1-\theta}\left(\int_{0}^{t}\|\Lambda^{\alpha}\left(\mathbb{J}_{\epsilon_{n}}v_{n}(\tau)-\mathbb{J}_{\epsilon_{m}}v_{m}(\tau)\right)\|_{H^{s}}^{2}d\tau\right)^{\theta}.
\end{align*}
The bound in \eqref{eq:boundHs} shows that $\{\mathbb{J}_{\epsilon_{n}}U_{n}\}$ is bounded in $X_{s,\alpha,T}^{\infty}$, while the previous argument proves that it is Cauchy in $X_{s-2,\alpha,T}$. Hence, by interpolation, we deduce that $E_{\sigma}^{2}(\mathbb{J}_{\epsilon_{n}}U_{n}(t)-\mathbb{J}_{\epsilon_{m}}U_{m}(t))\rightarrow 0$ as $n,m\rightarrow\infty$  for any $t\in[0,T]$. Therefore, $\{\mathbb{J}_{\epsilon_{n}}U_{n}\}$ is a Cauchy sequence in $X_{\sigma,\alpha,T}$ for every $\sigma\in[0,s)$, which concludes the proof. 
\end{proof}
The next Theorem shows that the limit $U=(u,v)$ obtained in Theorem \ref{Cauchysequenceaprox} is a strong solution of \eqref{eq:linearizedsystem1}, provided that the initial data have $H^{s}$ regularity.

\begin{theo}
\label{superapprox}
Assume that $U_{0}\in H^{s}$, $f_{1}\in\mathcal{C}([0,T];H^{s-1})\cap L^{2}([0,T];H^{s})$ and $f_{2}\in\mathcal{C}([0,T];H^{s-2\alpha})\cap L^{2}([0,T];H^{s-\alpha})$. Under assumptions (L1)-(L3), there exists a solution $U=(u,v)$ of \eqref{eq:linearizedsystem1}-\eqref{eq:linearizedinitialdata} in the space $X_{s,\alpha,T}^{\infty}\cap\mathcal{C}([0,T];H^{\sigma})$ with $\partial_{t}u\in\mathcal{C}([0,T];H^{\sigma-1})$ and $\partial_{t}v\in\mathcal{C}([0,T];H^{\sigma-2\alpha})$ for any $\sigma\in[0,s)$. Furthermore, $U$ satisfies the estimate
\begin{align}
E_{m,\alpha}^{2}(U(t))\leq K(t)\Phi_{\alpha}^{m}(U_{0},F,t)\label{eq:estimateforapprox}
\end{align}
for all integers $m\in[1,s]$ and $t\in[0,T]$. If, in addition, $\alpha\in(0,\tfrac{1}{2}]$, then  $\partial_{t}v\in\mathcal{C}([0,T];H^{\sigma-1})$ for every $\sigma\in[0,s)$.
\end{theo}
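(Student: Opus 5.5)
The limit $U$ produced by Theorem \ref{Cauchysequenceaprox} is the natural candidate. We already know $\mathbb{J}_{\epsilon_{n}}U_{n}\rightarrow U$ in $X_{\sigma,\alpha,T}$ for every $\sigma\in[0,s)$, so $U\in\mathcal{C}([0,T];H^{\sigma})$ for such $\sigma$. The proof then has four parts: (a) upgrade to $X_{s,\alpha,T}^{\infty}$ together with the estimate \eqref{eq:estimateforapprox}, (b) show that $U$ solves \eqref{eq:linearizedsystem1}--\eqref{eq:linearizedinitialdata}, (c) obtain the time regularity of $\partial_{t}u,\partial_{t}v$, and (d) improve $\partial_t v$ when $\alpha\le\frac12$.

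For (a) I would use weak compactness. By \eqref{eq:fractionalenergyT}, for each integer $m\in[1,s]$ the sequence $\mathbb{J}_{\epsilon_{n}}U_{n}$ is bounded in $L^{\infty}([0,T];H^{m})$, and $\Lambda^{\alpha}\mathbb{J}_{\epsilon_{n}}v_{n}$ is bounded in $L^{2}([0,T];H^{m})$. A subsequence therefore converges weak-$*$ in $L^{\infty}H^{m}$ and weakly in $L^{2}H^{m}$. The limits must coincide with $U$ and $\Lambda^{\alpha}v$, because we already have strong convergence in lower norms. For each fixed $t$ we also have $\mathbb{J}_{\epsilon_n}U_n(t)\to U(t)$ in $H^{\sigma}$, so $U(t)$ is a weak $H^{m}$ limit. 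Weak lower semicontinuity of norms then gives $\|U(t)\|_{H^{m}}^{2}\le\liminf\|\mathbb{J}_{\epsilon_n}U_{n}(t)\|_{H^{m}}^{2}$, and likewise $\int_{0}^{t}\|\Lambda^{\alpha}v\|_{H^{m}}^{2}\le\liminf\int_0^t\|\Lambda^\alpha\mathbb{J}_{\epsilon_n}v_n\|_{H^m}^2$. Adding these yields \eqref{eq:estimateforapprox} and $U\in X^{\infty}_{s,\alpha,T}$.

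For (b) I would write \eqref{eq:LSn} in integrated form,
\[
U_{n}(t)=\mathbb{J}_{\epsilon_{n}}U_{0}-\int_{0}^{t}(A^{0})^{-1}\big[\cdots\big]d\tau+\int_0^t (A^0)^{-1}\mathbb{J}_{\epsilon_n}F\,d\tau ,
\]
and pass to the limit term by term in $\mathcal{C}([0,T];H^{\sigma-2})$ with $\sigma<s$ close to $s$. The transport, damping and $B\Lambda^{2\alpha}$ terms converge by the product estimate \eqref{eq:basicproduct} and by \eqref{eq:Cauchyrate2} applied to the outer mollifiers; here the uniform $H^{s}$ bound pays for the $\epsilon_n^{\delta}$ factor. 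The commutator term $\mathbb{J}_{\epsilon_{n}}(H\mathbb{J}_{\epsilon_{n}}[\Lambda^{2\alpha},G]\mathcal{L}\mathbb{J}_{\epsilon_{n}}U_{n})$ I would split as in \eqref{eq:CauchydifferenceS1}. Each difference $\mathbb{J}_{\epsilon_n}-\mathbb{I}$ is bounded via \eqref{eq:Cauchyrate2}. The remaining piece $[\Lambda^{2\alpha},G](\mathcal{L}\mathbb{J}_{\epsilon_n}U_n-\mathcal{L}U)$ is bounded in $H^{\sigma-2}$ by expanding $\Lambda^{2\alpha}(G\cdot)-G\Lambda^{2\alpha}$ and using \eqref{eq:Gchainestimates}, exactly as in the estimate of $R^{\alpha}_{n,p}$. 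This gives the Duhamel identity for $U$. Since the integrand lies in $\mathcal{C}([0,T];H^{\sigma-2\alpha})$ (respectively $H^{\sigma-1}$ for the $u$-component), $U$ is differentiable in time and satisfies the equation strongly, with $U(0)=U_{0}$ because $\mathbb{J}_{\epsilon_n}U_0\to U_0$.

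For (c) I would solve the equation for $\partial_{t}U$. The hyperbolic component $\partial_t u=(A^0)^{-1}$ applied to transport, damping and $f_{1}$ terms; by (C5)--(C6) the $B$, $H$ blocks do not enter this component, apart from the coupling produced by $(A^0)^{-1}$. This gives $\partial_t u\in\mathcal{C}([0,T];H^{\sigma-1})$, using that $\widetilde U\in\mathcal{C}H^{s}$ and $U\in\mathcal{C}H^{\sigma}$. The $v$-component contains $B_{0}\Lambda^{2\alpha}v$ and the commutator. I would bound the commutator in $H^{\sigma-2\alpha}$ through $\Lambda^{2\alpha}(G_0w)-G_0\Lambda^{2\alpha}w$ and products, which gives $\partial_{t}v\in\mathcal{C}([0,T];H^{\sigma-2\alpha})$.

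For (d), when $\alpha\le\frac12$ we have $2\alpha\le1$, so $H^{\sigma-2\alpha}\hookrightarrow H^{\sigma-1}$. Here the transport term limits regularity to $H^{\sigma-1}$, and the dissipative and commutator terms are no worse, using \eqref{eq:dongliC2} with $\sigma=2\alpha\le1$. Hence $\partial_t v\in\mathcal{C}([0,T];H^{\sigma-1})$. The coupling through $(A^0)^{-1}$ is handled as in (c).

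I expect the main obstacle to be (b), specifically the convergence of the commutator term $[\Lambda^{2\alpha},G]\mathcal{L}\mathbb{J}_{\epsilon_n}U_n$ in case (E2) with $\alpha>\frac12$. There the commutator acts on the non-dissipated variable $u$, so only $H^{s}$ control is available. This is why the argument must work in the low norm $H^{\sigma-2}$, where the loss $2\alpha<2$ is affordable, rather than in $H^{\sigma-1}$.
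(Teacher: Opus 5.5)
Your overall route is the paper's: take the limit $U$ of Theorem~\ref{Cauchysequenceaprox}, get \eqref{eq:estimateforapprox} by weak lower semicontinuity, pass to the limit term by term in \eqref{eq:LSn} in a low Sobolev norm, and read the time regularity off the equation. The only variation is how you identify $\partial_tU$.
\begin{itemize}
\item The paper first proves $\{\partial_tU_n\}$ is Cauchy in $\mathcal{C}([0,T];H^{s-3})$ from the difference system \eqref{eq:Cauchydifference1} (estimate \eqref{eq:timederivativeestimate1}). It then uses $\mathbb{J}_{\epsilon_n}U_n(t)-\mathbb{J}_{\epsilon_n}U_n(0)=\int_0^t\mathbb{J}_{\epsilon_n}\partial_tU_n\,d\tau$.
\item You instead integrate \eqref{eq:LSn}. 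Since $\partial_tU_n$ equals the right-hand side of \eqref{eq:LSn}, your uniform term-by-term convergence already gives $\partial_tU_n\to\partial_tU$ in $\mathcal{C}([0,T];H^{\sigma-2})$, so the separate Cauchy estimate is redundant and your version is slightly leaner.
\end{itemize}
The commutator term is also not the obstacle you anticipate. Because $\sigma<s$ is free, any target norm $H^{r}$ with $r<s-\max\{1,2\alpha\}$ works, in (E1) and (E2) alike.

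The step that does not go through is (c), and the paper's proof skips it in exactly the same way. You mention the coupling through $P:=(A^0)^{-1}$, but you still conclude $\partial_tu\in\mathcal{C}([0,T];H^{\sigma-1})$.
\begin{itemize}
\item If the off-diagonal block $P_{12}$ is nonzero, then under (E1) $\partial_tu$ contains $P_{12}\bigl(f_2-B_0\Lambda^{2\alpha}v-H_0[\Lambda^{2\alpha},G_0]v\bigr)$. Here $f_2$ and $\Lambda^{2\alpha}v$ lie only in $H^{\sigma-2\alpha}$; already at $t=0$ you have only $f_2(0),\Lambda^{2\alpha}v_0\in H^{s-2\alpha}$.
\item For $\alpha\in(\tfrac12,1)$ and $\sigma>s+1-2\alpha$ this is strictly below $H^{\sigma-1}$. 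A simple counterexample: constant non-block-diagonal $A^0$, $H_0=0$, $F=0$, and $v_0\in H^{s}\setminus H^{\sigma-1+2\alpha}$. So the claim does not follow and in fact fails.
\item Under (E2) there is no problem. For $\alpha>\tfrac12$ the block-diagonal form of $A^0$ is assumed, and for $\alpha\le\tfrac12$ every term lies in $H^{\sigma-1}$, as in your (d).
\item Under (E1) with $\alpha>\tfrac12$, you must either impose that block structure or weaken the conclusion to $\partial_tu\in\mathcal{C}([0,T];H^{\sigma-2\alpha})$.
\end{itemize}
Similarly, your (c) overlooks that the $v$-equation carries the transport term, which is only in $H^{\sigma-1}$. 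So $\partial_tv\in\mathcal{C}([0,T];H^{\sigma-2\alpha})$ is what you actually get for $\alpha\ge\tfrac12$. For $\alpha<\tfrac12$ the correct statement is your (d), which is also all the paper's proof establishes.
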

\begin{proof}
We begin by establishing the convergence of the sequence $\{\partial_{t}U_{n}\}$ in a suitable space.
By Theorem \ref{existenceUniqLSn}, the difference $V_{n,p}=U_{n+p}-U_{n}$ satisfies \eqref{eq:Cauchydifference1} pointwise in $\mathcal{C}([0,T];H^{s})$, with source term given by \eqref{eq:CauchydifferenceS1}. Taking the $H^{s-3}$-norm in \eqref{eq:Cauchydifference1} and using the Sobolev product estimates, we obtain
\begin{equation}
\label{eq:Cauchytimederivative1}
\begin{aligned}
\|\partial_{t}V_{n,p}\|_{H^{s-3}}\leq\|f_{n,p}^{1,\alpha}\|_{H^{s-2}}+\|f_{n,p}^{2,\alpha}\|_{H^{s-3}}+C(\mathcal{O}_{1},M)\left[\|\mathbb{J}_{\epsilon_{n+p}}V_{n,p}\|_{H^{s-2}}+\|\mathbb{J}_{\epsilon_{n+p}}V_{n,p}\|_{H^{s-1}}\right].
\end{aligned}
\end{equation}
By estimate \eqref{eq:firstsource}, 
\begin{align*}
\|\partial_{t}V_{n,p}\|_{H^{s-3}}&\leq\|f_{n,p}^{2,\alpha}\|_{H^{s-3}}+C_{0}C(\mathcal{O}_{1},M)|\epsilon_{n+p}-\epsilon_{n}|(\|f_{1}\|_{H^{s-1}}+\|U_{n}\|_{H^{s}})\\
&+C(\mathcal{O}_{1},M)\left[\|\mathbb{J}_{\epsilon_{n+p}}V_{n,p}\|_{H^{s-2}}+\|\mathbb{J}_{\epsilon_{n+p}}V_{n,p}\|_{H^{s-1}}\right].
\end{align*}
Combining the Sobolev product estimates with \eqref{eq:Cauchyrate}, with $\sigma=s-3$ and $\delta=1$, and applying the resulting estimate to all terms in $f_{n,p}^{2,\alpha}$ except the last one, we obtain
\begin{align*}
\|f_{n,p}^{2,\alpha}\|_{H^{s-3}}\leq C_{0}C(\mathcal{O}_{1},M)|\epsilon_{n+p}-\epsilon_{n}|(\|f_{2}\|_{H^{s-2}}+\|U_{n}\|_{H^{s}})+C(\mathcal{O}_{1},M)\|[\Lambda^{2\alpha},G_{0}](\mathbb{J}_{\epsilon_{n+p}}-\mathbb{J}_{\epsilon_{n}})w_{n}\|_{H^{s-3}},
\end{align*}
where, as in the previous theorem, $w_{n}$ denotes the last $m$ components of $\mathcal{L}U_{n}$. Expanding the commutator in the remaining term and applying the triangle inequality, yields
\begin{align*}
\|[\Lambda^{2\alpha},G_{0}](\mathbb{J}_{\epsilon_{n+p}}-\mathbb{J}_{\epsilon_{n}})w_{n}\|_{H^{s-3}}\leq C(\mathcal{O}_{1},M)\|(\mathbb{J}_{\epsilon_{n+p}}-\mathbb{J}_{\epsilon_{n}})w_{n}\|_{H^{s-1}}.
\end{align*}
Thus, \eqref{eq:Cauchyrate} with $\sigma=s-1$ and $\delta=1$, imply
\begin{align*}
\|f_{n,p}^{2,\alpha}\|_{H^{s-3}}\leq C_{0}C(\mathcal{O}_{1},M)|\epsilon_{n+p}-\epsilon_{n}|(\|f_{2}\|_{H^{s-2}}+\|U_{n}\|_{H^{s}}). 
\end{align*}
Hence, 
\begin{equation}
\label{eq:timederivativeestimate1}
\begin{aligned}
\sup_{t\in[0,T]}\|\partial_{t}V_{n,p}(t)\|_{H^{s-3}}&\leq C_{3}(\mathcal{O}_{1},M)|\epsilon_{n+p}-\epsilon_{n}|(\|f_{1}\|_{\mathcal{C}([0,T];H^{s-1})}+\|f_{2}\|_{\mathcal{C}([0,T];H^{s-2\alpha})}+\|U_{n}\|_{\mathcal{C}([0,T];H^{s})})\\
&+C_{3}(\mathcal{O}_{1},M)\left[\|\mathbb{J}_{\epsilon_{n+p}}V_{n,p}\|_{\mathcal{C}([0,T];H^{s-2})}+\|\mathbb{J}_{\epsilon_{n+p}}V_{n,p}\|_{\mathcal{C}([0,T];H^{s-1})}\right].
\end{aligned}
\end{equation}
Since Theorem \ref{Cauchysequenceaprox} implies that $\mathbb{J}_{\epsilon_{n+p}}V_{n,p}\rightarrow 0$ in $\mathcal{C}([0,T];H^{\sigma})$ for every $\sigma\in[0,s)$, and Theorem \ref{energyestimatesF} shows that $\{U_{n}\}$ is bounded in $\mathcal{C}([0,T];H^{s})$, estimate \eqref{eq:timederivativeestimate1} yields that $\{\partial_{t}U_{n}\}$ is a Cauchy sequence in $\mathcal{C}([0,T];H^{s-3})$ for any $\alpha\in(0,1)$. Therefore, by \eqref{eq:Cauchylimit}, there exists $U\in\cap_{\sigma\in[0,s)}\mathcal{C}([0,T];H^{\sigma})$ and $V\in\mathcal{C}([0,T];H^{s-3})$ such that
\begin{equation}
\label{eq:Cauchyconvergence1}
\mathbb{J}_{\epsilon_{n}}U_{n}\rightarrow U=(u,v)\quad\mbox{in}\quad\mathcal{C}([0,T];H^{\sigma})\quad\mbox{for any}\quad\sigma\in[0,s)\quad\mbox{and}\quad\partial_{t}U_{n}\rightarrow V\quad\mbox{in}\quad\mathcal{C}([0,T];H^{s-3}).
\end{equation}
Moreover, by Theorem \ref{existenceUniqLSn}, the identity,
\begin{align*}
\mathbb{J}_{\epsilon_{n}}U_{n}(t)-\mathbb{J}_{\epsilon_{n}}U_{n}(0)=\int_{0}^{t}\mathbb{J}_{\epsilon_{n}}\partial_{t}U_{n}(\tau)d\tau
\end{align*}
holds in the space $\mathcal{C}([0,T];H^{s-3})$ for any $n\in\mathbb{N}$. By taking the limit in the norm of the space $\mathcal{C}([0,T];H^{s-3})$ and using \eqref{eq:Cauchyconvergence1}, it follows that $\partial_{t}U=V$. 

We now show that $U$ is a solution of the Cauchy problem \eqref{eq:linearizedsystem1}-\eqref{eq:linearizedinitialdata} by passing to the limit in each term of equation \eqref{eq:LSn} in the space $H^{s-3}$. Using \eqref{eq:Cauchyrate2} with $\sigma=s-3$ and $\delta=1$, we obtain
\begin{align*}
\|\mathbb{J}_{\epsilon_{n}}(A^{j}\partial_{j}\mathbb{J}_{\epsilon_{n}}U_{n})-A^{j}\partial_{j}U\|_{H^{s-3}}&\leq\|(\mathbb{J}_{\epsilon_{n}}-\mathbb{I})A^{j}\partial_{j}\mathbb{J}_{\epsilon_{n}}U_{n}\|_{H^{s-3}}+\|A^{j}\partial_{j}(\mathbb{J}_{\epsilon_{n}}U_{n}-U)\|_{H^{s-3}}\\
&\leq C(\mathcal{O}_{1},M)\left(C_{0}\epsilon_{n}\|U_{n}\|_{H^{s-1}}+\|\mathbb{J}_{\epsilon_{n}}U_{n}-U\|_{H^{s-2}}\right)
\end{align*}
and 
\begin{align*}
\|\mathbb{J}_{\epsilon_{n}}D\mathbb{J}_{\epsilon_{n}}U_{n}-DU\|_{H^{s-3}}\leq& C(\mathcal{O}_{1},M)\left(C_{0}\epsilon_{n}\|U_{n}\|_{H^{s-2}}+\|\mathbb{J}_{\epsilon_{n}}U_{n}-U\|_{H^{s-3}}\right)\\
\|\mathbb{J}_{\epsilon_{n}}B\Lambda^{2\alpha}\mathbb{J}_{\epsilon_{n}}U_{n}-B\Lambda^{2\alpha}U\|_{H^{s-3}}&\leq C(\mathcal{O}_{1},M)\left(C_{0}\epsilon_{n}\|U_{n}\|_{H^{s}}+\|\mathbb{J}_{\epsilon_{n}}U_{n}-U\|_{H^{s-1}}\right)
\end{align*}
Likewise for the commutator term,
\begin{align*}
&\|\mathbb{J}_{\epsilon_{n}}\left(H\mathbb{J}_{\epsilon_{n}}[\Lambda^{2\alpha},G]\mathbb{J}_{\epsilon_{n}}\mathcal{L}U_{n}\right)-H[\Lambda^{2\alpha},G]\mathcal{L}U\|_{H^{s-3}}\\
&\leq\|(\mathbb{J}_{\epsilon_{n}}-\mathbb{I})H_{0}\mathbb{J}_{\epsilon_{n}}[\Lambda^{2\alpha},G_{0}]\mathcal{L}\mathbb{J}_{\epsilon_{n}}U_{n}\|_{H^{s-3}}+\|H_{0}(\mathbb{J}_{\epsilon_{n}}-\mathbb{I})[\Lambda^{2\alpha},G_{0}]\mathcal{L}\mathbb{J}_{\epsilon_{n}}U_{n}\|_{H^{s-3}}+\|H_{0}[\Lambda^{2\alpha},G_{0}]\mathcal{L}(\mathbb{J}_{\epsilon_{n}}U_{n}-U)\|_{H^{s-3}}\\
&\leq C(\mathcal{O}_{1},M)\left(C_{0}\epsilon_{n}\|[\Lambda^{2\alpha},G_{0}]\mathcal{L}\mathbb{J}_{\epsilon_{n}}U_{n}\|_{H^{s-2}}+\|[\Lambda^{2\alpha},G_{0}]\mathcal{L}(\mathbb{J}_{\epsilon_{n}}U_{n}-U)\|_{H^{s-3}}\right)\\
&\leq C_{1}(\mathcal{O}_{1},M)\left(C_{0}\epsilon_{n}\|U_{n}\|_{H^{s}}+\|\mathbb{J}_{\epsilon_{n}}U_{n}-U\|_{H^{s-1}}\right). 
\end{align*}
The right hand side of each of the previous inequalities converges to zero when $n\rightarrow\infty$ by  \eqref{eq:Cauchyconvergence1} and the uniform boundedness of $\{U_{n}\}$ in $L^{\infty}([0,T];H^{s})$.
Finally, the assumptions of the Theorem imply 
\begin{align*}
\lim_{n\rightarrow\infty}\|\mathbb{J}_{\epsilon_{n}}F-F\|_{\mathcal{C}([0,T];H^{s-3})}=0\quad\mbox{and}\quad\lim_{n\rightarrow\infty}\|\mathbb{J}_{\epsilon_{n}}U_{0}-U_{0}\|_{H^{s-3}}=0.
\end{align*}
Moreover, by \eqref{eq:Cauchyconvergence1}
\begin{align*}
\lim_{n\rightarrow\infty}\|A^{0}\partial_{t}U_{n}-A^{0}\partial_{t}U\|_{\mathcal{C}([0,T];H^{s-3})}=0.
\end{align*}
Therefore, $U$ is a solution of equation \eqref{eq:linearizedsystem1} that satisfies the initial condition \eqref{eq:linearizedinitialdata} in the space $\mathcal{C}([0,T];H^{s-3})$.

Now, by Theorem \ref{energyestimatesF}, the sequence $\{U_{n}\}$ is bounded in the space $X_{s,\alpha,T}^{\infty}$. This implies that, for each $t\in[0,T]$, $\{\mathbb{J}_{\epsilon_{n}}U_{n}(t)\}$ is weakly convergent (up to a subsequence) to some $U^{t}$ in $H^{s}$ and $\Lambda^{\alpha}\mathbb{J}_{\epsilon_{n}}v_{n}$ is weakly convergent to some $v_{\alpha}$ in the space $L^{2}([0,t];H^{s})$. However, by Theorem \ref{Cauchysequenceaprox}, the convergence $\mathbb{J}_{\epsilon_{n}}U_{n}\rightarrow U=(u,v)$ in $X_{\sigma,\alpha,T}$ for every $\sigma\in[0,s)$, implies that $U^{t}=U(t)$ and $v_{\alpha}=\Lambda^{\alpha}v$. Hence, for any $t\in[0,T]$, 
\begin{align*}
\|U(t)\|_{H^{m}}^{2}+\int_{0}^{t}\|\Lambda^{\alpha}v(\tau)\|_{H^{m}}^{2}d\tau\leq\liminf_{n\rightarrow\infty}\left(\|U_{n}(t)\|_{H^{m}}^{2}+\int_{0}^{t}\|\Lambda^{\alpha}v_{n}(\tau)\|_{H^{m}}^{2}d\tau\right)
\end{align*}
and thus, estimate \eqref{eq:estimateforapprox} holds whenever $m=s$. If $m\in[1,s)\cap\mathbb{N}$, pass to the limit in \eqref{eq:parabolicenergyestimate} and \eqref{eq:estimateforapprox} follows.

Now assume that $\alpha\in(0,\tfrac{1}{2}]$ and let $\sigma\in[0,s)$. In this case, the embedding $H^{\sigma-2\alpha}\hookrightarrow H^{\sigma-1}$ implies that, $f_{2}$, $B\Lambda^{2\alpha}U$ and $H[\Lambda^{2\alpha},G]U$ belong to the space $\mathcal{C}([0,T];H^{\sigma-1})$. Since $U$ is a solution of \eqref{eq:linearizedsystem1}, it follows that $U\in\mathcal{C}([0,T];H^{\sigma})\cap\mathcal{C}^{1}([0,T];H^{\sigma-1})$ for every $\sigma\in[0,s)$. On the other hand, if $\alpha\in[\tfrac{1}{2},1)$ the previous conclusion remains true for the first component $u$ of $U$, that is, $u\in\mathcal{C}([0,T];H^{\sigma})\cap\mathcal{C}^{1}([0,T];H^{\sigma-1})$. Meanwhile, for the second component $v$ of $U$, we can only guarantee that $v\in\mathcal{C}([0,T];H^{\sigma})\cap\mathcal{C}^{1}([0,T];H^{\sigma-2\alpha})$ and the result follows. 
\end{proof}	
\begin{theo}
\label{mollifiedapprox}
Assume conditions (L1)-(L3) hold. Let $U\in X_{s,\alpha,T}^{\infty}$ be a solution of the Cauchy problem \eqref{eq:linearizedsystem1}-\eqref{eq:linearizedinitialdata} corresponding to the data
\begin{align*}
U_{0}\in H^{s},\quad f_{1}\in \mathcal{C}([0,T];H^{s-1})\cap L^{2}([0,T];H^{s}),\quad f_{2}\in\mathcal{C}([0,T];H^{s-2\alpha})\cap L^{2}([0,T];H^{s-\alpha}).
\end{align*}
Then, for any $t\in[0,T]$,
\begin{align}
E^{2}_{0,\alpha}(U(t))\leq K(t)\left[\|U_{0}\|_{L^{2}}^{2}+\int_{0}^{t}\|F(\tau)\|_{L^{2}}^{2}d\tau\right]\label{eq:L2energy}
\end{align}
and
\begin{align}
E_{s,\alpha}^{2}(U(t))\leq K(t)\Phi_{\alpha}^{s}(U_{0},F,t).\label{eq:Highenergy}
\end{align}
Furthermore, if $U\in\mathcal{C}([0,T];H^{\sigma})$ for some $\sigma\in[0,s)$, then $U\in\mathcal{C}([0,T];H^{s})$.
\end{theo}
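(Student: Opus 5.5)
The plan is to work directly with the solution $U\in X^{\infty}_{s,\alpha,T}$ rather than through the approximations. The difficulty is that $U$ is an arbitrary solution in this class, not necessarily the limit constructed in Theorem \ref{superapprox}, and it lacks the regularity needed to test the equation against $D^{\gamma}U$ at order $s$. I would therefore mollify: apply $\mathbb{J}_{\epsilon}$ to \eqref{eq:linearizedsystem1}. The function $U^{\epsilon}:=\mathbb{J}_{\epsilon}U$ then satisfies
\begin{align*}
A^{0}\partial_{t}U^{\epsilon}+A^{j}\partial_{j}U^{\epsilon}+DU^{\epsilon}+B\Lambda^{2\alpha}U^{\epsilon}+H[\Lambda^{2\alpha},G]\mathcal{L}U^{\epsilon}=\mathbb{J}_{\epsilon}F+\mathcal{R}_{\epsilon},
\end{align*}
where the commutator remainder $\mathcal{R}_{\epsilon}$ collects the terms $[A^{0},\mathbb{J}_{\epsilon}]\partial_{t}U$, $[A^{j},\mathbb{J}_{\epsilon}]\partial_{j}U$, $[D,\mathbb{J}_{\epsilon}]U$, $[B,\mathbb{J}_{\epsilon}]\Lambda^{2\alpha}U$ and $[\mathbb{J}_{\epsilon},H[\Lambda^{2\alpha},G]]\mathcal{L}U$. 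I would replace $\partial_{t}U$ in the first of these by using the equation.

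\textbf{$L^{2}$ estimate \eqref{eq:L2energy}.} Here no derivatives fall on $U$. I would take the $L^{2}$ product of the equation with $U$; this is justified because $\partial_{t}U\in L^{2}(H^{-1})$ and $U\in L^{2}(H^{1})$, which follows from the regularity in Theorem \ref{superapprox}. The hyperbolic and zeroth-order terms are handled as in $\mathcal{B}_{n,\gamma}$. The dissipation gives $\mu_{0}\|\Lambda^{\alpha}v\|_{L^{2}}^{2}$ by (C5). For the commutator term I would use the self-adjointness of $\Lambda^{2\alpha}$, as announced in the introduction:
\begin{align*}
\langle H_{0}[\Lambda^{2\alpha},G_{0}]w,v\rangle=\langle \Lambda^{\alpha}(G_{0}w),\Lambda^{\alpha}(H_{0}^{\top}v)\rangle-\langle \Lambda^{\alpha}w,\Lambda^{\alpha}(G_{0}^{\top}H_{0}^{\top}v)\rangle.
\end{align*}
Each factor $\Lambda^{\alpha}(Qf)$ is expanded as $Q\Lambda^{\alpha}f+[\Lambda^{\alpha},Q]f$ and bounded via \eqref{eq:dongliC2}. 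Under (E1) we have $w=v$, and every term is bounded by $C(\|v\|_{L^{2}}+\|\Lambda^{\alpha}v\|_{L^{2}})\|v\|_{H^{\alpha}}$, which is absorbed by Young's inequality. Under (E2) we have $w=\mathcal{L}_{0}u$, and the leading term is $\langle\Lambda^{\alpha}(G_{0}\mathcal{L}_{0}u),\cdot\rangle$ with $u\in H^{s}\subset H^{\alpha}$, which is harmless. Gronwall's inequality then gives \eqref{eq:L2energy}.

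\textbf{High-order estimate \eqref{eq:Highenergy}.} I would repeat the proof of Theorem \ref{energyestimatesF} for $U^{\epsilon}$, which lies in $X^{\infty}_{s+\alpha,\alpha,T}$ for fixed $\epsilon$, using part (i) of the coercivity estimate, Theorem \ref{fractionalellipticity}. The new ingredient is the bound
\begin{align*}
\int_{0}^{t}|\langle D^{\gamma}\mathcal{R}_{\epsilon},D^{\gamma}U^{\epsilon}\rangle|\,d\tau\le \int_{0}^{t}\mu_{1}\bigl(\|U\|_{H^{s}}^{2}+\|U\|_{H^{s}}\|\Lambda^{\alpha}v\|_{H^{s}}\bigr)d\tau,
\end{align*}
uniformly in $\epsilon$, together with $\langle D^{\gamma}\mathcal{R}_{\epsilon},D^{\gamma}U^{\epsilon}\rangle\to0$ for almost every $\tau$. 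The hyperbolic remainders are handled by the classical Friedrichs lemma. The term $[B,\mathbb{J}_{\epsilon}]\Lambda^{2\alpha}U$ and the commutator term are handled by Theorem \ref{secondFcommest}, where $\Lambda^{\alpha}[\mathbb{J}_{\epsilon},Q]$ is bounded on $H^{m}$ uniformly in $\epsilon$. Splitting $\Lambda^{2\alpha}=\Lambda^{\alpha}\Lambda^{\alpha}$ and using duality, as in \eqref{eq:dualityargument}, shifts one $\Lambda^{\alpha}$ onto the test function, and the dissipation controls it. Pointwise convergence to zero follows from density plus uniform boundedness, and dominated convergence lets me integrate. Letting $\epsilon\to0$ and using lower semicontinuity then yields \eqref{eq:Highenergy}.

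\textbf{Continuity in $H^{s}$.} The same mollified energy identity shows that $t\mapsto\sum_{|\gamma|\le s}\langle A^{0}D^{\gamma}U,D^{\gamma}U\rangle$ is continuous, being a limit, uniformly in $t$, of continuous functions whose differences are controlled by integrable quantities. Since $U\in\mathcal{C}([0,T];H^{\sigma})$ and $U$ is bounded in $H^{s}$, $U$ is weakly continuous in $H^{s}$. Weak continuity together with continuity of this equivalent norm gives strong continuity.

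\textbf{Main obstacle.} I expect the hardest step to be the uniform bound on the fractional commutator remainder $[\mathbb{J}_{\epsilon},H[\Lambda^{2\alpha},G]]\mathcal{L}U$ at order $s$, especially under (E2) with $\alpha>\tfrac12$. My first attempt would be to write it as $[\mathbb{J}_{\epsilon},H]\,[\Lambda^{2\alpha},G]\mathcal{L}U+H\,[\mathbb{J}_{\epsilon},[\Lambda^{2\alpha},G]]\mathcal{L}U$. The inner double commutator I would treat with identity \eqref{eq:maincommutatoridea}, reducing it to terms covered by Theorems \ref{fractionalcommutatorfeli} and \ref{secondFcommest}.
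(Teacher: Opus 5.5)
\textbf{What works.} Your high-order estimate follows the paper's route. You mollify, bound the commutator remainders uniformly in $\epsilon$ by an integrable majorant, show they vanish for a.e.\ $t$, apply coercivity (i) to $\mathbb{J}_\epsilon U$, and let $\epsilon\to0$. Your weak-continuity-plus-norm-continuity argument is a valid alternative to the paper's proof that $\{\mathbb{J}_\epsilon U\}$ is Cauchy in $\mathcal{C}([0,T];H^{s})$.

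\textbf{The gap: the $L^{2}$ estimate \eqref{eq:L2energy}.} You split $\Lambda^{2\alpha}=\Lambda^{\alpha}\Lambda^{\alpha}$ and let one $\Lambda^{\alpha}$ fall on $w$.

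Under (E2), $w=\mathcal{L}_{0}u$, and the term $\langle\Lambda^{\alpha}(G_{0}\mathcal{L}_{0}u),\Lambda^{\alpha}(H_{0}^{\top}v)\rangle$ is not harmless. It is only bounded by $C\|u\|_{H^{\alpha}}\|v\|_{H^{\alpha}}$, so Young's inequality leaves $C\|u\|_{H^{\alpha}}^{2}$. Since $u$ is not dissipated, this cannot be bounded by $\omega(t)\|U\|_{L^{2}}^{2}$, and Gronwall no longer gives a bound in terms of $\|U_{0}\|_{L^{2}}$ and $\|F\|_{L^{2}}$ alone. That $L^{2}$-only form is exactly what Lemma \ref{compactness} needs for the tails. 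Exploiting cancellations does not help: any surviving term with $\Lambda^{\alpha}$ on $w$ has the same defect.

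Under (E1) the argument is also wrong as written. A bound $C(\|v\|_{L^{2}}+\|\Lambda^{\alpha}v\|_{L^{2}})\|v\|_{H^{\alpha}}$ contains $C\|\Lambda^{\alpha}v\|_{L^{2}}^{2}$ with $C$ not small relative to $\mu_{0}$, so Young cannot absorb it. The step closes only because the two principal parts cancel exactly. Both $\langle G_{0}\Lambda^{\alpha}w,H_{0}^{\top}\Lambda^{\alpha}v\rangle$ and $\langle\Lambda^{\alpha}w,G_{0}^{\top}H_{0}^{\top}\Lambda^{\alpha}v\rangle$ equal $\langle H_{0}G_{0}\Lambda^{\alpha}w,\Lambda^{\alpha}v\rangle$, and what remains is of size $C\|v\|_{L^{2}}\|v\|_{H^{\alpha}}$. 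You must say this.

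\textbf{The fix (the paper's).} Keep $w$ in $L^{2}$ and put the whole operator on the dissipated side.
For $\alpha\in(0,\tfrac12]$, \eqref{eq:dongliC2} with $\sigma=2\alpha$ gives $|\langle H[\Lambda^{2\alpha},G]\mathcal{L}U,U\rangle|\le C\|\Lambda^{2\alpha}G\|_{L^{\infty}}\|U\|_{L^{2}}^{2}$.
For $\alpha\in(\tfrac12,1)$, self-adjointness gives $\langle H_{0}[\Lambda^{2\alpha},G_{0}]w,v\rangle=-\langle w,[\Lambda^{2\alpha},G_{0}^{\top}](H_{0}^{\top}v)\rangle$. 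The expansion \eqref{eq:KatoPonceLicommutator} with \eqref{eq:dongliC1} bounds $\|[\Lambda^{2\alpha},G_{0}^{\top}]h\|_{L^{2}}$, for $h=H_{0}^{\top}v$, by $C(\|\Lambda^{2\alpha}G_{0}\|_{BMO}\|h\|_{L^{2}}+\|\nabla G_{0}\|_{L^{\infty}}\|\Lambda^{2\alpha-1}h\|_{L^{2}})$. Since $2\alpha-1<\alpha$, the commutator term is at most $C\|U\|_{L^{2}}\|v\|_{H^{\alpha}}$ under both (E1) and (E2), which Young's inequality absorbs.

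\textbf{Two smaller points.} First, the regularity justifying the $L^{2}$ identity should come from the equation and $U\in X^{\infty}_{s,\alpha,T}$, which give $\partial_{t}U\in L^{2}([0,T];L^{2})$. It should not come from Theorem \ref{superapprox}, since your $U$ is arbitrary. Second, Theorem \ref{secondFcommest} does not apply to $[B,\mathbb{J}_{\epsilon}]\Lambda^{2\alpha}U$, because $B(\widetilde U)$ depends on $\widetilde u$ and need not lie in $\dot H^{s+\alpha}$. That term is instead handled by the classical Friedrichs estimate or Lemma \ref{epsilonlimitest}(i), using $\Lambda^{2\alpha}v\in L^{2}([0,T];H^{s-1})$.
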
		
\begin{proof}
We begin by proving the low-order estimate \eqref{eq:L2energy}. The argument follows the standard $L^{2}$-energy method. To avoid repetition, we only detail the treatment of the term 
\begin{align*}
\mathcal{A}_{\alpha,\widetilde{U}}U(t):=B(\widetilde{U}(t))\Lambda^{2\alpha}U(t)+H(\widetilde{U}(t))[\Lambda^{2\alpha},G(\widetilde{U}(t))]\mathcal{L}U(t),
\end{align*}
since all remaining contributions are estimated exactly as in the proof of Theorem \ref{energyestimatesF}. By condition (C5), we have
\begin{align}
\left\langle\mathcal{A}_{\alpha,\widetilde{U}}U(t),U(t)\right\rangle_{L^{2}}\geq\mu_{0}\|\Lambda^{\alpha}v\|_{L^{2}}^{2}+\left\langle H(\widetilde{U}(t))[\Lambda^{2\alpha},G\widetilde{U}(t)]\mathcal{L}U(t),U(t)\right\rangle_{L^{2}}.\label{eq:L2ellipticity1}
\end{align}
Therefore, it remains to estimate the commutator contribution. 

Assume first that $\alpha\in(0,\tfrac{1}{2}]$. The Cauchy-Schwarz inequality together with lemma \ref{dongli} imply
\begin{align*}
|\left\langle H[\Lambda^{2\alpha},G]\mathcal{L}U,U\right\rangle_{L^{2}}|&\leq C\|H\|_{L^{\infty}}\|\Lambda^{2\alpha}G\|_{L^{\infty}}\|U\|_{L^{2}}^{2}.
\end{align*}
From \eqref{eq:firstuseful} and the embedding $H^{s-\alpha}\hookrightarrow L^{\infty}$, we obtain
\begin{align}
|\left\langle H[\Lambda^{2\alpha},G]\mathcal{L}U,U\right\rangle_{L^{2}}|\leq C(\mathcal{O}_{1})\|G\|_{\dot{H}^{s+\alpha}}\|U\|_{L^{2}}^{2}.\label{eq:L2elliptic1}
\end{align}
Meanwhile, for $\alpha\in(\tfrac{1}{2},1)$ we use the method developed for the term $I_{2}$ during the proof of Theorem \ref{fractionalcommutatorfeli}. Let $\mathcal{L}U=(y,w)\in\mathbb{R}^{k}\times\mathbb{R}^{m}$. First, by condition (C6) and the self-adjointness of $\Lambda^{2\alpha}$, 
\begin{align*}
\left\langle H[\Lambda^{2\alpha},G]\mathcal{L}U,U\right\rangle_{L^{2}}=\left\langle H_{0}[\Lambda^{2\alpha},G_{0}]w,v\right\rangle_{L^{2}}=-\left\langle w,[\Lambda^{2\alpha},G_{0}^{\top}](H_{0}^{\top}v)\right\rangle_{L^{2}}.
\end{align*}
Thus, by the Cauchy-Schwarz inequality,
\begin{align*}
|\left\langle H_{0}[\Lambda^{2\alpha},G_{0}]w,v\right\rangle_{L^{2}}|\leq|\mathcal{L}|\|U\|_{L^{2}}\|[\Lambda^{2\alpha},G^{\top}_{0}](H^{\top}_{0}v)\|_{L^{2}}.
\end{align*}
By combining \eqref{eq:KatoPonceLicommutator}, the triangle inequality  and the commutator estimate \eqref{eq:dongliC1}, we deduce that
\begin{align*}
\|[\Lambda^{2\alpha},G^{\top}_{0}](H^{\top}_{0}v)\|_{L^{2}}&\leq\|\mathbb{K}_{2\alpha}(G^{\top}_{0},H^{\top}_{0}v)\|_{L^{2}}+\left\lVert\sum_{0<\beta\leq 2\alpha}\frac{1}{\beta!}D^{\beta}G_{0}^{\top}\Lambda^{2\alpha,\beta}(H_{0}^{\top}v)\right\rVert_{L^{2}}\\
&\leq C\left(\|\Lambda^{2\alpha}G\|_{L^{\infty}}\|H_{0}^{\top}v\|_{L^{2}}+\|\nabla G\|_{L^{\infty}}\|\Lambda^{2\alpha-1}(H_{0}^{\top}v)\|_{L^{2}}\right). 
\end{align*}
We use the embeddings $H^{s-\alpha}\hookrightarrow L^{\infty}$ and $H^{s-1}\hookrightarrow L^{\infty}$ to control the terms $\|\Lambda^{2\alpha}G\|_{L^{\infty}}$ and $\|\nabla G\|_{L^{\infty}}$, respectively, and since $\|\Lambda^{2\alpha-1}(H_{0}^{\top}v)\|_{L^{2}}\leq C\|\Lambda^{\alpha}(H_{0}^{\top}v)\|_{L^{2}}$, the commutator estimate \eqref{eq:dongliC2} implies
\begin{align*}
\|[\Lambda^{2\alpha},G^{\top}_{0}](H^{\top}_{0}v)\|_{L^{2}}&\leq C(\|G_{0}\|_{\dot{H}^{\alpha}}+\|G_{0}\|_{\dot{H}^{s+\alpha}})\left(\|H_{0}\|_{L^{\infty}}\|v\|_{L^{2}}+\|[\Lambda^{\alpha},H_{0}^{\top}]v\|_{L^{2}}+\|H_{0}\|_{L^{\infty}}\|\Lambda^{\alpha}v\|_{L^{2}}\right)\\
&\leq C(\mathcal{O}_{1},M)(\|G_{0}\|_{\dot{H}^{\alpha}}+\|G\|_{\dot{H}^{s+\alpha}})\|v\|_{H^{\alpha}}. 
\end{align*}
Therefore, by \eqref{eq:Gchainestimates}, 
\begin{align}
\left\langle |H[\Lambda^{2\alpha},G]\mathcal{L}U,U\right\rangle_{L^{2}}|\leq C(\mathcal{O}_{1},M)(M+\|\widetilde{v}\|_{H^{s+\alpha}})\|U\|_{L^{2}}\|v\|_{H^{\alpha}}.\label{eq:L2elliptic2}
\end{align}

Using \eqref{eq:L2elliptic1} in the case $\alpha\in(0,\tfrac{1}{2}]$ and \eqref{eq:L2elliptic2} in the case $\alpha\in(\tfrac{1}{2},1)$, and then applying Young's inequality in \eqref{eq:L2ellipticity1}, yields
\begin{align*}
\left\langle\mathcal{A}_{\alpha,\widetilde{U}}U(t),U(t)\right\rangle_{L^{2}}\geq\frac{\mu_{0}}{2}\|\Lambda^{\alpha}v(t)\|_{L^{2}}^{2}-\omega(t)\|U(t)\|_{L^{2}}^{2},
\end{align*}
 where $\omega\in L^{1}([0,T])$ is defined as in \eqref{eq:firstomega}. Combining the previous estimate with the $L^{2}$-energy identity and applying Gronwall's inequality yields \eqref{eq:L2energy}.
 
We now turn to the proof of \eqref{eq:Highenergy}. By applying the operator $\mathbb{J}_{\epsilon}$ to \eqref{eq:linearizedsystem1} we obtain that $\mathbb{J}_{\epsilon}U$ satisfies the following Cauchy problem
\begin{equation}
\label{eq:mollifiedlinearsystem}
\begin{aligned}
A^{0}\partial_{t}\mathbb{J}_{\epsilon}U+A^{j}\partial_{j}\mathbb{J}_{\epsilon}U+D\mathbb{J}_{\epsilon}U&+B\Lambda^{2\alpha}\mathbb{J}_{\epsilon}U+H[\Lambda^{2\alpha},G]\mathcal{L}\mathbb{J}_{\epsilon}U=A^{0}F_{\epsilon}+A^{0}K_{\epsilon}\\
\mathbb{J}_{\epsilon}U\rvert_{t=0}&=\mathbb{J}_{\epsilon}U_{0},
\end{aligned}
\end{equation}
with
\begin{align*}
F_{\epsilon}&:=(A^{0})^{-1}\mathbb{J}_{\epsilon}F+[\mathbb{J}_{\epsilon},(A^{0})^{-1}]F-[\mathbb{J}_{\epsilon},\overline{A}^{j}]\partial_{j}U-[\mathbb{J}_{\epsilon},\overline{D}]U-[\mathbb{J}_{\epsilon},\overline{B}]\Lambda^{2\alpha}U,\\
K_{\epsilon}&:=-[\mathbb{J}_{\epsilon},\overline{H}]\Lambda^{2\alpha}(G\mathcal{L}U)-\overline{H}\Lambda^{2\alpha}([\mathbb{J}_{\epsilon},G]\mathcal{L}U)+[\mathbb{J}_{\epsilon},\overline{H}G]\Lambda^{2\alpha}\mathcal{L}U,
\end{align*}
and where we have set, $\overline{A}^{j}=(A^{0})^{-1}A^{j}$, $\overline{B}=(A^{0})^{-1}B$, $\overline{D}=(A^{0})^{-1}D$ and $\overline{H}=(A^{0})^{-1}H$. 

Let us show that $K_{\epsilon}\in L^{2}([0,T];H^{s-\alpha})$ for any $\epsilon>0$. Since
\begin{equation}
\label{eq:Kepstimate}
\begin{aligned}
\|K_{\epsilon}\|_{H^{s-\alpha}}&\leq\|[\mathbb{J}_{\epsilon},\overline{H}]\Lambda^{2\alpha}(G\mathcal{L}U)\|_{H^{s-\alpha}}+\|\overline{H}\Lambda^{2\alpha}([\mathbb{J}_{\epsilon},G]\mathcal{L}U)\|_{H^{s-\alpha}}+\|[J_{\epsilon},\overline{H}G]\Lambda^{2\alpha}\mathcal{L}U\|_{H^{s-\alpha}}\\
&=:K_{\epsilon}^{1}+K_{\epsilon}^{2}+K_{\epsilon}^{3},
\end{aligned}
\end{equation}
we treat each term separately. Observe that, by the duality argument \eqref{eq:dualityargument}, 
\begin{equation}
\label{eq:Keps1}
\begin{aligned}
K_{\epsilon}^{1}&\leq C\left(\|[\mathbb{J}_{\epsilon},\overline{H}]\Lambda^{2\alpha}(G\mathcal{L}U)\|_{L^{2}}+\|[\mathbb{J}_{\epsilon},\overline{H}]\Lambda^{2\alpha}(G\mathcal{L}U)\|_{\dot{H}^{s-\alpha}}\right)\\
&\leq C\left(\|\overline{H}\|_{\widehat{H}^{s}}\|\Lambda^{2\alpha}(G\mathcal{L}U)\|_{H^{-1}}+\|[\mathbb{J}_{\epsilon},\overline{H}]\Lambda^{2\alpha}(G\mathcal{L}U)\|_{\dot{H}^{s-\alpha}}\right).
\end{aligned}
\end{equation}
For $K_{\epsilon}^{2}$ we use the fractional Leibniz rule \eqref{eq:fractionalproduct2}, 
\begin{align*}
K_{\epsilon}^{2}\leq C\left(\|\overline{H}\|_{L^{\infty}}\|\Lambda^{\alpha}[J_{\epsilon},G]\mathcal{L}U\|_{H^{s}}+\|\overline{H}\|_{\dot{H}^{s-\alpha}}\|\Lambda^{2\alpha}[J_{\epsilon},G]\mathcal{L}U\|_{L^{\infty}}\right).
\end{align*}
Then, we apply Theorem \ref{secondFcommest} with $m=s$ for the first term in the right hand side of the last inequality,
\begin{equation}
	\label{eq:middletermestimate}
	\begin{aligned}
		K_{\epsilon}^{2}\leq C(\mathcal{O}_{1},M)\left[(\|G\|_{\dot{H}^{\alpha}}+\|G\|_{\dot{H}^{s+\alpha}})\|U\|_{H^{s}}+\|\Lambda^{2\alpha}[\mathbb{J}_{\epsilon},G]\mathcal{L}U\|_{L^{\infty}}\right]
	\end{aligned}
\end{equation}
and let
\begin{equation}
\label{eq:firstK4}
K_{\epsilon}^{4}:=\|\Lambda^{2\alpha}[\mathbb{J}_{\epsilon},G]\mathcal{L}U\|_{L^{\infty}}.
\end{equation}

Assume first that hypothesis (E1) holds. Then,
\begin{align*}
G\mathcal{L}U=GU=\left(\begin{array}{c}
\mathbb{O}_{k\times 1}\\
G_{0}v
\end{array}\right)\in L^{2}([0,T];H^{s+\alpha})
\end{align*}
which implies that $\Lambda^{2\alpha}G\mathcal{L}U\in L^{2}([0,T];H^{s-\alpha})$. Therefore, by applying statement $(i)$ in lemma \ref{epsilonlimitest} to the second term in the right hand side of \eqref{eq:Keps1} we obtain 
\begin{align*}
K_{\epsilon}^{1}\leq C\|\overline{H}\|_{\widehat{H}^{s}}\|\Lambda^{2\alpha}(G_{0}v)\|_{H^{s-1}}.
\end{align*}
Then, by the fractional Leibniz rule
\begin{align*}
\|\Lambda^{2\alpha}(G_{0}v)\|_{H^{s-1}}&\leq C\left(\|G_{0}v\|_{L^{2}}+\|G_{0}v\|_{\dot{H}^{s+\alpha}}\right)\\
&\leq C\|G_{0}\|_{L^{\infty}}(\|v\|_{H^{s}}+\|v\|_{H^{s+\alpha}})+C\|G_{0}\|_{\dot{H}^{s+\alpha}}\|v\|_{H^{s}}.
\end{align*}
Thus, 
\begin{equation}
\label{eq:Keps1E1}
K_{\epsilon}^{1}\leq C(\mathcal{O}_{1},M)\left(\|v\|_{H^{s+\alpha}}+\|\widetilde{v}\|_{H^{s+\alpha}}\|U\|_{H^{s}}\right).
\end{equation}
Furthermore, by the block structure in conditions (C5)-(C6) and the triangle inequality
\begin{align*}
K_{\epsilon}^{4}&\leq\|\Lambda^{2\alpha}(G_{0}v)\|_{L^{\infty}}+\|G_{0}\Lambda^{2\alpha}v\|_{L^{\infty}}.
\end{align*}
Since $v, G_{0}v\in L^{2}([0,T];H^{s+\alpha})$, it holds that $\Lambda^{2\alpha}v, \Lambda^{2\alpha}(G_{0}v)\in L^{2}([0,T];H^{s-\alpha})$. Therefore, the embedding $H^{s-\alpha}\hookrightarrow L^{\infty}$ and the fractional Leibniz rule imply 
\begin{align*}
K_{\epsilon}^{4}&\leq C(\|G_{0}\|_{L^{\infty}}\|v\|_{H^{s+\alpha}}+\|G_{0}\|_{\dot{H}^{s+\alpha}}\|v\|_{L^{\infty}})\\
&\leq C(\mathcal{O}_{1},M)(\|v\|_{H^{s+\alpha}}+\|\widetilde{v}\|_{H^{s+\alpha}}\|U\|_{H^{s}})
\end{align*}
By using this estimate in \eqref{eq:middletermestimate} we obtain 
\begin{equation}
	\label{eq:Keps21}
	K_{\epsilon}^{2}\leq C(\mathcal{O}_{1},M)\left[(M+\|\widetilde{v}\|_{H^{s+\alpha}})\|U\|_{H^{s}}+\|v\|_{H^{s+\alpha}}\right].
\end{equation}
If, on the other hand, hypothesis (E2) holds, we have
\begin{align*}
G\mathcal{L}U=G\mathcal{L}_{N}U=\left(\begin{array}{c}
\mathbb{O}_{k\times 1}\\
G_{0}\mathcal{L}_{0}u
\end{array}\right)\in \mathcal{C}([0,T];H^{s}).
\end{align*}
Then, $\Lambda^{2\alpha}G\mathcal{L}_{N}U\in\mathcal{C}([0,T];H^{s-s_{\alpha}})$ and we can apply statement (ii) of lemma \ref{epsilonlimitest} to the second term in the right hand side of \eqref{eq:Keps1} to obtain that
\begin{equation}
\label{eq:Keps12}	
K_{\epsilon}^{1}\leq C\|\overline{H}\|_{\widehat{H}^{s}}\|G_{0}\mathcal{L}_{0}u\|_{H^{s}}\leq C(\mathcal{O}_{1},M)\|U\|_{H^{s}}.
\end{equation}
For the term $K_{\epsilon}^{4}$, the embedding $H^{s-s_{\alpha}}\hookrightarrow L^{\infty}$ yields the estimate
\begin{align*}
K_{\epsilon}^{4}&\leq\|\Lambda^{2\alpha}(G_{0}\mathcal{L}_{0}u)\|_{L^{\infty}}+\|G_{0}\Lambda^{2\alpha}\mathcal{L}_{0}u\|_{L^{\infty}}\\
&\leq C(\mathcal{O}_{1},M)\|U\|_{H^{s}}. 
\end{align*}
Hence, by \eqref{eq:middletermestimate}, estimate \eqref{eq:Keps21} remains true. The estimates for $K_{\epsilon}^{3}$ are similar to those for $K_{\epsilon}^{1}$. Therefore, by combining estimates \eqref{eq:Keps1E1}, \eqref{eq:Keps21} and \eqref{eq:Keps12} we conclude that, under any of the assumptions (E1)-(E2), the following estimate holds
\begin{align}
K_{\epsilon}\leq C(\mathcal{O}_{1},M)\left[(M+\|\widetilde{v}\|_{H^{s+\alpha}}+1)\|U\|_{H^{s}}+\|v\|_{H^{s+\alpha}}\right].\label{eq:KepsDominated}
\end{align}
Since $\widetilde{U}$ satisfies condition \ref{cond2} and $U\in X_{s,\alpha,T}^{\infty}$, it follows that $K_{\epsilon}\in L^{2}([0,T];H^{s-\alpha})$.

Now, let us show that 
\begin{align}
\label{eq:hminusa4}
\int_{0}^{T}\|K_{\epsilon}(t)\|_{H^{s-\alpha}}^{2}dt\rightarrow 0\quad\mbox{as}\quad\epsilon\rightarrow 0. 
\end{align}
Notice that, $K_{\epsilon}^{1}(t)$ and $K_{\epsilon}^{3}(t)$ converge to zero for almost all $t\in[0,T]$. In fact, this is an immediate consequence of lemma \ref{epsilonlimitest}: statement $(i)$ yields the convergence under hypothesis (E1), while statement $(ii)$ yields the same conclusion under hypothesis (E2).

Now, as we previously showed,
\begin{align*}
K_{\epsilon}^{2}&\leq C(\mathcal{O}_{1},M)\left(K_{\epsilon}^{4}+K_{\epsilon}^{5}\right)
\end{align*}
with $K_{\epsilon}^{4}:=\|\Lambda^{2\alpha}[\mathbb{J}_{\epsilon},G]\mathcal{L}U\|_{L^{\infty}}$ and $K_{\epsilon}^{5}:=\|\Lambda^{\alpha}[\mathbb{J}_{\epsilon},G]\mathcal{L}U\|_{H^{s}}$. Since 
\begin{align*}
K_{\epsilon}^{5}\leq\|[\mathbb{J}_{\epsilon},G]\mathcal{L}U\|_{H^{1}}+\|\Lambda^{s+\alpha}[\mathbb{J}_{\epsilon},G]\mathcal{L}U\|_{L^{2}}
\end{align*}
and $\mathcal{L}U\in\mathcal{C}([0,T];H^{s})$, statement (iii) in lemma \ref{epsilonlimitest} together with \cite[Lemma 1.5.2]{milani} imply that $K_{\epsilon}^{5}(t)\rightarrow 0$ for almost all $t\in[0,T]$.

Finally, consider the term $K_{\epsilon}^{4}$. If hypothesis (E1) is satisfied we use conditions (C5)-(C6) together with the embedding $H^{s-\alpha}\hookrightarrow L^{\infty}$ to obtain 
\begin{align*}
K_{\epsilon}^{4}&\leq C\|\Lambda^{2\alpha}[\mathbb{J}_{\epsilon},G_{0}]v\|_{H^{s-\alpha}}\leq C\left(\|[\mathbb{J}_{\epsilon},G_{0}]v\|_{H^{2}}+\|\Lambda^{s+\alpha}[\mathbb{J}_{\epsilon},G_{0}]v\|_{L^{2}}\right).
\end{align*}
Therefore, as a consequence of \cite[Theorem 1.5.6]{milani} and statement $(iii)$ in lemma \ref{epsilonlimitest} we conclude that $K_{\epsilon}(t)\rightarrow 0$ for almost all $t\in[0,T]$. The same conclusion is obtained under hypothesis (E2). Indeed, in this case we use the embedding $H^{s-s_{\alpha}}\hookrightarrow L^{\infty}$ in \eqref{eq:firstK4}, to obtain
\begin{align*}
K_{\epsilon}^{4}\leq C\|[\mathbb{J}_{\epsilon},G]\mathcal{L}_{N}U\|_{H^{s}}.
\end{align*} 
Then, the conclusion follows by \cite[Theorem 1.5.6]{milani}. Therefore, $K_{\epsilon}(t)\rightarrow 0$ for almost all $t\in[0,T]$ and thus, by \eqref{eq:KepsDominated}, the dominated convergence Theorem yields \eqref{eq:hminusa4}.

Applying the coercivity estimate \eqref{eq:fractionalstrongelliptic} with $U$ replaced by $\mathbb{J}_{\epsilon}U$, and combining it with the energy argument used in the proof of Theorem \ref{energyestimatesF}, we obtain
\begin{equation}
\label{eq:epsilonparestimate}
E_{s,\alpha}^{2}(\mathbb{J}_{\epsilon}U(t))\leq K(t)\Phi_{\alpha}^{s}(\mathbb{J}_{\epsilon}U_{0},A^{0}F_{\epsilon}+A^{0}K_{\epsilon},t)
\end{equation}
for all $t\in[0,T]$. Since the fractional Leibniz rule and the embedding $H^{s-\alpha}\hookrightarrow L^{\infty}$ imply that
\begin{align*}
\|A^{0}K_{\epsilon}\|_{H^{s-\alpha}}\leq C(\mathcal{O}_{1},M)\|K_{\epsilon}\|_{H^{s-\alpha}},
\end{align*} 
it follows from \eqref{eq:hminusa4} that $A^{0}K_{\epsilon}\rightarrow 0$ in $L^{2}([0,T];H^{s-\alpha})$.

To establish the convergence of $F_{\epsilon}$, write $A^{0}F_{\epsilon}=(f_{1}^{\epsilon},f_{2}^{\epsilon})\in\mathbb{R}^{k}\times\mathbb{R}^{m}$. Observe that, as a consequence of conditions (C5)-(C6), the nonzero components of $A^{0}[\mathbb{J}_{\epsilon},\overline{B}]\Lambda^{2\alpha}U$ are contained in the second block $f_{2}^{\epsilon}$. Therefore, by the convergence properties of $\mathbb{J}_{\epsilon}$ (see, \cite[Theorem 1.5.1 and Lemma 1.5.2]{milani}) it follows that $f_{1}^{\epsilon}\rightarrow f_{1}$ in $L^{2}([0,T];H^{s})$. For the second component, statement $(i)$ in lemma \ref{epsilonlimitest} yields $A^{0}[\mathbb{J}_{\epsilon},\overline{B}]\Lambda^{2\alpha}U\rightarrow 0$ as $\epsilon\rightarrow 0$ in $L^{2}([0,T];H^{s-\alpha})$ and thus, $f_{2}^{\epsilon}\rightarrow f_{2}$ in $L^{2}([0,T];H^{s-\alpha})$ as $\epsilon\rightarrow 0$. 
Hence, for every $t\in[0,T]$,
\begin{align*}
\Phi_{\alpha}^{s}(\mathbb{J}_{\epsilon}U_{0},A^{0}F_{\epsilon}+A^{0}K_{\epsilon},t)\rightarrow\Phi_{\alpha}^{s}(U_{0},F,t)\quad\mbox{as}\quad\epsilon\rightarrow 0.
\end{align*}
Therefore, passing to the limit  in \eqref{eq:epsilonparestimate} as $\epsilon\rightarrow 0$ yields estimate \eqref{eq:Highenergy}, which proves the first assertion of the theorem.

Finally, if $U\in\mathcal{C}([0,T];H^{\sigma})$ for some $\sigma\in[0,s)$ it follows that $\mathbb{J}_{\epsilon}U\in\mathcal{C}([0,T];H^{s})$ for any $\epsilon>0$. Let $\epsilon_{1}>0$ and $\epsilon_{2}>0$ and observe that $\mathbb{J}_{\epsilon_{1}}U-\mathbb{J}_{\epsilon_{2}}U$ satisfies \eqref{eq:mollifiedlinearsystem} with source term $A^{0}(F_{\epsilon_{1}}-F_{\epsilon_{2}})+A^{0}(K_{\epsilon_{1}}-K_{\epsilon_{2}})$ and initial data $\mathbb{J}_{\epsilon_{1}}U_{0}-\mathbb{J}_{\epsilon_{2}}U_{0}$. Then, the energy method yields the estimate, 
\begin{align*}
\frac{d}{dt}E^{2}_{s,\alpha}(\mathbb{J}_{\epsilon_{1}}U(t)-\mathbb{J}_{\epsilon_{2}}U(t))&\leq C_{1}(\mathcal{O}_{1},M)\left[\|f_{1}^{\epsilon_{1}}(t)-f_{1}^{\epsilon_{2}}(t)\|_{H^{s}}^{2}+\|f_{2}^{\epsilon_{1}}(t)-f_{2}^{\epsilon_{2}}(t)\|_{H^{s-\alpha}}^{2}\right.\\
&+\left.\|A^{0}K_{\epsilon_{1}}(t)-A^{0}K_{\epsilon_{2}}(t)\|_{H^{s-\alpha}}^{2}+\mu_{1}(t)\|\mathbb{J}_{\epsilon_{1}}U(t)-\mathbb{J}_{\epsilon_{2}}U(t)\|_{H^{s}}^{2}\right].
\end{align*}
By integrating this inequality and using \eqref{eq:hminusa4} it follows that
\begin{align*}
\sup_{t\in[0,T]} E^{2}_{s,\alpha}(\mathbb{J}_{\epsilon_{1}}U(t)-\mathbb{J}_{\epsilon_{2}}U(t))\rightarrow 0\quad\mbox{as}\quad\epsilon_{1},\epsilon_{2}\rightarrow 0.
\end{align*}
Consequently, $\{\mathbb{J}_{\epsilon}U\}_{\epsilon>0}$ is a Cauchy sequence in $\mathcal{C}([0,T];H^{s})$ and thus, $U\in\mathcal{C}([0,T];H^{s})$. 
\end{proof}	

\begin{theo}
\label{finallinearwellposed}
Let $1\leq m\leq s$ be an integer and assume that $U_{0}\in H^{m}$, $f_{1}\in\mathcal{C}([0,T];H^{m-1})\cap L^{2}([0,T];H^{m})$ and $f_{2}\in\mathcal{C}([0,T];H^{m-2\alpha})\cap L^{2}([0,T];H^{m-\alpha})$. Under conditions (L1)-(L3), the Cauchy problem \eqref{eq:linearizedsystem1}-\eqref{eq:linearizedinitialdata} admits a unique solution $U=(u,v)$ in the space $X_{m,\alpha,T}$. Moreover, $\partial_{t}u\in\mathcal{C}([0,T];H^{m-1})$, $\partial_{t}v\in\mathcal{C}([0,T];H^{m-2\alpha})\cap L^{2}([0,T];H^{m-\alpha})$ and $U$ satisfies the estimate
\begin{align*}
E_{m,\alpha}^{2}(U(t))\leq K(t)\Phi_{\alpha}^{m}(U_{0},F,t)\quad\mbox{for all}\quad t\in[0,T].
\end{align*}
If, in addition, $\alpha\in(0,\tfrac{1}{2}]$, then  $\partial_{t}v\in\mathcal{C}([0,T];H^{m-1})$.
\end{theo}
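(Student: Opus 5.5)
The plan is to treat $m=s$ first and then lower the regularity by a density argument. Existence for $m=s$ is essentially already available. Theorem \ref{superapprox} gives a solution $U\in X^{\infty}_{s,\alpha,T}\cap\mathcal{C}([0,T];H^{\sigma})$ for every $\sigma<s$, together with the energy bound \eqref{eq:estimateforapprox}. The last assertion of Theorem \ref{mollifiedapprox} then upgrades this to $U\in\mathcal{C}([0,T];H^{s})$, so $U\in X_{s,\alpha,T}$.

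The time-derivative regularity is read off from the equation, written as $\partial_t U=(A^0)^{-1}\big(F-A^j\partial_jU-DU-B\Lambda^{2\alpha}U-H[\Lambda^{2\alpha},G]\mathcal{L}U\big)$. The first block of $B$, $H$ and $G$ vanishes, so $\partial_t u\in\mathcal{C}([0,T];H^{s-1})$. In the second block, $B_0\Lambda^{2\alpha}v$ and the commutator lie in $\mathcal{C}([0,T];H^{s-2\alpha})$ by the product and commutator estimates of Section 3. They also lie in $L^2([0,T];H^{s-\alpha})$, using $v\in L^2H^{s+\alpha}$ and, for the commutator, Lemma \ref{dongli} or Theorem \ref{fractionalcommutatorfeli}. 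For $\alpha\le\frac12$ we have $H^{s-2\alpha}\hookrightarrow H^{s-1}$, which gives the last claim.

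\textbf{Uniqueness.} Let $W$ be the difference of two solutions in $X_{m,\alpha,T}$. It solves the homogeneous linear problem with $W(0)=0$, and the $L^2$ estimate \eqref{eq:L2energy} gives $W\equiv0$. The derivation of \eqref{eq:L2energy} uses only the $L^2$ energy identity, so it must be justified at regularity $m\geq1$. This is done by applying it to $\mathbb{J}_\epsilon W$ and controlling the commutators $[\mathbb{J}_\epsilon,\cdot]$ in $L^2$ via \eqref{eq:dualityargument}, which only needs $W\in X_{1,\alpha,T}$.

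\textbf{General $m$, by density.} Approximate the data by $U_0^k=\mathbb{J}_{1/k}U_0\in H^s$ and $F^k=\mathbb{J}_{1/k}F$. These satisfy the hypotheses with $m$ replaced by $s$, and they converge in the norms appearing in $\Phi^m_\alpha$ and in $\mathcal{C}([0,T];H^{m-1})\times\mathcal{C}([0,T];H^{m-2\alpha})$. Let $U^k$ be the corresponding $H^s$ solutions. By linearity, $U^k-U^l$ solves the problem with data $(U_0^k-U_0^l,F^k-F^l)$. The estimate $E^2_{m,\alpha}\le K\Phi^m_\alpha$ is valid for $H^s$ solutions by \eqref{eq:estimateforapprox}, so it shows that $\{U^k\}$ is Cauchy in $X_{m,\alpha,T}$, which is complete. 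The limit $U$ solves the equation in $\mathcal{C}([0,T];H^{m-3})$ or in distributions, by passing to the limit term by term as in Theorem \ref{superapprox}. The energy bound is inherited, and the time-derivative statements follow from the equation as above, now at level $m$.

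\textbf{Main obstacle.} The delicate point is that the energy estimate holds with a constant $K(t)$ independent of the data at every level $m\in[1,s]$. This requires Theorem \ref{fractionalellipticity} at level $m$ with $\omega$ depending only on $\widetilde U$. The term $H[\Lambda^{2\alpha},G]\mathcal{L}U$ must also converge under only $H^m$ convergence of $U^k$, and this should follow from the commutator bounds of Theorem \ref{fractionalcommutatorfeli} used with $w=U^k-U^l$. For small $m$ (e.g. $m=1,2$) the passage to the limit must be done in a weak or distributional sense rather than in $H^{m-3}$, and I expect this bookkeeping to be the main technical care.
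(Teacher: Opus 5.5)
Your construction of the solution, the energy bound and uniqueness follow the paper's proof. Mollify the data, use the $H^{s}$ solutions from Theorems \ref{superapprox} and \ref{mollifiedapprox}, apply \eqref{eq:estimateforapprox} at level $m$ to differences (legitimate by linearity and uniqueness) to get a Cauchy sequence in $X_{m,\alpha,T}$, and pass to the limit. Treating $m=s$ separately is harmless. Your remark that \eqref{eq:L2energy} must be re-justified, via $\mathbb{J}_{\epsilon}$ and \eqref{eq:dualityargument}, for solutions lying only in $X_{m,\alpha,T}$ is more careful than the paper, which simply invokes ``the energy estimates''.

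The step that does not hold up is reading off the time-derivative regularity. In $\partial_{t}U=(A^{0})^{-1}\big(F-A^{j}\partial_{j}U-DU-B\Lambda^{2\alpha}U-H[\Lambda^{2\alpha},G]\mathcal{L}U\big)$ you only track $B_{0}\Lambda^{2\alpha}v$ and the commutator. Two omissions matter:
\begin{itemize}
\item[(a)] $(A^{0})^{-1}$ couples the blocks unless $A^{0}$ is block diagonal, and this is assumed only in (E2) for $\alpha>\tfrac12$.
\item[(b)] The second block contains hyperbolic terms $A^{j}_{21}\partial_{j}u$ (e.g.\ the pressure gradient in the Navier--Stokes example), which are only in $H^{m-1}$.
\end{itemize}
Take scalar $u,v$, constant coefficients, $F=0$, $D=H=G=0$ and $B=\mathrm{diag}(0,1)$.
\begin{itemize}
\item[(i)] With $A^{0}=\left(\begin{smallmatrix}2&1\\1&2\end{smallmatrix}\right)$ and $A^{j}=0$ one gets $\partial_{t}u=\tfrac13\Lambda^{2\alpha}v$. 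For $\alpha>\tfrac12$ and $v_{0}\in H^{m}\setminus H^{m-1+2\alpha}$, this gives $\partial_{t}u(0)\notin H^{m-1}$.
\item[(ii)] With $A^{0}=\mathbb{I}_{2}$, $A^{1}=\left(\begin{smallmatrix}0&1\\1&0\end{smallmatrix}\right)$, $A^{2}=A^{3}=0$ and $v_{0}=0$ one gets $\partial_{t}v(0)=-\partial_{1}u_{0}$. For $\alpha<\tfrac12$ this need not lie in $H^{m-2\alpha}$. A Fourier computation (underdamped regime $|\xi_{1}|\gg|\xi|^{2\alpha}$) also gives $\partial_{t}v\notin L^{2}([0,T];H^{m-\alpha})$ for suitable $u_{0}\in H^{m}$.
\end{itemize}
What the equation actually yields is $\partial_{t}v\in\mathcal{C}([0,T];H^{m-\max\{1,2\alpha\}})\cap L^{2}([0,T];H^{m-1})$, in line with (A2) of Condition \ref{cond2}. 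It also yields $\partial_{t}u\in\mathcal{C}([0,T];H^{m-1})$ when $\alpha\le\tfrac12$ or the off-diagonal blocks of $A^{0}$ vanish. The paper's one-line justification (``follow directly from equation \eqref{eq:linearizedsystem1}'') skips the same terms. This part of the statement therefore has to be weakened; it cannot be proved as written.
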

\begin{proof}
We consider the sequence $\{U_{n}^{\prime}\}$ of solutions of 
\begin{equation}
\label{eq:finalapproxlinearsystem}
\begin{aligned}
A^{0}(\widetilde{U})\partial_{t}U_{n}^{\prime}+A^{j}(\widetilde{U})\partial_{j}U_{n}^{\prime}+D(\widetilde{U})U_{n}^{\prime}&+B(\widetilde{U})\Lambda^{2\alpha}U_{n}^{\prime}+H(\widetilde{U})[\Lambda^{2\alpha},G(\widetilde{U})]\mathcal{L}U_{n}^{\prime}=\mathbb{J}_{\epsilon_{n}}F,\\
	U_{n}^{\prime}\rvert_{t=0}&=\mathbb{J}_{\epsilon_{n}}U_{0}.
\end{aligned}
\end{equation}
Thanks to Theorems \ref{superapprox} and \ref{mollifiedapprox}, $U^{\prime}_{n}=(u^{\prime}_{n},v^{\prime}_{n})$ is a well-defined element of the space $X_{s,\alpha,T}$. Moreover, $\partial_{t}u_{n}^{\prime}\in\mathcal{C}([0,T];H^{s-1})$ and $\partial
_{t}v_{n}^{\prime}\in\mathcal{C}([0,T];H^{s-2\alpha})\cap L^{2}([0,T];H^{s-\alpha})$. Consequently, by Theorem \ref{superapprox}, the difference $V^{\prime}_{n,p}:=U_{n+p}^{\prime}-U_{n}^{\prime}$ satisfies the energy estimate
\begin{align}
	E_{m,\alpha}^{2}(V_{n,p}^{\prime}(t))\leq K(t)\Phi_{\alpha}^{m}\left((\mathbb{J}_{\epsilon_{n+p}}-\mathbb{J}_{\epsilon_{n}})U_{0},(\mathbb{J}_{\epsilon_{n+p}}-\mathbb{J}_{\epsilon_{n}})F,t\right)\quad\mbox{for all}\quad t\in[0,T],\label{eq:parcauchy}
\end{align}
for any $1\leq m\leq s$. By the assumptions on $F$ and the dominated convergence Theorem, the right hand side of \eqref{eq:parcauchy} goes to zero as $n\rightarrow\infty$. Thus, for any $\alpha\in(0,1)$, $\{U_{n}^{\prime}\}$ is a Cauchy sequence in $X_{m,\alpha,T}$. Therefore, there exists $U\in X_{m,\alpha,T}$ such that $U_{n}^{\prime}\rightarrow U$ in $X_{m,\alpha,T}$. Passing to the limit in \eqref{eq:finalapproxlinearsystem} shows that $U$ is a solution of \eqref{eq:linearizedsystem1}-\eqref{eq:linearizedinitialdata}. The uniqueness is a consequence of the energy estimates. The regularity properties of $\partial_{t}u$ and $\partial_{t}v$ follow directly from equation \eqref{eq:linearizedsystem1}, the regularity of $U$, the embedding $H^{m-2\alpha}\hookrightarrow H^{m-1}$ for $\alpha\in(0,\tfrac{1}{2}]$ and the assumptions on $F$.
\end{proof}

\section{Invariant sets under iterations and boundedness on the high norm}
\label{nonlinear}
We now introduce the following definition.

\begin{defi}
Let $s>\frac{d}{2}+1$ be an integer and $\alpha\in(0,1)$ be given. We denote by $X_{s,\alpha,T}(\mathcal{O}_{1},M,M_{1})$ the set of functions $\widetilde{U}=\widetilde{U}(x,t)\in\mathcal{O}$ satisfying condition \ref{cond2} and such that
\begin{align}
E_{s,\alpha}^{2}(\widetilde{U}(t))\leq M^{2}\quad\mbox{for all}\quad t\in[0,T]\label{eq:boundenergy1}
\end{align}
and
\begin{align}
\int_{0}^{t}\|\partial_{t}\widetilde{U}(\tau)\|_{H^{s-1}}^{2}d\tau\leq M_{1}^{2}\quad\mbox{for all}\quad t\in[0,T],\label{eq:timeuniformderivative}
\end{align}
where $M$ and $M_{1}$ are constants. 
\end{defi}
The first objective of this section is to determine $\mathcal{O}_{1}$, $M$, $M_{1}$ and $T_{0}>0$ ($\leq T$) such that for $\widetilde{U}\in X_{s,\alpha,T_{0}}(\mathcal{O}_{1},M,M_{1})$, the Cauchy problem, 
\begin{equation}
\label{eq:invariantsystem1}
\begin{aligned}
A^{0}(\widetilde{U})\partial_{t}U+A^{j}(\widetilde{U})\partial_{j}U+D(\widetilde{U})U&+B(\widetilde{U})\Lambda^{2\alpha}U+H(\widetilde{U})[\Lambda^{2\alpha},G(\widetilde{U})]\mathcal{L}U=0,\\
U\rvert_{t=0}&=U_{0},
\end{aligned}
\end{equation}
has a unique solution in $X_{s,\alpha,T_{0}}(\mathcal{O}_{1},M,M_{1})$. That is, $X_{s,\alpha,T_{0}}$ is invariant under the solution map, $\widetilde{U}\mapsto U$. To this end, we consider the following condition.
\begin{cond}
\label{initialcondition}
$U_{0}\in H^{s}$ and $U_{0}\in\mathcal{O}_{0}$, where $\mathcal{O}_{0}$ is a bounded open convex set in $\mathbb{R}^{N}$ such that $\overline{\mathcal{O}_{0}}\subset\mathcal{O}$. 
\end{cond}
\begin{prop}
\label{timeestimateL2}
Assume that condition \ref{cond1} is satisfied with one of the hypotheses (E1)-(E2). Moreover, assume that $U_{0}\in H^{s}$ satisfies condition \ref{initialcondition}. Let $\widetilde{U}\in X_{s,\alpha,T}(\mathcal{O}_{1},M,M_{1})$ and let $U$ be a solution of \eqref{eq:invariantsystem1} satisfying \eqref{eq:boundenergy1} with $M$ replaced by $\overline{M}$. Then, there is a positive constant $C_{3}(\mathcal{O}_{1},M)>0$ for which
\begin{equation}
\label{eq:timeuniformestimate}
\begin{aligned}
\int_{0}^{t}\|\partial_{t}U(\tau)\|_{H^{s-1}}^{2}d\tau\leq C_{3}(\mathcal{O}_{1},M)\overline{M}^{2}\left[t+1+M^{2}\right].
\end{aligned}
\end{equation}
\end{prop}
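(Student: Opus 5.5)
The plan is to read $\partial_{t}U$ directly off the equation \eqref{eq:invariantsystem1}. Solving for the time derivative gives
\begin{align*}
\partial_{t}U=-(A^{0})^{-1}\left[A^{j}\partial_{j}U+DU+B\Lambda^{2\alpha}U+H[\Lambda^{2\alpha},G]\mathcal{L}U\right].
\end{align*}
We then estimate each term in $H^{s-1}$ pointwise in time. After that we square, integrate over $[0,t]$, and use \eqref{eq:boundenergy1} for both $U$ (with $\overline{M}$) and $\widetilde{U}$ (with $M$).

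For the multiplications by $(A^{0})^{-1}$, $A^{j}$, $D$ and $H$ we use \eqref{eq:basicproduct} with $m=s-1$; Lemma \ref{sobolevprodinteger} also allows $r=s-1$. This gives
\begin{align*}
\|(A^{0})^{-1}A^{j}\partial_{j}U\|_{H^{s-1}}+\|(A^{0})^{-1}DU\|_{H^{s-1}}\leq C(\mathcal{O}_{1},M)\|U\|_{H^{s}}\leq C(\mathcal{O}_{1},M)\overline{M}.
\end{align*}
After squaring and integrating, these terms contribute $C\overline{M}^{2}t$.

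For the dissipative term, the block structure (C5) shows that only $v$ appears, and $\|\Lambda^{2\alpha}v\|_{H^{s-1}}\leq C\|\Lambda^{\alpha}v\|_{H^{s}}+C\|v\|_{L^{2}}$, since $2\alpha-1<\alpha$. This holds for every $\alpha\in(0,1)$ and splits into high and low frequencies. Its time integral is bounded by $C(\overline{M}^{2}+t\overline{M}^{2})$, because \eqref{eq:boundenergy1} controls $\int_{0}^{t}\|\Lambda^{\alpha}v\|_{H^{s}}^{2}$ by $\overline{M}^{2}$.

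The main obstacle is the commutator term $\|[\Lambda^{2\alpha},G_{0}]w\|_{H^{s-1}}$, where $w$ is the diffusive block of $\mathcal{L}U$. I would write it as $\Lambda^{2\alpha}(G_{0}w)-G_{0}\Lambda^{2\alpha}w$ and bound it by
\begin{align*}
\|G_{0}w\|_{L^{2}}+\|G_{0}w\|_{\dot{H}^{s-1+2\alpha}}+\|G_{0}\Lambda^{2\alpha}w\|_{H^{s-1}}.
\end{align*}
Under (E1), where $w=v$, the fractional Leibniz rule \eqref{eq:fractionalproduct2} together with \eqref{eq:Gchainestimates} gives the bound
\begin{align*}
C(\mathcal{O}_{1},M)\left(\|v\|_{H^{s}}+\|\Lambda^{\alpha}v\|_{H^{s}}+\|\widetilde{v}\|_{H^{s+\alpha}}\|U\|_{H^{s}}\right).
\end{align*}
Here we use $s-1+2\alpha\leq s+\alpha$ and the embedding $H^{s-1}\hookrightarrow L^{\infty}$. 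Squaring and integrating, the last term is controlled by $\overline{M}^{2}\int_{0}^{t}\|\widetilde{v}\|_{H^{s+\alpha}}^{2}\leq\overline{M}^{2}M^{2}$, and this is the source of the factor $M^{2}$.

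Under (E2), where $w=\mathcal{L}_{0}u\in H^{s}$, we do not need the $G_{0}$ regularity at the level $s+\alpha$ in full. Instead we use \eqref{eq:fractionalcommutator2}, or the splitting above, with $G_{0}$ in $\dot{H}^{s-1+2\alpha}$, which is bounded by $\|\widetilde{v}\|_{H^{s+\alpha}}+M$. In that term $u$ is measured in $L^{\infty}$ via $s>\frac{d}{2}+s_{\alpha}$, and the term $\|G_{0}\Lambda^{2\alpha}\mathcal{L}_{0}u\|_{H^{s-1}}$ stays below order $s$ because $2\alpha-1<1$. This again gives a bound of the form $C(\overline{M}+\|\widetilde{v}\|_{H^{s+\alpha}}\overline{M})$.

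Collecting all the terms yields
\begin{align*}
\int_{0}^{t}\|\partial_{t}U\|_{H^{s-1}}^{2}\leq C_{3}(\mathcal{O}_{1},M)\overline{M}^{2}(t+1+M^{2}),
\end{align*}
which is \eqref{eq:timeuniformestimate}. The delicate point is making sure that the top-order part of the commutator costs at most $\|\Lambda^{\alpha}v\|_{H^{s}}$ or $\|\widetilde{v}\|_{H^{s+\alpha}}$, both of which are square-integrable in time.
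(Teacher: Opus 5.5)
\textbf{Gap in case (E2) with $\alpha\in(\tfrac12,1)$.} Your handling of the transport, damping and dissipative terms is fine, and so is your commutator bound under (E1). The problem is (E2) when $\alpha>\tfrac12$.

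There $w=\mathcal{L}_{0}u$ is built from the hyperbolic component. That component is only bounded in $H^{s}$ and has no time-integrable gain of regularity. Your splitting $[\Lambda^{2\alpha},G_{0}]w=\Lambda^{2\alpha}(G_{0}w)-G_{0}\Lambda^{2\alpha}w$ makes each piece cost $\|w\|_{H^{s-1+2\alpha}}$, and $s-1+2\alpha>s$ as soon as $\alpha>\tfrac12$. Your justification that $\|G_{0}\Lambda^{2\alpha}\mathcal{L}_{0}u\|_{H^{s-1}}$ ``stays below order $s$ because $2\alpha-1<1$'' is incorrect: $2\alpha-1<1$ only places the order below $s+1$.

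Each piece really is of order $s-1+2\alpha$ in $u$. Take $G_{0}$ equal to a nonzero constant: the two pieces then cancel exactly, yet each separately behaves like $\|u\|_{H^{s-1+2\alpha}}$. So the one-derivative gain of the commutator must be kept at the $H^{s-1}$ level. Citing \eqref{eq:fractionalcommutator2} directly does not fix this, because that estimate only controls $L^{p}$ norms of the commutator, not its $H^{s-1}$ norm.

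\textbf{How the paper closes it.} The paper treats the commutator uniformly in (E1) and (E2). It applies \eqref{eq:fraccommnon} with $m=s$, $f=G$, $g=\mathcal{L}U$, then \eqref{eq:Gchainestimates}, to get $\|\partial_{t}U\|_{H^{s-1}}\leq C(\|U\|_{H^{s}}+\|\Lambda^{\alpha}v\|_{H^{s}}+\|\widetilde{v}\|_{H^{s+\alpha}}\|U\|_{H^{s}})$.

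The proof of \eqref{eq:fraccommnon} writes $\Lambda^{s-1}[\Lambda^{2\alpha},f]g=[\Lambda^{s-1+2\alpha},f]g-[\Lambda^{s-1},f]\Lambda^{2\alpha}g$ and applies \eqref{eq:fractionalcommutator2} to each commutator. As a result, $g$ enters only through three quantities:
\begin{itemize}
\item $\|\Lambda^{s-2+2\alpha}g\|_{L^{2}}\lesssim\|g\|_{H^{s}}$;
\item $\|g\|_{L^{\infty}}$;
\item $\|\Lambda^{2\alpha}g\|_{L^{\infty}}$.
\end{itemize}
The last is bounded by $\|g\|_{H^{s}}$ precisely because (E2) assumes $s>\frac{d}{2}+2\alpha$. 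That is where $s_{\alpha}$ enters. It is not needed to put $u$ in $L^{\infty}$, which only requires $s>\frac{d}{2}$. With this replacement your argument closes.

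\textbf{What your route does buy.} Your (E1) argument is a valid alternative to the paper's. Since $w=v\in L^{2}_{t}H^{s+\alpha}$, the splitting plus the fractional Leibniz rule works without any commutator estimate. The splitting also works in (E2) when $\alpha\leq\tfrac12$, because then $s-1+2\alpha\leq s$.
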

\begin{proof}
Since $\widetilde{U}\in X_{s,\alpha,T}(\mathcal{O}_{1},M,M_{1})$ and $U$ is a solution of \eqref{eq:invariantsystem1}, Theorem \ref{finallinearwellposed} implies that we can take the norm of $\partial_{t}U$ in the space $H^{s-1}$. Then, by the Sobolev product estimates, 
\begin{align}
\|\partial_{t}U\|_{H^{s-1}}\leq C(\mathcal{O}_{1},M)\left(\|U\|_{H^{s}}+\|\Lambda^{2\alpha}v\|_{H^{s-1}}+\|[\Lambda^{2\alpha},G]\mathcal{L}U\|_{H^{s-1}}\right).\label{eq:timederivativeinvariant1}
\end{align}
Under any of the hypotheses (E1)-(E2), $\mathcal{L}U\in\mathcal{C}([0,T];H^{s})$. Therefore, we apply estimate \eqref{eq:fraccommnon} in \eqref{eq:timederivativeinvariant1} and use \eqref{eq:Gchainestimates} to obtain
\begin{align*}
\|\partial_{t}U\|_{H^{s-1}}\leq C_{4}(\mathcal{O}_{1},M)\left[\|U\|_{H^{s}}+\|\Lambda^{\alpha}v\|_{H^{s}}+\|\widetilde{v}\|_{H^{s+\alpha}}\|U\|_{H^{s}}\right].
\end{align*}
By integrating and using the uniform bounds for $U$ and $\widetilde{U}$ it follows 
\begin{align*}
\int_{0}^{t}\|\partial_{t}U\|_{H^{s-1}}^{2}d\tau&\leq C_{4}(\mathcal{O}_{1},M)\left[\int_{0}^{t}\|U(\tau)\|_{H^{s}}^{2}d\tau+\int_{0}^{t}\|\Lambda^{\alpha}v(\tau)\|_{H^{s}}^{2}d\tau+\int_{0}^{t}\|\Lambda^{\alpha}\widetilde{v}(\tau)\|_{H^{s}}^{2}\|U(\tau)\|_{H^{s}}^{2}d\tau\right]\\
&=C_{4}(\mathcal{O}_{1},M)\overline{M}^{2}\left[t+1+M^{2}\right].
\end{align*}
This concludes the proof. 
\end{proof}

Now, fix a constant $\delta_{1}>0$ so that $0<\delta_{1}<\delta_{0}:=d(\mathcal{O}_{0},\partial\mathcal{O})$ and take
\begin{align}
\mathcal{O}_{1}&=\delta_{1}-\mbox{neighborhood of}\quad\mathcal{O}_{0},\label{eq:O1}\\
M&=\sqrt{2C_{0}(\mathcal{O}_{1})}\|U_{0}\|_{H^{s}}.\label{eq:Mconstant}
\end{align}
\begin{theo}
\label{invariantsetsTheorem}
Assume condition \ref{initialcondition} holds true. There is a positive constant $T_{0}$ depending on $\delta_{0}$, $\delta_{1}$ and $\|U_{0}\|_{H^{s}}$ such that, if $\widetilde{U}\in X_{s,\alpha,T}(\mathcal{O}_{1},M, M_{1})$ with $\mathcal{O}_{1}$ and $M$ defined by \eqref{eq:O1}-\eqref{eq:Mconstant}, the Cauchy problem \eqref{eq:invariantsystem1} has a unique solution $U=(u,v)$ in the same set $X_{s,\alpha,T}(\mathcal{O}_{1},M,M_{1})$. 
\end{theo}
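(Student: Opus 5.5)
Following the standard invariant-set argument for quasilinear symmetric hyperbolic systems (as in Kawashima's thesis), I would proceed as follows. Fix $\widetilde{U}\in X_{s,\alpha,T}(\mathcal{O}_{1},M,M_{1})$. Condition \ref{cond2} and the bound \eqref{eq:M0} then hold with this $M$, so (L1)--(L3) are satisfied. Theorem \ref{finallinearwellposed} with $m=s$ and $F=0$ gives a unique solution $U\in X_{s,\alpha,T}$ of \eqref{eq:invariantsystem1}, together with $\partial_{t}u\in\mathcal{C}([0,T];H^{s-1})$ and the stated regularity of $\partial_{t}v$. Uniqueness in the smaller set is then automatic. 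It remains to choose $T_{0}$ and $M_{1}$ so that, on $[0,T_{0}]$, three properties hold: the energy bound \eqref{eq:boundenergy1}, the time-derivative bound \eqref{eq:timeuniformderivative}, and the range condition $U(x,t)\in\mathcal{O}_{1}$.

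\textbf{Energy bound.} Theorem \ref{finallinearwellposed} gives $E^{2}_{s,\alpha}(U(t))\leq C_{0}(\mathcal{O}_{1})\exp\bigl(C_{2}(\mathcal{O}_{1},M)\int_{0}^{t}\mu_{1}\bigr)\|U_{0}\|_{H^{s}}^{2}$. I bound $\int_{0}^{t}\mu_{1}$ uniformly over the set. The constant part contributes $t(1+M+M^{2})$. By Cauchy--Schwarz, $\int_{0}^{t}\|\widetilde{v}\|_{H^{s+\alpha}}\leq C\sqrt{t}\,(\int_{0}^{t}\|\widetilde{v}\|_{H^{s}}^{2}+\|\Lambda^{\alpha}\widetilde{v}\|_{H^{s}}^{2})^{1/2}\leq C\sqrt{t}\,M\sqrt{1+t}$. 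Similarly $\int_{0}^{t}\|\partial_{t}\widetilde{U}\|_{H^{s-1}}\leq\sqrt{t}\,M_{1}$.

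\textbf{Main obstacle.} The troublesome term is $\int_{0}^{t}\|\widetilde{v}\|_{H^{s+\alpha}}^{2}$. It is only bounded by $C(1+t)M^{2}$ and does not become small as $t\to0$, so naively the exponent does not vanish. To handle it, I would revisit the Young's-inequality step in Theorem \ref{fractionalellipticity}. That quadratic term comes from $\|\widetilde{v}\|_{H^{s+\alpha}}\|U\|_{H^{m}}\|\Lambda^{\alpha}v\|_{H^{m}}$, and it can be avoided by instead keeping the product and using $E_{s,\alpha}$ structure. Alternatively, I would use absolute continuity: for fixed $\widetilde{U}$, $\int_{0}^{t}\|\widetilde{v}\|_{H^{s+\alpha}}^{2}\to0$ as $t\to0$. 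This is not uniform over the set, so the cleaner fix is to track the dependence and note that the linear-in-$\|\widetilde{v}\|_{H^{s+\alpha}}$ version suffices. If that fails, I would reset $M$ to absorb $e^{C(\mathcal{O}_{1},M)M^{2}}$ by a smallness/continuity argument. Granting a bound of the form $\int_{0}^{t}\mu_{1}\leq\Theta(t)$ with $\Theta(t)\to0$ uniformly, I choose $T_{0}$ so that $C_{2}\Theta(T_{0})\leq\ln2$. Then $E^{2}_{s,\alpha}(U(t))\leq 2C_{0}\|U_{0}\|_{H^{s}}^{2}=M^{2}$ by \eqref{eq:Mconstant}.

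\textbf{Time-derivative bound.} With $\overline{M}=M$, Proposition \ref{timeestimateL2} gives $\int_{0}^{t}\|\partial_{t}U\|_{H^{s-1}}^{2}\leq C_{3}(\mathcal{O}_{1},M)M^{2}(T_{0}+1+M^{2})$. I define $M_{1}^{2}$ to be this quantity evaluated at $T_{0}\leq1$. Since $M_{1}$ enters $\Theta$ only through $\sqrt{t}M_{1}$, I fix $M_{1}$ first, using $T\leq1$, and then shrink $T_{0}$.

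\textbf{Range condition.} I write $U(x,t)-U_{0}(x)=\int_{0}^{t}\partial_{t}U$. The embedding $H^{s-1}\hookrightarrow L^{\infty}$ and Cauchy--Schwarz give $\|U(t)-U_{0}\|_{L^{\infty}}\leq C\sqrt{t}\,M_{1}$. Shrinking $T_{0}$ so that $C\sqrt{T_{0}}M_{1}<\delta_{1}$ keeps $U$ in the $\delta_{1}$-neighborhood $\mathcal{O}_{1}$ of $\mathcal{O}_{0}$. The resulting $T_{0}$ depends only on $\delta_{0}$, $\delta_{1}$ and $\|U_{0}\|_{H^{s}}$.
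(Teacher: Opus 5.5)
\textbf{Gap: the uniform smallness of $\int_0^{T_0}\mu_1$ is assumed, and it is false on this set.} Your skeleton is the paper's, and those parts are fine:
\begin{itemize}
\item linear solvability from Theorem~\ref{finallinearwellposed};
\item $M_1$ fixed from Proposition~\ref{timeestimateL2} using $T_0\le 1$, before shrinking $T_0$;
\item the range condition via $H^{s-1}\hookrightarrow L^\infty$;
\item your uniform bounds $\int_0^t\|\widetilde v\|_{H^{s+\alpha}}\lesssim\sqrt t\,M\sqrt{1+t}$ and $\int_0^t\|\partial_t\widetilde U\|_{H^{s-1}}\le\sqrt t\,M_1$.
\end{itemize}
The problem is the sentence ``granting $\int_0^t\mu_1\le\Theta(t)$ with $\Theta(t)\to0$ uniformly''. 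That is the heart of the theorem, and you never prove it.

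The set contains elements whose $L^2_tH^{s+\alpha}$ mass concentrates at $t=0$. Since $U_0\in H^s$ vanishes at infinity, $0\in\overline{\mathcal O_0}\subset\mathcal O_1$. Take $\widetilde u=0$ and $\widetilde v(t)=\chi(t/\epsilon)f_\epsilon$, where $\chi\in C_c^\infty(0,1)$, $0\le\chi\le1$, and $\widehat{f_\epsilon}$ is supported in $|\xi|\sim N$. Choose $\|f_\epsilon\|_{H^s}=M/2$ and tune $\epsilon N^{2\alpha}\simeq 1$ so that $\int_0^\epsilon\|\Lambda^\alpha\widetilde v\|_{H^s}^2=M^2/2$. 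Then:
\begin{itemize}
\item $E^2_{s,\alpha}(\widetilde U(t))\le M^2$ for all $t$;
\item $\int_0^T\|\partial_t\widetilde v\|_{H^{s-1}}^2\simeq\epsilon^{-1}N^{-2}M^2\simeq\epsilon^{1/\alpha-1}M^2\to0$;
\item $\|\widetilde v\|_{L^\infty}\lesssim N^{-1}M\to0$, so the values stay in $\mathcal O_1$.
\end{itemize}
Hence $\sup_{\widetilde U}\int_0^t\|\widetilde v\|_{H^{s+\alpha}}^2\ge M^2/2$ for every $t>0$, and no such $\Theta$ exists. The paper's proof does not get around this either. It picks $T_0$ from $\widetilde v\in L^2([0,T];H^{s+\alpha})$, which is exactly the absolute-continuity argument you rightly reject. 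So its $T_0$ also depends on $\widetilde U$, and it does not give one set mapped into itself.

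\textbf{None of your three escapes closes the gap.}
\begin{itemize}
\item Keeping the product instead of applying Young's inequality does not help. After integration, $\int_0^t\|\widetilde v\|_{H^{s+\alpha}}\|U\|_{H^s}\|\Lambda^\alpha v\|_{H^s}\le\|\widetilde v\|_{L^2(0,t;H^{s+\alpha})}\sup_{[0,t]}\|U\|_{H^s}\,\|\Lambda^\alpha v\|_{L^2(0,t;H^s)}$. Absorbing this into $\sup y+\mu_0\int z$ still requires the same non-small factor $\|\widetilde v\|_{L^2(0,t;H^{s+\alpha})}$. The trilinear term is scale-critical.
\item Absolute continuity for a fixed $\widetilde v$ is non-uniform, as you note.
\item Enlarging $M$ leads to the requirement $M^2\ge C_0e^{cM^2}\|U_0\|_{H^s}^2$ with $c$ bounded below. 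Since $\sup_{M>0}M^2e^{-cM^2}<\infty$, this is solvable only for small $\|U_0\|_{H^s}$. It proves a small-data statement, not this one.
\end{itemize}

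\textbf{The obstruction is structural, not bookkeeping.} When $\widetilde U=U$, the term producing $\|\widetilde v\|_{H^{s+\alpha}}\|U\|_{H^s}\|\Lambda^\alpha v\|_{H^s}$ is, at top order, a second diffusion term of the same order as $B_0\Lambda^{2\alpha}$, with coefficient linear in $w$. For example, take scalar $v$ with $B_0=H_0=1$ and $G_0(\widetilde v)=-c\widetilde v$ under (E1). The linearized principal part of the $v$-equation is then $(1-cv)\Lambda^{2\alpha}$, which is backward wherever $cv>1$. To close the argument you need an ingredient beyond the paper's lemmas. One option is smallness of $U_0$. The other is a positivity condition on the combined top-order coefficient, exploited in the estimate the way $\rho>0$ is in the cancellation argument recalled in the introduction. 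Neither is among the hypotheses of the theorem.
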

\begin{proof}
The existence of a unique solution to the Cauchy problem \eqref{eq:invariantsystem1} follows from \ref{finallinearwellposed} with $m=s$. By using the energy estimates we have
\begin{align}
E^{2}_{s,\alpha}(U(t))\leq C_{0}(\mathcal{O}_{1})e^{C_{2}(\mathcal{O}_{1},M)\int_{0}^{t}\mu_{1}(\tau)d\tau}\|U_{0}\|_{H^{s}}^{2},\label{eq:energyinvariant1}
\end{align}
where, by H\"older's inequality
\begin{align*}
\int_{0}^{t}\mu_{1}(\tau)d\tau&=\int_{0}^{t}\left[1+M+M^{2}+\|\widetilde{v}(\tau)\|_{H^{s+\alpha}}+\|\widetilde{v}(\tau)\|_{H^{s+\alpha}}^{2}+\|\partial_{t}\widetilde{U}(\tau)\|_{H^{s-1}}\right]d\tau\\
&\leq\left[(1+M+M^{2})t+t^{1/2}M\right]+t^{1/2}M_{1}+\int_{0}^{t}\|\widetilde{v}(\tau)\|_{H^{s+\alpha}}^{2}d\tau
\end{align*}
In consequence, since $\widetilde{v}\in L^{2}([0,T];H^{s+\alpha})$ there is $T_{0}>0$ such that 
\begin{align*}
e^{C_{1}(\mathcal{O}_{1},M)\int_{0}^{T_{0}}\mu_{1}(\tau)d\tau}\leq 2
\end{align*}
and thus, by \eqref{eq:energyinvariant1}, 
\begin{align*}
\sup_{t\in[0,T_{0}]}E^{2}_{s,\alpha}(U(t))\leq 2C_{0}(\mathcal{O}_{1})\|U_{0}\|_{H^{s}}^{2}:=M^{2}.
\end{align*}
Then, by applying proposition \ref{timeestimateL2} with $\overline{M}=M$ it holds that
\begin{align*}
\int_{0}^{T_{0}}\|\partial_{t}U(\tau)\|_{H^{s-1}}^{2}d\tau\leq C_{4}(\mathcal{O}_{1},M)M^{2}\left[T_{0}+1+M^{2}\right].
\end{align*}
Therefore, if we take $0<T_{0}<1$ and define $M_{1}$ as 
\begin{align*}
M_{1}^{2}:=C_{4}(\mathcal{O}_{1},M)M^{2}[2+M^{2}],
\end{align*}
condition \eqref{eq:timeuniformderivative} is satisfied. This estimate gives
\begin{align*}
|U(x,t)-U(x,0)|\leq\int_{0}^{t}\|\partial_{t}U(\tau)\|_{L^{\infty}}d\tau\leq\kappa_{s-1}\int_{0}^{t}\|\partial_{j}U(\tau)\|_{H^{s-1}}\leq\kappa_{s-1}T_{0}^{1/2}M_{1}.
\end{align*}
Hence, by taking $T_{0}>0$ such that  $\kappa_{s-1}T_{0}^{1/2}M_{1}\leq\delta_{1}$, we conclude that $U(x,t)\in\mathcal{O}_{1}$ for all $(x,t)\in Q_{T_{0}}$. 
\end{proof}
\section{H\"older continuity in the low norm}
\label{holdernorm}
According with Theorem \ref{invariantsetsTheorem}, for every $\widetilde{U}\in X_{s,\alpha,T_{0}}(\mathcal{O}_{1},M,M_{1})$ there is a unique $U\in X_{s,\alpha,T_{0}}(\mathcal{O}_{1},M,M_{1})$ that satisfies the Cauchy problem \eqref{eq:invariantsystem1}. Hence, the solution map
\begin{equation}
\label{eq:solutionmapping}
\begin{aligned}
\mathcal{T}_{\alpha}:~X_{s,\alpha,T_{0}}(\mathcal{O}_{1},M,M_{1})&\rightarrow~X_{s,\alpha,T_{0}}(\mathcal{O}_{1},M,M_{1}),\\
\widetilde{U}&\mapsto~U,
\end{aligned}
\end{equation}
is well-defined. Observe that $X_{s,\alpha,T_{0}}(\mathcal{O}_{1},M,M_{1})\subset Y_{s-1,\alpha,T_{0}}$, where
\begin{align*}
	Y_{s-1,\alpha,T_{0}}:=\{U=(u,v)^{\top}\in L^{2}([0,T_{0}];H^{s-1})~|~\Lambda^{\alpha}v\in L^{2}([0,T_{0}];H^{s-1})\}.
\end{align*}
The purpose of this section is to prove that the solution map $\mathcal{T}_{\alpha}$ is continuous when $X_{s,\alpha,T_{0}}(\mathcal{O}_{1},M,M_{1})$ is endowed with the topology induced by $Y_{s-1,\alpha,T_{0}}$.

Consider $\widetilde{U}^{1},\widetilde{U}^{2}\in X_{s,\alpha,T_{0}}(\mathcal{O}_{1},M)$ and the corresponding $\mathcal{T}_{\alpha}(\widetilde{U}_{1}):=U_{1}$ and $\mathcal{T}_{\alpha}(\widetilde{U}_{2}):=U_{2}$ also in $X_{s,\alpha,T_{0}}(\mathcal{O}_{1},M)$. Then, the difference $U^{1}-U^{2}=:U$ satisfies
\begin{equation}
\label{eq:contractionsystem}
A^{0}(\widetilde{U}^{1})\partial_{t}U+A^{j}(\widetilde{U}^{1})\partial_{j}U+D(\widetilde{U}^{1})U+B(\widetilde{U}^{1})\Lambda^{2\alpha}U+H(\widetilde{U}^{1})[\Lambda^{2\alpha},G(\widetilde{U}^{1})]\mathcal{L}U=F^{1,2}+L^{1,2}
\end{equation}
with,
\begin{equation}
\label{eq:contractionsource}
\begin{aligned}
(A^{0}(\widetilde{U}^{1}))^{-1}F^{1,2}&:=-\left[\overline{A}^{j}(\widetilde{U}^{1})-\overline{A}^{j}(\widetilde{U}^{2})\right]\partial_{j}U^{2}-\left[\overline{D}(\widetilde{U}^{1})-\overline{D}(\widetilde{U}^{2})\right]U^{2}-\left[\overline{B}(\widetilde{U}^{1})-\overline{B}(\widetilde{U}^{2})\right]\Lambda^{2\alpha}U^{2},\\
(A^{0}(\widetilde{U}^{1}))^{-1}L^{1,2}&:=-\left[\overline{H}(\widetilde{U}^{1})-\overline{H}(\widetilde{U}^{2})\right][\Lambda^{2\alpha},G(\widetilde{U}^{1})]\mathcal{L}U^{2}-\overline{H}(\widetilde{U}^{2})[\Lambda^{2\alpha},G(\widetilde{U}^{1})-G(\widetilde{U}^{2})]\mathcal{L}U^{2}
\end{aligned}
\end{equation}
and initial data, $	U\rvert_{t=0}=0$. Here, for every matrix coefficient $A$, we have used the notation, $\overline{A}(\widetilde{U}^{\ell})=(A^{0}(\widetilde{U}^{\ell}))^{-1}A(\widetilde{U}^{\ell})$ for $\ell=1,2$.

At first sight, one might attempt to establish a contraction estimate for the solution operator $\mathcal{T}_{\alpha}$ in the norm of the space $X_{s-1,\alpha,T_{0}}^{\infty}$ by applying the energy method to the difference $U^{1}-U^{2}$. The main obstacle to this approach is the non-local contribution
\begin{align*}
	\mathcal{N}_{1,2}:=\overline{H}(\widetilde{U}^{2})[\Lambda^{2\alpha},G(\widetilde{U}^{1})-G(\widetilde{U}^{2})]\mathcal{L}U^{2},
\end{align*}
which appears in the source term $L^{1,2}$ of \eqref{eq:contractionsource}. In order to obtain a contraction estimate, one would need a bound of the form 
\begin{align}
\int_{0}^{T_{0}}\|\mathcal{N}_{1,2}(\tau)\|_{H^{s-1-\alpha}}^{2}d\tau\leq C(\mathcal{O}_{1},M)f(T_{0})\|\widetilde{U}^{1}-\widetilde{U}^{2}\|_{X_{s-1,\alpha,T_{0}}^{\infty}}^{2},\label{eq:failedcontraction}
\end{align}
where $f(T_{0})\rightarrow 0$ as $T_{0}\rightarrow 0$. A natural idea is to invoke standard commutator estimates. However, such estimates typically involve quantities of the form $\|\nabla(G_{0}(\widetilde{v}^{1})-G_{0}(\widetilde{v}^{2}))\|_{L^{\infty}}$, which cannot be controlled in the norm of the space $X_{s-1,\alpha,T_{0}}^{\infty}$, given our assumptions of regularity, namely $s>\frac{d}{2}+\max\{1,2\alpha\}$. Nonetheless, it is possible to control this term by the quantity $\|\widetilde{v}^{1}-\widetilde{v}^{2}\|_{H^{s-1}}$ if we assume that $s$ is large enough. Indeed, the statement follows from \eqref{eq:uniquenessrecovery} by assuming that $s>\frac{d}{2}+2$ and letting $s_{0}=s-1$ and $\sigma=1$. Therefore, the standard commutator theory does not lead to the desired contraction estimate. 

The other difficulty is to control the quantity $\|G_{0}(\widetilde{v}^{1})-G_{0}(\widetilde{v}^{2})\|_{\dot{H}^{s-1+\alpha}}$, which appears with or without the application of commutator estimates. To better understand the difficulty, let us temporarily assume that $H$ and $A^{0}$ are constant matrices (as in \cite{chenalign}). In this situation, the triangle inequality combined with the fractional Leibniz rule and the embedding $H^{s-1}\hookrightarrow L^{\infty}$, yield the estimate
\begin{equation}
\label{eq:failedcontraction2}
\begin{aligned}
\|\mathcal{N}_{1,2}\|_{H^{s-1-\alpha}}&\leq|\overline{H}|\left(\|\Lambda^{2\alpha}((G(\widetilde{v}^{1})-G(\widetilde{v}^{2}))\mathcal{L}U^{2})\|_{H^{s-1-\alpha}}+\|(G(\widetilde{v}^{1})-G(\widetilde{v}^{2}))\Lambda^{2\alpha}\mathcal{L}U^{2}\|_{H^{s-1-\alpha}}\right)\\
&\leq C(\mathcal{O}_{1},M)\|\widetilde{U}^{1}-\widetilde{U}^{2}\|_{X_{s-1,\alpha,T_{0}}^{\infty}}.
\end{aligned}
\end{equation}
This estimate is sufficient to prove the Lipschitz continuity of the solution map. Nonetheless, after integration, the resulting estimate does not contain the small factor $f(T_{0})$ required above and therefore fails to establish a contraction inequality (compare with \cite[Theorem 8.4]{felipecontraction}). In the general case, the variables coefficient $\overline{H}(\widetilde{U}^{2})$ introduces and additional difficulty. Indeed, applying the fractional Leibniz rule yields
\begin{equation}
\label{eq:nonlinearfractional}
\begin{aligned}
\|\mathcal{N}_{1,2}\|_{H^{s-1-\alpha}}&\leq\|\overline{H}(\widetilde{U}^{2})\|_{L^{\infty}}\|[\Lambda^{2\alpha},G(\widetilde{U}^{1})-G(\widetilde{U}^{2})]\mathcal{L}U^{2}\|_{H^{s-1-\alpha}}\\
&+\|\overline{H}(\widetilde{U}^{2})\|_{\dot{H}^{s-1-\alpha}}\|[\Lambda^{2\alpha},G(\widetilde{U}^{1})-G(\widetilde{U}^{2})]\mathcal{L}U^{2}\|_{L^{\infty}}.
\end{aligned}
\end{equation}
The first contribution can be handled as in \eqref{eq:failedcontraction2}. The second one, however, involves the higher order norm $\|[\Lambda^{2\alpha},G(\widetilde{U}^{1})-G(\widetilde{U}^{2})]\mathcal{L}U^{2}\|_{L^{\infty}}$, and cannot be controlled solely in terms of the norm of $X_{s-1,\alpha,T_{0}}^{\infty}$ unless $s$ is assumed to be sufficiently large (assume for example that $s>\frac{d}{2}+\max\{1,2\alpha\}+1$ and set $s_{0}=s-1$, $\sigma=\max\{1,2\alpha\}$ in \eqref{eq:uniquenessrecovery}). Consequently, neither the standard commutator estimates nor a direct Leibniz rule argument yield a contraction estimate under the current regularity assumptions. 

The next theorem establishes a H\"older continuity estimate of order $1/2$ for the solution map with respect to the norm $Y_{s-1,\alpha,T_{0}}$. Although a Lipschitz continuity estimate can be obtained in certain simplified situations, the H\"older estimate is more naturally adapted to the compactness argument developed below.
\begin{theo}
\label{holdercontinuity}
Let $s>\frac{d}{2}+1$ be an integer and $\alpha\in(0,1)$. Under one of the hypotheses (E1)-(E2), there is a constant $C_{3}(\mathcal{O}_{1},M,T_{0})>0$ such that for any $\widetilde{U}^{1},\widetilde{U}^{2}\in X_{s,\alpha,T_{0}}(\mathcal{O}_{1},M)$ and any $t\in[0,T_{0}]$, 
\begin{equation}
\label{eq:contractioninequality1h}
\begin{aligned}
&\|\mathcal{T}_{\alpha}(\widetilde{U}^{2}(t))-\mathcal{T}_{\alpha}(\widetilde{U}^{1}(t))\|_{H^{s-1}}^{2}+\int_{0}^{t}\|\Lambda^{\alpha}(v_{1}(\tau)-v_{2}(\tau))\|_{H^{s-1}}^{2}d\tau\\
&\leq C_{3}(\mathcal{O}_{1},M,T_{0})\left(\int_{0}^{t}\|\widetilde{U}^{1}(\tau)-\widetilde{U}^{2}(\tau)\|_{H^{s-1}}^{2}d\tau+\int_{0}^{t}\|\Lambda^{\alpha}(\widetilde{v}^{1}(\tau)-\widetilde{v}^{2}(\tau))\|_{H^{s-1}}^{2}d\tau\right)^{1/2}.
\end{aligned}
\end{equation}
In particular, the mapping $\mathcal{T}_{\alpha}:X_{s,\alpha,T_{0}}(\mathcal{O}_{1},M)\rightarrow X_{s,\alpha,T_{0}}(\mathcal{O}_{1},M)$ is H\"older continuous of order $1/2$ in the topology of the space $Y_{s-1,\alpha,T_{0}}$.
\end{theo}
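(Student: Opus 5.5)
We run the energy method at level $m=s-1$ on the difference system \eqref{eq:contractionsystem}. Write $U=U^{1}-U^{2}=(u,v)$, with $U\rvert_{t=0}=0$ and source $F^{1,2}+L^{1,2}$. Since $U^{1},U^{2}$ lie in $X_{s,\alpha,T_{0}}(\mathcal{O}_{1},M,M_{1})$, Theorem \ref{finallinearwellposed} applies with coefficient $\widetilde{U}^{1}$ and $m=s-1$. The coercivity estimate of Theorem \ref{fractionalellipticity} also holds, because its constants depend only on $\mathcal{O}_{1}$ and $M$.

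We do not put the source into $\Phi^{s-1}_{\alpha}$, since that would require $L^{1,2}\in L^{2}H^{s-1-\alpha}$ with a small factor, which is exactly what fails. Instead we keep the source pairing $\sum_{|\gamma|\le s-1}\langle D^{\gamma}(F^{1,2}+L^{1,2}),D^{\gamma}U\rangle$ inside the differential inequality. After the usual steps this gives
\begin{align*}
\frac{d}{dt}\sum_{|\gamma|\le s-1}\langle A^{0}D^{\gamma}U,D^{\gamma}U\rangle+\frac{\mu_{0}}{2}\|\Lambda^{\alpha}v\|_{H^{s-1}}^{2}\le C\mu_{1}(t)\|U\|_{H^{s-1}}^{2}+C\Big|\sum_{|\gamma|\le s-1}\langle D^{\gamma}(F^{1,2}+L^{1,2}),D^{\gamma}U\rangle\Big|.
\end{align*}

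The key step is a crude but uniform bound on the source pairing. One factor is the difference $\widetilde{U}^{1}-\widetilde{U}^{2}$ at low regularity. The other factor is bounded by high norms of $U^{1}$, $U^{2}$, $\widetilde{U}^{i}$, which are controlled by $M$ and are only square-integrable in time.

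For $F^{1,2}$ we use $\|\overline{A}(\widetilde{U}^{1})-\overline{A}(\widetilde{U}^{2})\|_{H^{s-1}}\le C\|\widetilde{U}^{1}-\widetilde{U}^{2}\|_{H^{s-1}}$ together with the product estimates (Lemma \ref{sobolevprodinteger}).

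\begin{itemize}
\item The term $\Lambda^{2\alpha}U^{2}$ is placed against $D^{\gamma}v$ by moving $\Lambda^{\alpha}$ over via self-adjointness. This costs $\|\Lambda^{\alpha}v\|_{H^{s-1}}$, or $\|\Lambda^{\alpha}\widetilde{v}^{i}\|$ when it falls on the coefficient.
\item For the commutator term $\mathcal{N}_{1,2}$ we argue as in the proof of Theorem \ref{fractionalcommutatorfeli}. We split with identity \eqref{eq:maincommutatoridea}, move $\Lambda^{\alpha}$ onto $D^{\gamma}v$, and use the Kato--Ponce--Li estimates (Lemmas \ref{fractionalprodest}, \ref{dongli}).
\item Every factor containing $G_{0}(\widetilde{v}^{1})-G_{0}(\widetilde{v}^{2})$ at too high order (the dangerous $\dot H^{s-1+\alpha}$ and $L^{\infty}$ norms) is bounded not by differences but by the sum $\|G_{0}(\widetilde{v}^{1})\|+\|G_{0}(\widetilde{v}^{2})\|$. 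This sum is controlled through \eqref{eq:Gchainestimates} by $M+\|\widetilde{v}^{1}\|_{H^{s+\alpha}}+\|\widetilde{v}^{2}\|_{H^{s+\alpha}}$.
\end{itemize}

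The outcome is a bound of the form
\begin{align*}
\Big|\sum\langle D^{\gamma}(F^{1,2}+L^{1,2}),D^{\gamma}U\rangle\Big|\le \theta(t)\big(\|\widetilde{U}^{1}-\widetilde{U}^{2}\|_{H^{s-1}}+\|\Lambda^{\alpha}(\widetilde{v}^{1}-\widetilde{v}^{2})\|_{H^{s-1}}\big)+\tfrac{\mu_{0}}{4}\|\Lambda^{\alpha}v\|_{H^{s-1}}^{2}+C\mu_{1}\|U\|^{2}_{H^{s-1}},
\end{align*}
where $\theta\in L^{2}([0,T_{0}])$ satisfies $\|\theta\|_{L^{2}}\le C(\mathcal{O}_{1},M,T_{0})$. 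The factor $\theta$ is built from $\|U^{i}\|_{H^{s}}$, $\|\Lambda^{\alpha}v^{i}\|_{H^{s}}$ and $\|\widetilde{v}^{i}\|_{H^{s+\alpha}}$. Where a term cannot be split linearly, we bound it by the product $\|U\|_{H^{s-1}}+\|\Lambda^{\alpha}v\|_{H^{s-1}}$ times $\theta$ times the low-norm difference. We then use $\|U\|_{H^{s-1}}\le 2M$ and Young's inequality to absorb the $\Lambda^{\alpha}v$ part.

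We then absorb the $\tfrac{\mu_{0}}{4}$ term, integrate on $[0,t]$ and apply Cauchy--Schwarz in time to the $\theta$ term. Gronwall's inequality with $\int\mu_{1}$ bounded (Theorem \ref{invariantsetsTheorem}) then gives the left side of \eqref{eq:contractioninequality1h}, bounded by $C\|\theta\|_{L^{2}}\big(\int_{0}^{t}\|\widetilde U^{1}-\widetilde U^{2}\|^{2}_{H^{s-1}}+\|\Lambda^{\alpha}(\widetilde v^{1}-\widetilde v^{2})\|_{H^{s-1}}^{2}\big)^{1/2}$. This is exactly the square-root (Hölder $1/2$) form claimed.

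The main obstacle is the $\mathcal{N}_{1,2}$ pairing under (E2) when $\alpha>\tfrac12$. There $\Lambda^{2\alpha}\mathcal{L}_{0}u^{2}$ has no dissipative gain and $G_{0}$-differences appear at order $s-1+2\alpha$. We would handle it using $s>\tfrac d2+2\alpha$ as in part (I3) of Theorem \ref{fractionalcommutatorfeli}. The leftover top-order difference factor would be bounded by its two-sided sum rather than the difference, which again only produces the square-root rate.
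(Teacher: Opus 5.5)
\paragraph{Gap in the $\mathcal{N}_{1,2}$ pairing.} Your overall architecture matches the paper's proof: the energy method at level $s-1$ on $U=U^1-U^2$, source pairings kept inside the differential inequality, bounds that are linear in a low-norm difference with a weight that is $L^2$ in time, and then Cauchy--Schwarz in time. The gap is in the one term that matters, the pairing of $\mathcal{N}_{1,2}$ with $D^\gamma v$.

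In that pairing the only factor carrying $\widetilde U^1-\widetilde U^2$ is $Q:=G_0(\widetilde v^1)-G_0(\widetilde v^2)$. The other factors are $H_0(\widetilde v^2)$ and $w^2$, which come from $\widetilde U^2$ and $U^2$, and the output difference $v$. If you replace the high norms of $Q$ by $\|G_0(\widetilde v^1)\|+\|G_0(\widetilde v^2)\|$, you are left with a bound like $\theta(t)\|v\|_{H^{s-1+\alpha}}$ that contains no factor of the input difference. Its time integral is $O(1)$ and does not vanish as $\widetilde U^1\to\widetilde U^2$, even when $Q\equiv 0$. Since the term is only linear in $v$, Gronwall cannot remove it either: $y'\le\theta\sqrt{y}$ with $y(0)=0$ does not force $y=0$.

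So this step yields no modulus of continuity, and your claim that it ``again only produces the square-root rate'' is false. The rate comes from the opposite trade. Keep the input difference linearly and at low order, and put the high norm on the output difference, using $\|U(t)\|_{H^s}\le\|U^1(t)\|_{H^s}+\|U^2(t)\|_{H^s}\le 2M$ pointwise in time. What pushed you into the sum was routing the term through Theorem \ref{fractionalcommutatorfeli}. That theorem needs $\|Q\|_{\dot H^{s+\alpha}}$ and $L^\infty$ bounds on $\nabla Q$ and $\Lambda^{2\alpha}Q$, none of which the $Y_{s-1,\alpha,T_0}$ norm controls.

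\paragraph{How to close it.} The paper handles $\mathcal{K}^4_{s-1}$ by writing $\gamma=\eta+\beta$ with $|\eta|=1$ and integrating by parts once. The pairing is then bounded by $C\|[\Lambda^{2\alpha},Q]w^2\|_{H^{s-2}}\|U\|_{H^s}$. After expanding the commutator, this only needs $Q\in\widehat H^{s-1}\cap\dot H^{s-2+2\alpha}$. By \eqref{eq:Lipschitzcontinuityfinal} it is at most $CM\bigl(\|\widetilde U^1-\widetilde U^2\|_{H^{s-1}}+\|\widetilde v^1-\widetilde v^2\|_{H^{s-2+2\alpha}}\bigr)$. Since $s-2+2\alpha\le s-1+\alpha$, this is controlled by the $Y_{s-1,\alpha,T_0}$ integrand, and Cauchy--Schwarz in time gives the exponent $1/2$.

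Your $\Lambda^\alpha$-duality would also work if you stopped at $\|H_0(\widetilde v^2)[\Lambda^{2\alpha},Q]w^2\|_{H^{s-1-\alpha}}\|v\|_{H^{s-1+\alpha}}$ and expanded the commutator directly. The $\dot H^{s-1+\alpha}$ norm of $Q$ that you call dangerous is covered by \eqref{eq:Lipschitzcontinuityfinal} together with $\|\widetilde v^1-\widetilde v^2\|_{H^{s-1+\alpha}}\lesssim\|\widetilde v^1-\widetilde v^2\|_{H^{s-1}}+\|\Lambda^\alpha(\widetilde v^1-\widetilde v^2)\|_{H^{s-1}}$, and $\|v\|_{H^{s-1+\alpha}}\le 2M$.

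For the same reason, do not absorb the $\|\Lambda^\alpha v\|_{H^{s-1}}$ factors by Young's inequality. With weights that are only $L^2$ in time, this leaves $\int\theta^2\delta^2$, which is not bounded by a multiple of $(\int\delta^2)^{1/2}$. Bound those factors pointwise by $\|v\|_{H^s}\le 2M$ instead.
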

\begin{proof}
For any $\widetilde{U}^{1},\widetilde{U}^{2}\subset X_{s,\alpha,T_{0}}(\mathcal{O}_{1},M)$ set $U^{1}=\mathcal{T}_{\alpha}(\widetilde{U}^{1})$ and $U^{2}=\mathcal{T}_{\alpha}(U^{2})$. Since $U=U^{1}-U^{2}$, is a solution of \eqref{eq:contractionsystem} with \eqref{eq:contractionsource}, we can apply the energy method described in Theorem \ref{energyestimatesF} to any value of $\alpha\in(0,1)$ (see estimates \eqref{eq:energyEstimate10}) in order to obtain that, 
\begin{align}
\frac{d}{dt}\|U(t)\|_{H^{s-1}}^{2}+\|\Lambda^{\alpha}v(t)\|_{H^{s-1}}^{2}\leq C_{2}(\mathcal{O}_{1},M)\left[\sum_{|\gamma|=0}^{s-1}|\mathcal{J}_{\gamma}(t)|+|\mathcal{K}_{\gamma}(t)|+\mu_{1}(t)\|U(t)\|_{H^{s-1}}^{2}\right],\label{eq:continuityEstimate1}
\end{align}
where, 
\begin{align*}
\mathcal{J}_{\gamma}(t):=\langle D^{\gamma}F^{1,2}(t),D^{\gamma}U(t)\rangle_{L^{2}}\quad\mbox{and}\quad\mathcal{K}_{\gamma}(t):=\langle D^{\gamma}L^{1,2}(t),D^{\gamma}U(t)\rangle_{L^{2}}.
\end{align*}
After integrating in the interval $[0,T_{0}]$ and using Gronwall's inequality, we arrive at the estimate
\begin{align}
\|U(t)\|_{H^{s-1}}^{2}+\int_{0}^{t}\|\Lambda^{\alpha}v(\tau)\|_{H^{s-1}}^{2}d\tau\leq 2C_{2}(\mathcal{O}_{1},M)\int_{0}^{t}\left(\sum_{|\gamma|=0}^{s-1}|\mathcal{J}_{\gamma}(t)|+|\mathcal{K}_{\gamma}(t)|\right)dt\label{eq:continuityEstimate2}
\end{align}
for all $t\in[0,T_{0}]$. It remains to estimate the source contributions $\mathcal{J}_{\gamma}$ and $\mathcal{K}_{\gamma}$. The treatment of $\mathcal{J}_{\gamma}$ is relatively straightforward. By the Cauchy-Schwarz inequality and the Sobolev product estimates, we have
\begin{align*}
\sum_{|\gamma|=0}^{s-1}|\mathcal{J}_{\gamma}(t)|\leq C_{1}(\mathcal{O}_{1},M)\|\widetilde{U}^{1}-\widetilde{U}^{2}\|_{H^{s-1}}(\|U^{2}\|_{H^{s}}+\|\Lambda^{2\alpha}v^{2}\|_{H^{s-1}})\|U\|_{H^{s-1}}.
\end{align*}
Since $U^{j}\in X_{s,\alpha,T_{0}}(\mathcal{O}_{1},M)$, we obtain
\begin{align}
\sum_{|\gamma|=0}^{s-1}|\mathcal{J}_{\gamma}(t)|\leq 2	MC_{1}(\mathcal{O}_{1},M)\left(M+\|\Lambda^{2\alpha}v^{2}\|_{H^{s-1}}\right)\|\widetilde{U}^{1}-\widetilde{U}^{2}\|_{H^{s-1}}.\label{eq:continuityEstimate3}
\end{align}
Thus, the terms $\mathcal{J}_{\gamma}$ can be controlled directly in terms of the norm $\|\widetilde{U}^{1}-\widetilde{U}^{2}\|_{Y_{s-1,\alpha,T_{0}}}$.

The situation is different for the commutator contribution contained in $\mathcal{K}_{\gamma}$. A direct estimate would require controlling $G(\widetilde{U}^{1})-G(\widetilde{U}^{2})$ at a regularity level that is not available in the topology of $Y_{s-1,\alpha,T_{0}}$. To overcome this difficulty, we integrate by parts in the higher order contributions. This allows us to transfer derivatives away from $G(\widetilde{U}^{1})-G(\widetilde{U}^{2})$, thereby reducing the regularity required on $\widetilde{U}^{1}-\widetilde{U}^{2}$. 

Let $\mathcal{L}U^{2}=(y^{2},w^{2})\in\mathbb{R}^{k}\times\mathbb{R}^{m}$. We begin by isolating the contributions that can be estimated directly. As consequence of condition (C6),
\begin{align*}
\sum_{|\gamma|=0}^{s-1}|\mathcal{K}_{\gamma}(t)|&\leq\sum_{|\gamma|=0}^{s-1}|\left\langle D^{\gamma}\left\lbrace\left[H_{0}(\widetilde{v}^{1})-H_{0}(\widetilde{v}^{2})\right][\Lambda^{2\alpha},G_{0}(\widetilde{v}^{1})]w^{2}\right\rbrace,D^{\gamma}v\right\rangle_{L^{2}}|\\
&+\sum_{|\gamma=0}^{s-1}|\left\langle D^{\gamma}\left\lbrace H_{0}(\widetilde{v}^{2})[\Lambda^{2\alpha},G_{0}(\widetilde{v}^{1})-G_{0}(\widetilde{v}^{2})]w^{2}\right\rbrace,D^{\gamma}v\right\rangle_{L^{2}}|\\
&=:\mathcal{K}_{s-1}^{1}+\mathcal{K}_{s-2}^{2}.
\end{align*}
For estimating $\mathcal{K}_{s-1}^{1}$ we first apply the Cauchy-Schwarz inequality and then estimate \eqref{eq:fraccommnon} to obtain
\begin{align*}
\mathcal{K}_{s-1}^{1}&\leq\|H(\widetilde{U}^{1})-H(\widetilde{U}^{2})\|_{\widehat{H}^{s-1}}\|[\Lambda^{2\alpha},G_{0}(\widetilde{v}^{1})]w^{2}\|_{H^{s-1}}\|U\|_{H^{s-1}}\\
&\leq C_{1}(\mathcal{O}_{1},M)\|\widetilde{U}^{1}-\widetilde{U}^{2}\|_{H^{s-1}}\left(\|G_{0}(\widetilde{v}^{1})\|_{\dot{H}^{\alpha}}+\|G_{0}(\widetilde{v}^{1})\|_{\dot{H}^{s+\alpha}}\right)\|U^{2}\|_{H^{s}}\|U\|_{H^{s-1}}.
\end{align*}
Since $\|\widetilde{U}^{j}\|_{H^{s}},\|U^{j}\|_{H^{s}}\leq M$,
\begin{align}
\label{eq:continuityEstimate4}
\mathcal{K}_{s-1}^{1}\leq 2M^{2}C_{1}(\mathcal{O}_{1},M)\|\widetilde{U}^{1}-\widetilde{U}^{2}\|_{H^{s-1}}\left(M+\|\Lambda^{2\alpha}\widetilde{v}^{1}\|_{H^{s-1}}\right).
\end{align}
In order to estimate the term $\mathcal{K}_{s-1}^{2}$ we first observe that, 
\begin{align*}
\mathcal{K}_{s-1}^{2}\leq\mathcal{K}_{s-1}^{3}+\mathcal{K}_{s-1}^{4},
\end{align*}
with 
\begin{align*}
\mathcal{K}_{s-1}^{3}:=\sum_{|\gamma|=0}^{s-1}\left\lvert\left\langle[D^{\gamma},H_{0}(\widetilde{v}^{2})][\Lambda^{2\alpha},G_{0}(\widetilde{v}^{1})-G_{0}(\widetilde{v}^{2})]w^{2},D^{\gamma}v\right\rangle_{L^{2}}\right\rvert
\end{align*}
and 
\begin{align*}
\mathcal{K}_{s-1}^{4}:=\sum_{|\gamma|=0}^{s-1}\left\lvert\left\langle D^{\gamma}[\Lambda^{2\alpha},G_{0}(\widetilde{v}^{1})-G_{0}(\widetilde{v}^{2})]w^{2},H_{0}(\widetilde{v}^{2})^{\top}D^{\gamma}v\right\rangle_{L^{2}}\right\rvert.
\end{align*}
For the term $\mathcal{K}_{s-1}^{3}$, the commutator estimates for $D^{\gamma}$ together with the Sobolev embedding Theorem imply 
\begin{align*}
\mathcal{K}_{s-1}^{3}&\leq C(\mathcal{O}_{1},M)\|[\Lambda^{2\alpha},G_{0}(\widetilde{v}^{1})-G_{0}(\widetilde{v}^{2})]w^{2}\|_{H^{s-2}}\|U\|_{H^{s-1}}.
\end{align*}
The term $\mathcal{K}_{s-1}^{4}$ contains the highest-order derivatives falling on the commutator. To estimate $\mathcal{K}_{s-1}^{4}$ we write $\gamma=\eta+\beta$ with $|\eta|=1$ for every $\gamma\in\mathbb{N}^{d}$ with $|\gamma|\geq 1$ and use integration by parts to obtain
\begin{align*}
\mathcal{K}_{s-1}^{4}&:=\left\lvert\left\langle[\Lambda^{2\alpha},G_{0}(\widetilde{v}^{1})-G_{0}(\widetilde{v}^{2})]w^{2},H_{0}(\widetilde{v}^{2})^{\top}v\right\rangle_{L^{2}}\right\rvert\\
&+\sum_{|\beta|=0}^{s-2}\left\lvert\left\langle D^{\beta}[\Lambda^{2\alpha},G_{0}(\widetilde{v}^{1})-G_{0}(\widetilde{v}^{2})]w^{2},D^{\eta}\left(H_{0}(\widetilde{v}^{2})^{\top}D^{\beta+\eta}v\right)\right\rangle_{L^{2}}\right\rvert\\
&\leq C(\mathcal{O}_{1},M)\|[\Lambda^{2\alpha},G_{0}(\widetilde{v}^{1})-G_{0}(\widetilde{v}^{2})]w^{2}\|_{H^{s-2}}\|U\|_{H^{s}}.
\end{align*}
Then, by the triangle inequality, the Sobolev product estimates and the fractional Leibniz rule it follows that
\begin{align*}
\|[\Lambda^{2\alpha},G_{0}(\widetilde{v}^{1})-G_{0}(\widetilde{v}^{2})]w^{2}\|_{H^{s-2}}&\leq\|\Lambda^{2\alpha}\left((G_{0}(\widetilde{v}^{1})-G_{0}(\widetilde{v}^{2}))w^{2}\right)\|_{H^{s-2}}+\|\left(G_{0}(\widetilde{v}^{1})-G_{0}(\widetilde{v}^{2})\right)\Lambda^{2\alpha}w^{2}\|_{H^{s-2}}\\
&\leq C\left(\|(G_{0}(\widetilde{v}^{1})-G_{0}(\widetilde{v}^{2}))w^{2}\|_{H^{2}}+\|(G(\widetilde{v}^{1})-G(\widetilde{v}^{2}))w^{2}\|_{\dot{H}^{s-2+2\alpha}}\right)\\
&+\|G_{0}(\widetilde{v}^{1})-G_{0}(\widetilde{v}^{2})\|_{\widehat{H}^{s-1}}\|\Lambda^{2\alpha}w^{2}\|_{H^{s-2}}\\
&\leq C\left(\|G_{0}(\widetilde{v}^{1})-G_{0}(\widetilde{v}^{2})\|_{\widehat{H}^{s-1}}\|U^{2}\|_{H^{s}}+\|G_{0}(\widetilde{v}^{1})-G_{0}(\widetilde{v}^{2})\|_{\dot{H}^{s-2+2\alpha}}\|U^{2}\|_{L^{\infty}}\right.\\
&+\left.\|G_{0}(\widetilde{v}^{1})-G_{0}(\widetilde{v}^{2})\|_{L^{\infty}}\|U^{2}\|_{\dot{H}^{s-2+2\alpha}}\right)
\end{align*}
and thus, by \eqref{eq:Lipschitzcontinuityfinal} we  obtain
\begin{align*}
\|[\Lambda^{2\alpha},G_{0}(\widetilde{v}^{1})-G_{0}(\widetilde{v}^{2})]w^{2}\|_{H^{s-2}}\leq C(\mathcal{O}_{1},M)M\left(\|\widetilde{U}^{1}-\widetilde{U}^{2}\|_{H^{s-1}}+\|\widetilde{v}^{1}-\widetilde{v}^{2}\|_{H^{s-2+2\alpha}}\right).
\end{align*}
In consequence, by combining the estimates for $\mathcal{K}_{s-1}^{3}$ and $\mathcal{K}_{s-1}^{4}$ we conclude
\begin{equation}
\label{eq:continuityEstimate5}
\begin{aligned}
\mathcal{K}_{s-1}^{2}&\leq C_{1}(\mathcal{O}_{1},M)2M^{2}\left(\|\widetilde{U}^{1}-\widetilde{U}^{2}\|_{H^{s-1}}+\|\widetilde{v}^{1}-\widetilde{v}^{2}\|_{H^{s-2+2\alpha}}\right).
\end{aligned}
\end{equation}
Therefore, by\eqref{eq:continuityEstimate4} and \eqref{eq:continuityEstimate5}, we have
\begin{equation}
\label{eq:continuityEstimate6}
\begin{aligned}
\sum_{|\gamma|=0}^{s-1}|\mathcal{K}_{\gamma}(t)|\leq&2M^{2}C_{1}(\mathcal{O}_{1},M)\left[\left(1+M+\|\Lambda^{2\alpha}\widetilde{v}^{1}\|_{H^{s-1}}\right)\|\widetilde{U}^{1}-\widetilde{U}^{2}\|_{H^{s-1}}+\|\widetilde{v}^{1}-\widetilde{v}^{2}\|_{H^{s-2+2\alpha}}\right]
\end{aligned}
\end{equation}
Using \eqref{eq:continuityEstimate3} and \eqref{eq:continuityEstimate6} in the right hand side of \eqref{eq:continuityEstimate2}, leads us to the estimate
\begin{align*}
&\|U(t)\|_{H^{s-1}}^{2}+\int_{0}^{t}\|\Lambda^{\alpha}v(\tau)\|_{H^{s-1}}^{2}d\tau\leq 2MC_{1}(\mathcal{O}_{1},M)\int_{0}^{t}(M+\|\Lambda^{2\alpha}v^{2}(\tau)\|_{H^{s-1}})\|\widetilde{U}^{1}(\tau)-\widetilde{U}^{2}(\tau)\|_{H^{s-1}}d\tau\\
&+2M^{2}C_{1}(\mathcal{O}_{1},M)\left[\int_{0}^{t}\left(1+M+\|\Lambda^{2\alpha}\widetilde{v}^{1}(\tau)\|_{H^{s-1}}\right)\|\widetilde{U}^{1}(\tau)-\widetilde{U}^{2}(\tau)\|_{H^{s-1}}d\tau\right.\\
&+\left.\int_{0}^{t}\|\Lambda^{2\alpha}(\widetilde{v}^{1}(\tau)-\widetilde{v}^{2}(\tau))\|_{H^{s-2}}d\tau\right]
\end{align*}
for all $t\in[0,T_{0}]$. Then, H\"older's inequality yields 
\begin{align*}
&\|U(t)\|_{H^{s-1}}^{2}+\int_{0}^{t}\|\Lambda^{\alpha}v(\tau)\|_{H^{s-1}}^{2}d\tau\\
&\leq C_{2}(\mathcal{O}_{1},M)\left(\int_{0}^{t}(M^{2}+\|\Lambda^{\alpha}v^{2}(\tau)\|_{H^{s}}^{2}+\|\Lambda^{\alpha}\widetilde{v}^{1}\|_{H^{s}}^{2})d\tau\right)^{1/2}\left(\int_{0}^{t}\|\widetilde{U}^{1}(\tau)-\widetilde{U}^{2}(\tau)\|_{H^{s-1}}^{2}d\tau\right)^{1/2}\\
&+ C_{2}(\mathcal{O}_{1},M)t^{1/2}\left(\int_{0}^{t}\|\Lambda^{\alpha}(\widetilde{v}^{1}(\tau)-\widetilde{v}^{2}(\tau))\|_{H^{s-1}}^{2}d\tau\right)^{1/2}.
\end{align*}
Finally, since $\widetilde{U}^{1},U^{2}\in X_{s,\alpha,T_{0}}(\mathcal{O}_{1},M,M_{1})$, we conclude that
\begin{align*}
\|U(t)\|_{H^{s-1}}^{2}&+\int_{0}^{t}\|\Lambda^{\alpha}v(\tau)\|_{H^{s-1}}^{2}d\tau\leq C_{2}(\mathcal{O}_{1},M)(Mt^{1/2}+M)\left(\int_{0}^{t}\|\widetilde{U}^{1}(\tau)-\widetilde{U}^{2}(\tau)\|_{H^{s-1}}^{2}d\tau\right)^{1/2}\\
&+ C_{2}(\mathcal{O}_{1},M)t^{1/2}\left(\int_{0}^{t}\|\Lambda^{\alpha}(\widetilde{v}^{1}(\tau)-\widetilde{v}^{2}(\tau))\|_{H^{s-1}}^{2}d\tau\right)^{1/2}
\end{align*}
for all $t\in[0,T_{0}]$. Hence, estimate \eqref{eq:contractioninequality1h} follows. 
\end{proof}
\section{Compactness: tightness of the $L^{2}$ norm}
In the following result we establish the relative compactness of the solution map $\mathcal{T}_{\alpha}: X_{s,\alpha,T_{0}}(\mathcal{O}_{1},M,M_{1})\rightarrow X_{s,\alpha,T_{0}}(\mathcal{O}_{1},M,M_{1})$ with respect to the topology of $Y_{s-1,\alpha,T_{0}}$. The proof relies on three ingredients: The Aubin-Lions lemma, a uniform control of the $L^{2}$-tails of the family $U_{n}=\mathcal{T}_{\alpha}(\widetilde{U}_{n})$, and the Kato-Ponce-Li commutator expansion. More precisely, if $\omega$ is a scalar valued function and $V$ is a vector field, then
\begin{align}
\omega\Lambda^{2\alpha}V=\Lambda^{2\alpha}(\omega V)-\mathbb{K}_{2\alpha}(\omega,V)-\sum_{0<|\beta|\leq 2\alpha}\frac{1}{\beta!}D^{\beta}\omega\Lambda^{2\alpha,\beta}V,\label{eq:usefuldecomp}
\end{align}
where $\mathbb{K}_{\sigma}(\omega,V)$ was defined in lemma \ref{dongli}. 

\begin{lema}
\label{compactness}
The set $\mathcal{T}_{\alpha}\left(X_{s,\alpha,T_{0}}(\mathcal{O}_{1},M)\right)\subset X_{s,\alpha,T_{0}}(\mathcal{O}_{1},M)$ is relatively compact in $Y_{s-1,\alpha,T_{0}}$.
\end{lema}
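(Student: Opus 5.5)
Take an arbitrary sequence $\{\widetilde{U}_{n}\}\subset X_{s,\alpha,T_{0}}(\mathcal{O}_{1},M,M_{1})$ and set $U_{n}=(u_{n},v_{n})=\mathcal{T}_{\alpha}(\widetilde{U}_{n})$. The goal is to extract a subsequence that is Cauchy in $Y_{s-1,\alpha,T_{0}}$, i.e. in $L^{2}([0,T_{0}];H^{s-1})$ for $U_{n}$ and in the same space for $\Lambda^{\alpha}v_{n}$.

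\textbf{Uniform bounds.} By Theorem \ref{invariantsetsTheorem} and Proposition \ref{timeestimateL2}, the sequence satisfies
\begin{align*}
\sup_{t\in[0,T_{0}]}\|U_{n}(t)\|_{H^{s}}^{2}+\int_{0}^{T_{0}}\|\Lambda^{\alpha}v_{n}\|_{H^{s}}^{2}d\tau\leq M^{2},\qquad \int_{0}^{T_{0}}\|\partial_{t}U_{n}\|_{H^{s-1}}^{2}d\tau\leq M_{1}^{2}.
\end{align*}
Hence $\{U_{n}\}$ is bounded in $L^{2}([0,T_{0}];H^{s})\cap H^{1}([0,T_{0}];H^{s-1})$, and $\{v_{n}\}$ is bounded in $L^{2}([0,T_{0}];H^{s+\alpha})$.

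\textbf{Local compactness.} For each ball $B_{R}$, apply the Aubin--Lions lemma to the triple $H^{s}(B_{R})\Subset H^{s-\theta}(B_{R})\hookrightarrow H^{s-1}(B_{R})$, with $\theta$ small. For $v_{n}$, use instead $H^{s+\alpha}(B_{R})\Subset H^{s-1+\alpha}(B_{R})$. This gives relative compactness of $\{U_{n}\rvert_{B_{R}}\}$ in $L^{2}([0,T_{0}];H^{s-1}(B_{R}))$ and of $\{v_{n}\rvert_{B_{R}}\}$ in $L^{2}([0,T_{0}];H^{s-1+\alpha}(B_{R}))$. A diagonal argument over $R\in\mathbb{N}$ then produces a subsequence converging in $L^{2}_{loc}$ at these regularity levels.

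The nonlocal operator $\Lambda^{\alpha}$ does not localize, so convergence of $\Lambda^{\alpha}v_{n}$ will be deduced at the end from global convergence of $v_{n}$ in $L^{2}([0,T_{0}];H^{s-1+\alpha})$. Interpolating between $L^{2}([0,T_{0}];L^{2})$ convergence and the uniform $H^{s}$ and $H^{s+\alpha}$ bounds upgrades global $L^{2}$ convergence to the required norms. It therefore suffices to prove the following tail estimate.

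\textbf{Tightness of the $L^{2}$ norm.} The claim is
\begin{align*}
\lim_{R\to\infty}\sup_{n}\sup_{t\in[0,T_{0}]}\int_{|x|>R}|U_{n}(x,t)|^{2}dx=0 .
\end{align*}
To prove it:
\begin{enumerate}
\item Fix a smooth cutoff $\omega_{R}(x)=\omega(x/R)$ with $\omega=0$ on $B_{1}$ and $\omega=1$ outside $B_{2}$.
\item Take the $L^{2}$ inner product of \eqref{eq:invariantsystem1} with $\omega_{R}^{2}U_{n}$, mirroring the $L^{2}$ energy estimate in Theorem \ref{mollifiedapprox}.
\item The hyperbolic terms contribute $\langle (\partial_{j}A^{j})\omega_{R}U_{n},\omega_{R}U_{n}\rangle$, which is bounded by $C\|\omega_{R}U_{n}\|_{L^{2}}^{2}$, plus a term of size $R^{-1}\|U_{n}\|_{L^{2}}^{2}$ coming from $\nabla\omega_{R}$.
\item For the dissipative term, use \eqref{eq:usefuldecomp} with $\omega=\omega_{R}$ and $V=B_{0}\omega_{R}v_{n}$ (or $\omega_{R}v_{n}$), in order to move $\omega_{R}$ inside $\Lambda^{2\alpha}$. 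This yields the coercive quantity $\mu_{0}\|\Lambda^{\alpha}(\omega_{R}v_{n})\|_{L^{2}}^{2}$ up to error terms.
\item The error terms are $\mathbb{K}_{2\alpha}(\omega_{R},\cdot)$ and, when $2\alpha\ge1$, $\nabla\omega_{R}\,\Lambda^{2\alpha,\beta}$. By Lemma \ref{dongli} they are bounded by $\|\Lambda^{2\alpha}\omega_{R}\|_{BMO}\|v_{n}\|_{L^{2}}+\|\nabla\omega_{R}\|_{L^{\infty}}\|v_{n}\|_{H^{2\alpha-1}}$. By scaling, both are $O(R^{-\min(2\alpha,1)})M^{2}$.
\item The commutator term $H[\Lambda^{2\alpha},G]\mathcal{L}U_{n}$ is handled the same way. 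First apply the self-adjointness trick used for $I_{2}$, and then control the resulting term $\langle w_{n},[\Lambda^{2\alpha},G_{0}^{\top}](H_{0}^{\top}\omega_{R}^{2}v_{n})\rangle$. The factor $[\Lambda^{2\alpha},G_{0}^{\top}]$ produces $\|\Lambda^{2\alpha}G_{0}\|$, which is only integrable in time; Young's inequality places it into an $L^{1}$ weight. Moving $\omega_{R}$ across $\Lambda^{2\alpha}$ again gives a bound $\omega(t)\|\omega_{R}U_{n}\|_{L^{2}}^{2}$ plus $O(R^{-\min(2\alpha,1)})$ errors.
\end{enumerate}
Gronwall's inequality, with $\omega\in L^{1}$ uniformly controlled by $M$, then gives
\begin{align*}
\|\omega_{R}U_{n}(t)\|_{L^{2}}^{2}\le K\left(\|\omega_{R}U_{0}\|_{L^{2}}^{2}+C(M)R^{-\min(2\alpha,1)}\right),
\end{align*}
uniformly in $n$. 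The first term vanishes as $R\to\infty$ because $U_{0}\in L^{2}$, which proves the claim.

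\textbf{Main obstacle.} The hard step is the tail estimate for the nonlocal terms. $\Lambda^{2\alpha}$ and the commutator do not respect supports, so one must verify that every remainder produced by \eqref{eq:usefuldecomp} carries a negative power of $R$, using only the bounds available at $H^{s}$ regularity and the $L^{1}$-in-time bound on $\|\widetilde{v}_{n}\|_{H^{s+\alpha}}$, uniformly in $n$. I would start from the $BMO$ scaling $\|\Lambda^{2\alpha}\omega_{R}\|_{BMO}=R^{-2\alpha}\|\Lambda^{2\alpha}\omega\|_{BMO}$. Combining the tails with the local convergence then makes $\{U_{n}\}$ Cauchy in $L^{2}([0,T_{0}];L^{2})$, and interpolation concludes.
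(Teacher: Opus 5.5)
Your proposal is correct. Its overall skeleton matches the paper: Aubin--Lions on balls with a diagonal argument, a tail estimate, and interpolation. The tightness step, however, follows a genuinely different route.

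The paper multiplies the equation once by $1-\phi_N$ and rewrites it as a system of the same structural form for $U_{k,N}=(1-\phi_N)U_{n(k)}$, with coefficients $(1-\phi_N)A^j$ and $(1-\phi_N)G$. It then applies the $L^2$ estimate \eqref{eq:L2energy} as a black box. The price is that the source $F_{k,N}$ contains leakage terms such as $(1-\phi_N)A^j\partial_j(\phi_N U_{n(k)})$ and $(1-\phi_N)H[\Lambda^{2\alpha},G]\mathcal{L}(\phi_N U_{n(k)})$. These involve $\nabla U_{n(k)}$ in the transition annulus and are not uniformly small in $k$. The paper therefore only controls their $\limsup_k$: it splits $\phi_N U_{n(k)}=\phi_N(U_{n(k)}-U)+\phi_N U$, uses the local strong convergence for the first piece, and uses the tails of the weak limit $U$ for the second.

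Your weighted energy method avoids this. Testing with $\omega_R^2U_n$ and integrating by parts in the symmetric hyperbolic part turns every transition-region contribution into a term carrying $\nabla\omega_R$ or $\Lambda^{2\alpha}\omega_R$. You therefore get the stronger statement: tightness uniform in $n$ and pointwise in $t$, with no reference to any limit. Your final upgrade (interpolating $v_n$ between $L^2$ and $H^{s+\alpha}$ to get convergence in $L^2H^{s-1+\alpha}$) is also simpler than the paper's interpolation through $\dot H^{-\alpha}$.

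What you pay is that the nonlocal commutator term must be localized by hand, and your sketch of that step is vague. After the duality step, set $h=\omega_R H_0^{\top}v_n$ and use
\[
[\Lambda^{2\alpha},G_0^{\top}](\omega_R h)=\omega_R[\Lambda^{2\alpha},G_0^{\top}]h+[\Lambda^{2\alpha},\omega_R](G_0^{\top}h)-G_0^{\top}[\Lambda^{2\alpha},\omega_R]h .
\]
The main part then pairs $\omega_R w_n$ against $[\Lambda^{2\alpha},G_0^{\top}]h$ and is treated exactly as $I_2$. The two remainders are $O(R^{-\min\{2\alpha,1\}})M^2$ by Lemma \ref{dongli}. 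That lemma applies because $[\Lambda^{2\alpha},\omega_R]=-[\Lambda^{2\alpha},\phi_R]$ with $\phi_R$ a Schwartz function.

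One small slip: in the dissipative term, \eqref{eq:usefuldecomp} should be applied with $V=v_n$, using that the scalar $\omega_R$ commutes pointwise with $B_0$. It should not be $V=B_0\omega_R v_n$ or $V=\omega_R v_n$. Your stated error bound already corresponds to $V=v_n$, so this does not affect the argument.
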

\begin{proof}
Fix $\alpha\in(0,1)$ and consider the sequences $\{\widetilde{U}_{n}\},\{U_{n}\}\subset X_{s,\alpha,T_{0}}(\mathcal{O}_{1},M)$ where $U_{n}=\mathcal{T}_{\alpha}(\widetilde{U}_{n})$ for all $n\in\mathbb{N}$. By the definition of $X_{s,\alpha,T_{0}}(\mathcal{O}_{1},M)$, the weak convergence properties of Hilbert spaces and the Banach-Alaoglu Theorem, we can extract a sub-sequence $\{U_{n(j)}\}\subset\{U_{n}\}$ and some $U\in X_{s,\alpha,T_{0}}$ such that 
\begin{equation}
\label{eq:weakconvergence}
\begin{aligned}
U_{n(j)}\overset{\ast}{\rightharpoonup}U&\quad\mbox{in}\quad L^{\infty}([0,T_{0}];H^{s}),\\
\Lambda^{\alpha}v_{n(j)}\rightharpoonup \Lambda^{\alpha}v&\quad\mbox{in}\quad L^{2}([0,T_{0}];H^{s}),\\
\partial_{t}U_{n(j)}\rightharpoonup\partial_{t}U&\quad\mbox{in}\quad L^{2}([0,T_{0}];H^{s-1}).
\end{aligned}
\end{equation}
In consequence, by the lower semicontinuity of the norms,
\begin{equation}
\label{eq:weakcomp1}
\begin{aligned}
\|U\|_{L^{\infty}([0,T_{0}];H^{s})}^{2}&+\int_{0}^{T_{0}}\|\Lambda^{\alpha}v(\tau)\|_{H^{s}}^{2}d\tau\\
&\leq\liminf_{j\rightarrow\infty}\left(\|U_{n(j)}\|_{L^{\infty}([0,T_{0}];H^{s})}^{2}+\int_{0}^{T_{0}}\|\Lambda^{\alpha}v_{n(j)}(\tau)\|_{H^{s}}^{2}d\tau\right)\leq M^{2}
\end{aligned}
\end{equation}
and
\begin{align}
\int_{0}^{T_{0}}\|\partial_{t}U\|_{H^{s-1}}^{2}d\tau\leq\liminf_{j\rightarrow\infty}\int_{0}^{T_{0}}\|\partial_{t}U_{n(j)}(\tau)\|_{H^{s-1}}^{2}d\tau\leq M_{1}^{2}.\label{eq:weakcomp2} 
\end{align}
Moreover, the weak$^{\ast}$ convergence in \eqref{eq:weakconvergence} implies that
\begin{align}
U_{n(j)}\rightharpoonup U\quad\mbox{in}\quad L^{2}([0,T_{0}];H^{s}). \label{eq:weakconvergence2}
\end{align}
Now, let $B_{N}$ be a ball of radius $N\in\mathbb{N}$ centered at zero and consider the compact embedding $H^{s}(B_{N})\hookrightarrow \hookrightarrow H^{s-1}(B_{N})$. By the Aubin-Lions lemma combined with a diagonal argument and the uniqueness of the weak limit \eqref{eq:weakconvergence2}, there exists a sub-sequence $\{U_{n(k)}\}\subset\{U_{n(j)}\}$ such that, 
\begin{align}
\mbox{for any}\quad N\in\mathbb{N},\quad U_{n(k)}\rightarrow U\quad\mbox{in}\quad L^{2}([0,T_{0}];H^{s-1}(B_{N}))\quad\mbox{as}\quad k\rightarrow\infty. \label{eq:aubinlions}
\end{align}
To recover the compactness in the whole space $\mathbb{R}^{d}$ we show that the $L^{2}$-norm of $\{U_{n(k)}\}$ is tight. Let $\phi\in\mathcal{D}(\mathbb{R}^{d})$ be equal to 1 on the unit ball $B_{1}(0)$ and, for $N\in\mathbb{N}$, let $\phi_{N}(x)=\phi(\tfrac{x}{N})$ and $U_{k,N}=(1-\phi_{N})U_{n(k)}$. Since $U_{n(k)}=\mathcal{T}_{\alpha}(\widetilde{U}_{n(k)})$, for any $k\in\mathbb{N}$ it holds that
\begin{align*}
A^{0}(\widetilde{U}_{n(k)})\partial_{t}U_{n(k)}+A^{j}(\widetilde{U}_{n(k)})\partial_{j}U_{n(k)}&+D(\widetilde{U}_{n(k)})U_{n(k)}\\
&+B(\widetilde{U}_{n(k)})\Lambda^{2\alpha}U_{n(k)}+H(\widetilde{U}_{n(k)})[\Lambda^{2\alpha},G(\widetilde{U}_{n(k)})]\mathcal{L}U_{n(k)}=0,\\
&U_{n(k)}\rvert_{t=0}=U_{0}. 
\end{align*}
For any matrix coefficient $A$, we introduce the notation, $A_{k}=A(\widetilde{U}_{n(k)})$ and $A_{k,N}=(1-\phi_{N})A_{k}$. Multiplying the above equation by $1-\phi_{N}\geq 0$ and using the decomposition $U_{n(k)}=U_{k,N}+\phi_{N}U_{n(k)}$ in every term except $\Lambda^{2\alpha}U_{n(k)}$, gives
\begin{equation}
\label{eq:tailequation0}
\begin{aligned}
A^{0}_{k}\partial_{t}U_{k,N}+A^{j}_{k,N}\partial_{j}U_{k,N}&+D_{k,N}U_{k,N}+B_{k}(1-\phi_{N})\Lambda^{2\alpha}U_{n(k)}+H_{k}(1-\phi_{N})[\Lambda^{2\alpha},G_{k}]\mathcal{L}U_{k,N}\\
&=-(1-\phi_{N})A^{j}_{k}\partial_{j}(\phi_{N}U_{n(k)})-(1-\phi_{N})D_{k}(\phi_{N}U_{n(k)})\\
&-(1-\phi_{N})H_{k}[\Lambda^{2\alpha},G_{k}]\mathcal{L}(\phi_{N}U_{n(k)}).
\end{aligned}
\end{equation}
Before applying the energy method to this equation, observe that the coefficient $(1-\phi_{N})B_{k}$ no longer satisfies the structural assumptions of condition \ref{cond1}. To recover the required form of the diffusion term, we apply \eqref{eq:usefuldecomp} with $\omega=1-\phi_{N}$ and $V=U_{n(k)}$, so that
\begin{align*}
(1-\phi_{N})\Lambda^{2\alpha}U_{n(k)}=\Lambda^{2\alpha}U_{k,N}-\mathbb{K}_{2\alpha}(1-\phi_{N},U_{n(k)})-\sum_{0<|\beta|\leq 2\alpha}\frac{1}{\beta!}D^{\beta}(1-\phi_{N})\Lambda^{2\alpha,\beta}U_{n(k)}.
\end{align*}
Similarly, 
\begin{align*}
(1-\phi_{N})[\Lambda^{2\alpha},G_{k}]\mathcal{L}U_{k,N}&=[\Lambda^{2\alpha},(1-\phi_{N})G_{k}]\mathcal{L}U_{k,N}+\mathbb{K}_{2\alpha}(1-\phi_{N},G_{k}\mathcal{L}U_{k,N})\\
&-\sum_{0<\beta\leq 2\alpha}\frac{1}{\beta!}D^{\beta}(1-\phi_{N})\Lambda^{2\alpha,\beta}(G_{k}\mathcal{L}U_{k,N}).
\end{align*}
By using these identities in \eqref{eq:tailequation0} we conclude that $U_{k,N}$ satisfies the following Cauchy problem
\begin{equation}
\label{eq:tailequation1}
\begin{aligned}
A^{0}_{k}\partial_{t}U_{k,N}+A^{j}_{k,N}\partial_{j}U_{k,N}+D_{k,N}U_{k,N}&+B_{k}\Lambda^{2\alpha}U_{k,N}+H_{k}[\Lambda^{2\alpha},G_{k,N}]\mathcal{L}U_{k,N}=F_{k,N}+W_{k,N},\\
U_{k,N}\rvert_{t=0}&=(1-\phi_{N})U_{0},
\end{aligned}
\end{equation}
with, 
\begin{align*}
F_{k,N}&:=-(1-\phi_{N})A^{j}(\widetilde{U}_{n(k)})\partial_{j}(\phi_{N}U_{n(k)})-(1-\phi_{N})D(\widetilde{U}_{n(k)})(\phi_{N}U_{n(k)})\\
&-(1-\phi_{N})H(\widetilde{U}_{n(k)})[\Lambda^{2\alpha},G(\widetilde{U}_{n(k)})]\mathcal{L}(\phi_{N}U_{n(k)})
\end{align*}
and
\begin{align*}
W_{k,N}&:=B_{k}\mathbb{K}_{2\alpha}(1-\phi_{N},U_{n(k)})+H_{k}\mathbb{K}_{2\alpha}(1-\phi_{N},G_{k}\mathcal{L}U_{k,N})\\
&+B_{k}\sum_{0<|\beta|\leq 2\alpha}\frac{1}{\beta!}D^{\beta}(1-\phi_{N})\Lambda^{2\alpha,\beta}U_{n(k)}+H_{k}\sum_{0<|\beta|\leq 2\alpha}\frac{1}{\beta!}D^{\beta}(1-\phi_{N})\Lambda^{2\alpha,\beta}(G_{k}\mathcal{L}U_{k,N}).
\end{align*}
Then, by the $L^{2}$ energy estimate \eqref{eq:L2energy}, we have
\begin{equation}
\label{eq:energytail}
\begin{aligned}
\|U_{k,N}(t)\|_{L^{2}}^{2}&+\int_{0}^{T_{0}}\|\Lambda^{\alpha}v_{k,N}(\tau)\|_{L^{2}}^{2}d\tau\\
&\leq C(\mathcal{O}_{1},M)\left[\|(1-\phi_{N})U_{0}\|_{L^{2}}^{2}+\int_{0}^{T_{0}}\|F_{k,N}(\tau)\|_{L^{2}}^{2}+\|W_{k,N}(\tau)\|_{L^{2}}^{2}d\tau\right].
\end{aligned}
\end{equation}
Next, we estimate the source terms in \eqref{eq:energytail}. By the triangle inequality, 
\begin{align*}
&\int_{0}^{T_{0}}\|F_{k,N}(\tau)\|_{L^{2}}^{2}d\tau\\
&\leq 2\int_{0}^{T_{0}}\|(1-\phi_{N})A^{j}(\widetilde{U}_{n(k)})\partial_{j}(\phi_{N}U_{n(k)}-\phi_{N}U)(\tau)\|_{L^{2}}^{2}d\tau+2\int_{0}^{T_{0}}\|(1-\phi_{N})A^{j}(\widetilde{U}_{n(k)})\partial_{j}(\phi_{N}U(\tau))\|_{L^{2}}^{2}d\tau\\
&+ 2\int_{0}^{T_{0}}\|(1-\phi_{N})D(\widetilde{U}_{n(k)})(\phi_{N}U_{n(k)}-\phi_{N}U)(\tau)\|_{L^{2}}^{2}d\tau+2\int_{0}^{T_{0}}\|(1-\phi_{N})D(\widetilde{U}_{n(k)})\phi_{N}U(\tau)\|_{L^{2}}^{2}d\tau\\
&+ 2\int_{0}^{T_{0}}\|(1-\phi_{N})H(\widetilde{U}_{n(k)})[\Lambda^{2\alpha},G(\widetilde{U}_{n(k)})]\mathcal{L}(\phi_{N}U_{n(k)}-\phi_{N}U)(\tau)\|_{L^{2}}^{2}d\tau\\
&+2\int_{0}^{T_{0}}\|(1-\phi_{N})H(\widetilde{U}_{n(k)})[\Lambda^{2\alpha},G(\widetilde{U}_{n(k)})]\mathcal{L}(\phi_{N}U(\tau))\|_{L^{2}}^{2}d\tau
\end{align*}
and since $\widetilde{U}_{n(k)}\in X_{s,\alpha,T_{0}}(\mathcal{O}_{1},M,M_{1})$, for all $k\in\mathbb{N}$, the Sobolev product estimates imply 
\begin{equation}
\label{eq:energytail2}
\begin{aligned}
&\int_{0}^{T_{0}}\|F_{k,N}(\tau)\|_{L^{2}}^{2}d\tau\leq  C(\mathcal{O}_{1},M)\left[\int_{0}^{T_{0}}\|(1-\phi_{N})\nabla(\phi_{N}U_{n(k)}-\phi_{N}U)(\tau)\|_{L^{2}}^{2}d\tau+\int_{0}^{T_{0}}\|(1-\phi_{N})\nabla(\phi_{N}U(\tau))\|_{L^{2}}^{2}d\tau\right.\\
&+ \int_{0}^{T_{0}}\|(1-\phi_{N})(\phi_{N}U_{n(k)}-\phi_{N}U)(\tau)\|_{L^{2}}^{2}d\tau+\int_{0}^{T_{0}}\|(1-\phi_{N})\phi_{N}U(\tau)\|_{L^{2}}^{2}d\tau\\
&+ \int_{0}^{T_{0}}\|(1-\phi_{N})[\Lambda^{2\alpha},G_{k}]\mathcal{L}(\phi_{N}U_{n(k)}-\phi_{N}U)(\tau)\|_{L^{2}}^{2}d\tau\\
&+\left.\int_{0}^{T_{0}}\|(1-\phi_{N})[\Lambda^{2\alpha},G_{k}]\mathcal{L}(\phi_{N}U(\tau))\|_{L^{2}}^{2}d\tau\right]. 
\end{aligned}
\end{equation}
The fractional Leibniz rule in lemmas \ref{fractionalprodest} and \ref{dongli} together with the Sobolev embedding theorem yield
\begin{align}
&\|(1-\phi_{N})[\Lambda^{2\alpha},G(\widetilde{U}_{n(k)})]\mathcal{L}(\phi_{N}U_{n(k)}-\phi_{N}U)(\tau)\|_{L^{2}}\nonumber\\
&\leq C\left\lbrace\begin{array}{cc}
\|\Lambda^{2\alpha}G(\widetilde{U}_{n(k)})\|_{L^{\infty}}\|\phi_{N}(U_{n(k)}-U)\|_{L^{2}}&0<\alpha\leq\tfrac{1}{2},\nonumber\\
\|\Lambda^{2\alpha}G(\widetilde{U}_{n(k)})\|_{L^{2}}\|\phi_{N}(U_{n(k)}-U)\|_{L^{\infty}}+\|\nabla G(\widetilde{U}_{n(k)})\|_{L^{\infty}}\|\Lambda^{2\alpha-1}(\phi_{N}(U_{n(k)}-U))\|_{L^{2}}&\tfrac{1}{2}<\alpha<1
\end{array}\right.\nonumber\\
&\leq C(\mathcal{O}_{1},M)\left\lbrace\begin{array}{cc}
\|\phi_{N}(U_{n(k)}-U)\|_{L^{2}}&0<\alpha\leq\tfrac{1}{2},\\
\|\phi_{N}(U_{n(k)}-U)\|_{H^{s-1}}+\|\Lambda^{2\alpha-1}(\phi_{N}(U_{n(k)}-U))\|_{L^{2}}&\tfrac{1}{2}<\alpha<1.
\end{array}\right.\label{eq:commutatortail}
\end{align}
In order to estimate the last term in \eqref{eq:energytail2} we first apply the triangle inequality,
\begin{align*}
\|(1-\phi_{N})[\Lambda^{2\alpha},G_{k}]\mathcal{L}(\phi_{N}U)\|_{L^{2}}&\leq\|(1-\phi_{N})\Lambda^{2\alpha}(G_{k}\mathcal{L}(\phi_{N}U))\|_{L^{2}}+\|(1-\phi_{N})G_{k}\Lambda^{2\alpha}\mathcal{L}(\phi_{N}U)\|_{L^{2}}\\
&\leq\|(1-\phi_{N})\Lambda^{2\alpha}(G_{k}\mathcal{L}(\phi_{N}U))\|_{L^{2}}+C(\mathcal{O}_{1},M)\|(1-\phi_{N})\Lambda^{2\alpha}(\phi_{N}U)\|_{L^{2}}.
\end{align*}
For each term in the right hand side of the last inequality we apply \eqref{eq:usefuldecomp} and then, as a consequence of lemma \ref{dongli} and the Sobolev product estimates, we obtain
\begin{align*}
&\|(1-\phi_{N})\Lambda^{2\alpha}(G_{k}\mathcal{L}(\phi_{N}U))\|_{L^{2}}\leq \left\lVert\Lambda^{2\alpha}\left((1-\phi_{N})G_{k}\mathcal{L}(\phi_{N}U)\right)\right\rVert_{L^{2}}+\left\lVert \mathbb{K}_{2\alpha}(1-\phi_{N},G_{k}\mathcal{L}(\phi_{N}U))\right\rVert_{L^{2}}\\
&+\left\lVert \sum_{0<|\beta|\leq 2\alpha}\frac{1}{\beta!}D^{\beta}(1-\phi_{N})\Lambda^{2\alpha,\beta}(G(\widetilde{U}_{n(k)})\phi_{N}U)\right\rVert_{L^{2}}\\
&\leq C(\mathcal{O}_{1},M)\left(\|(1-\phi_{N})\phi_{N}U\|_{H^{2}}+\|\Lambda^{2\alpha}(1-\phi_{N})\|_{L^{\infty}}\|\phi_{N}U\|_{L^{2}}+\|\nabla(1-\phi_{N})\|_{L^{\infty}}\|\phi_{N}U\|_{H^{1}}\right)
\end{align*}
and 
\begin{align*}
\|(1-\phi_{N})\Lambda^{2\alpha}(\phi_{N}U)\|_{L^{2}}\leq C\left(\|(1-\phi_{N})\phi_{N}U\|_{H^{2}}+\|\Lambda^{2\alpha}(1-\phi_{N})\|_{L^{\infty}}\|\phi_{N}U\|_{L^{2}}+\|\nabla(1-\phi_{N})\|_{L^{\infty}}\|\phi_{N}U\|_{H^{1}}\right).
\end{align*}
Thus, 
\begin{equation}
\label{eq:energytail4}
\begin{aligned}
&\|(1-\phi_{N})[\Lambda^{2\alpha},G_{k}]\mathcal{L}(\phi_{N}U)\|_{L^{2}}\\
&\leq C(\mathcal{O}_{1},M)\left(\|(1-\phi_{N})\phi_{N}U\|_{H^{2}}+\|\Lambda^{2\alpha}(1-\phi_{N})\|_{L^{\infty}}\|\phi_{N}U\|_{L^{2}}+\|\nabla(1-\phi_{N})\|_{L^{\infty}}\|\phi_{N}U\|_{H^{1}}\right).
\end{aligned}
\end{equation}
Now, take the $\limsup_{k\rightarrow\infty}$ in \eqref{eq:energytail2}. In view of \eqref{eq:aubinlions}, \eqref{eq:commutatortail} and \eqref{eq:energytail4},
\begin{align*}
&\limsup_{k\rightarrow\infty}\int_{0}^{T_{0}}\|F_{k,N}(\tau)\|_{L^{2}}^{2}d\tau\leq C(\mathcal{O}_{1},M)\left(\int_{0}^{T_{0}}\|(1-\phi_{N})\nabla(\phi_{N}U(\tau))\|_{L^{2}}^{2}d\tau+\int_{0}^{T_{0}}\|(1-\phi_{N})\phi_{N}U(\tau)\|_{H^{2}}^{2}d\tau\right.\\
&+\left.\|\Lambda^{2\alpha}(1-\phi_{N})\|_{L^{\infty}}^{2}\int_{0}^{T_{0}}\|\phi_{N}U(\tau)\|_{L^{2}}^{2}d\tau+\|\nabla(1-\phi_{N})\|_{L^{\infty}}^{2}\int_{0}^{T_{0}}\|\phi_{N}U(\tau)\|_{H^{1}}^{2}d\tau\right)
\end{align*}
holds for any $N\in\mathbb{N}$. By expanding the derivative $\nabla(\phi_{N}U)$ and using the formulas,
\begin{align}
\|\Lambda^{2\alpha}(1-\phi_{N})\|_{L^{\infty}}=N^{-2\alpha}\|\Lambda^{2\alpha}\phi\|_{L^{\infty}}=\frac{N^{-2\alpha}}{(2\pi)^{d}}\||\cdot|^{2\alpha}\widehat{\phi}\|_{L^{1}}\quad\mbox{and}\quad\|\nabla(1-\phi_{N})\|_{L^{\infty}}=N^{-1}\|\nabla\phi\|_{L^{\infty}},\label{eq:decayratesf}
\end{align}
we find a constant $C_{4}=C_{4}(\mathcal{O}_{1},M,d,\phi,\alpha)>0$ such that
\begin{align*}
&\limsup_{k\rightarrow\infty}\int_{0}^{T_{0}}\|F_{k,N}(\tau)\|_{L^{2}}^{2}d\tau\leq C_{4}\left[\int_{0}^{T_{0}}\|(1-\phi_{N})U(\tau)\|_{L^{2}}^{2}+\|(1-\phi_{N})\nabla U(\tau)\|_{L^{2}}^{2}d\tau\right.\\
&+\int_{0}^{T_{0}}\|\nabla(1-\phi_{N})U\|_{L^{2}}^{2}+\|(1-\phi_{N})D^{2}U(\tau)\|_{L^{2}}^{2}+\|\nabla(1-\phi_{N})\nabla U(\tau)\|_{L^{2}}^{2}+\|D^{2}(1-\phi_{N})U(\tau)\|_{L^{2}}^{2} d\tau\\
&+\left.\left(\frac{1}{N^{4\alpha}}+\frac{1}{N^{2}}\right)\int_{0}^{T_{0}}\|U(\tau)\|_{H^{1}}^{2}d\tau\right]\\
&\leq C_{4}\left[\int_{0}^{T_{0}}\|(1-\phi_{N})U(\tau)\|_{L^{2}}^{2}+\|(1-\phi_{N})\nabla U(\tau)\|_{L^{2}}^{2}+\|(1-\phi_{N})D^{2}U(\tau)\|_{L^{2}}^{2}d\tau\right.\\
&+\left.\left(\frac{1}{N^{4\alpha}}+\frac{1}{N^{2}}+\frac{1}{N^{4}}\right)\int_{0}^{T_{0}}\|U(\tau)\|_{H^{1}}^{2}d\tau\right].
\end{align*}
Therefore, as a consequence of \eqref{eq:weakcomp1}, for any $N\in\mathbb{N}$ it holds that
\begin{equation}
\label{eq:limsupdecaytail}
\begin{aligned}
&\limsup_{k\rightarrow\infty}\int_{0}^{T_{0}}\|F_{k,N}(\tau)\|_{L^{2}}^{2}d\tau\leq C_{4} \left(\frac{1}{N^{4\alpha}}+\frac{1}{N^{2}}+\frac{1}{N^{4}}\right)T_{0}M^{2}\\
&+ C_{4}\left[\int_{0}^{T_{0}}\|(1-\phi_{N})U(\tau)\|_{L^{2}}^{2}+\|(1-\phi_{N})\nabla U(\tau)\|_{L^{2}}^{2}+\|(1-\phi_{N})D^{2}U(\tau)\|_{L^{2}}^{2}d\tau\right].
\end{aligned}
\end{equation}
Similarly, by lemma \ref{dongli} and the Sobolev product estimates it follows that
\begin{align*}
\int_{0}^{T_{0}}\|W_{k,N}(\tau)\|_{L^{2}}^{2}d\tau&\leq C(\mathcal{O}_{1},M)\left(\|\Lambda^{2\alpha}(1-\phi_{N})\|_{L^{\infty}}^{2}\int_{0}^{T_{0}}\|U_{n(k)}(\tau)\|_{L^{2}}^{2}+\|\nabla(1-\phi_{N})\|_{L^{\infty}}^{2}\int_{0}^{T_{0}}\|U_{n(k)}(\tau)\|_{H^{1}}^{2}d\tau\right).
\end{align*}
As a consequence of  \eqref{eq:decayratesf} and $\{U_{n(k)}\}\subset X_{s,\alpha,T_{0}}(\mathcal{O}_{1},M)$ we conclude that, for each $N\in\mathbb{N}$ the upper bound
\begin{equation}
\label{eq:limsupdecaytail2}
\int_{0}^{T_{0}}\|W_{k,N}(\tau)\|_{L^{2}}^{2}d\tau\leq C_{4}M^{2}T_{0}\left(\frac{1}{N^{4\alpha}}+\frac{1}{N^{2}}+\frac{1}{N^{4}}\right),
\end{equation}
holds for all $k\in\mathbb{N}$. Therefore, by using \eqref{eq:limsupdecaytail} and \eqref{eq:limsupdecaytail2}, in estimate \eqref{eq:energytail} we find that
\begin{equation}
\label{eq:finaldecaytail}
\begin{aligned}
\limsup_{k\rightarrow\infty}\int_{0}^{T_{0}}\|U_{k,N}(\tau)\|_{L^{2}}^{2}d\tau&\leq C_{4}\int_{0}^{T_{0}}\left(\|(1-\phi_{N})U(\tau)\|_{L^{2}}^{2}+\|(1-\phi_{N})\nabla U(\tau)\|_{L^{2}}^{2}+\|(1-\phi_{N})D^{2}U(\tau)\|_{L^{2}}^{2}\right)d\tau\\
&+C_{4}\left[\left(\frac{1}{N^{4\alpha}}+\frac{1}{N^{2}}+\frac{1}{N^{4}}\right)T_{0}M^{2}+\|(1-\phi_{N})U_{0}\|_{L^{2}}^{2}\right].
\end{aligned}
\end{equation}
Now, let us show that $U_{n(k)}\rightarrow U$ in $L^{2}([0,T_{0}];L^{2})$ as $k\rightarrow\infty$. For every $k,N\in\mathbb{N}$ we have that, 
\begin{align*}
&\int_{0}^{T_{0}}\|U_{n(k)}(\tau)-U(\tau)\|_{L^{2}}^{2}d\tau\leq 2\int_{0}^{T_{0}}\left(\|\phi_{N}(U_{n(k)}-U)(\tau)\|_{L^{2}}^{2}+\|(1-\phi_{N})(U_{n(k)}-U)(\tau)\|_{L^{2}}^{2}\right)d\tau\\
&\leq 2\left[\int_{0}^{T_{0}}\|\phi_{N}(U_{n(k)}-U)(\tau)\|_{L^{2}}^{2}d\tau+\int_{0}^{T_{0}}\|U_{k,N}(\tau)\|_{L^{2}}^{2}d\tau+\int_{0}^{T_{0}}\|(1-\phi_{N})U(\tau)\|_{L^{2}}^{2}d\tau\right].
\end{align*}
In particular, \eqref{eq:aubinlions} implies that, for any $N\in\mathbb{N}$, $\phi_{N}U_{n(k)}\rightarrow\phi_{N}U$ in $L^{2}([0,T_{0}];L^{2})$ and thus, by \eqref{eq:finaldecaytail}, 
\begin{align*}
&\limsup_{k\rightarrow\infty}\int_{0}^{T_{0}}\|U_{n(k)}(\tau)-U(\tau)\|_{L^{2}}^{2}d\tau\\
&\leq C_{4}\int_{0}^{T_{0}}\left(\|(1-\phi_{N})U(\tau)\|_{L^{2}}^{2}+\|(1-\phi_{N})\nabla U(\tau)\|_{L^{2}}^{2}+\|(1-\phi_{N})D^{2}U(\tau)\|_{L^{2}}^{2}\right)d\tau\\
&+C_{4}\left[\left(\frac{1}{N^{4\alpha}}+\frac{1}{N^{2}}+\frac{1}{N^{4}}\right)T_{0}M^{2}+\|(1-\phi_{N})U_{0}\|_{L^{2}}^{2}\right],
\end{align*}
which is valid for any $N\in\mathbb{N}$. Since $U\in L^{\infty}([0,T_{0}];H^{s})$ and $s\geq 2$, the dominated convergence theorem implies
\begin{align*}
\int_{0}^{T_{0}}\|(1-\phi_{N})D^{\ell}U(\tau)\|_{L^{2}}^{2}\rightarrow 0,\quad\ell=0,1,2,
\end{align*}
as $N\rightarrow\infty$. Likewise, 
\begin{align*}
\|(1-\phi_{N})U_{0}\|_{L^{2}}\rightarrow 0. 
\end{align*}
Therefore, given $\epsilon>0$ we can find $N\in\mathbb{N}$ sufficiently large so that
\begin{align}
\limsup_{k\rightarrow\infty}\int_{0}^{T_{0}}\|U_{n(k)}(\tau)-U(\tau)\|_{L^{2}}^{2}d\tau\leq\epsilon.\label{eq:L2convergence}
\end{align}
Since $\epsilon>0$ is arbitrary, it follows that $U_{n(k)}\rightarrow U\quad\mbox{in}\quad L^{2}([0,T_{0}];L^{2})$.

Next, we upgrade the convergence to  the space $L^{2}([0,T_{0}];H^{s-1})$. Let $\theta=\tfrac{s-1}{s}$. By interpolating between $L^{2}$ and $H^{s}$ we get
\begin{align*}
\int_{0}^{T_{0}}\|(U_{n(k)}-U)(\tau)\|_{H^{s-1}}^{2}d\tau\leq\int_{0}^{T_{0}}\|(U_{n(k)}-U)(\tau)\|_{L^{2}}^{2(1-\theta)}\|(U_{n(k)}-U)(\tau)\|_{H^{s}}^{2\theta}d\tau.
\end{align*}
Applying H\"older's inequality and using \eqref{eq:weakcomp1}, we obtain
\begin{align*}
\int_{0}^{T_{0}}\|(U_{n(k)}-U)(\tau)\|_{H^{s-1}}^{2}d\tau&\leq\left(\int_{0}^{T_{0}}\|(U_{n(k)}-U)(\tau)\|_{L^{2}}^{2}d\tau\right)^{1-\theta}\left(\int_{0}^{T_{0}}\|(U_{n(k)}-U)(\tau)\|_{H^{s}}^{2}d\tau\right)^{\theta}\\
&\leq (4T_{0}M^{2})^{\theta}\left(\int_{0}^{T_{0}}\|(U_{n(k)}-U)(\tau)\|_{L^{2}}^{2}d\tau\right)^{1-\theta}.
\end{align*}
Hence, \eqref{eq:L2convergence} implies that $U_{n(k)}\rightarrow U$ in $L^{2}([0,T_{0}];H^{s-1})$. 
Finally, let us consider the sequence $\{\Lambda^{\alpha}v_{n(k)}\}$, where $U_{n(k)}=(u_{n(k)},v_{n(k)})$. Interpolating between the homogeneous Sobolev spaces $\dot{H}^{-\alpha}$ and $\dot{H}^{s-1}$, with $\theta=\tfrac{\alpha}{s-1+\alpha}$, yields
\begin{align*}
\int_{0}^{T_{0}}\|\Lambda^{\alpha}(v_{n(k)}-v)(\tau)\|_{L^{2}}^{2}d\tau\leq\left(\int_{0}^{T_{0}}\|(v_{n(k)}-v)(\tau)\|_{L^{2}}^{2}d\tau\right)^{1-\theta}\left(\int_{0}^{T_{0}}\|\Lambda^{\alpha}(v_{n(k)}-v)(\tau)\|_{\dot{H}^{s-1}}^{2}d\tau\right)^{\theta}.
\end{align*} 
Using again \eqref{eq:weakcomp1} together with \eqref{eq:L2convergence}, we deduce that $\Lambda^{\alpha}v_{n(k)}\rightarrow\Lambda^{\alpha}v$ in $L^{2}([0,T_{0}];L^{2})$. Finally, interpolation between $L^{2}$ and $H^{s-1+\alpha}$ shows that the convergence actually holds in $L^{2}([0:T_{0}];H^{s-1})$. Indeed, choosing $\theta=\frac{s-1}{s-1+\alpha}$, we obtain
\begin{align*}
\int_{0}^{T_{0}}\|\Lambda^{\alpha}(v_{n(k)}-v)(\tau)\|_{H^{s-1}}^{2}d\tau\leq\left(\int_{0}^{T_{0}}\|\Lambda^{\alpha}(v_{n(k)}-v)(\tau)\|_{L^{2}}^{2}d\tau\right)^{1-\theta}\left(\int_{0}^{T_{0}}\|\Lambda^{\alpha}(v_{n(k)}-v)(\tau)\|_{H^{s}}^{2}d\tau\right)^{\theta}.
\end{align*}
Combining this estimate with \eqref{eq:weakcomp1} and the previous convergence result gives $\Lambda^{\alpha}v_{n(k)}\rightarrow \Lambda^{\alpha}v$ in $L^{2}([0,T_{0}];H^{s-1})$. Therefore, $U_{n(k)}\rightarrow U$ in $Y_{s-1,\alpha,T_{0}}$. Consequently, $\mathcal{T}_{\alpha}(X_{s,\alpha,T_{0}}(\mathcal{O}_{1},M,M_{1}))$ is relatively compact in $Y_{s-1,\alpha,T_{0}}$, which completes the proof. 
\end{proof}
\section{Local existence: The fixed point argument}
\label{fixedlocal}
In this section we use Schauder's fixed point theorem to deduce the local existence of solutions for the Cauchy problem \eqref{eq:quasilinearsystem1}. However, this theorem cannot be applied directly to $\mathcal{T}_{\alpha}:X_{s-1,\alpha,T_{0}}(\mathcal{O}_{1},M,M_{1})\rightarrow X_{s-1,\alpha,T_{0}}(\mathcal{O}_{1},M,M_{1})$ because $X_{s-1,\alpha,T_{0}}$ is not a closed set with respect to the topology of $Y_{s-1,\alpha,T_{0}}$. Therefore, we show that $\mathcal{T}_{\alpha}$ can be extended to the closure of its domain and then apply Schauder's fixed point theorem to this extension. Finally, we recover the required regularity for the solution by using the energy estimate. 

\begin{defi}
Let $s>\frac{d}{2}+1$ be an integer and $\alpha\in(0,1)$ be given. Assume that $\mathcal{O}_{1}$ and $M$ are defined by \eqref{eq:O1} and \eqref{eq:Mconstant}, respectively and that $T_{0}>0$ and $M_{1}$ satisfy the conditions of Theorem \ref{invariantsetsTheorem}. Define $X_{\infty}$ as the set of all functions $V=(w,z)^{\top}$ of $(x,t)\in Q_{T_{0}}$ with $w\in\mathbb{R}^{k}$ and $z\in\mathbb{R}^{m}$ such that 
\begin{equation}
\label{eq:Xinftyconditions}
\begin{aligned}
&V\in L^{\infty}([0,T_{0}];H^{s}),\quad z\in L^{2}([0,T_{0}];H^{s+\alpha}),\\
&\partial_{t}V\in L^{2}([0,T_{0}];H^{s-1}),\quad V(x,t)\in\mathcal{O}_{1}\quad\mbox{for all}\quad(x,t)\in Q_{T_{0}},
\end{aligned}
\end{equation}
and for all $t\in[0,T_{0}]$, 
\begin{align}
E^{2}_{s,\alpha}(V(t))\leq M^{2}\quad\mbox{and}\quad\int_{0}^{t}\|\partial_{t}V(\tau)\|_{H^{s-1}}^{2}d\tau\leq M_{1}^{2}.\label{eq:energyestimatesinfty}
\end{align}
\end{defi}
\begin{prop}
\label{closure}
The set $X_{\infty}$ is the closure of $X_{s,\alpha,T_{0}}(\mathcal{O}_{1},M,M_{1})$ with respect to the topology of $Y_{s-1,\alpha,T_{0}}$. Furthermore, $X_{\infty}$ can be characterized as the set of all vectors $U\in Y_{s-1,\alpha,T_{0}}$ for which there is a sequence $\{U_{k}\}\subset X_{s-1,\alpha,T_{0}}(\mathcal{O}_{1},M,M_{1})$ and $V\in Y_{s-1,\alpha,T_{0}}$ such that 
\begin{align}
U_{k}\rightarrow U\quad\mbox{and}\quad \mathcal{T}_{\alpha}(U_{k})\rightarrow V\quad\mbox{in}\quad Y_{s-1,\alpha,T_{0}}.\label{eq:domainextension}
\end{align}
\end{prop}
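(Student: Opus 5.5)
We prove the two inclusions separately and then derive the characterization \eqref{eq:domainextension} from them together with Theorem \ref{holdercontinuity} and Lemma \ref{compactness}. Write $\overline{X}$ for the closure of $X_{s,\alpha,T_{0}}(\mathcal{O}_{1},M,M_{1})$ in $Y_{s-1,\alpha,T_{0}}$.

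\emph{Step 1: $\overline{X}\subset X_{\infty}$.} Let $U_{k}\in X_{s,\alpha,T_{0}}(\mathcal{O}_{1},M,M_{1})$ with $U_{k}\to U$ in $Y_{s-1,\alpha,T_{0}}$. The bounds \eqref{eq:boundenergy1}--\eqref{eq:timeuniformderivative} are uniform in $k$. Banach--Alaoglu then gives a subsequence with weak$^{\ast}$ limits in $L^{\infty}([0,T_{0}];H^{s})$, weak limits of $\Lambda^{\alpha}z_{k}$ in $L^{2}([0,T_{0}];H^{s})$, and weak limits of $\partial_{t}U_{k}$ in $L^{2}([0,T_{0}];H^{s-1})$. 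By uniqueness of limits in $\mathcal{D}'$, all of these limits coincide with $U$, $\Lambda^{\alpha}z$ and $\partial_{t}U$.

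Lower semicontinuity, exactly as in \eqref{eq:weakcomp1}--\eqref{eq:weakcomp2}, yields \eqref{eq:energyestimatesinfty} for $t=T_{0}$. For the pointwise-in-$t$ version we apply the same argument on $[0,t]$. The term $\|U(t)\|_{H^{s}}$ needs separate care, since $U$ has a representative in $\mathcal{C}([0,T_{0}];H^{s-1})$, because $\partial_{t}U\in L^{2}H^{s-1}$. Strong convergence in $L^{2}H^{s-1}$ combined with the equicontinuity from the $\partial_{t}$ bound gives $U_{k}(t)\rightharpoonup U(t)$ in $H^{s}$ for every $t$. The time-integral term is handled by restricting the weak convergence to $[0,t]$.

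Finally, $V(x,t)\in\mathcal{O}_{1}$ follows from a.e. convergence of a subsequence together with $H^{s-1}\hookrightarrow L^{\infty}$. More precisely, the limit lies in $\overline{\mathcal{O}_{1}}$. We expect to use the strict margin built into the proof of Theorem \ref{invariantsetsTheorem}: there $|U(x,t)-U_{0}(x)|\le\kappa T_{0}^{1/2}M_{1}$, and this bound passes to the limit. Choosing $T_{0}$ so that this quantity is strictly less than $\delta_{1}$ keeps the limit in the open set $\mathcal{O}_{1}$.

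\emph{Step 2: $X_{\infty}\subset\overline{X}$.} Given $V\in X_{\infty}$, we approximate it by regularizations that preserve the constraints. The natural candidate is $V_{\epsilon}=\mathbb{J}_{\epsilon}V$. Mollification does not increase the norms in $E_{s,\alpha}$ or in $\int\|\partial_{t}\cdot\|^{2}_{H^{s-1}}$. Because $\mathcal{O}_{1}$ is convex and $\eta_{\epsilon}$ is a probability kernel, $\mathbb{J}_{\epsilon}V$ still takes values in $\mathcal{O}_{1}$. Since $\partial_{t}V\in L^{2}H^{s-1}$ and $V\in L^{\infty}H^{s}$, we get $\mathbb{J}_{\epsilon}V\in\mathcal{C}([0,T_{0}];H^{s})$. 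We also have $\partial_{t}\mathbb{J}_{\epsilon}V\in L^{2}H^{s-1}$, but continuity in time in $H^{s-1}$, as required by (A2), may fail.

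To repair this we additionally mollify in time. We extend $V$ to a slightly larger interval by reflection and then cut back to $[0,T_{0}]$. Since a time average of a function with values in $\mathcal{O}_{1}$ again has values in $\mathcal{O}_{1}$ (convexity), and averaging does not increase the bounds, the constraints survive. Convergence $\mathbb{J}_{\epsilon}V\to V$ in $Y_{s-1,\alpha,T_{0}}$ follows from \eqref{eq:Cauchyrate2} with $\delta=1$ and dominated convergence. This step is the main technical obstacle: we must check that the reflection extension preserves \eqref{eq:energyestimatesinfty} exactly rather than only up to a constant. If it does not, we will rescale $V$ slightly towards $U_{0}$, using the strict margins in $M$ and $M_{1}$.

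\emph{Step 3: characterization.} If $U$ satisfies \eqref{eq:domainextension}, then $U\in\overline{X}=X_{\infty}$ directly. Conversely, if $U\in X_{\infty}=\overline{X}$, pick $U_{k}\to U$ in $Y_{s-1,\alpha,T_{0}}$. By Lemma \ref{compactness}, a subsequence of $\mathcal{T}_{\alpha}(U_{k})$ converges in $Y_{s-1,\alpha,T_{0}}$ to some $V$, which gives \eqref{eq:domainextension}. Theorem \ref{holdercontinuity} shows that $V$ is independent of the chosen sequence, since $\mathcal{T}_{\alpha}(U_{k})$ is Cauchy whenever $U_{k}$ is. This justifies defining the extension $\overline{\mathcal{T}_{\alpha}}(U):=V$.
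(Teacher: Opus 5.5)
Your Step 1 breaks down when you show that the limit stays in the open set $\mathcal{O}_{1}$. The bound $|U(x,t)-U_{0}(x)|\le\kappa_{s-1}T_{0}^{1/2}M_{1}$ from the proof of Theorem \ref{invariantsetsTheorem} holds only for images $U=\mathcal{T}_{\alpha}(\widetilde{U})$, because those start from $U_{0}$. A general element of $X_{s,\alpha,T_{0}}(\mathcal{O}_{1},M,M_{1})$ has no prescribed initial value. It only has to take values in $\mathcal{O}_{1}$ and obey the two energy bounds, so its values may approach $\partial\mathcal{O}_{1}$.

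In fact the inclusion can fail. Since $U_{0}$ vanishes at infinity, $0\in\overline{\mathcal{O}_{0}}\subset\mathcal{O}_{1}$. Take $p\in\partial\mathcal{O}_{1}$, a profile $\chi\in\mathcal{S}$ with $0\le\chi\le 1$ and $\chi(0)=1$, and $\lambda_{k}\uparrow 1$. The time-independent functions $\widetilde{U}_{k}:=\lambda_{k}\chi p$ take values on $[0,p)\subset\mathcal{O}_{1}$ and have $\partial_{t}\widetilde{U}_{k}=0$. Hence they lie in $X_{s,\alpha,T_{0}}(\mathcal{O}_{1},M,M_{1})$ as soon as $|p|(\|\chi\|_{H^{s}}^{2}+\|\Lambda^{\alpha}\chi\|_{H^{s}}^{2})^{1/2}\le M$. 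This happens, for instance, if $0\in\partial\mathcal{O}_{0}$ and $\delta_{1}$ is small, since then one can take $|p|=\delta_{1}$. Their $Y_{s-1,\alpha,T_{0}}$-limit $\chi p$ takes the value $p\notin\mathcal{O}_{1}$.

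So values in $\overline{\mathcal{O}_{1}}$, which your argument does give, are the best possible. The paper's own proof makes the same unjustified appeal to Theorem \ref{invariantsetsTheorem}. The statement should be repaired by using $\overline{\mathcal{O}_{1}}$ in the definition of $X_{\infty}$. This is harmless, because $\overline{\mathcal{O}_{1}}$ is compactly contained in $\mathcal{O}$ and every constant is a supremum over $\overline{\mathcal{O}_{1}}$.

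In Step 2 you correctly notice something the paper dismisses as easy to verify. Mollification in $x$ alone does not give $\partial_{t}\mathbb{J}_{\epsilon}V\in\mathcal{C}([0,T_{0}];H^{s-1})$, which (A2) requires. Your fallback, however, does not work as written.
\begin{itemize}
\item $X_{\infty}$ has no strict margins in $M$ and $M_{1}$, since equality is allowed.
\item A convex combination with the time-independent $U_{0}$ leaves $L^{2}([0,T_{0}];H^{s+\alpha})$ in the $v$-component unless $v_{0}\in H^{s+\alpha}$.
\end{itemize}
Scale towards $0\in\mathcal{O}_{1}$ instead. For $\lambda<1$, $\lambda V$ takes values in the open set $\mathcal{O}_{1}$ even if $V$ touches $\partial\mathcal{O}_{1}$, and it satisfies the bounds with $\lambda M$ and $\lambda M_{1}$. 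Extend evenly in time at $t=0$ and $t=T_{0}$, then mollify in $x$ and $t$; convexity keeps the values in $\mathcal{O}_{1}$. By Jensen's inequality and the absolute continuity of $\int\|\Lambda^{\alpha}z\|_{H^{s}}^{2}$ and $\int\|\partial_{t}V\|_{H^{s-1}}^{2}$, this raises $E_{s,\alpha}^{2}$ and the time-derivative integral by at most $o(1)$ as $\epsilon\to 0$, uniformly in $t$. The margin created by $\lambda$ absorbs this loss. A diagonal choice $\lambda\to 1$, $\epsilon\to 0$ then gives the approximating sequence.

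Your Step 3 is correct. The paper obtains $V$ directly from Theorem \ref{holdercontinuity}, since images of a Cauchy sequence form a Cauchy sequence. The detour through Lemma \ref{compactness} is therefore unnecessary, though harmless.
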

\begin{proof}
Let $V\in\overline{X_{s-1,\alpha,T_{0}}(\mathcal{O}_{1},M,M_{1})}$. Then, there is $\{V_{k}\}\subset X_{s,\alpha,T_{0}}(\mathcal{O}_{1},M,M_{1})$ such that $V_{k}\rightarrow V$ in $Y_{s-1,\alpha,T_{0}}$. Since $\{V_{k}\}$ is bounded and satisfies \eqref{eq:boundenergy1} and \eqref{eq:timeuniformderivative}, we can extract a subsequence  $\{V_{k(j)}\}\subset\{V_{k}\}$ that satisfies \eqref{eq:weakconvergence}, \eqref{eq:weakcomp1} and \eqref{eq:weakcomp2} with $V$ replacing $U$. Since $T_{0}$ satisfies Theorem \ref{invariantsetsTheorem}, it follows that $V(x,t)\in\mathcal{O}_{1}$ for all $(x,t)\in Q_{T_{0}}$. Hence, $V\in X_{\infty}$. On the other hand, let $V\in X_{\infty}$. Consider the sequence $V_{k}:=\mathbb{J}_{\epsilon_{k}}V$ with $\epsilon_{k}\rightarrow 0$. Since $\partial_{t}V\in L^{2}([0,T];H^{s-1})$ it follows that $V\in\mathcal{C}([0,T_{0}];H^{s-1})$. Consequently, $V_{k}\in\mathcal{C}([0,T_{0}];H^{s})$ for all $k\in\mathbb{N}$. It is easy to verify that $\{V_{k}\}$ satisfies \eqref{eq:Xinftyconditions} and \eqref{eq:energyestimatesinfty}. Therefore, $\{V_{k}\}\subset X_{s,\alpha,T_{0}}(\mathcal{O}_{1},M,M_{1})$. Moreover, the standard properties of the mollifier $\mathbb{J}_{\epsilon_{k}}V$ combined with the dominated convergence theorem imply that $V_{k}\rightarrow V$ in $Y_{s-1,\alpha,T_{0}}$. Thus, $X_{\infty}=\overline{X_{s-1,\alpha,T_{0}}(\mathcal{O}_{1},M,M_{1})}$. 

In order to prove the second statement, let $X_{0}$ be the set of vectors in $Y_{s-1,\alpha,T_{0}}$ that satisfies \eqref{eq:domainextension}. Then, by the first part of the proposition and the definition of $X_{0}$, we have that $X_{0}\subset X_{\infty}$. On the other hand, let $V\in X_{\infty}$. There is $\{V_{k}\}\subset X_{s,\alpha,T_{0}}(\mathcal{O}_{1},M,M_{1})$ such that $V_{k}\rightarrow V$ in $Y_{s-1,\alpha,T_{0}}$. Then, by the H\"older continuity described in \eqref{eq:contractioninequality1h} and the embedding, $L^{\infty}([0,T_{0}];H^{s-1})\hookrightarrow L^{2}([0,T_{0}];H^{s-1})$, we deduce that $\{\mathcal{T}_{\alpha}(V_{k})\}$ is a Cauchy sequence in $Y_{s-1,\alpha,T_{0}}$. In consequence, there is $V\in Y_{s-1,\alpha,T_{0}}$ such that \eqref{eq:domainextension} holds and thus, $X_{\infty}\subset X_{0}$. This concludes the proof.
\end{proof}
In the next result we establish the existence of an extension of $\mathcal{T}_{\alpha}$ to the closure of its domain.
\begin{prop}
\label{extension}
There is an extension $\mathcal{T}_{\alpha,\infty}$ of $\mathcal{T}_{\alpha}$ to the set $X_{\infty}$ defined as $\mathcal{T}_{\alpha,\infty}(U)=V$, where $U$ and $V$ satisfy \eqref{eq:domainextension} with the following properties:
\begin{itemize}
	\item [(1)] The mapping $U\xmapsto{T_{\alpha,\infty}} V$ is well defined in the sense that it is independent of the Cauchy sequence that satisfies $U_{k}\rightarrow U$. Moreover, $U$ and $V$ satisfy the following Cauchy problem
\begin{equation}
\label{eq:linearextension}
\begin{aligned}
A^{0}(U)\partial_{t}V+A^{j}(U)\partial_{j}V&+D(U)V+B(U)\Lambda^{2\alpha}V+H(U)[\Lambda^{2\alpha},G(U)]\mathcal{L}V=0,\\
V\rvert_{t=0}&=U_{0}.
\end{aligned}
\end{equation}
	\item [(2)] There is a positive constant $C_{5}(\mathcal{O}_{1},M,T_{0})$ such that 
	\begin{align}
\|\mathcal{T}_{\alpha,\infty}(U_{1})-\mathcal{T}_{\alpha,\infty}(U_{2})\|_{Y_{s-1,\alpha,T_{0}}}^{2}\leq C_{5}(\mathcal{O}_{1},M,T_{0})\|U_{1}-U_{2}\|_{Y_{s-1,\alpha,T_{0}}}\quad\mbox{for all}\quad U_{1},U_{2}\in X_{\infty}.\label{eq:holderextension}
	\end{align}
\item [(3)] $\mathcal{T}_{\alpha,\infty}(X_{\infty})\subset X_{\infty}$ and $\mathcal{T}_{\alpha,\infty}(U)=\mathcal{T}_{\alpha}(U)$ for all $U\in X_{s,\alpha,T_{0}}(\mathcal{O}_{1},M,M_{1})$. 
\item [(4)] $\mathcal{T}_{\alpha,\infty}(X_{\infty})$ is relatively compact with respect to the topology of $Y_{s-1,\alpha,T_{0}}$.
\end{itemize}
\end{prop}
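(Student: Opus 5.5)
The plan is to define $\mathcal{T}_{\alpha,\infty}$ through Proposition \ref{closure} and obtain every property by passing to the limit in estimates already proved on $X_{s,\alpha,T_{0}}(\mathcal{O}_{1},M,M_{1})$.

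\emph{Well-definedness.} Given $U\in X_{\infty}$, take $\{U_{k}\}\subset X_{s,\alpha,T_{0}}(\mathcal{O}_{1},M,M_{1})$ with $U_{k}\to U$ in $Y_{s-1,\alpha,T_{0}}$. Integrating \eqref{eq:contractioninequality1h} over $[0,T_{0}]$ shows that $\{\mathcal{T}_{\alpha}(U_{k})\}$ is Cauchy in $Y_{s-1,\alpha,T_{0}}$, with
\begin{align*}
\|\mathcal{T}_{\alpha}(U_{k})-\mathcal{T}_{\alpha}(U_{l})\|_{Y_{s-1,\alpha,T_{0}}}^{2}\leq C\|U_{k}-U_{l}\|_{Y_{s-1,\alpha,T_{0}}}.
\end{align*}
If $\{U_{k}'\}$ is a second approximating sequence, applying the same bound to the interlaced sequence $U_{1},U_{1}',U_{2},U_{2}',\dots$ gives the same limit $V$. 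For \eqref{eq:linearextension}, the sequence $V_{k}=\mathcal{T}_{\alpha}(U_{k})$ is bounded by $M$ and $M_{1}$. Using the weak compactness argument of Lemma \ref{compactness} (see \eqref{eq:weakconvergence}), I extract weak limits and identify them with $V$. I then pass to the limit in the equation for $V_{k}$ in a weak or distributional sense against $H^{-(s-1)}$ test functions. Each coefficient $A(U_{k})$ converges strongly in $L^{2}([0,T_{0}];H^{s-1})\hookrightarrow L^{2}L^{\infty}$, with the difference estimates for smooth functions of $U_{k}$ following from the mean value theorem and the bound $U_{k}\in\mathcal{O}_{1}$. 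Each product then pairs a strongly convergent factor with a weakly convergent one. The initial datum is preserved because $V_{k}(0)=U_{0}$ and $\partial_{t}V_{k}$ is bounded in $L^{2}H^{s-1}$, which gives uniform convergence in $\mathcal{C}([0,T_{0}];H^{s-2})$.

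\emph{Hölder estimate (2).} Choose approximating sequences for $U_{1}$ and $U_{2}$ and pass to the limit $k\to\infty$ in the integrated form of \eqref{eq:contractioninequality1h}; both sides converge in $Y_{s-1,\alpha,T_{0}}$. \emph{Property (3).} If $U\in X_{s,\alpha,T_{0}}(\mathcal{O}_{1},M,M_{1})$, the constant sequence $U_{k}=U$ gives $\mathcal{T}_{\alpha,\infty}(U)=\mathcal{T}_{\alpha}(U)$. For general $U\in X_{\infty}$, $V$ is a $Y_{s-1,\alpha,T_{0}}$-limit of elements $\mathcal{T}_{\alpha}(U_{k})$ of $X_{s,\alpha,T_{0}}(\mathcal{O}_{1},M,M_{1})$, so $V$ lies in its closure, which is $X_{\infty}$ by Proposition \ref{closure}. \emph{Property (4).} Take any sequence $V_{n}=\mathcal{T}_{\alpha,\infty}(U_{n})$. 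By the definition of $\mathcal{T}_{\alpha,\infty}$, for each $n$ I can choose $W_{n}=\mathcal{T}_{\alpha}(U_{n,k_{n}})$ with $\|V_{n}-W_{n}\|_{Y_{s-1,\alpha,T_{0}}}<1/n$. By Lemma \ref{compactness}, $\{W_{n}\}$ has a convergent subsequence, and then so does $\{V_{n}\}$.

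\textbf{Main obstacle.} I expect the hardest step to be passing to the limit in the nonlocal term $H(U_{k})[\Lambda^{2\alpha},G(U_{k})]\mathcal{L}V_{k}$, since the convergence of $G(U_{k})$ is only known in $H^{s-1}$. I would expand the commutator and place $\Lambda^{2\alpha}$ onto the test function by self-adjointness. It then suffices to show that $G(U_{k})\mathcal{L}V_{k}\to G(U)\mathcal{L}V$ and $G(U_{k})\Lambda^{2\alpha}\mathcal{L}V_{k}\to G(U)\Lambda^{2\alpha}\mathcal{L}V$ in $L^{2}L^{2}$ or weakly. This follows from strong convergence of $G(U_{k})$ in $L^{2}L^{\infty}$ combined with weak convergence of $\Lambda^{2\alpha}V_{k}$ in $L^{2}H^{s-1}$; the latter is available because $s-2\alpha+\alpha\geq s-1$ together with the bound on $\Lambda^{\alpha}v_{k}$ in $L^{2}H^{s}$.
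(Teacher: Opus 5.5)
Your proposal is correct. For (2), (3) and (4) it follows the paper. You obtain (2) by passing to the limit in \eqref{eq:contractioninequality1h} along approximating sequences and (3) from Proposition \ref{closure}. For (4), $\mathcal{T}_{\alpha,\infty}(X_{\infty})$ lies in the $Y_{s-1,\alpha,T_{0}}$-closure of $\mathcal{T}_{\alpha}(X_{s,\alpha,T_{0}}(\mathcal{O}_{1},M,M_{1}))$, which is compact by Lemma \ref{compactness}; your $1/n$-approximation is just this inclusion written out.

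The genuine difference is how you prove well-definedness in (1). The paper first passes to the limit in the equation for $V_{k}=\mathcal{T}_{\alpha}(U_{k})$. Any two candidate limits $V,\widetilde{V}$ then solve \eqref{eq:linearextension} with the same coefficient $U$ and the same datum, and the paper applies the energy method to $V-\widetilde{V}$. You instead use extension by uniform continuity: you interlace the two approximating sequences, invoke the H\"older bound, and only afterwards identify the limit equation.

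Each route buys something different:
\begin{itemize}
\item Your route is purely metric. It does not need uniqueness for the linear problem with the rough coefficient $U\in X_{\infty}$. Such a $U$ lies only in $L^{\infty}([0,T_{0}];H^{s})$ with $\partial_{t}U\in L^{2}([0,T_{0}];H^{s-1})$, so it is not literally covered by Condition \ref{cond2}, under which the linear theory was developed.
\item The paper's route identifies $\mathcal{T}_{\alpha,\infty}(U)$ intrinsically as the solution of \eqref{eq:linearextension}. Well-definedness would then survive with only compactness and no modulus of continuity.
\end{itemize}
You also supply the limit passage in the equation, which the paper merely asserts. Your mechanism is sound: coefficients converge strongly in $L^{2}L^{\infty}$ against weakly convergent factors, and $\Lambda^{2\alpha}$ is moved onto the test function.

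Two small points of precision:
\begin{itemize}
\item When you move $\Lambda^{2\alpha}$ onto the test function $\phi$, the factor $H(U_{k})$ goes with it. You therefore actually need $\Lambda^{2\alpha}\bigl((H(U_{k})-H(U))^{\top}\phi\bigr)\to 0$ in $L^{2}L^{2}$. This holds because $H(U_{k})-H(U)\to 0$ in $L^{2}([0,T_{0}];H^{s-1})$ and $2\alpha<2\leq s-1$.
\item Under (E2) with $\alpha>\tfrac{1}{2}$ one has $\mathcal{L}V_{k}=(0,\mathcal{L}_{0}u_{k})^{\top}$. Here $\Lambda^{2\alpha}u_{k}$ is bounded only in $L^{\infty}([0,T_{0}];H^{s-2\alpha})$, not in $L^{2}([0,T_{0}];H^{s-1})$ as you claim. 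The weak convergence in $L^{2}L^{2}$ that you actually use is still available, since $s-2\alpha>\tfrac{d}{2}>0$.
\end{itemize}
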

\begin{proof}
Let $U\in X_{\infty}$. There is $\{U_{k}\}\subset X_{s,\alpha,T_{0}}$ and $V$ in $Y_{s-1,\alpha,T_{0}}$ such that $U_{k}\rightarrow U$ and $\mathcal{T}_{\alpha}(U_{k})\rightarrow V$ in $Y_{s-1,\alpha,T_{0}}$. By the definition of $\mathcal{T}_{\alpha}$, if we set $V_{k}:=\mathcal{T}_{\alpha}(U_{k})$, we have that
\begin{equation}
\label{eq:approximatelinearinfty}
\begin{aligned}
A^{0}(U_{k})\partial_{t}V_{k}+A^{j}(U_{k})\partial_{j}V_{k}&+D(U_{k})V_{k}+B(U_{k})\Lambda^{2\alpha}V_{k}+H(U_{k})[\Lambda^{2\alpha},G(U_{k})]\mathcal{L}V_{k}=0,\\
V_{k}\rvert_{t=0}&=U_{0},
\end{aligned}
\end{equation}
holds for all $k\in\mathbb{N}$. By taking the limit when $k\rightarrow\infty$ and using the convergence in \eqref{eq:domainextension} we find that $V$ satisfies the Cauchy problem \eqref{eq:linearextension}. If there is another sequence $\{\widetilde{U}_{k}\}\subset X_{s,\alpha,T_{0}}(\mathcal{O}_{1},M,M_{1})$ such that $\widetilde{U}_{k}\rightarrow U$ and $\widetilde{V}_{k}:=\mathcal{T}_{\alpha}(\widetilde{U}_{k})\rightarrow\widetilde{V}$ in $Y$ then, as in the previous lines, $\widetilde{V}$ satisfies \eqref{eq:linearextension}. Since $V,\widetilde{V}\in X_{\infty}$, we can apply the energy method to conclude that $E_{s,\alpha,T_{0}}(V-\widetilde{V})=0$. Therefore, $\widetilde{V}=V$ and $\mathcal{T}_{\alpha,\infty}$ is well-defined. This concludes $(1)$. 

Next, we prove statement $(2)$. Let $U_{1},U_{2}\in X_{\infty}$. Then, there are sequences $\{U_{1,k}\}$ and $\{U_{2,k}\}$ in $X_{s,\alpha,T_{0}}(\mathcal{O}_{1},M,M_{1})$ and vectors $V_{1},V_{2}\in Y_{s-1,\alpha,T_{0}}$ such that $U_{1,k}\rightarrow U_{1}$, $U_{2,k}\rightarrow U_{2}$, $\mathcal{T}_{\alpha}(U_{1,k})\rightarrow V_{1}$ and $\mathcal{T}_{\alpha}(U_{2,k})\rightarrow V_{2}$. As a consequence of \eqref{eq:contractioninequality1h} and the embedding $L^{\infty}([0,T_{0}];H^{s-1})\hookrightarrow L^{2}([0,T_{0}];H^{s-1})$, it holds that
\begin{align*}
\|\mathcal{T}_{\alpha}(U_{1,k})-\mathcal{T}_{\alpha}(U_{2,k})\|_{Y_{s-1,\alpha,T_{0}}}^{2}\leq C_{5}(\mathcal{O}_{1},M,T_{0})\|U_{1,k}-U_{2,k}\|_{Y_{s-1,\alpha,T_{0}}}\quad\mbox{for all}\quad k\in\mathbb{N}. 
\end{align*}
After taking the limit when $k\rightarrow \infty$, \eqref{eq:holderextension} follows.\\
The proof of statement $(3)$ follows immediately from the definition of $\mathcal{T}_{\alpha,\infty}$ and proposition \ref{closure}.

Finally, we prove statement $(4)$. Observe that, $\mathcal{T}_{\alpha,\infty}(X_{\infty})\subset\overline{\mathcal{T}_{\alpha}(X_{s,\alpha,T_{0}}(\mathcal{O}_{1},M,M_{1}))}$ holds as a consequence of \eqref{eq:domainextension}. Therefore, lemma \ref{compactness} implies that the closure of $\mathcal{T}_{\alpha,\infty}(X_{\infty})$ is contained in a compact set in $Y_{s-1,\alpha,T_{0}}$. The result follows. 
\end{proof}

\begin{theo}[Local existence]
\label{localexistence}
Let $\alpha\in(0,1)$ and $s>\tfrac{d}{2}+1$ be an integer. Assume that condition \ref{cond1} is satisfied with one of the hypotheses (E1)-(E2). Assume that $U_{0}\in H^{s}$ satisfies condition \ref{initialcondition}. Then, there is $T_{0}>0$ and solution $U=(u,v)$ to the Cauchy problem \eqref{eq:quasilinearsystem1} in the set $X_{s,\alpha,T_{0}}(\mathcal{O}_{1},M,M_{1})$.
\end{theo}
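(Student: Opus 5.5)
The plan is to apply Schauder's fixed point theorem to the extension $\mathcal{T}_{\alpha,\infty}$ of Proposition \ref{extension}, acting on the closed set $X_{\infty}\subset Y_{s-1,\alpha,T_{0}}$. First I fix $\mathcal{O}_{1}$ and $M$ by \eqref{eq:O1}--\eqref{eq:Mconstant}, and take $T_{0}$ and $M_{1}$ as in Theorem \ref{invariantsetsTheorem}, so that $\mathcal{T}_{\alpha}$ maps $X_{s,\alpha,T_{0}}(\mathcal{O}_{1},M,M_{1})$ into itself.

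Next I verify the hypotheses of Schauder's theorem in the form: a continuous self-map of a nonempty closed convex subset of a Banach space whose image is relatively compact has a fixed point. Nonemptiness of $X_{\infty}$ follows because the constant-in-time extension $\widetilde{U}(t)=U_{0}$ belongs to $X_{s,\alpha,T_{0}}(\mathcal{O}_{1},M,M_{1})$, at least after a small adjustment. Indeed, $E^{2}_{s,\alpha}(U_{0})\leq M^{2}$ requires $\|U_{0}\|_{H^{s}}^{2}+t\|\Lambda^{\alpha}v_{0}\|_{H^{s}}^{2}\leq 2C_{0}\|U_{0}\|_{H^{s}}^{2}$, which may fail when $v_{0}\notin H^{s+\alpha}$; in that case I would use $\mathcal{T}_{\alpha}$ applied to a mollified constant, or simply $\mathbb{J}_{\epsilon}U_{0}$ with $T_{0}$ small.

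Convexity of $X_{\infty}$ holds because $\mathcal{O}_{1}$ is convex, and the conditions \eqref{eq:Xinftyconditions}--\eqref{eq:energyestimatesinfty} are convex constraints: $E_{s,\alpha}$ and the $L^{2}_{t}H^{s-1}$ seminorm of $\partial_{t}V$ are both convex. Closedness of $X_{\infty}$ is Proposition \ref{closure}. Continuity follows from the Hölder estimate \eqref{eq:holderextension}, self-mapping from item (3), and relative compactness of the image from item (4). Schauder's theorem then yields $U\in X_{\infty}$ with $\mathcal{T}_{\alpha,\infty}(U)=U$. By item (1), $U$ solves \eqref{eq:linearextension} with $V=U$, which is exactly \eqref{eq:quasilinearsystem1}.

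The remaining step, which I expect to be the main obstacle, is to upgrade $U\in X_{\infty}$ to $U\in X_{s,\alpha,T_{0}}(\mathcal{O}_{1},M,M_{1})$, in particular continuity in time with values in $H^{s}$ and condition (A2). Since $\partial_{t}U\in L^{2}([0,T_{0}];H^{s-1})$, I get $U\in\mathcal{C}([0,T_{0}];H^{s-1})$. I would then view $U$ as the solution of the linear problem \eqref{eq:invariantsystem1} with coefficient $\widetilde{U}=U$, checking that $U$ satisfies Condition \ref{cond2} at least in the integrated form needed by the energy estimates: $\mu_{1}\in L^{1}$ requires only $\partial_{t}U\in L^{2}H^{s-1}$ and $v\in L^{2}H^{s+\alpha}$. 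The last assertion of Theorem \ref{mollifiedapprox} then gives $U\in\mathcal{C}([0,T_{0}];H^{s})$. The regularity of $\partial_{t}u$ and $\partial_{t}v$ required in (A2) follows from the equation itself, as in Theorem \ref{finallinearwellposed}. The bounds \eqref{eq:boundenergy1} and \eqref{eq:timeuniformderivative} are inherited from $X_{\infty}$.

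The delicate point is that the proofs of Theorems \ref{mollifiedapprox} and \ref{finallinearwellposed} formally assume continuity of $\widetilde{U}$ in $H^{s}$. I would check that only the $L^{\infty}H^{s}$ bound, together with the $L^{1}$-integrability of $\mu_{1}$ and $\omega$, is actually used, with continuity in $H^{s-1}$ sufficing for the coefficient estimates.
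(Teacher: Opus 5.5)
Your proposal is correct and follows the paper's proof. As in the paper, you apply Schauder's theorem to the extension $\mathcal{T}_{\alpha,\infty}$ on the closed convex set $X_{\infty}$ (via Propositions \ref{closure} and \ref{extension}), then use the last assertion of Theorem \ref{mollifiedapprox} to upgrade the fixed point to $\mathcal{C}([0,T_{0}];H^{s})$. The paper passes over the two extra points you raise: nonemptiness of $X_{\infty}$, via a mollified stationary profile $\mathbb{J}_{\epsilon}U_{0}$, and the fact that Theorem \ref{mollifiedapprox} is applied with coefficient $\widetilde{U}=U$, which is not yet known to satisfy Condition \ref{cond2}; your remedy for the latter (only the uniform $H^{s}$ bound and $\mu_{1},\omega\in L^{1}([0,T_{0}])$ enter that energy argument) is a sound way to remove the apparent circularity.
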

\begin{proof}
First, define $\mathcal{O}_{1}$ and $M$ as in \eqref{eq:O1} and \eqref{eq:Mconstant}, respectively. By Theorem \ref{invariantsetsTheorem}, there is $T_{0}>0$ such that the solution map $\mathcal{T}_{\alpha}:X_{s,\alpha,T_{0}}(\mathcal{O}_{1},M,M_{1})\rightarrow X_{s,\alpha,T_{0}}(\mathcal{O}_{1},M,M_{1})$ is well defined. Consider the extension $\mathcal{T}_{\alpha,\infty}$ of $\mathcal{T}_{\alpha}$ to the set $X_{\infty}$ defined in proposition \ref{extension}. It is easy to verify that $X_{\infty}$ is a convex set. Consequently, by proposition \ref{extension}, we can apply Schauder's fixed point theorem  to conclude the existence of a fixed point $U_{\infty}\in X_{\infty}$ of $\mathcal{T}_{\alpha,\infty}$ (see \cite{morrisch}, for example). Thus, $\mathcal{T}_{\infty,\alpha}(U_{\infty})=U_{\infty}$ and, as a consequence of statement $(1)$ in proposition \ref{extension}, $U_{\infty}$ satisfies the Cauchy problem
\begin{align*}
A^{0}(U_{\infty})\partial_{t}U_{\infty}+A^{j}(U_{\infty})\partial_{j}U_{\infty}&+D(U_{\infty})U_{\infty}+B(U_{\infty})\Lambda^{2\alpha}U_{\infty}+H(U_{\infty})[\Lambda^{2\alpha},G(U_{\infty})]\mathcal{L}U_{\infty}=0,\\
U_{\infty}\rvert_{t=0}&=U_{0}.
\end{align*}
Since $U_{\infty}\in X_{\infty}$, Theorem \ref{mollifiedapprox} implies that $U_{\infty}\in\mathcal{C}([0,T_{0}];H^{s})$. Hence, $U_{\infty}\in X_{s,\alpha,T_{0}}(\mathcal{O}_{1},M,M_{1})$. 
\end{proof}

\section{Uniqueness}
Given the lack of a contraction type inequality for the solution map associated with system \eqref{eq:quasilinearsystem1}, we can only show uniqueness of solutions under the assumption of smallness of the initial data.
	
\begin{theo}
\label{localwellposedness}
Let $s>\frac{d}{2}+1$ be an integer and let $\alpha\in(0,1)$ be given. Assume that condition \ref{cond1} is satisfied with a constant matrix  $H=H_{c}$. Let $U_{0}\in H^{s}$ satisfy condition \ref{initialcondition} and define $\mathcal{O}_{1}$, $M$ and $M_{1}$ as in section \ref{nonlinear}. Then, there is $T_{0}>0$, such that the Cauchy problem \eqref{eq:quasilinearsystem1} has a unique solution $U=(u,v)$ in $X_{s,\alpha, T_{1}}(\mathcal{O}_{1},M,M_{1})$ if one of the following conditions holds true: 
\begin{itemize}
\item [(i)] $H_{c}=\mathbb{O}_{N}$.
\item [(ii)] One of the hypotheses (E1)-(E2) is satisfied. Moreover, let $\kappa_{\sigma,d}$ be de norm of the embedding $H^{\sigma}\hookrightarrow L^{\infty}$ whenever $\sigma>\frac{d}{2}$ and define the constant 
\begin{align*}
\kappa(\alpha,s,d):=\left\lbrace\begin{array}{ccc}
\kappa_{s-\alpha,d}&\mbox{if}&\mbox{(E1) holds},\\
\kappa_{s-s_{\alpha},d}&\mbox{if}&\mbox{(E2) holds}.
\end{array}\right.
\end{align*}
Assume the following condition,
\begin{equation}
\label{eq:constantuniqueness}
0<\omega_{0}:=2C_{0}(\mathcal{O}_{1})|H_{c}|^{2}C_{G}^{2}(\mathcal{O}_{1},M,s,\alpha)\kappa^{2}(\alpha,s,d)|\mathcal{L}|^{2}M^{2}<1,
\end{equation}
where $C(\mathcal{O}_{1},M,s,\alpha)$ is the constant in \eqref{eq:Lipschitzcontinuityfinal} with $\sigma=s-1-\alpha$. 
\end{itemize}
In particular, the solution satisfies the energy estimate, 
\begin{equation}
\label{eq:energyestimatepurequasi}
\|U(t)\|_{H^{s}}^{2}+\int_{0}^{t}\|\Lambda^{\alpha}v(\tau)\|_{H^{s}}^{2}d\tau\leq 2 C_{0}(\mathcal{O}_{1})\|U_{0}\|_{H^{s}}^{2}\quad\mbox{for any}\quad t\in[0,T_{0}].
\end{equation}
\end{theo}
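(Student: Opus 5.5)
Existence in both cases comes from Theorem \ref{localexistence}: it produces $T_{0}>0$ and a solution $U\in X_{s,\alpha,T_{0}}(\mathcal{O}_{1},M,M_{1})$. The bound \eqref{eq:energyestimatepurequasi} is the invariance statement of Theorem \ref{invariantsetsTheorem}, because $M^{2}=2C_{0}(\mathcal{O}_{1})\|U_{0}\|_{H^{s}}^{2}$. So the real work is uniqueness. Take two solutions $U^{1},U^{2}$ in the set. Then $U^{1}=\mathcal{T}_{\alpha}(U^{1})$ and $U^{2}=\mathcal{T}_{\alpha}(U^{2})$, and the difference $U=U^{1}-U^{2}$ satisfies \eqref{eq:contractionsystem} with $\widetilde{U}^{\ell}=U^{\ell}$ and zero initial data. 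I will run the energy method of Theorem \ref{energyestimatesF} at the level $m=s-1$. The aim is an inequality $E^{2}_{s-1,\alpha,t}(U)\le \theta\,E^{2}_{s-1,\alpha,t}(U)+o(1)\cdot E^{2}_{s-1,\alpha,t}(U)$ with $\theta<1$. Absorbing the right-hand side then gives $U\equiv0$ on a short interval $[0,T_{1}]$, and a continuation argument extends this to $[0,T_{0}]$.

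\emph{Case (i).} Since $H_{c}=\mathbb{O}_{N}$, the commutator term is absent and $L^{1,2}=0$. The source $F^{1,2}$ is bounded as in \eqref{eq:continuityEstimate3}, with pointwise-in-time factor $(M+\|\Lambda^{2\alpha}v^{2}\|_{H^{s-1}})\|U\|_{H^{s-1}}$, using $\widetilde{U}^{1}-\widetilde{U}^{2}=U$. The $B$-difference is the one delicate term: $[\overline{B}(U^{1})-\overline{B}(U^{2})]\Lambda^{2\alpha}U^{2}$. I will pair it in $H^{s-1}$ against $D^{\gamma}U$. The required derivative loss falls on $U^{2}$, which lies in $L^{2}H^{s+\alpha}$ in its $v$-component. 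The resulting inequality is $\frac{d}{dt}\|U\|^{2}_{H^{s-1}}\le \mu(t)\|U\|_{H^{s-1}}^{2}$ with $\mu\in L^{1}$, and Gronwall gives $U=0$. This is the standard contraction-type argument; it works because no nonlocal term differentiates $G(U^{1})-G(U^{2})$.

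\emph{Case (ii).} Here $L^{1,2}=-|H_{c}|$-weighted $[\Lambda^{2\alpha},G(U^{1})-G(U^{2})]\mathcal{L}U^{2}$, since $H$ is constant and the first term of \eqref{eq:contractionsource} vanishes. I will treat this term as a source in $L^{2}_{t}H^{s-1-\alpha}$, so that the dissipation of Theorem \ref{energyestimatesF} absorbs its pairing with $D^{\gamma}v$. Young's inequality with the coercivity constant is what produces the factor $2C_{0}(\mathcal{O}_{1})$. The commutator splits as $\Lambda^{2\alpha}((G^{1}-G^{2})w^{2})-(G^{1}-G^{2})\Lambda^{2\alpha}w^{2}$. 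Both pieces are bounded in $H^{s-1-\alpha}$ by $\|G^{1}-G^{2}\|_{H^{s-1+\alpha}}$ times $\|w^{2}\|_{L^{\infty}}$ or $\|\Lambda^{2\alpha}w^{2}\|_{L^{\infty}}$. The second factor is controlled by $\kappa(\alpha,s,d)|\mathcal{L}|M$, using $H^{s-\alpha}\hookrightarrow L^{\infty}$ under (E1) and $H^{s-s_{\alpha}}\hookrightarrow L^{\infty}$ under (E2), together with the fractional Leibniz rule. The Lipschitz bound \eqref{eq:Lipschitzcontinuityfinal} with $\sigma=s-1-\alpha$ then gives $\|G^{1}-G^{2}\|_{\dot H^{s-1+\alpha}}\le C_{G}(\|U\|_{H^{s-1}}+\|\Lambda^{\alpha}v\|_{H^{s-1}})$, up to lower-order terms.

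Squaring, integrating and applying Gronwall with $K(t)\le 2C_{0}$ on short times gives
\[
E^{2}_{s-1,\alpha}(U(t))\le \omega_{0}\int_{0}^{t}\|\Lambda^{\alpha}v\|_{H^{s-1}}^{2}\,d\tau+C\Big(\int_{0}^{t}\mu\Big)\sup_{[0,t]}E^{2}_{s-1,\alpha}(U).
\]
Using $\omega_{0}<1$ and choosing $T_{1}$ small, this yields $U=0$. The main obstacle is to arrange that the top-order commutator contribution is controlled exactly by the dissipative quantity $\int\|\Lambda^{\alpha}v\|^{2}_{H^{s-1}}$, with precisely the constant $\omega_{0}$. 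Two pieces must therefore carry a small factor in $t$: the $u$-components of $G$-differences, which vanish under (C6) since $G=G_{0}(\widetilde{v})$, and the lower-order pieces. My first attempt will be to put all non-top contributions into $\mu(t)\|U\|^{2}_{H^{s-1}}$ and keep only the $\dot H^{s-1+\alpha}$ part of $v^{1}-v^{2}$ on the dissipative side.
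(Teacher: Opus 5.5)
Your proposal follows the paper's proof essentially step for step. Existence and \eqref{eq:energyestimatepurequasi} come from Theorem~\ref{localexistence}, and uniqueness comes from an $H^{s-1}$ energy estimate for $U^{1}-U^{2}$. In case (i) there is no commutator source and the estimate closes by Gronwall. In case (ii) the only top-order piece, $\|G_{0}(v^{1})-G_{0}(v^{2})\|_{\dot H^{s-1+\alpha}}\|w^{2}\|_{L^{\infty}}$, is absorbed by the dissipation through $\omega_{0}<1$ and the argument finishes with a short-time continuation. The paper reaches this same term via the identity \eqref{eq:uniquenessdecomp} and Kato--Ponce commutator bounds rather than your direct product split, which is a minor variant.

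One correction is needed. Under (E1), $\|\Lambda^{2\alpha}w^{2}\|_{L^{\infty}}$ is not bounded by $\kappa(\alpha,s,d)|\mathcal{L}|M$. It is only bounded by $\kappa(\alpha,s,d)|\mathcal{L}|(\|U^{2}\|_{H^{s}}+\|\Lambda^{\alpha}v^{2}\|_{H^{s}})$, which is merely $L^{2}$ in time. So in the Leibniz bound for $(G^{1}-G^{2})\Lambda^{2\alpha}w^{2}$ you must pair it with the lower-order norm $\|G^{1}-G^{2}\|_{\dot H^{s-1-\alpha}}$ and send it into $\mu(t)$, as the paper does. Pairing it with $\|G^{1}-G^{2}\|_{H^{s-1+\alpha}}$, as you wrote, would break the $\omega_{0}$-absorption.
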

\begin{proof}
Define $\mathcal{O}_{1}$, $M$ and $M_{1}$ as in section \ref{nonlinear}. Then, by Theorem \ref{localexistence}, there exists $T_{0}>0$ and a solution $U=(u,v)\in X_{s,\alpha, T_{0}}(\mathcal{O}_{1},M,M_{1})$ of \eqref{eq:quasilinearsystem1} that satisfies \eqref{eq:energyestimatepurequasi} for all $t\in[0,T_{0}]$. It remains to show the uniqueness. Let $U^{1}$ and $U^{2}$ be two different solution of the Cauchy problem \eqref{eq:quasilinearsystem1} in $X_{s,\alpha,T_{0}}(\mathcal{O}_{1},M,M_{1})$. Then, as in \eqref{eq:contractionsystem} and \eqref{eq:contractionsource}, $U:=U^{1}-U^{2}$ satisfies the equation 
\begin{align*}
A^{0}(U^{1})\partial_{t}U+A^{j}(U^{1})\partial_{j}U+D(U^{1})U+B(U^{1})\Lambda^{2\alpha}U+H_{c}[\Lambda^{2\alpha},G(\widetilde{U}^{1})]\mathcal{L}U=F_{1,2}+Q_{1,2},
\end{align*}
with
\begin{align*}
(A^{0}(U^{1}))^{-1}F^{1,2}&:=-\left[\overline{A}^{j}(U^{1})-\overline{A}^{j}(U^{2})\right]\partial_{j}U^{2}-\left[\overline{D}(U^{1})-\overline{D}(U^{2})\right]U^{2}-\left[\overline{B}(U^{1})-\overline{B}(U^{2})\right]\Lambda^{2\alpha}U^{2}\\
(A^{0}(U^{1}))^{-1}Q^{1,2}&:=-\left[\overline{H}(U^{1})-\overline{H}(U^{2})\right][\Lambda^{2\alpha},G(\widetilde{U}^{1})]\mathcal{L}U^{2}-\overline{H}(U^{2})[\Lambda^{2\alpha},G(U^{1})-G(U^{2})]\mathcal{L}U^{2}
\end{align*}
Now, by the block structure in condition $(C5)$, we have that $F^{1,2}=(A^{0}(U^{1}))^{-1}(f^{1,2}_{1},f^{1,2}_{2})$ where $(f^{1,2}_{1},f^{1,2}_{2})\in\mathbb{R}^{k}\times\mathbb{R}^{m}$. Therefore, by the energy estimates with $m=s-1$, 
\begin{equation}
\label{eq:contractionuniqueness}
\begin{aligned}
\sup_{\tau\in[0,t]}\|U(\tau)\|_{H^{s-1}}^{2}&+\int_{0}^{t}\|\Lambda^{\alpha}(v^{1}-v^{2})(\tau)\|_{H^{s-1}}d\tau\\
&\leq 2C_{0}(\mathcal{O}_{1})\int_{0}^{t}\left(\|f_{1}^{1,2}(\tau)\|_{H^{s-1}}^{2}+\|f_{2}^{1,2}(\tau)\|_{H^{s-1-\alpha}}^{2}+\|Q^{1,2}(\tau)\|_{H^{s-1-\alpha}}^{2}\right)d\tau.
\end{aligned}
\end{equation}
Since $U^{1},U^{2}\in X_{s,\alpha,T_{0}}(\mathcal{O}_{1},M,M_{1})$, the Sobolev product estimates imply that,
\begin{align}
\|f_{1}^{1,2}(\tau)\|_{H^{s-1}}\leq C(\mathcal{O}_{1},M)M\|U^{1}-U^{2}\|_{H^{s-1}} \label{eq:fcontraction1}
\end{align}
and
\begin{align*}
\|f^{1,2}_{2}\|_{H^{s-1-\alpha}}\leq C(\mathcal{O}_{1},M)\left(M\|U^{1}-U^{2}\|_{H^{s-1}}+\|[\overline{B}_{0}(U^{2})-\overline{B}_{0}(U^{1})]\Lambda^{2\alpha}v^{2}\|_{H^{s-1-\alpha}}\right).
\end{align*}
The last term requires the fractional Leibniz rule,
\begin{align*}
&\|[\overline{B}_{0}(U^{2})-\overline{B}_{0}(U^{1})]\Lambda^{2\alpha}v^{2}\|_{H^{s-1-\alpha}}\\
&\leq C\left(\|\overline{B}_{0}(U^{2})-\overline{B}_{0}(U^{1})\|_{\dot{H}^{s-1-\alpha}}\|\Lambda^{2\alpha}v^{2}\|_{L^{\infty}}+\|\overline{B}_{0}(U^{2})-\overline{B}_{0}(U^{1})\|_{L^{\infty}}\|v^{2}\|_{H^{s}}\right)
\end{align*}
and since, $s-2-\alpha>0$, we have
\begin{align*}
\|\overline{B}_{0}(U^{2})-\overline{B}_{0}(U^{1})\|_{\dot{H}^{s-1-\alpha}}^{2}\leq C\|\nabla(\overline{B}_{0}(U^{2})-\overline{B}_{0}(U^{1}))\|_{H^{s-2}}^{2}.
\end{align*}
The embedding $H^{s-\alpha}\hookrightarrow L^{\infty}$ then yields the bound
\begin{equation}
\label{eq:fcontraction2}
\begin{aligned}
\|f_{2}^{1,2}\|_{H^{s-1-\alpha}}\leq C(\mathcal{O}_{1},M)\|U^{1}-U^{2}\|_{H^{s-1}}\left(M+\|\Lambda^{\alpha}v^{2}\|_{H^{s}}\right).
\end{aligned}
\end{equation}
If we assume $(i)$, we have $Q^{1,2}=0$. Thus, by using \eqref{eq:fcontraction1} and \eqref{eq:fcontraction2} in \eqref{eq:contractionuniqueness}, yields
\begin{align*}
\sup_{\tau\in[0,t]}\|U(\tau)\|_{H^{s-1}}^{2}&+\int_{0}^{t}\|\Lambda^{\alpha}(v^{1}-v^{2})(\tau)\|_{H^{s-1}}d\tau\\
&\leq C(\mathcal{O}_{1},M)\sup_{\tau\in[0,t]}\|U(\tau)\|_{H^{s-1}}^{2}\left[t+\int_{0}^{t}\|\Lambda^{\alpha}v^{2}(\tau)\|_{H^{s}}^{2}\right].
\end{align*}
Since $\lim_{t\rightarrow 0}\int_{0}^{t}\|\Lambda^{\alpha}v^{2}(\tau)\|_{H^{s}}^{2}d\tau=0$, there is $0<T_{1}\leq T_{0}$ such that
\begin{align*}
	0<\sup_{t\in[0,T_{1}]}\left[t+\int_{0}^{t}\|\Lambda^{\alpha}v^{2}(\tau)\|_{H^{s}}^{2}d\tau\right]<1.
\end{align*}
Therefore, it follows that $U^{1}\equiv U^{2}$ on $[0,T_{1}]$. Then, by the uniform continuity of $f(t)=\int_{0}^{t}\|\Lambda^{\alpha}v^{2}(\tau)\|_{H^{s}}d\tau$, we can iterate the argument with initial data in $t=T_{1}, 2T_{1},..$ and thus,  uniqueness follows in $[0,T_{0}]$.

Now assume that $(ii)$ holds. Then, $A^{0}(U^{1})=A^{0}_{c}$ and thus, 
\begin{align*}
\|Q^{1,2}\|_{H^{s-1-\alpha}}&\leq|H_{c}|\left(\|[\Lambda^{2\alpha},G(U^{1})-G(U^{2})]\mathcal{L}U^{2}\|_{L^{2}}+\|[\Lambda^{2\alpha},G(U^{1})-G(U^{2})]\mathcal{L}U^{2}\|_{\dot{H}^{s-1-\alpha}}\right)\\
&\leq|H_{c}|\left(\|(G(U^{1})-G(U^{2}))\mathcal{L}U^{2}\|_{H^{2}}+\|(G(U^{1})-G(U^{2}))\Lambda^{2\alpha}\mathcal{L}U^{2}\|_{L^{2}}\right.\\
&+\left.\|[\Lambda^{2\alpha},G(U^{1})-G(U^{2})]\mathcal{L}U^{2}\|_{\dot{H}^{s-1-\alpha}}\right).
\end{align*}
Since $s-1\geq 2$ and $U^{2}\in X_{s,\alpha,T_{0}}(\mathcal{O}_{1},M,M_{1})$ we have
\begin{align*}
\|Q^{1,2}\|_{H^{s-1-\alpha}}\leq|H_{c}|\left(C_{G}(\mathcal{O}_{1},M)M\|U^{1}-U^{2}\|_{H^{s-1}}+\|[\Lambda^{2\alpha},G(U^{1})-G(U^{2})]\mathcal{L}U^{2}\|_{\dot{H}^{s-1-\alpha}}\right).
\end{align*}
By using the identity
\begin{equation}
\label{eq:uniquenessdecomp}
\begin{aligned}
\Lambda^{s-1-\alpha}[\Lambda^{2\alpha},G(U^{1})-G(U^{2})]\mathcal{L}U^{2}&=[\Lambda^{s-1+\alpha},G(U^{1})-G(U^{2})]\mathcal{L}U^{2}\\
&-[\Lambda^{s-1-\alpha},G(U^{1})-G(U^{2})]\Lambda^{2\alpha}\mathcal{L}U^{2},
\end{aligned}
\end{equation}
together with condition (C6) and fractional Leibniz rule, we obtain
\begin{align*}
&\|[\Lambda^{2\alpha},G(U^{1})-G(U^{2})]\mathcal{L}U^{2}\|_{\dot{H}^{s-1-\alpha}}\\
&\leq C\left(\|\nabla(G_{0}(v^{1})-G_{0}(v^{2}))\|_{L^{4}}\|\Lambda^{s-2+\alpha}\mathcal{L}U^{2}\|_{L^{4}}+\|G_{0}(v^{1})-G_{0}(v^{2})\|_{\dot{H}^{s-1+\alpha}}\|\mathcal{L}U^{2}\|_{L^{\infty}}\right.\\
&+\left.\|\nabla(G_{0}(v^{1})-G_{0}(v^{2}))\|_{L^{4}}\|\Lambda^{s-2-\alpha}\mathcal{L}U^{2}\|_{L^{4}}+\|G_{0}(v^{1})-G_{0}(v^{2})\|_{\dot{H}^{s-1-\alpha}}\|\Lambda^{2\alpha}\mathcal{L}U^{2}\|_{L^{\infty}}\right).
\end{align*}
Then, by the embeddings $H^{1}\hookrightarrow L^{4}$ and $H^{s}\hookrightarrow L^{\infty}$, one finds that
\begin{align*}
&\|[\Lambda^{2\alpha},G(U^{1})-G(U^{2})]\mathcal{L}U^{2}\|_{\dot{H}^{s-1-\alpha}}\\
&\leq \|G_{0}(v^{1})-G_{0}(v^{2})\|_{H^{2}}\|\mathcal{L}U^{2}\|_{H^{s}}+\|G_{0}(v^{1})-G_{0}(v^{2})\|_{\dot{H}^{s-1+\alpha}}\|\mathcal{L}U^{2}\|_{H^{s}}\\
&+\|G_{0}(v^{1})-G_{0}(v^{2})\|_{\dot{H}^{s-1-\alpha}}\|\Lambda^{2\alpha}\mathcal{L}U^{2}\|_{L^{\infty}}.
\end{align*}
Therefore, the embedding $H^{s-1}\hookrightarrow H^{2}$  and  estimate \eqref{eq:Lipschitzcontinuityfinal} yield
\begin{align*}
&\|[\Lambda^{2\alpha},G(U^{1})-G(U^{2})]\mathcal{L}U^{2}\|_{\dot{H}^{s-1-\alpha}}\\
&\leq C_{G}(\mathcal{O}_{1},M,s,\alpha)\left(\|U^{1}-U^{2}\|_{H^{s-1}}|\mathcal{L}|M+\|\Lambda^{\alpha}(v^{1}-v^{2})\|_{H^{s-1}}|\mathcal{L}|M+\|U^{1}-U^{2}\|_{H^{s-1}}\|\Lambda^{2\alpha}\mathcal{L}U^{2}\|_{L^{\infty}}\right).
\end{align*}
Notice that, under any of the hypotheses (E1)-(E3), the following bound holds 
\begin{align*}
\|\Lambda^{2\alpha}\mathcal{L}U^{2}\|_{L^{\infty}}\leq \kappa(\alpha,d)|\mathcal{L}|\left(\|\Lambda^{\alpha}v^{2}\|_{H^{s}}+\|U^{2}\|_{H^{s}}\right).
\end{align*}
Thus, we have
\begin{align*}
&\|[\Lambda^{2\alpha},G(U^{1})-G(U^{2})]\mathcal{L}U^{2}\|_{\dot{H}^{s-1-\alpha}}\\
&\leq C_{G}(\mathcal{O}_{1},M,s,\alpha) \kappa(\alpha,d)|\mathcal{L}|\left(\left[M+\|\Lambda^{\alpha}v^{2}\|_{H^{s}}\right]\|U^{1}-U^{2}\|_{H^{s-1}}+\|\Lambda^{\alpha}(v^{1}-v^{2})\|_{H^{s-1}}M\right)
\end{align*}
and 
\begin{align*}
\|Q^{1,2}\|_{H^{s-1-\alpha}}&\leq|H_{c}|C_{G}(\mathcal{O}_{1},M,s,\alpha)\left(M+\kappa(\alpha,s,d)|\mathcal{L}|[M+\|\Lambda^{\alpha}v^{2}\|_{H^{s}}]\right)\|U^{1}-U^{2}\|_{H^{s-1}}\\
&+|H_{c}|C_{G}(\mathcal{O}_{1},M,s,\alpha)\kappa(\alpha,s,d)|\mathcal{L}|M\|\Lambda^{\alpha}(v^{1}-v^{2})\|_{H^{s-1}}.
\end{align*}
Consequently, 
\begin{equation}
\label{eq:fcontraction3}
\begin{aligned}
2C_{0}(\mathcal{O}_{1})\int_{0}^{t}\|Q^{1,2}(\tau)\|_{H^{s-1-\alpha}}^{2}d\tau&\leq C_{1}(\mathcal{O}_{1},M)\sup_{\tau\in[0,t]}\|U(\tau)\|_{H^{s-1}}^{2}\left[t+\int_{0}^{t}\|\Lambda^{\alpha}v^{2}(\tau)\|_{H^{s}}^{2}\right]\\
&+\omega_{0}\int_{0}^{t}\|\Lambda^{\alpha}(v^{1}-v^{2})(\tau)\|_{H^{s-1}}^{2}d\tau
\end{aligned}
\end{equation}
By using estimates \eqref{eq:fcontraction1}, \eqref{eq:fcontraction2} and \eqref{eq:fcontraction3} in \eqref{eq:contractionuniqueness} together with the assumption \eqref{eq:constantuniqueness},  we conclude that
\begin{align*}
\sup_{\tau\in[0,t]}\|U(\tau)\|_{H^{s-1}}^{2}\leq C(\mathcal{O}_{1},M)\sup_{\tau\in[0,t]}\|U(\tau)\|_{H^{s-1}}^{2}\left[t+\int_{0}^{t}\|\Lambda^{2\alpha}v^{2}(\tau)\|_{H^{s}}^{2}\right].
\end{align*}
Then, as in case $(i)$, uniqueness follows.
\end{proof}
\section{Applications to fractional compressible fluid dynamics}
\subsection{Hypo-dissipative isentropic Navier-Stokes equations}
In this section we apply the previous results to fractional regularizations of physical examples provided that the initial data $U_{0}$ differs from a constant state $U_{c}$ by an $H^{s}$ function.

Consider the following system of equations, 
\begin{align}
\widetilde{A}^{0}(V)\partial_{t}V+\widetilde{A}^{j}(V)\partial_{j}V+\widetilde{B}(V)\Lambda^{2\alpha}V=0\label{eq:nonsymmetricquasi}
\end{align}
with initial data satisfying that, 
\begin{align*}
V(0)-U_{c}\in H^{s}
\end{align*}
for some constant state $U_{c}\in\mathcal{O}$ for which, we assume the condition, 
\begin{condition}[W]
$\widetilde{D}(V)U_{c}=0$ for all $V\in\mathcal{O}$. 
\end{condition}
Typically, such system is not given in symmetric form, that is, it does not satisfy the assumption of symmetry in conditions (C2), (C3) and (C5). Nonetheless, if the system corresponds to a fractional regularization of a hyperbolic system of balance laws, a Friedrichs' symmetrizer might exist. That is, a smooth symmetric matrix function $S=S(V)>0$ such that, $S(V)A^{0}(V)$, $S(V)A^{j}(V)$ and $S(V)B(V)$ are symmetric and $S(V)A^{0}(V)>0$ for all $V\in\mathcal{O}$. Then, by introducing the variable $U=V-U_{c}$ and defining,
\begin{equation}
\label{eq:symmetricoeff}
\begin{aligned}
A^{0}(U):=S(U+U_{c})\widetilde{A}^{0}(U+U_{c}),&~A^{j}(U):=S(U+U_{c})\widetilde{A}^{j}(U+U_{c}),~D(U):=D(U+U_{c})\\
&\mbox{and}~B(U):=B(U+U_{c}),
\end{aligned}
\end{equation}
a quasilinear system of the form \eqref{eq:purequasilinear} that satisfies condition \ref{cond1} is obtained. Let us consider the fractional regularization of the three dimensional compressible isentropic Navier-Stokes equations given by \eqref{eq:fractionalNS1}. We make the following assumption, 
\begin{condition}[T]
 The pressure field $p$ varies within the domain $\mathcal{D}=\{\rho\in\mathbb{R}~|~\rho>0\}$, $p>0$ and $p\in\mathcal{C}^{\infty}(\mathcal{D})$. The fluid satisfies that, $p^{\prime}(\rho)>0$.
\end{condition}	
Consider the vector state of variables $V=(\rho,v)^{\top}\in\mathcal{O}_{V}\subset\mathbb{R}^{4}$ where $\mathcal{O}_{V}$ is an open convex set defined as
\begin{align*}
\mathcal{O}_{V}=\{(\rho,v)^{\top}\in\mathbb{R}^{4}~|~\rho>0\quad\mbox{and}\quad v\in\mathbb{R}^{3}\}.
\end{align*}
Assume that $U_{c}=(\rho_{c},0)\in\mathcal{O}_{V}$. Then, the variable $U=(\eta,v)^{\top}$, defined as $U=V-U_{c}$ takes values in the open convex set 
\begin{align*}
\mathcal{O}=\{(\eta,v)^{\top}\in\mathbb{R}^{4}~|~\eta>-\rho_{c}\quad\mbox{and}\quad v\in\mathbb{R}^{3}\}.
\end{align*}

In order to obtain the quasilinear form \eqref{eq:nonsymmetricquasi}, we first provide the coefficients $\widetilde{A}^{0}(V)$, $\widetilde{B}(V)$ and the symbol, $\widetilde{A}(\xi;V):=\sum_{j=1}^{d}\widetilde{A}^{j}(V)\xi_{j}$ for any $\xi=(\xi_{1},..,\xi_{d})\in\mathbb{S}^{d-1}$ and $V=U+U_{c}$ with $U\in\mathcal{O}$. We have that, 
\begin{align}
\widetilde{A}^{0}(U+U_{c}):=\left(\begin{array}{cc}
1&\\
&(\eta+\rho_{c})\mathbb{I}_{3}
\end{array}\right),\quad\widetilde{B}(U+U_{c}):=\left(\begin{array}{cc}
0&\\
&\mu_{0}\mathbb{I}_{3}
\end{array}\right),\label{eq:tildecoefficients1}
\end{align}
where $\mathbb{I}_{d}$ denotes the identity matrix of order $d\times d$, and all the empty spaces refer to zero block matrices of appropriate sizes, and 
\begin{align}
\widetilde{A}(\xi;U+U_{c}):=\left(\begin{array}{cccc}
\xi\cdot v&(\eta+\rho_{c})\xi_{1}&(\eta+\rho_{c})\xi_{2}&(\eta+\rho_{c})\xi_{3}\\
\xi_{1}p^{\prime}(\eta+\rho_{c})&(\eta+\rho_{c})\xi\cdot v&0&0\\
\xi_{2}p^{\prime}(\eta+\rho_{c})&0&(\eta+\rho_{c})\xi\cdot v&0\\
\xi_{3}p^{\prime}(\eta+\rho_{c})&0&0&(\eta+\rho_{c})\xi\cdot v
\end{array}\right).\label{eq:tildecoefficients2}
\end{align}
Furthermore, the diagonal matrix function
\begin{align}
S(U+U_{c})=\left(\begin{array}{cc}
\frac{p^{\prime}(\eta+\rho_{c})}{\eta+\rho_{c}}&\\
&\mathbb{I}_{3}
\end{array}\right)\label{eq:firstsymm}
\end{align}
is a Friedrichs' symmetrizer for system \eqref{eq:fractionalNS1} and thus, by \eqref{eq:symmetricoeff}, its quasilinear symmetric form is determined by the  following coefficients, 
\begin{align}
A^{0}(U)=\left(\begin{array}{cc}
\frac{p^{\prime}(\eta+\rho_{c})}{\eta+\rho_{c}}&\\
&(\eta+\rho_{c})\mathbb{I}_{3}
\end{array}\right),\quad B(U)=\left(\begin{array}{cc}
0&\\
&\mu_{0}\mathbb{I}_{3}
\end{array}\right)\label{eq:NS1coeff}
\end{align}
and
\begin{align}
A(\xi;U):=\left(\begin{array}{cccc}
\frac{p^{\prime}(\eta+\rho_{c})}{\eta+\rho_{c}}\xi\cdot v&\xi_{1}p^{\prime}(\eta+\rho_{c})&\xi_{2}p^{\prime}(\eta+\rho_{c})&\xi_{3}p^{\prime}(\eta+\rho_{c})\\
\xi_{1}p^{\prime}(\eta+\rho_{c})&(\eta+\rho_{c})\xi\cdot v&0&0\\
\xi_{2}p^{\prime}(\eta+\rho_{c})&0&(\eta+\rho_{c})\xi\cdot v&0\\
\xi_{3}p^{\prime}(\eta+\rho_{c})&0&0&(\eta+\rho_{c})\xi\cdot v
\end{array}\right).\label{eq:NS1symbol}
\end{align}
Thus, the quasilinear form of \eqref{eq:fractionalNS1}-\eqref{eq:fractionalNS2} is 
\begin{equation}
\label{eq:purequasilinear}
A^{0}(U)\partial_{t}U+A^{j}(U)\partial_{j}U+B(U)\Lambda^{2\alpha}U=0
\end{equation}
with coefficients given by \eqref{eq:NS1coeff} and \eqref{eq:NS1symbol}. As a consequence of Theorem \ref{localexistence} and statement $(i)$ in Theorem \ref{localwellposedness}, we have the following result. 
\begin{theo}
Let $s\in\mathbb{N}$ be such that $s>\frac{d}{2}+1$ and assume that $\alpha\in(0,1)$ is given. Let condition (\textbf{T}) hold true. Suppose that the initial data satisfies $U_{0}=(\rho_{0}-\rho_{c},v_{0})\in H^{s}$ and $\inf_{x\in\mathbb{R}^{3}}\rho_{0}(x)>0$. Then, there are  positive constants $T_{0}>0$, $M$ and $M_{1}$, and an open convex bounded set $\mathcal{O}_{1}$ compactly contained in $\mathcal{O}$ such that the Cauchy problem for the quasilinear form of \eqref{eq:fractionalNS1}-\eqref{eq:fractionalNS2}, that is, \eqref{eq:purequasilinear} with coefficients \eqref{eq:NS1coeff} and \eqref{eq:NS1symbol}, has a unique solution $U=(\rho-\rho_{c},v)\in\mathcal{C}([0,T_{0}];H^{s})$ with $v\in L^{2}([0,T_{0}];H^{s+\alpha})$ in the set $X_{s,\alpha,T_{0}}(\mathcal{O}_{1},M,M_{1})$ with  $\partial_{t}\rho\in\mathcal{C}([0,T_{0}];H^{s-1})$ and $\partial_{t}v\in\mathcal{C}([0,T_{0}];H^{s-2\alpha})$. In particular, the solution satisfies the energy estimate 
\begin{align*}
\sup_{t\in[0,T_{0}]}\|(\rho-\rho_{c})(t)\|_{H^{s}}^{2}+\int_{0}^{T_{0}}\|\Lambda^{\alpha}v(t)\|_{H^{s}}^{2}dt\leq M^{2}.
\end{align*}
Moreover, if $\alpha\in(0,\tfrac{1}{2}]$, we have that $\partial_{t}v\in\mathcal{C}([0,T_{0}];H^{s-1})$.
\end{theo}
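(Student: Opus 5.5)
The statement should follow from Theorem \ref{localexistence} and statement (i) of Theorem \ref{localwellposedness}, so the plan is to check their hypotheses for the symmetrized system \eqref{eq:purequasilinear} with coefficients \eqref{eq:NS1coeff}--\eqref{eq:NS1symbol}.

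\textbf{Step 1: structural assumptions.} I will check condition \ref{cond1} with the partition $U=(\eta,v)$, so $k=1$ and $m=3$.
(C1) holds because $p\in\mathcal{C}^{\infty}(\mathcal{D})$ and $\eta+\rho_{c}>0$ on $\mathcal{O}$.
(C2) holds because $A^{0}$ is diagonal with positive entries $p'(\eta+\rho_{c})/(\eta+\rho_{c})$ and $\eta+\rho_{c}$; positivity uses condition (\textbf{T}).
(C3) holds because the symbol \eqref{eq:NS1symbol} is visibly symmetric; this is the point of the symmetrizer \eqref{eq:firstsymm}.
(C5) holds with $B_{0}=\mu_{0}\mathbb{I}_{3}$, since $\langle\mu_{0}\Lambda^{2\alpha}v,v\rangle=\mu_{0}\|\Lambda^{\alpha}v\|_{L^{2}}^{2}$ by Plancherel.
(C6) holds with $H_{0}=G_{0}=0$, so $H_{c}=\mathbb{O}_{N}$. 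The commutator term is then absent, and (E1) can be taken formally with $\mathcal{L}=\mathbb{I}_{N}$.
There is no zero-order term ($D\equiv 0$), so condition (W) is trivially satisfied.

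\textbf{Step 2: initial data.} I will check condition \ref{initialcondition}. Since $s>\frac{d}{2}$, Sobolev embedding gives $U_{0}\in L^{\infty}$. Together with $\inf\rho_{0}>0$, the range of $U_{0}$ lies in a bounded set $\{\eta\geq \inf\rho_{0}-\rho_{c}\}\times\{|v|\leq \|v_{0}\|_{L^{\infty}}\}$. I take $\mathcal{O}_{0}$ to be a bounded open convex neighborhood of this set whose closure lies in $\mathcal{O}$, by shrinking the lower bound on $\eta$ slightly.
Then \eqref{eq:O1}--\eqref{eq:Mconstant} define $\mathcal{O}_{1}$ and $M$, and Theorem \ref{invariantsetsTheorem} gives $T_{0}$ and $M_{1}$.

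\textbf{Step 3: existence, uniqueness and estimates.} Theorem \ref{localexistence} yields a solution in $X_{s,\alpha,T_{0}}(\mathcal{O}_{1},M,M_{1})$, which gives $U\in\mathcal{C}([0,T_{0}];H^{s})$ and $v\in L^{2}([0,T_{0}];H^{s+\alpha})$. Since $H_{c}=\mathbb{O}_{N}$, case (i) of Theorem \ref{localwellposedness} gives uniqueness in that set. The energy bound is \eqref{eq:energyestimatepurequasi} combined with \eqref{eq:Mconstant}.

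\textbf{Step 4: time regularity.} The fixed point $U$ solves the linear problem \eqref{eq:invariantsystem1} with $\widetilde U=U$. Theorem \ref{finallinearwellposed} with $m=s$ and $F=0$ then gives $\partial_{t}\eta\in\mathcal{C}([0,T_{0}];H^{s-1})$ and $\partial_{t}v\in\mathcal{C}([0,T_{0}];H^{s-2\alpha})$. For $\alpha\le\frac12$ it gives the improvement $\partial_{t}v\in\mathcal{C}([0,T_{0}];H^{s-1})$. Finally, $\partial_t\rho=\partial_t\eta$.

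\textbf{Main obstacle.} The only non-mechanical point is making sure condition \ref{cond2} holds for the fixed point, in particular (A2). This should follow from Theorem \ref{finallinearwellposed} together with the bound on $\int\|\partial_t U\|_{H^{s-1}}^{2}$ in Proposition \ref{timeestimateL2}.
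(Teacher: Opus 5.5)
Your proposal follows the paper's route exactly. The paper obtains this theorem directly from Theorem~\ref{localexistence} and case~(i) of Theorem~\ref{localwellposedness} (with $H\equiv\mathbb{O}_{N}$), and your Steps~1--3 just make explicit the hypothesis checks it leaves implicit. Two small remarks on Step~4. First, it needs no separate argument, because condition~(A2) is part of the definition of $X_{s,\alpha,T_{0}}(\mathcal{O}_{1},M,M_{1})$; by contrast, deriving it from Theorem~\ref{finallinearwellposed} with $\widetilde U=U$ already presupposes (A2) for $U$. That definition also shows the correct split $\partial_{t}v\in\mathcal{C}([0,T_{0}];H^{s-\max\{1,2\alpha\}})$. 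Second, for $\alpha<\tfrac12$ one has $H^{s-2\alpha}\subset H^{s-1}$, so $\partial_{t}v\in\mathcal{C}([0,T_{0}];H^{s-1})$ is not an improvement but the sharp statement, since the transport and pressure terms lie only in $H^{s-1}$.
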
\qed
\subsection{Compressible Euler system with singular velocity alignment}\label{alignment section}
Let us consider system \eqref{eq:Eulervelal} with kernel given by \eqref{eq:fractionalkernel} with $d=3$ and $N=4$. As in the previous example, we consider a constant state of mas density $\rho_{c}>0$  and the change of variables $\eta=\rho-\rho_{c}$. Then, by using \eqref{eq:fractionalPV}, the non-local velocity alignment term can be reformulated as
\begin{align*}
-\rho(x)\int_{\mathbb{R}^{d}}\phi(x-y)(v(x)-v(y))\rho(y)dy=-\rho_{c}(\eta(x)+\rho_{c})\Lambda^{2\alpha}v(x)-(\eta(x)+\rho_{c})[\Lambda^{2\alpha},v]\eta.  
\end{align*}
Consequently, system \eqref{eq:Eulervelal} takes the quasilinear form
\begin{align}
\widetilde{A}^{0}(V)\partial_{t}V+\widetilde{A}^{j}(V)\partial_{j}V+\widetilde{D}(V)V+\widetilde{B}(V)\Lambda^{2\alpha}V+\widehat{H}(V)[\Lambda^{2\alpha},\widetilde{G}(V)]\mathcal{L}_{4}V=0\label{eq:quasilinearEulerv}
\end{align}
where $V=U+U_{c}$ with $U_{c}=(\rho_{c},0)^{\top}$. In this case, the coefficient $\widetilde{A}^{0}(U+U_{C})$ and the first order symbol $\widetilde{A}(\xi;U+U_{c})$ coincide with those given in \eqref{eq:tildecoefficients1} and \eqref{eq:tildecoefficients2}. The remaining coefficients are the following:
\begin{align*}
&\widetilde{B}(U+U_{c}):=\left(\begin{array}{cc}
0&\\
&\rho_{c}(\eta+\rho_{c})\mathbb{I}_{3}
\end{array}\right),\quad\widehat{H}(U+U_{c}):=\left(\begin{array}{cc}
0&\\
&(\eta+\rho_{c})\mathbb{I}_{3}
\end{array}\right),\\
&\widetilde{G}(U+U_{c}):=\left(\begin{array}{cccc}
0&0&0&0\\
0&v_{1}&0&0\\
0&0&v_{2}&0\\
0&0&0&v_{3}
\end{array}\right),\quad\widetilde{D}(U+U_{c}):=\left(\begin{array}{cc}
0&\\
&\beta(\eta+\rho_{c})\mathbb{I}_{3}
\end{array}\right)
\end{align*}
and the constant matrix $\mathcal{L}_{4}$ is given as
\begin{align}
\mathcal{L}_{4}:=\left(\begin{array}{cccc}
0&0&0&0\\
1&0&0&0\\
1&0&0&0\\
1&0&0&0
\end{array}\right). \label{eq:L4}
\end{align}
In particular, notice that $U_{c}$ satisfies condition (\textbf{R}).

Although \eqref{eq:firstsymm} is a Friedrichs's symmetrizer for system \eqref{eq:quasilinearEulerv}, it does not place the system within the framework of condition \ref{cond1}. Indeed, the matrix product $S(U+U_{c})\widehat{H}(U+U_{c})$ fails to satisfy condition (C6), since its nontrivial block depends on the hyperbolic variable $\eta$. In order to correct this problem we propose the symmetrizer as the following diagonal matrix function
\begin{align*}
\mathcal{S}(U+U_{c})=\left(\begin{array}{cc}
	\frac{p^{\prime}(\eta+\rho_{c})}{(\eta+\rho_{c})^{2}}&\\
	&\frac{1}{(\eta+\rho_{c})}\mathbb{I}_{3}
\end{array}\right). 
\end{align*}
Then, system \eqref{eq:Eulervelal} can be written in the quasilinear symmetric form 
\begin{align}
A^{0}(U)\partial_{t}U+A^{j}(U)\partial_{j}U+B(U)\Lambda^{2\alpha}U+H(U)[\Lambda^{2\alpha},G(U)]\mathcal{L}_{4}U=0\label{eq:Eulerquasymm}
\end{align}
with coefficients given by 
\begin{equation}
\label{eq:Eulercoeff}
\begin{aligned}
&A^{0}(U):=\mathcal{S}(U+U_{c})\widetilde{A}^{0}(U+U_{C})=\mathbb{I}_{4},\quad B(U):=\mathcal{S}(U+U_{c})\widetilde{B}(U+U_{c}):=\left(\begin{array}{cc}
	0&\\
	&\rho_{c}\mathbb{I}_{3}
\end{array}\right)\\
&D(U):=\mathcal{S}(U+U_{c})\widetilde{D}(U+U_{c}):=\left(\begin{array}{cc}
	0&\\
	&\beta\mathbb{I}_{3}
\end{array}\right),\quad H(U):=\mathcal{S}(U+U_{c})\widehat{H}(U+U_{c}):=\left(\begin{array}{cc}
	0&\\
	&\mathbb{I}_{3}
\end{array}\right),\\
&G(U):=\left(\begin{array}{cccc}
	0&0&0&0\\
	0&v_{1}&0&0\\
	0&0&v_{2}&0\\
	0&0&0&v_{3}
\end{array}\right)
\end{aligned}
\end{equation}
and 
\begin{align}
A(\xi;U):=\mathcal{S}(U+U_{c})\widetilde{A}(\xi;U+U_{c})=\left(\begin{array}{cccc}
	\frac{p^{\prime}(\eta+\rho_{c})}{(\eta+\rho_{c})^{2}}\xi\cdot v&\xi_{1}\frac{p^{\prime}(\eta+\rho_{c})}{\eta+\rho_{c}}&\xi_{2}\frac{p^{\prime}(\eta+\rho_{c})}{\eta+\rho_{c}}&\xi_{3}\frac{p^{\prime}(\eta+\rho_{c})}{\eta+\rho_{c}}\\
	\xi_{1}\frac{p^{\prime}(\eta+\rho_{c})}{\eta+\rho_{c}}&\xi\cdot v&0&0\\
\xi_{2}\frac{p^{\prime}(\eta+\rho_{c})}{\eta+\rho_{c}}&0&\xi\cdot v&0\\
	\xi_{3}\frac{p^{\prime}(\eta+\rho_{c})}{\eta+\rho_{c}}&0&0&\xi\cdot v
\end{array}\right).\label{eq:Eulersymbol}
\end{align}
In particular, notice that $B_{0}(U):=\rho_{c}\mathbb{I}_{3}$ and thus, for any $v\in H^{\alpha}$ we have
\begin{align}
\left\langle B_{0}(U)\Lambda^{2\alpha}v,v\right\rangle_{L^{2}}\geq\rho_{c}\|\Lambda^{\alpha}v\|_{L^{2}}^{2}.\label{eq:Eulercoercive}
\end{align}
Therefore, with these coefficients, condition \ref{cond1} is satisfied. Hence, we have the following result.
\begin{theo}
Assume that $\alpha\in(0,1)$ is given and set $s_{\alpha}:=\max\{1,2\alpha\}$. Let $s\in\mathbb{N}$ be such that $s>\frac{d}{2}+s_{\alpha}$. Let condition (\textbf{T}) hold true. Suppose that the initial data satisfies $U_{0}=(\rho_{0}-\rho_{c},v_{0})\in H^{s}$ and $\inf_{x\in\mathbb{R}^{3}}\rho_{0}(x)>0$. Then, there are  positive constants $T_{0}>0$, $M$ and $M_{1}$, and an open convex bounded set $\mathcal{O}_{1}$ compactly contained in $\mathcal{O}$ such that the Cauchy problem for the quasilinear symmetric form of \eqref{eq:Eulervelal}, that is, \eqref{eq:Eulerquasymm} with coefficients given by \eqref{eq:Eulercoeff} and \eqref{eq:Eulersymbol}, has a solution $U=(\rho-\rho_{c},v)\in\mathcal{C}([0,T_{0}];H^{s})$ with $v\in L^{2}([0,T_{0}];H^{s+\alpha})$ in the set $X_{s,\alpha,T_{0}}(\mathcal{O}_{1},M,M_{1})$ with  $\partial_{t}\rho\in\mathcal{C}([0,T_{0}];H^{s-1})$ and $\partial_{t}v\in\mathcal{C}([0,T_{0}];H^{s-2\alpha})$. In particular, the solution satisfies the energy estimate 
\begin{align*}
\sup_{t\in[0,T_{0}]}\|(\rho-\rho_{c})(t)\|_{H^{s}}^{2}+\int_{0}^{T_{0}}\|\Lambda^{\alpha}v(t)\|_{H^{s}}^{2}dt\leq M^{2}.
\end{align*}
Additionally, if $\alpha\in(0,\tfrac{1}{2}]$, we have that $\partial_{t}v\in\mathcal{C}([0,T_{0}];H^{s-1})$. Furthermore, there is a small parameter $\eta_{0}>0$, such that if 
\begin{align}
\|(\rho_{0}-\rho_{c},v_{0})\|_{H^{s}}\leq\eta_{0},
\end{align}
the solution is unique among all solutions in $X_{s,\alpha,T_{0}}(\mathcal{O}_{1},M,M_{1})$ with the same initial data. 
\end{theo}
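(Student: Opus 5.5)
The plan is to reduce the Euler alignment theorem to the abstract results already established: Theorem \ref{localexistence} gives existence and part (ii) of Theorem \ref{localwellposedness} gives uniqueness. To do this we must check that the symmetrized system \eqref{eq:Eulerquasymm}, with coefficients \eqref{eq:Eulercoeff}--\eqref{eq:Eulersymbol}, satisfies Condition \ref{cond1} under hypothesis (E2).

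\emph{Verifying Condition \ref{cond1} and (E2).} Write $U=(\eta,v)$ with $k=1$, $m=3$, and work on $\mathcal{O}=\{\eta>-\rho_{c}\}\times\mathbb{R}^{3}$.
\begin{itemize}
\item (C1): smoothness holds by condition (\textbf{T}).
\item (C2): $A^{0}=\mathbb{I}_{4}$ is symmetric positive definite.
\item (C3): each $A^{j}$ is symmetric, which is read off from \eqref{eq:Eulersymbol}.
\item (C4): the partition is $U=(\eta,v)$.
\item (C5): $B_{0}=\rho_{c}\mathbb{I}_{3}$, and \eqref{eq:Eulercoercive} gives the coercivity with $\mu_{0}=\rho_{c}$.
\item (C6): $H_{0}=\mathbb{I}_{3}$ and $G_{0}(v)=\operatorname{diag}(v_{1},v_{2},v_{3})$ depend only on $v$. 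This is exactly why the symmetrizer $\mathcal{S}$ was chosen in place of \eqref{eq:firstsymm}.
\item (C7): $\mathcal{L}_{4}$ in \eqref{eq:L4} has the form \eqref{eq:diffusionmatrix} with $\mathcal{L}_{0}=(1,1,1)^{\top}$.
\end{itemize}
Then (E2) holds because $s>\frac d2+s_{\alpha}$ is assumed. For $\alpha\in(\tfrac12,1)$ the block structure of $A^{0}$ is trivial, since $A^{0}=\mathbb{I}_{4}$.

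The damping term $D(U)U$ is linear with a constant coefficient, so it fits the framework directly. The constant state $U_{c}$ is annihilated by $\widetilde D$ in the shifted variable, so no inhomogeneity appears.

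\emph{Initial data and existence.} Since $\inf\rho_{0}>0$ and $\rho_{0}-\rho_{c}\in H^{s}\hookrightarrow L^{\infty}$, the range of $U_{0}$ lies in a bounded convex set $\mathcal{O}_{0}$ with $\overline{\mathcal{O}_{0}}\subset\mathcal{O}$. Condition \ref{initialcondition} therefore holds. We define $\mathcal{O}_{1},M$ by \eqref{eq:O1}--\eqref{eq:Mconstant} and $M_{1},T_{0}$ by Theorem \ref{invariantsetsTheorem}. Theorem \ref{localexistence} then yields a solution in $X_{s,\alpha,T_{0}}(\mathcal{O}_{1},M,M_{1})$.

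The energy bound is $E_{s,\alpha}^{2}\le M^{2}$, taken from the definition of this set. The time regularity $\partial_{t}\rho\in\mathcal{C}([0,T_{0}];H^{s-1})$ and $\partial_{t}v\in\mathcal{C}([0,T_{0}];H^{s-2\alpha})$, and the improvement for $\alpha\le\frac12$, follow from Theorem \ref{finallinearwellposed} applied with $\widetilde U=U$ and $m=s$.

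\emph{Uniqueness under smallness.} This is the main point. $H=H_{c}$ is constant, so Theorem \ref{localwellposedness}(ii) applies once \eqref{eq:constantuniqueness} holds, that is, once $2C_{0}(\mathcal{O}_{1})|H_{c}|^{2}C_{G}^{2}\kappa^{2}|\mathcal{L}_{4}|^{2}M^{2}<1$. Here $|H_{c}|=1$ and $|\mathcal{L}_{4}|=\sqrt3$, while $M^{2}=2C_{0}(\mathcal{O}_{1})\|U_{0}\|_{H^{s}}^{2}$.

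The obstacle is that $\mathcal{O}_{1}$, $C_{0}(\mathcal{O}_{1})$ and $C_{G}(\mathcal{O}_{1},M,s,\alpha)$ themselves depend on the data. To handle this, restrict first to $\|U_{0}\|_{H^{s}}\le1$. Then $\|\eta_{0}\|_{L^{\infty}}\le\kappa_{d}$, and combined with $\inf\rho_{0}>0$ this lets us choose a single fixed $\mathcal{O}_{0}$ and hence a fixed $\mathcal{O}_{1}$, independent of the data in this class. With $\mathcal{O}_{1}$ fixed, $C_{0}$ is a fixed number. Since $M\le\sqrt{2C_{0}}$, the constant $C_{G}$ is bounded uniformly, because it is nondecreasing in $M$. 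Hence the left side of \eqref{eq:constantuniqueness} is at most $C\,\|U_{0}\|_{H^{s}}^{2}$ with $C$ independent of $U_{0}$. Choosing $\eta_{0}$ with $C\eta_{0}^{2}<1$ verifies \eqref{eq:constantuniqueness}, and Theorem \ref{localwellposedness}(ii) gives uniqueness in $X_{s,\alpha,T_{0}}(\mathcal{O}_{1},M,M_{1})$.
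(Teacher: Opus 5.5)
Your proposal follows the paper's route. Existence comes from Theorem~\ref{localexistence}, and uniqueness from verifying \eqref{eq:constantuniqueness}: the smallness is carried by $M^{2}=2C_{0}(\mathcal{O}_{1})\|U_{0}\|_{H^{s}}^{2}$, and every other factor in $\omega_{0}$ stays bounded.

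One step fails as written. Restricting to $\|U_{0}\|_{H^{s}}\le 1$ does not give a data-independent $\mathcal{O}_{0}$. The bound $\|\rho_{0}-\rho_{c}\|_{L^{\infty}}\le\kappa_{d}$ keeps $\rho_{0}$ away from $0$ only if $\kappa_{d}<\rho_{c}$. The hypothesis $\inf\rho_{0}>0$ holds for each datum separately, but gives no uniform lower bound over the class. So no single $\mathcal{O}_{0}$ with $\overline{\mathcal{O}_{0}}\subset\mathcal{O}$ need contain the ranges of all such data. The fix is to restrict instead to $\|U_{0}\|_{H^{s}}\le r$ with $\kappa_{d}r\le\rho_{c}/2$. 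This forces $\rho_{0}\ge\rho_{c}/2$ uniformly, and your argument then goes through.

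The paper does not need a uniform $\mathcal{O}_{1}$ at all, because the constants are explicit here:
\begin{itemize}
\item Since $A^{0}=\mathbb{I}_{4}$, one has $a_{0}=a_{1}=1$, and $\mu_{0}=\rho_{c}$. So \eqref{eq:importantconstant} gives $C_{0}=1/\min\{1,\rho_{c}/2\}$, independently of $\mathcal{O}_{1}$.
\item Since $G$ is linear in $v$, $C_{G}$ can be bounded explicitly. Indeed $G(\widetilde{v}^{1})-G(\widetilde{v}^{2})=G(\widetilde{v}^{1}-\widetilde{v}^{2})$.
\end{itemize}
Hence $\omega_{0}\le C\|U_{0}\|_{H^{s}}^{2}$ with $C$ depending only on $\rho_{c},s,\alpha,d$, and the choice of $\eta_{0}$ follows directly.
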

\begin{proof}
The existence statement follows directly from Theorem \ref{localexistence}. Thus, it remains to verify the uniqueness criterion given by condition \eqref{eq:constantuniqueness} in statement $(ii)$ of Theorem \ref{localwellposedness}.

Observe that, since $A^{0}=\mathbb{I}_{4}$ we may choose $a_{0}(\mathcal{O}_{1})=a_{1}(\mathcal{O}_{1})=1$ and, by \eqref{eq:Eulercoercive}, $\mu_{0}=\rho_{c}$. Consequently, from \eqref{eq:importantconstant} and \eqref{eq:Mconstant}, we obtain
\begin{align*}
C_{0}(\mathcal{O}_{1})=\frac{1}{\min\left\lbrace1,\frac{\rho_{c}}{2}\right\rbrace}\quad\mbox{and}\quad M:=\sqrt{\frac{2}{\min\left\lbrace1,\frac{\rho_{c}}{2}\right\rbrace}}\|(\rho_{0}-\rho_{c},v_{0})\|_{H^{s}}. 
\end{align*}
Moreover, $|H|=|\mathcal{L}|=\sqrt{3}$. Therefore, in this case, the parameter $\omega_{0}$ in \eqref{eq:constantuniqueness} is given by
\begin{align*}
\omega_{0}=18\left(\frac{1}{\min\left\lbrace1,\frac{\rho_{c}}{2}\right\rbrace}\right)^{3/2}C_{G}^{2}(\mathcal{O}_{1},M)\kappa^{2}(\alpha,s,d)\|(\rho_{0}-\rho_{c},v_{0})\|_{H^{s}}^{2}.
\end{align*}
Next, observe that $\|DG(v)\|_{L^{\infty}}\leq 1$ and $D^{k}G(v)=0$ for all $k\geq 2$. Then, $\|DG\|_{\mathcal{C}^{[s-1-\alpha]}}\leq 1$ and, by the chain rule estimates and Theorem \ref{invariantsetsTheorem}, we obtain
\begin{align*}
\|DG(v)\|_{\dot{H}^{s-1-\alpha}}&\leq C\|DG\|_{\mathcal{C}^{[s-1-\alpha]}}(1+\|v\|_{L^{\infty}}^{s-1-\alpha})\|v\|_{\dot{H}^{s-1-\alpha}}\\
&\leq C(1+(\kappa_{s,d}M)^{s-1-\alpha})M.
\end{align*}
Therefore, as a consequence of \eqref{eq:lipschitzrule}, we can take 
\begin{align*}
C_{G}(\mathcal{O}_{1},M):=\max\{\|DG(v)\|_{L^{\infty}},\|DG(v)\|_{\dot{H}^{s-1-\alpha}}\}.
\end{align*}
We now choose 
\begin{align*}
\eta_{0}:=\left(\frac{2}{\min\left\lbrace1,\frac{\rho_{c}}{2}\right\rbrace}\right)^{-1/2}
\min\left\lbrace\frac{1}{2C},\frac{1}{2\kappa_{\sigma,d}},\frac{1}{18\kappa^{2}(\alpha,s,d)}\left(\frac{1}{\min\left\lbrace1,\frac{\rho_{c}}{2}\right\rbrace}\right)^{-1}\right\rbrace
\end{align*}
so that $C_{G}(\mathcal{O}_{1},M)\leq 1$.  Substituting this bound into the expression for $\omega_{0}$, we obtain $0<\omega_{0}<1$. Therefore, condition \eqref{eq:constantuniqueness} is satisfied, and the uniqueness statement follows from Theorem \ref{localwellposedness}. 
\end{proof}
\section{Appendix A}
\begin{lema}
\label{continuityHsigma}
Lets $s>\frac{d}{2}+1$ and $\alpha\in(0,1)$. Assume that condition \ref{cond2} is satisfied. Let $g$ be a smooth function of $\widetilde{U}=(\widetilde{u},\widetilde{v})\in\mathcal{O}$.
\begin{itemize}
\item [(a)] For every $0<\sigma\leq s$, it holds that $g(\widetilde{U})\in\mathcal{C}([0,T];\dot{H}^{\sigma})$.
\item [(b)]  Assume that $s_{0},\sigma>0$ are such that $s_{0}-\sigma>\frac{d}{2}$ and $h\in\dot{H}^{\sigma}\cap\dot{H}^{s_{0}}$. Then, there is a positive constant $C$ depending only on $s_{0}$, $\sigma$ and $d$ such that 
\begin{align}
\|\Lambda^{\sigma}h\|_{L^{\infty}}\leq C\left(\|h\|_{\dot{H}^{\sigma}}+\|h\|_{\dot{H}^{s_{0}}}\right)\label{eq:uniquenessrecovery}
\end{align}
\item [(c)] If $\sigma\in(0,1]$ we have that, $\Lambda^{\sigma}g(\widetilde{U})\in\mathcal{C}([0,T];L^{\infty})$ and there are two positive constants $C_{1}=C_{1}(d)$ and $C_{2}=C_{2}(d,s,\sigma)$ such that
\begin{align}
\label{eq:infinitycontrol}
\sup_{t\in[0,T]}\|\Lambda^{\sigma}g(\widetilde{U}(t))\|_{L^{\infty}}\leq C_{1}\|g(\widetilde{U})\|_{\mathcal{C}([0,T];\dot{H}^{\sigma})}+C_{2}\|g(\widetilde{U})\|_{\mathcal{C}([0,T];\dot{H}^{s})}.
\end{align}
\item [(d)] Assume that $g=g(\widetilde{v})$ and let $\sigma\in[0,s+\alpha]$. Let $\widetilde{U}^{1}=(\widetilde{u},\widetilde{v}^{1})^{\top}$ and $\widetilde{U}^{2}=(\widetilde{u}^{2},\widetilde{v}^{2})^{\top}$ satisfy condition \ref{cond2}. Then, there is a positive constant $C_{g}(\mathcal{O}_{1},M,\sigma)$ such that
\begin{align}
\label{eq:Lipschitzcontinuityfinal}
\|g(\widetilde{v}^{1})-g(\widetilde{v}^{2})\|_{\dot{H}^{\sigma}}\leq C_{g}(\mathcal{O}_{1},M,\sigma)\left(\|\widetilde{v}^{1}-\widetilde{v}^{2}\|_{H^{\sigma}}+\|\widetilde{v}^{1}-\widetilde{v}^{2}\|_{L^{\infty}}\right).
\end{align}

\end{itemize}
\end{lema}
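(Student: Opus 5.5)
I would prove the four items of Lemma \ref{continuityHsigma} in order, since (c) rests on (a) and (b), and (d) is a separate Lipschitz estimate.

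\emph{Part (a).} Write $g(\widetilde{U})=g(0)+(g(\widetilde{U})-g(0))$; the constant does not see $\dot{H}^{\sigma}$. The smoothness of $g$ on $\overline{\mathcal{O}_{1}}$ and the fractional chain rule (Lemma \ref{fractionalchainrulesest}) give $g(\widetilde{U}(t))-g(0)\in \dot H^{\sigma}$ for $0<\sigma\leq s$. For continuity in $t$, I write
\begin{align*}
g(\widetilde{U}(t))-g(\widetilde{U}(t_{0}))=\int_{0}^{1}Dg\big(\widetilde{U}(t_{0})+\theta(\widetilde{U}(t)-\widetilde{U}(t_{0}))\big)\,d\theta\,\big(\widetilde{U}(t)-\widetilde{U}(t_{0})\big).
\end{align*}
Convexity of $\mathcal{O}_{1}$ keeps the argument of $Dg$ in $\mathcal{O}_{1}$. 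For $\sigma\leq s$, the $\dot{H}^{\sigma}$ norm of this product is bounded by \eqref{eq:fractionalproduct2} and the chain rule by $C(\mathcal{O}_{1},M)\|\widetilde{U}(t)-\widetilde{U}(t_{0})\|_{H^{s}}$. This tends to zero by (A1).

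\emph{Part (b).} I split $\Lambda^{\sigma}h$ in frequency, $|\xi|\leq1$ and $|\xi|>1$, and bound the sup norm by the $L^{1}$ norm of the Fourier transform. On $|\xi|\leq 1$,
\begin{align*}
\int_{|\xi|\le1}|\xi|^{\sigma}|\widehat h|\,d\xi\le |B_1|^{1/2}\|h\|_{\dot H^{\sigma}}
\end{align*}
by Cauchy--Schwarz. On $|\xi|>1$,
\begin{align*}
\int_{|\xi|>1}|\xi|^{\sigma-s_0}|\xi|^{s_0}|\widehat h|\,d\xi\le \Big(\int_{|\xi|>1}|\xi|^{-2(s_0-\sigma)}d\xi\Big)^{1/2}\|h\|_{\dot H^{s_0}},
\end{align*}
and the first factor is finite precisely because $s_{0}-\sigma>d/2$.

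\emph{Part (c).} I apply (b) to $h=g(\widetilde{U}(t))-g(0)$ with $s_{0}=s$. For $\sigma\in(0,1]$ we have $s-\sigma\geq s-1>d/2$, so the hypothesis of (b) holds. Taking sup over $t$ gives \eqref{eq:infinitycontrol}. Continuity of $t\mapsto\Lambda^{\sigma}g(\widetilde U(t))$ in $L^{\infty}$ follows by applying (b) to the difference $g(\widetilde{U}(t))-g(\widetilde{U}(t_0))$ and using (a).

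\emph{Part (d).} This is the main obstacle. I use the same integral representation,
\begin{align*}
g(\widetilde v^{1})-g(\widetilde v^{2})=Q\,(\widetilde v^{1}-\widetilde v^{2}),\qquad Q:=\int_0^1Dg\big(\widetilde v^{2}+\theta(\widetilde v^{1}-\widetilde v^{2})\big)\,d\theta,
\end{align*}
and then the product estimate \eqref{eq:fractionalproduct2}:
\begin{align*}
\|Q(\widetilde v^1-\widetilde v^2)\|_{\dot H^{\sigma}}\le C\big(\|Q\|_{L^\infty}\|\widetilde v^1-\widetilde v^2\|_{\dot H^{\sigma}}+\|Q\|_{\dot H^{\sigma}}\|\widetilde v^1-\widetilde v^2\|_{L^\infty}\big).
\end{align*}
The term $\|Q\|_{L^{\infty}}$ is bounded by $\sup_{\overline{\mathcal O_1}}|Dg|$. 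The delicate factor is $\|Q\|_{\dot H^{\sigma}}$ when $\sigma$ runs up to $s+\alpha$. There I bound $\|Q\|_{\dot{H}^{\sigma}}$ via Lemma \ref{fractionalchainrulesest} and Minkowski's inequality in $\theta$ by $C(1+\|\widetilde v^1\|_{\dot H^{\sigma}}+\|\widetilde v^2\|_{\dot H^{\sigma}})$. For $\sigma\leq s$ this is controlled by $M$. For $\sigma\in(s,s+\alpha]$ it is controlled only by $\|\widetilde v^{j}\|_{H^{s+\alpha}}$, and the constant must be read as depending on that quantity, or the estimate applied pointwise a.e. in $t$. This is the point where I expect the constant's dependence to need care.
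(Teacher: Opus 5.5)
Your proposal follows the paper's proof essentially step for step. Both use the mean-value integral representation combined with \eqref{eq:fractionalproduct2} and the fractional chain rule (with Minkowski/Jensen in the integration variable) for (a) and (d). Both split $\|\mathcal{F}(\Lambda^{\sigma}h)\|_{L^{1}}$ into $|\xi|\le 1$ and $|\xi|>1$ for (b). Both apply (b) with $s_{0}=s$ to $g(\widetilde{U})$ and to its time increments for (c).

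Your caveat in (d) is well founded, and it is a defect of the statement rather than of your argument. The paper closes (d) by citing \eqref{eq:integraluniform}, which is only proved for $\sigma\le s$. For $\sigma\in(s,s+\alpha]$ no constant depending only on $\mathcal{O}_{1}$, $M$, $\sigma$ can exist. To see this, take $g(v)=v^{2}$, $\widetilde{v}^{1}-\widetilde{v}^{2}=\epsilon\psi$ with $\psi\equiv1$ on the support of $\widetilde{v}^{2}$, and $\widetilde{v}^{2}$ bounded in $H^{s}$ with $\|\widetilde{v}^{2}\|_{\dot{H}^{\sigma}}\to\infty$; the left side then contains $2\epsilon\widetilde{v}^{2}$ while the right side stays fixed. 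Every application of \eqref{eq:Lipschitzcontinuityfinal} in the paper uses $\sigma\le s$ (e.g.\ $\sigma=s-2+2\alpha$ and $\sigma=s-1\pm\alpha$), so those applications are unaffected.
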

\begin{proof}
In order to prove the continuity of the mapping $g(\widetilde{U}):[0,T]\rightarrow \dot{H}^{\sigma}$ we consider the following formula, 
\begin{align*}
g(\widetilde{U}(t))-g(\widetilde{U}(t_{0}))&=\int_{0}^{1}\frac{d}{d\tau}g\left(\widetilde{U}(t_{0})+\tau(\widetilde{U}(t)-\widetilde{U}(t_{0}))\right)d\tau\\
&=\int_{0}^{1}Dg(\mathcal{U}(\tau;t,t_{0}))d\tau\left[\widetilde{U}(t)-\widetilde{U}(t_{0})\right],
\end{align*}
where $\mathcal{U}(\tau;t,t_{0}):=\widetilde{U}(t_{0})+\tau(\widetilde{U}(t)-\widetilde{U}(t_{0}))$. Then, by \eqref{eq:fractionalproduct2}, 
\begin{equation}
\label{eq:contcomp1}
\begin{aligned}
\|g(\widetilde{U}(t))-g(\widetilde{U}(t_{0}))\|_{\dot{H}^{\sigma}}&\leq\left\lVert\int_{0}^{1}Dg(\mathcal{U}(\tau;t,t_{0}))d\tau\right\rVert_{L^{\infty}}\|\widetilde{U}(t)-\widetilde{U}(t_{0})\|_{\dot{H}^{\sigma}}\\
&+\left\lVert\int_{0}^{1}Dg(\mathcal{U}(\tau;t,t_{0}))d\tau\right\rVert_{\dot{H}^{\sigma}}\|\widetilde{U}(t)-\widetilde{U}(t_{0})\|_{L^{\infty}}.
\end{aligned}
\end{equation}
Observe that $Dg(\mathcal{U}(\tau;t,t_{0}))\in L^{\infty}([0,1]\times\mathbb{R}^{d}\times[0,T])$ and thus, the first term in the right hand side of \eqref{eq:contcomp1} goes to zero if $t\rightarrow t_{0}$ in $[0,T]$. Since $\widetilde{U}\in\mathcal{C}([0,T];H^{s})$, it is enough to obtain a uniform bound on the term 
\begin{align*}
\left\lVert\int_{0}^{1}Dg(\mathcal{U}(\tau;t,t_{0}))d\tau\right\rVert_{\dot{H}^{\sigma}},
\end{align*}
in order to conclude. First, observe that, for any $t,t_{0}$ and $\tau\in[0,1]$,  $|\mathcal{U}(\tau;t,t_{0})|\leq2\|\widetilde{U}\|_{\mathcal{C}([0,T];H^{s})}$ so that, the range of the function $[0,1]\times[0,T]\mapsto\mathcal{U}(\tau;t,t_{0})$ lies within a compact set of $\mathbb{R}^{N}$. Hence, for any $\delta>0$, there is a positive constant $M(\delta)>0$ such that $\|Dg\|_{\mathcal{C}^{[\delta]}}\leq M(\delta)$ when restricted to $R(\mathcal{U}(\tau;t,t_{0}))$. Therefore, by the fractional chain rule estimates, for any $\delta>0$ there are positive constants $C(\delta)$ and $M(\delta)$ such that,
\begin{align}
\label{eq:fracchainDG}
\mbox{for all}\quad t,t_{0}\in[0,T]~\mbox{and}~\tau\in[0,1],\quad\|Dg(\mathcal{U}(\tau;t,t_{0}))\|_{\dot{H}^{\delta}}\leq C(\delta)M(\delta)\|\widetilde{U}\|_{\mathcal{C}([0,T];H^{s})}.
\end{align}
Now, it is easy to show that
\begin{align*}
\mathcal{F}\left(\int_{0}^{1}Dg(\mathcal{U}(\tau;t,t_{0}))d\tau\right)=\int_{0}^{1}\mathcal{F}(Dg(\mathcal{U}(\tau;t,t_{0})))d\tau.
\end{align*}
Therefore, by Jensen's inequality and \eqref{eq:fracchainDG} it follows that, for any $\sigma\in(0,s]$ there are two positive constants $C(\sigma)$ and $M(\sigma)$ such that for any $t,t_{0}\in[0,T]$, 
\begin{equation}
\label{eq:integraluniform}
\begin{aligned}
\left\lVert\int_{0}^{1}Dg(\mathcal{U}(\tau;t,t_{0}))d\tau\right\rVert_{\dot{H}^{\sigma}}^{2}&\leq\int_{\mathbb{R}^{d}}\int_{0}^{1}|\xi|^{2\sigma}\left\lvert\mathcal{F}\left(Dg(\mathcal{U}(\tau;t,t_{0}))\right)(\xi)\right\rvert^{2}d\tau d\xi\\
&\leq C(\sigma)^{2}M(\sigma)^{2}\|\widetilde{U}\|_{\mathcal{C}([0,T];H^{s})}^{2}.
\end{aligned}
\end{equation}
In consequence, by taking the limit when $t\rightarrow t_{0}$ in the right hand side of \eqref{eq:contcomp1}, the result follows. 

Next, we prove $(b)$. By the continuity of $\mathcal{F}:L^{1}\rightarrow L^{\infty}$ and H\"older's inequality, for any $t\in[0,T]$, we have
\begin{align*}
\|\Lambda^{\sigma}h\|_{L^{\infty}}&\leq\|\mathcal{F}(\Lambda^{\sigma}h)\|_{L^{1}}\\
&\leq\int_{|\xi|\leq 1}|\xi|^{\sigma}|\widehat{h}(\xi)|d\xi+\int_{|\xi|\geq 1}|\xi|^{\sigma}|\widehat{h}(\xi)||\xi|^{s_{0}-\sigma}|\xi|^{-(s_{0}-\sigma)}d\xi\\
&\leq\lambda_{d}(B_{1}(0))^{1/2}\|h\|_{\dot{H}^{\sigma}}+\left(\int_{|\xi|\geq 1}|\xi|^{-2(s_{0}-\sigma)}d\xi\right)^{1/2}\|h\|_{\dot{H}^{s_{0}}}.
\end{align*}
Since $s_{0}-\sigma>\frac{d}{2}$, the last integral is finite and \eqref{eq:uniquenessrecovery} follows.

The proof of statement $(c)$ is a direct application of \eqref{eq:uniquenessrecovery} to the function $h=g(\widetilde{U})$. Indeed, by applying the fractional chain rule estimates together with \eqref{eq:uniquenessrecovery}, we find a constant $C=(\sigma,s,d)>0$ satisfying that,
\begin{align*}
\sup_{t\in[0,T]}\|\Lambda^{\sigma}g(\widetilde{U}(t))\|_{L^{\infty}}\leq C(\sigma,s,d)\|\widetilde{U}\|_{\mathcal{C}([0,T];H^{s})}.
\end{align*}
Moreover, for any $t,t_{0}\in[0,T]$, \eqref{eq:uniquenessrecovery} yields
\begin{align*}
\|\Lambda^{\sigma}g(\widetilde{U}(t))-\Lambda^{\sigma}g(\widetilde{U}(t_{0}))\|_{L^{\infty}}&\leq\|\mathcal{F}(g(\widetilde{U}(t)))-\mathcal{F}(g(\widetilde{U}(t_{0})))\|_{L^{1}}\\
&\leq C(\sigma,s,d)\left(\|g(\widetilde{U}(t))-g(\widetilde{U}(t_{0}))\|_{\dot{H}^{\sigma}}+\|g(\widetilde{U}(t))-g(\widetilde{U}(t_{0}))\|_{\dot{H}^{s}}\right).
\end{align*}
By the continuity proven in $(a)$, we reach the conclusion. 

Finally, to prove $(d)$ let $\widetilde{U}^{1},\widetilde{U}^{2}\in\mathcal{O}_{1}$ be arbitrary states with second components $\widetilde{v}^{1}$ and $\widetilde{v}^{2}$, respectively. By using the same argument that lead us to \eqref{eq:contcomp1} we obtain the es timate
\begin{align}
\|g(\widetilde{v}^{1})-g(\widetilde{v}^{2})\|_{\dot{H}^{\sigma}}&\leq\left\lVert\int_{0}^{1}Dg(\mathcal{V}(\tau))d\tau\right\rVert_{L^{\infty}}\|\widetilde{v}^{1}-\widetilde{v}^{2}\|_{\dot{H}^{\sigma}}+\left\lVert\int_{0}^{1}Dg(\mathcal{V}(\tau))d\tau\right\rVert_{\dot{H}^{\sigma}}\|\widetilde{v}^{1}-\widetilde{v}^{2}\|_{L^{\infty}},\label{eq:lipschitzrule}
\end{align}
where $\mathcal{V}(\tau)=\widetilde{v}_{2}+\tau(\widetilde{v}^{1}-\widetilde{v}^{2})$. The result is now a consequence of the fact that $Dg(\mathcal{V}(\tau))\in L^{\infty}(\mathbb{R}^{d}\times[0,T]\times[0,1])$ and \eqref{eq:integraluniform}. 
\end{proof}	
\section{Apendix B: Auxiliary commutator estimates}
\begin{lema}
\label{epsilonlimitest}
Let $\alpha\in(0,1)$ be given and let $s>\frac{d}{2}+1$ be an integer.  Assume that $Q$ is $p\times p$ matrix-valued function and that $w$ is a $\mathbb{R}^{p}$-valued vector field. Then, there is a positive constant $C$ depending only on $d$, $s$ and $\alpha$ such that the following statements are satisfied.
\begin{itemize}
\item [(i)] If  $w\in H^{s-\alpha}$ and $Q\in\widehat{H}^{s}$, then
\begin{align}
\|\Lambda^{s-\alpha}[\mathbb{J}_{\epsilon},Q]w\|_{L^{2}}\leq C\|Q\|_{\widehat{H}^{s}}\|w\|_{H^{s-1}}\label{eq:firstfeliestimate}
\end{align}
and $\|\Lambda^{s-\alpha}[\mathbb{J}_{\epsilon},Q]w\|_{L^{2}}\rightarrow 0$ as $\epsilon\rightarrow 0$. 
\item [(ii)] Set $s_{\alpha}:=\max\{1,2\alpha\}$. If $s>\frac{d}{2}+s_{\alpha}$, $Q\in\widehat{H}^{s}$ and $w\in H^{s-s_{\alpha}}$, then
\begin{align}
\|\Lambda^{s-\alpha}[\mathbb{J}_{\epsilon},Q]w\|_{L^{2}}\leq C\|Q\|_{\widehat{H}^{s}}\|w\|_{H^{s-s_{\alpha}}}\label{eq:secondfeliestimate}
\end{align}
and $\|\Lambda^{s-\alpha}[\mathbb{J}_{\epsilon},Q]w\|_{L^{2}}\rightarrow 0$ as $\epsilon\rightarrow 0$. 
\item [(iii)] If $Q\in\dot{H}^{\alpha}\cap\widehat{H}^{s}\cap\dot{H}^{s+\alpha}$ and $w\in H^{s}$ we have that
\begin{align}
\|\Lambda^{s+\alpha}[\mathbb{J}_{\epsilon},Q]w\|_{L^{2}}\leq C(\|Q\|_{\dot{H}^{\alpha}}+\|Q\|_{\dot{H}^{s+\alpha}})\|w\|_{H^{s}}\label{eq:repeteadestimate}
\end{align}
and $\|\Lambda^{s+\alpha}[\mathbb{J}_{\epsilon},Q]w\|_{L^{2}}\rightarrow 0$ as $\epsilon\rightarrow 0$. 
\end{itemize}
\end{lema}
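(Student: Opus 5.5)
The plan is to treat the three statements uniformly. For the bounds I use duality and self-adjointness of $\mathbb{J}_{\epsilon}$, in the spirit of \eqref{eq:dualityargument}, combined with the classical Friedrichs commutator estimates of \cite[Section 1.5.4]{milani}. For the convergence to zero I use a density argument: the commutator vanishes as $\epsilon\rightarrow0$ for smooth $w$, and the uniform bound then transfers this to general $w$.

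For (i) and (ii), I first split $\|\Lambda^{s-\alpha}[\mathbb{J}_{\epsilon},Q]w\|_{L^{2}}\leq\|[\mathbb{J}_{\epsilon},Q]w\|_{H^{s-\alpha}}$. Since $s-\alpha<s$, I interpolate between an $H^{s-1}$ bound and an $H^{s}$-type bound. The Friedrichs estimate (\cite[Lemma 1.5.2]{milani}) gives $\|[\mathbb{J}_{\epsilon},Q]\partial_{j}f\|_{L^{2}}\leq C\|\nabla Q\|_{L^{\infty}}\|f\|_{L^{2}}$. Its higher order version, obtained by applying $D^{\gamma}$ with $|\gamma|\leq s-1$ and the Leibniz rule, gives
\[
\|[\mathbb{J}_{\epsilon},Q]w\|_{H^{s}}\leq C\|Q\|_{\widehat{H}^{s}}\|w\|_{H^{s-1}}.
\]
Here $D^{\beta}Q\in H^{s-|\beta|}$ is paired with $D^{\gamma-\beta}w$ using Lemma \ref{sobolevprodinteger} and $s-1>\frac{d}{2}$. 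The top term, where one derivative is converted onto $\mathbb{J}_{\epsilon}$ via $[\mathbb{J}_{\epsilon},Q]\partial$, is handled exactly as in \eqref{eq:dualityargument}. Since $H^{s}\hookrightarrow H^{s-\alpha}$, this already yields \eqref{eq:firstfeliestimate}.

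For (ii), when $s_{\alpha}=1$ the estimate coincides with (i). When $s_{\alpha}=2\alpha>1$, I need $\|[\mathbb{J}_{\epsilon},Q]w\|_{H^{s-\alpha}}\lesssim\|w\|_{H^{s-2\alpha}}$, which gains $\alpha$ derivatives. I would use the Littlewood--Paley representation \eqref{eq:littlewoodpaley}. I apply \eqref{eq:maincommutatoridea} with $T_{1}=\Delta_{j}$, $T_{2}=\mathbb{J}_{\epsilon}$ as in Theorem \ref{secondFcommest}, and bound $[\mathbb{J}_{\epsilon},Q]\Delta_{j}w$ by duality in $H^{-1}$. This gives a gain of one derivative, which suffices since $\alpha<1$. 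The low frequency and remaining terms are controlled by $\|\nabla Q\|_{H^{s-1}}$, using $s-2\alpha>\frac{d}{2}$ to place $w$-factors in $L^{\infty}$.

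For (iii), I follow verbatim the proof of Theorem \ref{secondFcommest}, with $m=s$. That theorem bounds $\|\Lambda^{\alpha}[\mathbb{J}_{\epsilon},Q]w\|_{H^{s}}$, which dominates $\|\Lambda^{s+\alpha}[\mathbb{J}_{\epsilon},Q]w\|_{L^{2}}$, so \eqref{eq:repeteadestimate} is immediate.

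For the convergence statements, I take $w_{k}\in\mathcal{S}$ with $w_{k}\rightarrow w$ in the relevant space. For Schwartz $w_{k}$, $[\mathbb{J}_{\epsilon},Q]w_{k}=(\mathbb{J}_{\epsilon}-\mathbb{I})(Qw_{k})-Q(\mathbb{J}_{\epsilon}-\mathbb{I})w_{k}$. Both pieces tend to zero in the target norm by \eqref{eq:Cauchyrate2}, with a small loss absorbed by the extra smoothness of $w_{k}$. For (iii) I instead approximate $Q$, using the density of smooth functions in $\dot{H}^{\alpha}\cap\dot{H}^{s+\alpha}$, since there the regularity of $Q$ is the constraint. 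A $3\epsilon$ argument with the uniform bounds then concludes.

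The main obstacle is the case $2\alpha>1$ in (ii), where one must gain more than what the Friedrichs estimate gives naively. The key step to verify there is the dyadic bookkeeping showing the $H^{-1}$ duality gain suffices.
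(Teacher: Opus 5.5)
Your bound in (iii) via Theorem \ref{secondFcommest} is the paper's argument. Your Leibniz--Friedrichs bound $\|[\mathbb{J}_{\epsilon},Q]w\|_{H^{s}}\le C\|Q\|_{\widehat{H}^{s}}\|w\|_{H^{s-1}}$ is also correct: only the $\beta=0$ term needs the commutator structure, and for $|\beta|\ge1$ you pair $D^{\beta}Q\in H^{s-|\beta|}$ with $D^{\gamma-\beta}w\in H^{|\beta|-1}$. It proves (i), in stronger form, by a route different from the paper's. The paper instead applies \eqref{eq:maincommutatoridea} with $T_{1}=\Lambda^{s-\alpha}$ and $T_{2}=\mathbb{J}_{\epsilon}$. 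It controls $[\Lambda^{s-\alpha},Q]$ by the Kato--Ponce commutator estimate, which needs only $w\in H^{s-\alpha-1}\cap L^{\infty}$, and controls $[\mathbb{J}_{\epsilon},Q]\Lambda^{s-\alpha}w$ by \eqref{eq:dualityargument}. The same lines give (ii), with $s>\frac{d}{2}+s_{\alpha}$ used only for $\|w\|_{L^{\infty}}$.

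Your plan for (ii) has a real hole, and it is not where you put it. In your dyadic splitting the $H^{-1}$-duality term is harmless: it gives $2^{j(s-\alpha-1)}\|\Delta_{j}w\|_{L^{2}}$, which is square-summable to $\|w\|_{H^{s-\alpha-1}}$. The trouble is in $\mathbb{J}_{\epsilon}[\Delta_{j},Q]w$ and $[\Delta_{j},Q]\mathbb{J}_{\epsilon}w$. There \cite[Lemma 2.97]{bahouri} only gives $C2^{-j}\|\nabla Q\|_{L^{\infty}}\|w\|_{L^{2}}$, and against the weight $2^{j(s-\alpha)}$ this is not square-summable since $s-\alpha>1$. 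In Theorem \ref{secondFcommest} this was harmless only because the weight was $2^{j\alpha}$ with $\alpha<1$. You would need a paraproduct commutator lemma such as $\|(2^{j\sigma}\|[\Delta_{j},Q]f\|_{L^{2}})_{j}\|_{\ell^{2}}\lesssim\|\nabla Q\|_{L^{\infty}}\|f\|_{H^{\sigma-1}}+\|\nabla Q\|_{H^{\sigma-1}}\|f\|_{L^{\infty}}$ with $\sigma=s-\alpha$. Your sketch neither states nor proves such a lemma.

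The cheapest repair stays in your own framework. The same computation gives $\|[\mathbb{J}_{\epsilon},Q]w\|_{H^{k}}\le C\|Q\|_{\widehat{H}^{s}}\|w\|_{H^{k-1}}$ for every integer $1\le k\le s$, since in each product the two Sobolev indices are nonnegative and sum to at least $s-1>\frac{d}{2}$. Interpolating $k=s-1$ with $k=s$ then yields $\|\Lambda^{s-\alpha}[\mathbb{J}_{\epsilon},Q]w\|_{L^{2}}\le C\|Q\|_{\widehat{H}^{s}}\|w\|_{H^{s-\alpha-1}}$ uniformly in $\epsilon$. This covers (ii) because $s-s_{\alpha}\ge s-\alpha-1$, and it never uses $w\in L^{\infty}$.

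In the convergence part of (iii), approximating $Q$ is unnecessary, and it cannot replace approximating $w$. For $w\in H^{s}$, neither $(\mathbb{J}_{\epsilon}-\mathbb{I})(Q_{k}w)$ nor $Q_{k}(\mathbb{J}_{\epsilon}-\mathbb{I})w$ lies in $\dot{H}^{s+\alpha}$ in general, so your two-piece splitting is unavailable. In (i)--(ii) the rate from \eqref{eq:Cauchyrate2} worked because $Qw_{k}\in H^{s}$ has $\alpha$ derivatives to spare. In (iii) there is nothing to spare, but no rate is needed.

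Approximate $w$ by Schwartz $w_{k}$ as before. Since $Q\in L^{\infty}\cap\dot{H}^{s+\alpha}$, \eqref{eq:fractionalproduct2} gives $Qw_{k}\in H^{s+\alpha}$, so $(\mathbb{J}_{\epsilon}-\mathbb{I})(Qw_{k})\to0$ in $H^{s+\alpha}$ for fixed $k$. By the same rule, $\|Q(\mathbb{J}_{\epsilon}-\mathbb{I})w_{k}\|_{\dot{H}^{s+\alpha}}\le C(\|Q\|_{L^{\infty}}\|(\mathbb{J}_{\epsilon}-\mathbb{I})w_{k}\|_{\dot{H}^{s+\alpha}}+\|Q\|_{\dot{H}^{s+\alpha}}\|(\mathbb{J}_{\epsilon}-\mathbb{I})w_{k}\|_{L^{\infty}})\to0$. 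Applying \eqref{eq:repeteadestimate} to $w-w_{k}$ then closes the argument, which is essentially the paper's proof.
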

\begin{proof}
$(i)$ Let us first assume that $w\in H^{s-\alpha}$. By \eqref{eq:maincommutatoridea},
\begin{align*}
\Lambda^{s-\alpha}[\mathbb{J}_{\epsilon},Q]w=\mathbb{J}_{\epsilon}[\Lambda^{s-\alpha},Q]w+[\mathbb{J}_{\epsilon},Q]\Lambda^{s-\alpha}w-[\Lambda^{s-\alpha},Q]\mathbb{J}_{\epsilon}w.
\end{align*}
By the fractional Leibniz rule and the duality argument that led us to \eqref{eq:dualityargument} it follows that, 
\begin{align*}
\|\Lambda^{s-\alpha}[\mathbb{J}_{\epsilon},Q]w\|_{L^{2}}\leq C\left(\|\nabla Q\|_{L^{\infty}}\|\Lambda^{s-\alpha-1}w\|_{L^{2}}+\|\Lambda^{s-\alpha}Q\|_{L^{2}}\|w\|_{L^{\infty}}+\|\nabla Q\|_{L^{\infty}}\|\Lambda^{s-\alpha}w\|_{H^{-1}}\right). 
\end{align*}
Then, estimate \eqref{eq:firstfeliestimate} is a consequence of applying the Sobolev embedding $H^{s-1}\hookrightarrow L^{\infty}$ into the last inequality.
	
$(ii)$ Now assume that $s>\frac{d}{2}+s_{\alpha}$ and $w\in H^{s-s_{\alpha}}$. Let $\{\varphi_{m}\}\subset\mathcal{S}$ such that $\varphi_{m}\rightarrow w$ in $H^{s-s_{\alpha}}$. Since, $\varphi_{m}\in H^{s-\alpha}$, we apply the decomposition
\begin{align*}
\Lambda^{s-\alpha}[\mathbb{J}_{\epsilon},Q]\varphi_{m}&=\mathbb{J}_{\epsilon}[\Lambda^{s-\alpha},Q]\varphi_{m}+[\mathbb{J}_{\epsilon},Q]\Lambda^{s-\alpha}\varphi_{m}-[\Lambda^{s-\alpha},Q]\mathbb{J}_{\epsilon}\varphi_{m}
\end{align*}
Then, as in step $(i)$, for each $m\in\mathbb{N}$ it holds that, 
\begin{align*}
\|\Lambda^{s-\alpha}[\mathbb{J}_{\epsilon},Q]\varphi_{m}\|_{L^{2}}&\leq C\left(\| Q\|_{\widehat{H}^{s}}\|\varphi_{m}\|_{H^{s-\alpha-1}}+\|\Lambda^{s-\alpha}Q\|_{L^{2}}\|\varphi_{m}\|_{L^{\infty}}\right),
\end{align*}
Therefore, by the embeddings $H^{s-s_{\alpha}}\hookrightarrow L^{\infty}$ and $H^{s-s_{\alpha}}\hookrightarrow H^{s-\alpha-1}$ we obtain that
\begin{align*}
\|\Lambda^{s-\alpha}[\mathbb{J}_{\epsilon},Q]\varphi_{m}\|_{L^{2}}\leq C\|Q\|_{\widehat{H}^{s}}\|\varphi_{m}\|_{H^{s-s_{\alpha}}}\quad\mbox{for all}\quad m\in\mathbb{N}.
\end{align*}
Consequently, for every $\phi\in\mathcal{S}$ and $m\in\mathbb{N}$, it holds that, 
\begin{align*}
|\langle[\mathbb{J}_{\epsilon},Q]\varphi_{m},\Lambda^{s-\alpha}\phi\rangle_{L^{2}}|=|	\langle\Lambda^{s-\alpha}[\mathbb{J}_{\epsilon},Q]\varphi_{m},\phi\rangle_{L^{2}}|\leq C\|Q\|_{\widehat{H}^{s}}\|\varphi_{m}\|_{H^{s-s_{\alpha}}}\|\phi\|_{L^{2}}.
\end{align*}
Since, $[\mathbb{J}_{\epsilon},Q]\varphi_{m}\rightarrow[\mathbb{J}_{\epsilon},Q]w$ in $L^{2}$, for any $\epsilon>0$, it follows that 
\begin{align*}
|\langle[\mathbb{J}_{\epsilon},Q]w,\Lambda^{s-\alpha}\phi\rangle_{L^{2}}|\leq C\|Q\|_{\widehat{H}^{s}}\|w\|_{H^{s-s_{\alpha}}}\|\phi\|_{L^{2}},\quad\mbox{for all}\quad\phi\in\mathcal{S}.
\end{align*}
The estimate \eqref{eq:secondfeliestimate} now follows by the Riesz representation Theorem. 
	
$(iii)$ Assume that $w\in H^{s}$. Since
\begin{align*}
\|\Lambda^{s+\alpha}[\mathbb{J}_{\epsilon},Q]w\|_{L^{2}}\leq\|\Lambda^{\alpha}[\mathbb{J}_{\epsilon},Q]w\|_{H^{s}},
\end{align*}
\eqref{eq:repeteadestimate} is a consequence of Theorem \ref{secondFcommest}.  
	
Now, we prove the convergence assertion in case $(iii)$.  Let $\{\varphi_{m}\}\subset\mathcal{S}$ such that, $\varphi_{m}\rightarrow w$ in $H^{s}$. In order to prove the convergence to zero in $L^{2}$, we fix $m\in\mathbb{N}$ and use the following decomposition,
\begin{align*}
\Lambda^{s+\alpha}[\mathbb{J}_{\epsilon},Q]\varphi_{m}&=\mathbb{J}_{\epsilon}[\Lambda^{s+\alpha},Q]\varphi_{m}+[\mathbb{J}_{\epsilon},Q]\Lambda^{s+\alpha}\varphi_{m}-[\Lambda^{s+\alpha},Q]\mathbb{J}_{\epsilon}\varphi_{m}\\
&=:I_{1,\epsilon}^{m}+I_{2,\epsilon}^{m}-I_{3,\epsilon}^{m}. 
\end{align*}
By \cite[Lemma 1.5.2]{milani}, $I_{2,\epsilon}^{m}\rightarrow 0$ in $L^{2}$ as $\epsilon\rightarrow 0$. Meanwhile, since $\Lambda^{s+\alpha}(Q\varphi_{m})$, $Q\Lambda^{s+\alpha}\varphi_{m}$ and $\Lambda^{s+\alpha}\varphi_{m}$ belong to $L^{2}$, it follows that
\begin{align*}
\mathbb{J}_{\epsilon}\Lambda^{s+\alpha}(Q\varphi_{m})\rightarrow \Lambda^{s+\alpha}(Q\varphi_{m}),\quad \mathbb{J}_{\epsilon}(Q\Lambda^{s+\alpha}\varphi_{m})\rightarrow Q\Lambda^{s+\alpha}\varphi_{m}\quad\mbox{and}\quad Q\Lambda^{s+\alpha}\mathbb{J}_{\epsilon}\varphi_{m}\rightarrow Q\Lambda^{s+\alpha}\varphi_{m}
\end{align*}
in $L^{2}$ as $\epsilon\to 0$. Furthermore, by the fractional Leibniz rule combined with the embedding $H^{s-1}\hookrightarrow L^{\infty}$ we have the estimate
\begin{align*}
\|\Lambda^{s+\alpha}(Q\mathbb{J}_{\epsilon}\varphi_{m})-\Lambda^{s+\alpha}(Q\varphi_{m})\|_{L^{2}}\leq C\left(\|Q\|_{L^{\infty}}\|\mathbb{J}_{\epsilon}\varphi_{m}-\varphi_{m}\|_{\dot{H}^{s+\alpha}}+\|Q\|_{\dot{H}^{s+\alpha}}\|\mathbb{J}_{\epsilon}\varphi_{m}-\varphi_{m}\|_{H^{s-1}}\right),
\end{align*}
which goes to zero as $\epsilon\rightarrow 0$. Therefore, for each $m\in\mathbb{N}$ it holds that
\begin{align*}
\lim_{\epsilon\rightarrow 0}\left(I_{1,\epsilon}^{m}-I_{3,\epsilon}^{m}\right)&=\lim_{\epsilon\rightarrow 0}\left(\mathbb{J}_{\epsilon}\Lambda^{s+\alpha}(Q\varphi_{m})-\Lambda^{s+\alpha}(Q\mathbb{J}_{\epsilon}\varphi_{m})-\mathbb{J}_{\epsilon}(Q\Lambda^{s+\alpha}\varphi_{m})+Q\Lambda^{s+\alpha}\mathbb{J}_{\epsilon}\varphi_{m}\right)\\
&=\Lambda^{s+\alpha}(Q\varphi_{m})-\Lambda^{s+\alpha}(Q\varphi_{m})-Q\Lambda^{s+\alpha}\varphi_{m}+Q\Lambda^{s+\alpha}\varphi_{m}=0\quad\mbox{in}\quad L^{2}. 
\end{align*}
Finally, observe that, as a consequence of \eqref{eq:repeteadestimate}, 
\begin{align*}
\|\Lambda^{s+\alpha}[\mathbb{J}_{\epsilon},Q]w-\Lambda^{s+\alpha}[\mathbb{J}_{\epsilon},Q]\varphi_{m}\|_{L^{2}}\leq C(\|Q\|_{\dot{H}^{\alpha}}+\|Q\|_{\dot{H}^{s+\alpha}})\|w-\varphi_{m}\|_{H^{s}}
\end{align*}
and the result follows. The convergence statements in $(i)$ and $(ii)$ are obtained by the same argument, replacing $\Lambda^{s+\alpha}$ by $\Lambda^{s-\alpha}$ and by using estimates \eqref{eq:firstfeliestimate} and \eqref{eq:secondfeliestimate}, respectively. 
\end{proof}

\begin{theo}
	Let $s>\frac{d}{2}+1$ be an integer and $\alpha\in(0,1)$ be given. Assume that $f\in\dot{H}^{\alpha}\cap\widehat{H}^{s}\cap\dot{H}^{s+\alpha}$ and $g\in H^{m}$ for some $m\in\mathbb{N}$ with $1\leq m\leq s$. Then, there is a positive constant $C$ depending only on $\alpha$, $m$, $d$ and $s$, such that
	\begin{align}
		\label{eq:fraccommnon}
		\|[\Lambda^{2\alpha},f]g\|_{H^{m-1}}\leq C(\|f\|_{\dot{H}^{\alpha}}+\|f\|_{\dot{H}^{s+\alpha}})\|g\|_{H^{m}}.
	\end{align}
\end{theo}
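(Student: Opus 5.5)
We prove \eqref{eq:fraccommnon} by estimating $\|[\Lambda^{2\alpha},f]g\|_{L^{2}}$ and $\|D^{\gamma}[\Lambda^{2\alpha},f]g\|_{L^{2}}$ for every multi-index $\gamma$ with $1\leq|\gamma|\leq m-1$. The main tool is the commutator identity \eqref{eq:maincommutatoridea} with $T_{1}=D^{\gamma}$ and $T_{2}=\Lambda^{2\alpha}$, exactly as at the start of the proof of Theorem \ref{fractionalcommutatorfeli}:
\begin{align*}
D^{\gamma}[\Lambda^{2\alpha},f]g=\Lambda^{2\alpha}[D^{\gamma},f]g+[\Lambda^{2\alpha},f]D^{\gamma}g-[D^{\gamma},f]\Lambda^{2\alpha}g.
\end{align*}
This reduces the problem to three pieces, each of which is controlled by results already available: the $L^{2}$ commutator bound, the $H^{\alpha}$ estimate of Theorem \ref{mainFcommutator}, and the classical commutator estimate for $D^{\gamma}$.

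\emph{Zeroth order ($|\gamma|=0$).} For $\alpha\in(0,\tfrac12]$ we use \eqref{eq:dongliC2} with $\sigma=2\alpha$, which gives $\|[\Lambda^{2\alpha},f]g\|_{L^{2}}\le C\|\Lambda^{2\alpha}f\|_{L^{\infty}}\|g\|_{L^{2}}$. For $\alpha\in(\tfrac12,1)$ we use the Kato--Ponce--Li expansion \eqref{eq:KatoPonceLicommutator} together with \eqref{eq:dongliC1}. This gives $\|\Lambda^{2\alpha}f\|_{BMO}\|g\|_{L^{2}}+\|\nabla f\|_{L^{\infty}}\|\Lambda^{2\alpha-1}g\|_{L^{2}}$, as in \eqref{eq:dongli2}. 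The multiplier norms on $f$ are then absorbed into $\|f\|_{\dot H^{\alpha}}+\|f\|_{\dot H^{s+\alpha}}$:
\begin{itemize}
\item $\|\Lambda^{2\alpha}f\|_{L^{\infty}}$ is handled by \eqref{eq:uniquenessrecovery} with $\sigma=2\alpha$ and $s_{0}=s+\alpha$, since $s-\alpha>d/2$;
\item $\|\Lambda^{2\alpha}f\|_{BMO}$ is handled by the embedding $\dot H^{d/2}\hookrightarrow BMO$;
\item $\|\nabla f\|_{L^{\infty}}$ is handled by interpolating $\nabla f\in H^{s+\alpha-1}$ between $\dot H^{\alpha}$ and $\dot H^{s+\alpha}$.
\end{itemize}
This gives the bound by $C(\|f\|_{\dot H^{\alpha}}+\|f\|_{\dot H^{s+\alpha}})\|g\|_{H^{1}}$.

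\emph{Higher order ($1\le|\gamma|\le m-1$).} We treat the three terms of the identity separately.
\begin{itemize}
\item First term: $\|\Lambda^{2\alpha}[D^{\gamma},f]g\|_{L^{2}}\le\|[D^{\gamma},f]g\|_{H^{2\alpha}}$. If $2\alpha\le\alpha+1$ (always true), we bound this by $\|\Lambda^{\alpha}[D^{\gamma},f]g\|_{H^{1}}$, that is, by the $H^{\alpha}$ norm of $D^{\eta}[D^{\gamma},f]g$ with $|\eta|\le1$. Writing $D^{\eta}[D^{\gamma},f]g=[D^{\gamma+\eta},f]g-[D^{\eta},f]D^{\gamma}g$, each piece is a commutator of order at most $m$. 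Theorem \ref{mainFcommutator} then bounds it in $H^{\alpha}$ by $C(\|f\|_{\dot H^{\alpha}}+\|f\|_{\dot H^{s+\alpha}})\|g\|_{H^{m}}$. Theorem \ref{mainFcommutator} is applied with $D^{\gamma}g\in H^{m-|\gamma|}$, $m-|\gamma|\ge1$, in the second piece.
\item Second term: $[\Lambda^{2\alpha},f]D^{\gamma}g$ is the zeroth-order case applied to $D^{\gamma}g\in H^{1}$.
\item Third term: the classical estimate for $D^{\gamma}$ gives $\|[D^{\gamma},f]\Lambda^{2\alpha}g\|_{L^{2}}\le C\|\nabla f\|_{H^{s-1}}\|\Lambda^{2\alpha}g\|_{H^{|\gamma|-1}}$. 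The last factor is at most $\|g\|_{H^{|\gamma|-1+2\alpha}}\le\|g\|_{H^{m}}$, since $|\gamma|\le m-1$ and $2\alpha<2$. This needs $|\gamma|-1+2\alpha\le m$, which holds.
\end{itemize}
Summing over $|\gamma|\le m-1$ yields \eqref{eq:fraccommnon}.

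The main obstacle is the first term when $2\alpha>1$. The $H^{2\alpha}$ norm of $[D^{\gamma},f]g$ requires one derivative plus $\alpha$ more, so we must make sure that Theorem \ref{mainFcommutator} is applied at order $|\gamma|+1\le m$ and never at order $m+1$. This is exactly why the identity is only used for $|\gamma|\le m-1$. If this bookkeeping fails at some edge case, the fallback is to prove the first term directly by the fractional Leibniz rule \eqref{eq:fractionalleibnizrule} with $L^{4}$ pairings and the embedding $H^{1}\hookrightarrow L^{4}$. That is the same estimate used in \eqref{eq:felipedecomp5}, applied to $D^{\beta}fD^{\gamma-\beta}g$ for $0<\beta\le\gamma$.
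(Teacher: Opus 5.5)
Your proof is correct, but it takes a different route from the paper.

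\textbf{The paper's route.} The paper never differentiates with $D^{\gamma}$. It first treats $m=1$ with the $L^{2}$ bounds for $[\Lambda^{2\alpha},f]$, essentially as in your zeroth-order step. It then bounds the $H^{m-1}$ norm by $\|[\Lambda^{2\alpha},f]g\|_{L^{2}}+\|\Lambda^{m-1}[\Lambda^{2\alpha},f]g\|_{L^{2}}$. Next it uses the purely fractional identity $\Lambda^{m-1}[\Lambda^{2\alpha},f]g=[\Lambda^{m-1+2\alpha},f]g-[\Lambda^{m-1},f]\Lambda^{2\alpha}g$. This reduces everything to the high-order Kato--Ponce estimate \eqref{eq:fractionalcommutator2}, with $L^{4}$ pairings for $m\le s-1$ and an $L^{2}\times L^{\infty}$ pairing for $m=s$.

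\textbf{Your route.} You apply \eqref{eq:maincommutatoridea} with $T_{1}=D^{\gamma}$ and $T_{2}=\Lambda^{2\alpha}$, as in the proof of Theorem \ref{fractionalcommutatorfeli}. You then close with results already in the paper:
\begin{itemize}
\item Theorem \ref{mainFcommutator} at orders $|\gamma|$, $|\gamma|+1\le m$ and $1$;
\item the zeroth-order bound applied to $D^{\gamma}g\in H^{1}$;
\item the classical $D^{\gamma}$-commutator estimate.
\end{itemize}
Your bookkeeping is right: $\|\Lambda^{2\alpha}h\|_{L^{2}}\le\|\Lambda^{\alpha}h\|_{H^{1}}$, the identity $D^{\eta}[D^{\gamma},f]g=[D^{\gamma+\eta},f]g-[D^{\eta},f]D^{\gamma}g$, and $|\gamma|-1+2\alpha<m$ all check out. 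The fallback you mention is therefore unnecessary.

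\textbf{What each buys.} The paper's route is shorter and uses only the Kato--Ponce--Li inequalities, not the Littlewood--Paley machinery behind Theorem \ref{mainFcommutator}. Yours pays for that heavier input but treats every $m$ the same way and never needs pointwise control of $\Lambda^{2\alpha}g$. In the paper's $m=s$ step, the pairing $p_{3}=2$, $p_{4}=\infty$ on $[\Lambda^{s-1},f]\Lambda^{2\alpha}g$ puts $\Lambda^{2\alpha}g$ in $L^{\infty}$. For $\alpha>\tfrac12$ this does not follow from $s>\tfrac d2+1$ alone, and one would have to switch to $L^{4}$ pairings there. Your third term only needs $\Lambda^{2\alpha}g\in H^{|\gamma|-1}$.
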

\begin{proof}
	First, notice that if $m=1$ the result follows directly from the $L^{2}$ estimates for the commutator of $\Lambda^{2\alpha}$. Indeed, if $0<\alpha\leq\frac{1}{2}$ we use \eqref{eq:dongliC2} and if $\frac{1}{2}<\alpha<1$ we apply \eqref{eq:fractionalcommutator2} with $p_{1}=\infty$, $p_{2}=2$, $p_{3}=p_{4}=4$ together with the embedding $H^{1}\hookrightarrow L^{4}$. 
	
	Then, we assume $2\leq m\leq s$. Given that
	\begin{align*}
		\|[\Lambda^{2\alpha},f]g\|_{H^{m-1}}\leq C(\|[\Lambda^{2\alpha},f]g\|_{L^{2}}+\|\Lambda^{m-1}[\Lambda^{2\alpha},f]g\|_{L^{2}}),
	\end{align*}
	it is enough to prove the required estimate for the term $\Lambda^{m-1}[\Lambda^{2\alpha},f]g$. By using the identity
	\begin{align*}
		\Lambda^{m-1}[\Lambda^{2\alpha},f]g=[\Lambda^{m-1+2\alpha},f]g-[\Lambda^{m-1},f]\Lambda^{2\alpha}g,
	\end{align*}
	the result reduces to estimate the $L^{2}$ norm of each commutator separately. If $2\leq m\leq s-1$ we proceed as in the case above. If, on the other hand, $m=s$ we apply \eqref{eq:fractionalcommutator2} with $p_{1}=\infty$, $p_{2}=2$, $p_{3}=2$ and $p_{4}=\infty$ together with the embedding $H^{s}\hookrightarrow L^{\infty}$. 
\end{proof}

\section*{Acknowledgments}
The author is grateful to Professors Ram\'on Plaza and Luis L\'opez for their encouragement and support during the preparation of this manuscript. This work was supported by C\'atedra IIMAS from August 2024 to December of the same year and by Estancias Posdoctorales por M\'exico 2024 SECIHTI, grant 589929, from February 2025 to August of the same year.

	\bibliographystyle{plain} 
	\bibliography{dissref}
	
\end{document}